\documentclass[fleqn]{article}

\usepackage[american]{babel}
\usepackage{fullpage}
\usepackage[T1]{fontenc}
\usepackage{etex}
\usepackage{letltxmacro}
\usepackage{import}
\usepackage{latexsym}
\usepackage{amsfonts}
\usepackage{amsmath}
\usepackage{amssymb}
\usepackage{amsthm}
\usepackage{mathtools}
\usepackage{bbm}
\usepackage[shortlabels]{enumitem}
\usepackage[backend=biber,%
	style=alphabetic,%
	sorting=anyt,%
	maxnames=99,%
	maxalphanames=4,%
	minalphanames=3,%
	labelalpha=true,%
	isbn=false,%
	doi=false,%
	url=false%
	]{biblatex}
\renewbibmacro{in:}{} 
\DeclareFieldFormat[article,inbook,incollection,inproceedings,thesis]{title}{#1}
\addbibresource{approx_paper.bib}

\usepackage{xcolor}
\usepackage{calc} 
\usepackage{etoolbox}
\robustify\setcounter 
\robustify\addtocounter 
\robustify\setlength
\robustify\addtolength
\usepackage{xspace}
\usepackage{graphicx}

\usepackage{xparse}
\usepackage{pdftexcmds}

\usepackage{aliascnt}

\usepackage{caption}
\usepackage[hypcap=true,justification=centering]{subcaption} 

\usepackage{tikz}
\usetikzlibrary{external}
\tikzset{external/system call={lualatex
		\tikzexternalcheckshellescape -halt-on-error -interaction=batchmode
		-jobname "\image" "\texsource"},
		external/only named=true
		}
\tikzexternalize[prefix=figures/ext/]

\usepackage{pgfplots}

\usepackage{hyperref}
\AtBeginDocument{}
\usepackage[all]{hypcap}


\providecommand{\bbC}{\mathbb{C}}

\providecommand{\bbI}{\mathbb{I}}

\providecommand{\bbN}{\mathbb{N}}

\providecommand{\bbR}{\mathbb{R}}
\providecommand{\bbS}{\mathbb{S}}

\providecommand{\bbZ}{\mathbb{Z}}

\providecommand{\CA}{\mathcal{A}}

\providecommand{\CC}{\mathcal{C}}
\providecommand{\CD}{\mathcal{D}}
\providecommand{\CE}{\mathcal{E}}
\providecommand{\CF}{\mathcal{F}}

\providecommand{\CH}{\mathcal{H}}

\providecommand{\CO}{\mathcal{O}}
\providecommand{\CP}{\mathcal{P}}

\providecommand{\CR}{\mathcal{R}}
\providecommand{\CS}{\mathcal{S}}

\providecommand{\CV}{\mathcal{V}}

\newcommand{\VA}{{\mathbf{A}}}

\newcommand{\VW}{{\mathbf{W}}}

\providecommand{\FC}{\mathfrak{C}}

\providecommand{\Fh}{\mathfrak{h}}

\newcommand*{\wt}[1]{\widetilde{#1}}
\newcommand*{\wh}[1]{\widehat{#1}}

\newcommand{\ttt}[1]{\textnormal{\texttt{#1}}}
\newcommand{\eps}{\varepsilon}

\newcommand{\Hs}[1][\vec s]{H^{#1}} 

\renewcommand{\d}{\mathop{}\!\dd} 
\newcommand{\dd}{\mathrm{d}}
\newcommand{\ee}{\mathrm{e}}
\newcommand{\ii}{\mathrm{i}}
\newcommand{\rmr}{\mathrm{r}}
\newcommand{\rms}{\mathrm{s}}

\newcommand{\ind}{\mathbbm{1}}

\newcommand{\compl}{\mathsf{c}}

\newcommand{\supp}{\mathrm{supp}\mathop{}}
\newcommand{\diag}{\mathrm{diag}}
\newcommand{\dist}{\mathrm{dist}}
\newcommand{\trace}{\mathrm{tr}}

\renewcommand{\Re}{\operatorname{Re}}

\newcommand{\kp}{\vec k{}'}

\newcommand{\tpp}{\vec t{}''}

\newcommand{\xip}{\vec \xi{}'}

\newcommand{\xp}{\vec x{}'}

\newcommand{\yp}{\vec y{}'}

\newcommand{\zp}{\vec z{}'}
\newcommand{\zpp}{\vec z{}''}

\newcommand{\spr}{\vec s{}'}
\newcommand{\sst}{\vec s{}^*}

\newcommand{\Ac}{A_{\mathrm{c}}}
\newcommand{\fc}{f_{\mathrm{c}}}

\newcommand{\bbSd}{{\mathbb{S}^{d-1}}}
\newcommand{\jl}{{j\!\?\?,\ell}}

\newcommand{\jlk}{{j\!\?\?,\?\ell\!\?\?,\?\vec k}}

\renewcommand{\Rn}{R_{\vec n}}
\newcommand{\Rs}{R_{\vec s}}

\newcommand{\Rjl}{R_\jl}

\newcommand{\sjl}{\vec s_\jl}

\newcommand{\rs}{\rho_{\vec s}}


\newcommand{\nth}{\textsuperscript{th}\xspace} 


\newcommand{\normalr}[1]{\mathclose{\displaystyle#1}} 

\NewDocumentCommand{\Dn} {m O{} m}  {\frac{\dd^{#1}#2}{\dd#3^{#1}}}
\NewDocumentCommand{\Dpn}{m O{} m}  {\frac{\partial^{#1}#2}{\partial#3^{#1}}}
\NewDocumentCommand{\Dpi}{m O{} m}  {\frac{\partial^{\abs[]{#1}}#2}{\partial{#3}^{#1}}}
\NewDocumentCommand{\Dpp}{m O{} m m}{\frac{\partial^{#1}#2}{\partial{#3}\,\partial{#4}}}
\NewDocumentCommand{\D}  {O{} m}    {\Dn{}[#1]{#2}}
\NewDocumentCommand{\DD} {O{} m}    {\Dn{2}[#1]{#2}}
\NewDocumentCommand{\Dp} {O{} m}    {\Dpn{}[#1]{#2}}
\NewDocumentCommand{\DDp}{O{} m}    {\Dpn{2}[#1]{#2}}

\DeclarePairedDelimiter {\nrmInternal}   {\lVert}    {\rVert}
\DeclarePairedDelimiter {\absInternal}   {\lvert}    {\rvert}
\DeclarePairedDelimiter {\parInternal}   {\lparen}   {\rparen}
\DeclarePairedDelimiter {\braInternal}   {\lbrack}   {\rbrack}
\DeclarePairedDelimiter {\bbrInternal}   {\llbracket}{\rrbracket}
\DeclarePairedDelimiter {\ceiInternal}   {\lceil}    {\rceil}
\DeclarePairedDelimiter {\cceInternal}   {\llceil}   {\rrceil}
\DeclarePairedDelimiter {\flrInternal}   {\lfloor}   {\rfloor}
\DeclarePairedDelimiter {\fflInternal}   {\llfloor}  {\rrfloor}
\DeclarePairedDelimiter {\inpInternal}   {\langle}   {\rangle}
\DeclarePairedDelimiter {\crlInternal}   {\{}        {\}}
\DeclarePairedDelimiterX{\setInternal}[2]{\{}        {\}}        {#1\colon#2} 

\NewDocumentCommand{\norm}    {s o m}  {  \resizerOneInput {\nrmInternal}{#1}{#2}{#3}    }
\NewDocumentCommand{\abs}     {s o m}  {  \resizerOneInput {\absInternal}{#1}{#2}{#3}    }
\NewDocumentCommand{\snorm}   {s o m}  {  \resizerOneInput {\absInternal}{#1}{#2}{#3}    }
\NewDocumentCommand{\parens}  {s o m}  {  \resizerOneInput {\parInternal}{#1}{#2}{#3}    }
\NewDocumentCommand{\bracket} {s o m}  {  \resizerOneInput {\braInternal}{#1}{#2}{#3}    }
\NewDocumentCommand{\bbracket}{s o m}  {  \resizerOneInput {\bbrInternal}{#1}{#2}{#3}    }
\NewDocumentCommand{\ceil}    {s o m}  {  \resizerOneInput {\ceiInternal}{#1}{#2}{#3}    }
\NewDocumentCommand{\cceil}   {s o m}  {  \resizerOneInput {\cceInternal}{#1}{#2}{#3}    }
\NewDocumentCommand{\floor}   {s o m}  {  \resizerOneInput {\flrInternal}{#1}{#2}{#3}    }
\NewDocumentCommand{\ffloor}  {s o m}  {  \resizerOneInput {\fflInternal}{#1}{#2}{#3}    }
\NewDocumentCommand{\inpr}    {s o m}  {  \resizerOneInput {\inpInternal}{#1}{#2}{#3}    }
\NewDocumentCommand{\reg}     {s o m}  {  \resizerOneInput {\inpInternal}{#1}{#2}{#3}    }
\NewDocumentCommand{\curly}   {s o m}  {  \resizerOneInput {\crlInternal}{#1}{#2}{#3}    }
\NewDocumentCommand{\set}     {s o m m}{  \resizerTwoInputs{\setInternal}{#1}{#2}{#3}{#4}}
\NewDocumentCommand{\card}    {s o m m}{\#\resizerTwoInputs{\setInternal}{#1}{#2}{#3}{#4}}

\NewDocumentCommand{\coeff}{s o m}     {\resizerOneInput{\braInternal}{#1}{#2}{#3}\mathop{}\!}
\NewDocumentCommand{\seq}  {s o m O{0}}{{\resizerOneInput{\crlInternal}{#1}{#2}{#3}}_{#4}^\infty}
\NewDocumentCommand{\ops}  {s o m O{0} m}{\mathrm{ops}\resizerOneInput{\parInternal}{#1}{#2}{%
\resizerOneInput{\crlInternal}{#1}{#2}{#3}_{#4}^\infty,#5}}

\NewDocumentCommand{\resizerOneInput}{m m m m}{
	\IfBooleanTF{#2}   
		{#1[\big]{#4}} 
		{\IfNoValueTF{#3} 
			{#1*{#4}}     
			{#1[#3]{#4}}} 
}
\NewDocumentCommand{\resizerTwoInputs}{m m m m m}{
	\IfBooleanTF{#2}
		{#1[\big]{#4}{#5}}
		{\IfNoValueTF{#3}
			{#1*{#4}{#5}}
			{#1[#3]{#4}{#5}}}
}


\newcommand{\phantomrel}{\mathrel{\phantom{=}}}

\makeatletter
\newcommand{\specialcell}[1]{\ifmeasuring@#1\else\omit$\displaystyle#1$\ignorespaces\fi}
\makeatother

\newlength{\mytemplength} 

\newcommand{\lcopywidth}[2]{%
	\phantom{\smash{#2}}\mathllap{#1}}


\newcounter{universalcounter}[section]
\renewcommand{\theuniversalcounter}%
	{\arabic{section}.\arabic{universalcounter}}
\renewcommand{\theequation}%
	{\arabic{section}.\arabic{equation}}

\newcommand*{\mynewtheorem}[2]{
  \newaliascnt{#1}{universalcounter}
  \newtheorem{#1}[#1]{#2}
  \aliascntresetthe{#1}
  \expandafter\def\csname#1autorefname\endcsname{#2}
}

\newcommand*{\mynewdefinition}[2]{
	\mynewtheorem{#1x}{#2}
	\newenvironment{#1}
		{\pushQED{\qed}\renewcommand{\qedsymbol}{$\triangle$}\csname#1x\endcsname}
		{\popQED\csname end#1x\endcsname}
}

\theoremstyle{definition}
\mynewdefinition{definition} {Definition}
\mynewdefinition{example}    {Example}
\mynewdefinition{remark}     {Remark}

\theoremstyle{plain}
\mynewtheorem{theorem}    {Theorem}
\mynewtheorem{lemma}      {Lemma}
\mynewtheorem{corollary}  {Corollary}
\mynewtheorem{proposition}{Proposition}
\mynewtheorem{fact}       {Fact}
\mynewtheorem{assumption} {Assumption}
\mynewtheorem{goal}       {Goal}

\newcounter{step}[universalcounter]
\NewDocumentCommand{\step}{m o}{
	\vspace{\topsep}%
	{\refstepcounter{step}\IfNoValueF{#2}{#2}%
	{\noindent\bfseries Step~\thestep\ -- #1}\par\nobreak%
	}\noindent\ignorespaces%
}

\numberwithin{equation}{section}
\numberwithin{figure}{section}
\numberwithin{table}{section}

\addto\extrasenglish{

}
\addto\extrasamerican{

}


\mathtoolsset{showonlyrefs=true}


\definecolor{RoyalBlue}{cmyk}{1, 0.50, 0, 0}
\hypersetup{
	colorlinks=true,
	pdfstartview=FitV, breaklinks=true,bookmarksnumbered,bookmarksdepth=2,
	bookmarksopen=true,bookmarksopenlevel=0,%
	pdfhighlight=/O,%
	urlcolor=black,
	linkcolor=RoyalBlue,citecolor=RoyalBlue,%
	pdftitle={On the Approximation of Functions with Line Singularities by Ridgelets},%
	pdfauthor={Axel Obermeier, Philipp Grohs},%
	pdfcreator={pdfLaTeX},%
	pdfproducer={LaTeX}%
}


\captionsetup{format=hang,subrefformat=parens}

\newsavebox{\mytempbox}
\newlength{\plotsize}

\makeatletter
\g@addto@macro\@floatboxreset\centering
\makeatother

\pgfplotsset{compat=newest,
	every tick label={font=\footnotesize},
	matlab/.style={
		scale only axis,
		axis on top,
		separate axis lines,
		every outer axis line/.append style={black},
		every tick label/.append style={font=\color{black}\footnotesize},
		xticklabel={$\mathclap{\pgfmathprintnumber{\tick}}$},
		axis background/.style={fill=white},
		colormap/jet,
		colorbar,
		colorbar style={separate axis lines,every outer axis line/.append style={black},every tick label/.append style={font=\color{black}\footnotesize}}
	},
	matlabnoclb/.style={
		scale only axis,
		axis on top,
		separate axis lines,
		every outer axis line/.append style={black},
		every tick label/.append style={font=\color{black}\footnotesize},
		xticklabel={$\mathclap{\pgfmathprintnumber{\tick}}$},
		axis background/.style={fill=white}
	}
}



\let\oldleft\left
\let\oldright\right
\renewcommand{\left}{\mathopen{}\mathclose\bgroup\oldleft}
\renewcommand{\right}{\aftergroup\egroup\oldright}

\LetLtxMacro\oldvec\vec
\def\vec#1{\oldvec{#1{}}}

\interfootnotelinepenalty=10000

\newcommand{\?}{\:\!} 

\title{On the Approximation of Functions with Line Singularities by Ridgelets}
\author{Axel Obermeier\footnote{Seminar for Applied Mathematics, ETH Z\"urich, R\"amistr. 101, 8092 Z\"urich, Email: \texttt{axel.obermeier@sam.math.ethz.ch}}, %
Philipp Grohs\footnote{Seminar for Applied Mathematics, ETH Z\"urich, R\"amistr. 101, 8092 Z\"urich, Email: \texttt{philipp.grohs@sam.math.ethz.ch}}{\hspace{0.3em}}\textsuperscript{,}\footnote{Fakult\"at f\"ur Mathematik, Universit\"at Wien, Oskar-Morgenstern-Platz 1, 1090 Wien, Email: \texttt{philipp.grohs@univie.ac.at}}}

\begin{document}

\maketitle

\begin{abstract}
	In \cite{compress}, the authors discussed the existence of numerically feasible solvers for advection equations that run in optimal computational complexity. In this paper, we complete the last remaining requirement to achieve this goal --- by showing that ridgelets, on which the solver is based, approximate functions with line singularities (which may appear as solutions to the advection equation) with the best possible approximation rate.
	
	Structurally, the proof resembles \cite{mutilated}, where a similar result was proved for a different ridgelet construction, which is however not well-suited for use in a PDE solver (and in particular, not suitable for the CDD-schemes \cite{cdd} we are interested in). Due to the differences between the two ridgelet constructions, we have to deal with quite a different set of issues, but are also able to relax the (support) conditions on the function being approximated. Finally, the proof employs a new convolution-type estimate that could be of independent interest due to its sharpness.
\end{abstract}

\setcounter{tocdepth}{1}
\tableofcontents

\subsection*{Acknowledgements}

The first author gratefully acknowledges support for this work by the Swiss National Science Foundation, Project 146356.

\newpage
\section{Introduction}\label{sec:introduction}

Over the past two decades, the field of Applied Harmonic Analysis has produced a wide range of multiscale systems which are exceptionally well-adapted to different signal classes. Wavelets \cite{wavelets} are the classical construction in this context of course, which are able to achieve ($N$-term) approximation rates of functions with point singularities as if the singularities were not there (in stark contrast to Fourier series, for example).

\subsection{Higher-Dimensional Singularities}

The number of possible construction increases dramatically when going from point-like singularities to functions that are singular on a higher-dimensional manifold, and has spawned an entire ``zoo'' of constructions, among which are ridgelets \cite{Can98}, curvelets \cite{Candes2005a,Candes2005b,CDDY06}, shearlets \cite{KuLaLiWe,shearlets_book} and contourlets \cite{DV05} --- the mentioned constructions are well-adapted to line singularities (ridgelets, see \cite{mutilated}), resp. curved singularities (the rest, see e.g. \cite{curvelet_cartoon,par-mol}). Since many of the proofs (for example of approximation rates) often resemble each other between constructions, some effort has been made recently to unify them by discovering and clarifying the underlying concepts --- curvelets, shearlets and contourlets fall into the framework of so-called ``parabolic molecules'' \cite{par-mol}, while all of the mentioned systems (including wavelets and ridgelets) are encompassed by the even broader framework of $\alpha$-molecules \cite{alpha-mol}.

The well-adaptedness of such a dictionary (say, $\Phi$) to its target classes is reflected in the fact that, for a function $f$ from the respective class, the coefficient vector $\inpr{\Phi,f}:=(\inpr{\varphi_\lambda,f})_{\lambda\in\Lambda}$ contains few large coefficients and the rest is small or negligible --- allowing tremendously improved performance for data compression, data denoising, data reconstruction etc. compared to other representation systems.

\subsection{Motivation: CDD-Schemes}\label{ssec:cdd}

But aside from image applications (where the dominating features --- mostly edges --- can often be modelled by such singularities), many of these function classes appear in the context of various PDEs as well. The seminal work \cite{cdd} showed that, in the context of elliptic PDEs exhibiting point singularities, wavelets not only sparsify the solution, but also the Galerkin matrix of the discretised operator. This allows, roughly speaking, to construct a numerically feasible algorithm that recovers $N$ of the largest coefficients of the solution in $\CO(N)$ floating point operations (flops) --- which is the best that is even theoretically possible.

This line of thought led to many further papers (e.g. \cite{cdd2,Stevenson2004,quadr_unit_cost_gant_ste,Dahlke2007,dahlke_steepest_descent}) and these methods are now referred to as CDD-schemes. Considering the very strong results obtained for wavelets with this approach, the question arose whether such a scheme could also be developed for a class of PDEs exhibiting higher-dimensional singularities --- with the well-adapted representation systems mentioned above being the obvious candidates for discretising the PDE.

To formulate some sufficient conditions for transferring the results to other PDEs, consider a differential operator $A:\CH\to\CH'$, mapping from a Hilbert space $\CH$ to its dual and inducing a variational formulation
\begin{align}\label{eq:intro_gen_var_prb}
	a(v,u)=\ell(v), \quad \forall v\in \CH.
\end{align}
Based on \cite{cdd,Stevenson2004,Dahlke2007},  the following list of ingredients makes it possible to transfer the results achieved by \cite{cdd} for wavelets to a discretisation for $A$ (see e.g. \cite[Sec. 1.2]{compress}, \cite[Sec. 4.3.1]{my_thesis}):

\begin{enumerate}[(I)]
\item\label{itm:ingr:1}
	The bilinear form $a$ is bounded and coercive with respect to the norm of the Hilbert space $\CH$, i.e.
	\begin{align}
		a(v,v) \sim \norm{v}_{\CH}^2
		\quad \text{and} \quad
		a(v,u)\lesssim \norm{v}_{\CH} \norm{u}_{\CH}.
	\end{align}
\item\label{itm:ingr:2}
	The system is discretised with a frame\footnote{Actually, more specifically, a Gel'fand frame for the Gel'fand triple $(\CH,L^2,\CH')$, see \cite{Dahlke2007}.} $\Phi$ for $\CH$, in other words, there is a diagonal weight $\VW=\diag\parens*{(w_\lambda)_{\lambda\in\Lambda}}$ such that
	\begin{align}
		\norm{u}_{\CH} \sim \norm*{\inpr{\Phi,u}_\CH}_{\ell_{\VW}^2}:= \norm*{\parens*{w_\lambda\inpr{\varphi_\lambda,u}_\CH}_{\lambda\in\Lambda}}_{\ell^2}.
	\end{align}
\item\label{itm:ingr:3}
	The Galerkin matrix $\VA$ --- i.e. the discretisation of \eqref{eq:intro_gen_var_prb} by $\Phi$ (and preconditioned by $\VW$) --- needs to be ``almost diagonal''\footnote{This name tries to encapsulate the underlying principle without going into details; the needed concept is called compressibility, see \cite[Def. 3.6]{cdd}.}.
\item\label{itm:ingr:4}
	The frame $\Phi$ is optimal for a class $\FC$ of model solutions to \eqref{eq:intro_gen_var_prb}, in the sense that it allows $N$-term approximation rates of functions in this class with the benchmark rate $\sigma^*(\FC)$; see \autoref{sec:benchmark}.
\end{enumerate}

These ingredients suffice to formulate the desired algorithm --- under the assumption of unit quadrature cost\footnote{Removing this assumption is the subject of future work based on ideas in \cite{quadr_unit_cost_gant_ste}.} for evaluating the inner products $\inpr{\Phi,u}_\CH$ and within $\VA$ --- see \autoref{th:main_result} below.

\subsection{Linear Advection Equations}

\begin{figure}
	\setlength{\plotsize}{0.6\linewidth}
	\tikzsetnextfilename{box_sol}%
%
%
\definecolor{myblue}{rgb}{0,0,0.56078}%
\begin{tikzpicture}[trim axis left,trim axis right]
\begin{axis}[%
	width=\plotsize,
	xlabel={\footnotesize$x_1$},
	ylabel={\footnotesize$x_2$},
	xlabel shift={-0.75em},
	ylabel shift={-0.75em},
	scale only axis,
	tick align=outside,
	clip=false,
	xmin=-1,
	xmax=1,
	xmajorgrids,
	ymin=-1,
	ymax=1,
	ymajorgrids,
	zmin=0,
	zmax=2,
	zmajorgrids,
	every tick label/.append style={font=\footnotesize},
	axis x line*=bottom,
	axis y line*=left,
	axis z line*=left
	]
	
	\addplot3 graphics[points={
	(-0.99665,-0.98692,9.7769e-11) => (55.7081,105.3937)
	(0.97341,0.98941,1.7626e-10) => (377.9119,156.585)
	(0.023055,0.054493,1.0839) => (223.5853,168.63)
	(0.96699,-0.96354,2.7661e-09) => (260.4731,37.6406)
	}]{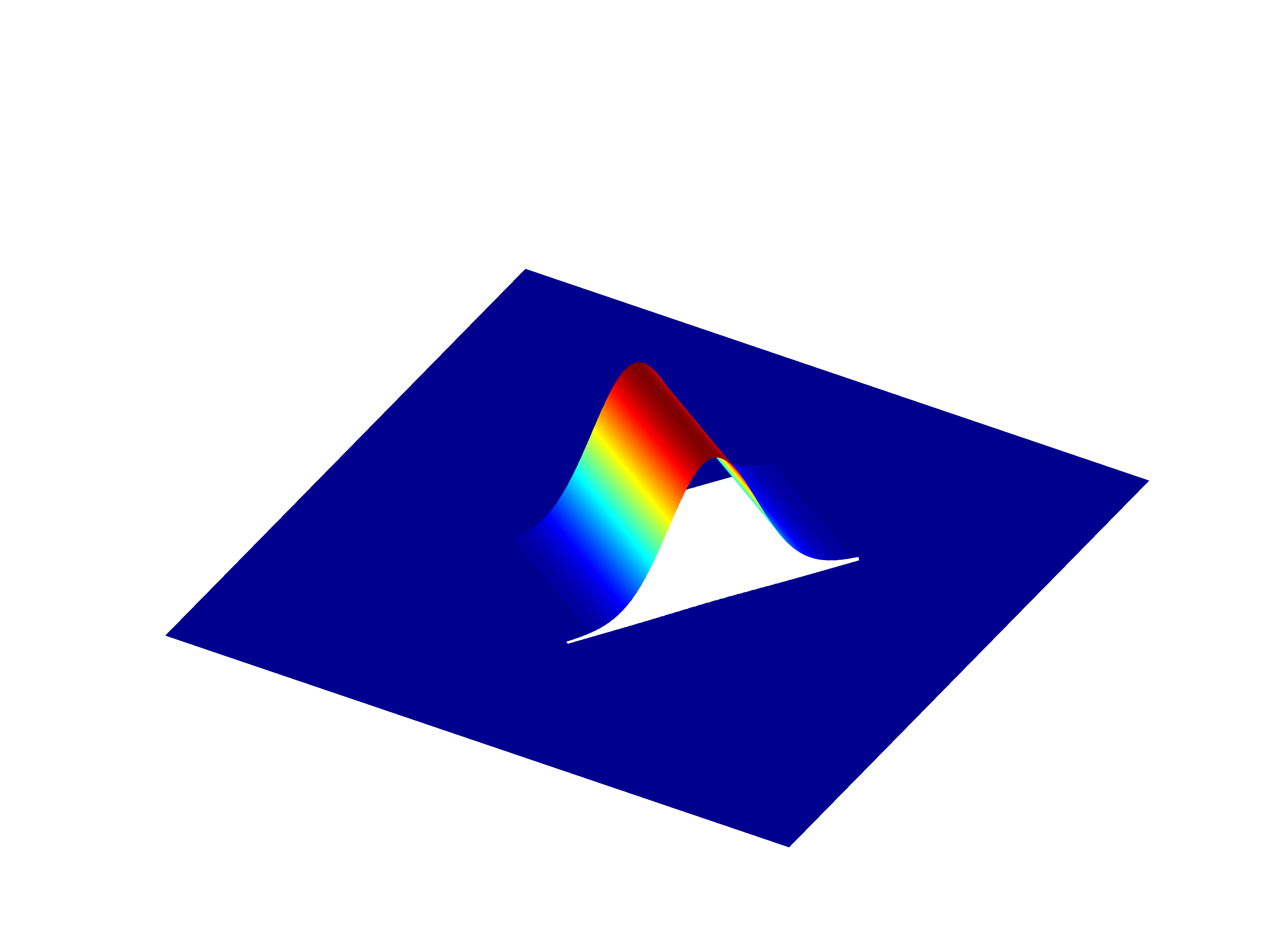};
	
	\def\myxtick{0.046}
	\def\myytick{0.058}
	\def\myztick{0.15}
	\def\mylw{0.005}
	
	\pgfplotsinvokeforeach{0,1,2}
	{
	\draw[white] (-1,-1,#1) -- (-1-\myxtick,-1,#1);
	\draw[white] (-1,-1,#1) -- (-1,-1-\myytick,#1);
	\draw[very thin,gray] (-1,-1,#1) -- (-1,-1-\myytick,#1);
	}
	\pgfplotsinvokeforeach{-1,-0.5,0,0.5,1}
	{
	\draw[white] (#1,-1,0) -- (#1,-1,-\myztick);		
	\draw[white] (#1,-1,0) -- (#1,-1-\myytick,0);		
	\draw[very thin,gray] (#1,-1,0) -- (#1,-1-\myytick,0);
	}
	\pgfplotsinvokeforeach{-1,-0.5,0,0.5,1}
	{
	\draw[white] ( 1,#1,0) -- ( 1,#1,-\myztick);		
	\draw[very thin,gray] (1,#1,0) -- (1+\myxtick,#1,0);
	}
	\draw[very thin,gray] (1,-0.99,0) -- (1,-1-\myytick,0);
	\draw[myblue,thick] (-1+\mylw,-1+\mylw,0) -- (1-\mylw,-1+\mylw,0) -- (1-\mylw,1-\mylw,0) -- (-1+\mylw,1-\mylw,0) -- cycle;
	\draw[white,thick] (-1,1,0.02) -- (1,1,0.02);
	\draw[gray!50] (-1,-1,0) -- (-1,1,0) -- (1,1,0);
	\pgfplotsinvokeforeach{-1,-0.5,0,0.5,1}
	{
	\draw[gray!50] (#1,1,0) -- (#1,1,1);		
	}
	\draw (-1,-1,2+\myztick) -- (-1,-1,0) -- (1,-1,0) -- (1,1,0);
\end{axis}
\end{tikzpicture}%
	\caption[Solution for a singular right-hand side]{Solution to the advection equation \eqref{eq:LinTrans} for a singular right-hand side}\label{fig:sing_sol}
\end{figure}

Regarding differential equations exhibiting solutions with more complicated singularities than points, such features already appear in the context of linear advection equations. More precisely, we will deal with the unidirectional \emph{advection--reaction equation},
\begin{align}\label{eq:LinTrans}
	Au:=\vec s \cdot \nabla u(\vec x)  + \kappa(\vec x) u(\vec x)= f(\vec x),
\end{align}
which describes the stationary distribution of the unknown quantity $u$ under absorption $\kappa$, emission $f$ and transport in direction $\vec s$. The corresponding Hilbert space to \eqref{eq:LinTrans} is the \emph{anisotropic Sobolev space}
\begin{align}
	\Hs(\bbR^d)&:=\set*{f\in L^2(\bbR^d)}{(\vec s\cdot\nabla)f\in L^2(\bbR^d)},
\end{align}
which is equipped with the norm
\begin{align}
	\norm{f}_{\Hs(\bbR^d)}&:=\sqrt{\norm{f}_{L^2(\bbR^d)}^2+\norm{(\vec s\cdot\nabla)f}_{L^2(\bbR^d)}^2}.
\end{align}

Roughly speaking, the operator $A$ smoothes only in the direction of $\vec s$, and singularities orthogonal to this direction may remain, compare for example \autoref{fig:sing_sol}. Typical methods for the numerical solution of \eqref{eq:LinTrans} include (\cite[Chap. 5]{ern}):
\begin{itemize}
	\item Galerkin Least Squares: $\inpr{Av,Au}=\inpr{Av,f}\quad \forall v \in \CV_{\mathrm{test}}$
	\item Discontinuous Galerkin Methods
	\item Streamline Upwinding Petrov Galerkin (SUPG) \cite{supg}
\end{itemize}
All of these suffer from the fact that the transport term $s \cdot \nabla u$ means that \eqref{eq:LinTrans} is not $H^1$-elliptic, which leads to ill-conditioned systems of equations (with no rigorous results about the choice or efficiency of preconditioners).

Even wavelets do not perform optimally for \eqref{eq:LinTrans} --- heuristically, the break-down can also be explained with the fact that it takes many wavelets (which are adapted to point singularities) to resolve a singularity along a line --- picture a ``cliff'' or compare again with \autoref{fig:sing_sol}.
To model this behaviour of the solutions, we will also introduce a class based on ``mutilated'' Sobolev functions (cf. \cite{mutilated}) in \autoref{ssec:prop_advec}.

Ridgelets turned out to be particularly well-suited for discretising \eqref{eq:LinTrans} --- \cite{grohs1} essentially proved \ref{itm:ingr:2} of the ingredients mentioned above, while \cite{compress} showed \ref{itm:ingr:1} $\&$ \ref{itm:ingr:3}. In this sense, the present work complements and completes these previous works by aiming to show the last remaining ingredient \ref{itm:ingr:4}, which will then allow to achieve the desired highly efficient algorithms, see \autoref{th:main_result}.

\subsection{Main Result}

The core of this paper is to show that ridgelets (in the version of \cite{grohs1}) achieve the best possible $N$-term approximation rate for functions in $H^t(\bbR^d)$ that are allowed to be singular across hyperplanes, stated in rough strokes in the following ``theorem''.

\begin{theorem}[Sketch of \autoref{th:approx}]\label{th:approx_intro}
	Let $f$ be a function in $f \in H^t(\bbR^d)$ apart from hyperplanes\footnote{We will make this statement precise in \autoref{def:Ht_except_hyp}.}, such that the decay condition
	\begin{align}\label{eq:intro_f_glob_decay}
		\abs{f(\vec x)} \le \frac{C_m}{\reg{\vec x}^{2m}}
	\end{align}
	holds for all $m\in \bbN$, where $\reg{\vec x}=\sqrt{1+|\vec x|^2}$. Then, for arbitrary $\delta>0$ and $N\in\bbN$, the function $f_N$ --- reconstructed from only the $N$ largest ridgelet coefficients of $f$ --- satisfies
	\begin{align}\label{eq:intro_approx_f}
		\norm{f-f_N}_{L^2} \le C_\delta N^{-\frac{t}{d}+\delta},
	\end{align}
	which (up to $\delta$) is the best theoretically possible rate.
	
	Additionally, if $u$ is the solution to \eqref{eq:LinTrans} with the $f$ from above, then, for $\kappa$ smooth enough, the reconstruction $u_N$ from the $N$ largest ridgelet coefficients of $u$ similarly satisfies (note the different norm)
	\begin{align}\label{eq:intro_approx_u}
		\norm{u-u_N}_{\Hs} \le C'_\delta N^{-\frac{t}{d}+\delta}.
	\end{align}
\end{theorem}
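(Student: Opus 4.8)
Since the ridgelets of \cite{grohs1} form a frame (and, suitably normalised, a frame for $\Hs$), and the frame is localised, the best $N$-term error of the reconstruction from the $N$ largest coefficients is controlled, up to constants, by the $\ell^2$-tail $\sum_{n>N}\abs{c_{(n)}(f)}^2$ of the non-increasing rearrangement of the coefficient sequence $c_\lambda(f)=\inpr{\varphi_\lambda,f}$. Thus \eqref{eq:intro_approx_f} will follow once I show that this sequence lies in weak-$\ell^p$ with $p=(1/2+t/d)^{-1}$, up to logarithmic factors --- equivalently, that for every threshold $\eps>0$ the number of ridgelets with $\abs{c_\lambda(f)}>\eps$ is at most $C\,\eps^{-p}$ times a power of $\log(1/\eps)$. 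The first part of the proof is therefore entirely devoted to such a coefficient estimate, after which $\sum_{n>N}\abs{c_{(n)}}^2\lesssim N^{-2t/d+\delta}$ is immediate (the logarithmic factors becoming the $\delta$), and optimality is the classical $N$-term lower bound for $H^t(\bbR^d)$.

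\textbf{Splitting off the singularities.} Using a smooth partition of unity together with the decay \eqref{eq:intro_f_glob_decay} --- which lets me cut $f$ off at a radius $R=R(\eps)$ that grows only polylogarithmically, at negligible error, so that only finitely many hyperplane pieces are seen --- I decompose $f=f_{\mathrm{reg}}+\sum_k f_k$, where $f_{\mathrm{reg}}\in H^t(\bbR^d)$ honestly and each $f_k$ is supported in a bounded slab straddling a single affine hyperplane, on either side of which $f_k\in H^t$. For $f_{\mathrm{reg}}$ the smoothness together with the vanishing moments and localisation of the ridgelets yields the usual scalewise decay of coefficients, hence the benchmark rate. For each $f_k$, I separate the ridgelets according to how closely their orientation matches the normal of the hyperplane: at a given scale only $\CO(1)$-many orientation bins are ``aligned'', and in those bins the coefficient is driven by the transverse (jump-type) profile against the tangential $H^t$-regularity; for the ``misaligned'' bins the oscillation axis crosses the hyperplane transversally, the many vanishing moments again force decay, and there is an extra orientation-dependent gain which must be summable over the angular grid.

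\textbf{The convolution estimate.} The technical core is a sharp bound on $\inpr{\varphi_\lambda,f_k}$: writing this pairing as, essentially, a one-dimensional convolution in the direction transverse to the hyperplane --- the ridgelet profile playing the role of the kernel --- against a tangential quantity controlled by the $H^t$-norm of $f_k$ along the hyperplane, one is led to the new convolution-type estimate announced in the abstract. Its sharpness is essential: after summing the resulting bounds over all scales $j$, all orientations and all translations, the counting function must come out as $\eps^{-p}$ with $p^{-1}=1/2+t/d$ \emph{exactly}, since any polynomial slack here would turn the harmless $\delta$ in \eqref{eq:intro_approx_f} into a genuine loss in the rate. Assembling the aligned and misaligned contributions over all $k$ (whose number is controlled by the cutoff radius) completes the count, and hence \eqref{eq:intro_approx_f}.

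\textbf{The advection equation.} For \eqref{eq:intro_approx_u} I would first transfer the hypotheses from $f$ to the solution $u$ via its representation by integration along the characteristics in direction $\vec s$: the exponential absorption factor built from the smooth $\kappa$ preserves the polynomial decay \eqref{eq:intro_f_glob_decay}, and integrating across a hyperplane not parallel to $\vec s$ gains one transverse order of regularity, while a hyperplane parallel to $\vec s$ is left untouched --- so in every case $u$ lies in the same ``$H^t$ apart from hyperplanes with polynomial decay'' class as $f$, now carrying the anisotropic $\Hs$-regularity. Since the ridgelets form a Gel'fand frame for $\Hs$ by \cite{grohs1} (ingredient~\ref{itm:ingr:2}), approximating $u$ in $\Hs$ at rate $N^{-t/d}$ reduces to best-$N$-term approximation in weighted $\ell^2$ of $(w_\lambda\inpr{\varphi_\lambda,u})_\lambda$; the weights are absorbed as a harmless rescaling of the effective ridgelet scale, and because $u$ carries (at least) the matching extra order of smoothness, re-running the coefficient count of the first part --- now for $u$ and in the $\Hs$-norm --- gives $\norm{u-u_N}_{\Hs}\le C'_\delta N^{-t/d+\delta}$. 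I expect the dominant difficulty throughout to be the orientation-resolved coefficient sum of the third step: pinning down the geometry of which ridgelets see a given hyperplane, and how strongly, tightly enough that the exponent is precisely $1/2+t/d$.
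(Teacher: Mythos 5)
Your outline reproduces the right high-level architecture --- reduce to weak-$\ell^p$ membership of the coefficient sequence, split regular from singular contributions, localise in angle and in space, and power the spatial estimate with the sharp convolution inequality --- but there are two places where what you describe would not work as stated, and both are points where the paper's proof does something essentially different.

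First, your treatment of the $\Hs$-norm is not correct. You claim the weights $w_\lambda$ ``are absorbed as a harmless rescaling of the effective ridgelet scale'' and that $u$ ``carries (at least) the matching extra order of smoothness'' gained by integrating along characteristics. The paper does neither. \autoref{prop:sol_smooth_except_hyp} only establishes that $\CS[f]$ lies in the \emph{same} class $H^t(\bbR^d\setminus\{h_i\})$ with comparable norms --- no gain in transverse regularity is claimed or used --- and the proof of \autoref{prop:loc_space} explicitly remarks that it is not apparent how any extra smoothness in the $\vec s$-direction could be exploited there, so the weight simply costs a factor $2^j$ in the pointwise coefficient bound \eqref{eq:gamma_est_w}. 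That loss must then be compensated not by a rescaling but by the angle-localised $\ell^2$-bound \eqref{eq:gamma_est_sum_Lambda}, for which \autoref{prop:loc_angle} introduces a separate commuting-operator argument (replacing $A$ by a constant-absorption $\Ac$ so that $\Ac(u*\chi)=(\Ac u)*\chi$ lets one push the Fourier localiser through the $\Hs$-norm). Without some such mechanism the weighted singular coefficients would not be summable.

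Second, your assertion that the counting function must come out as $\eps^{-p}$ with $p^{-1}=\tfrac12+\tfrac td$ ``exactly'', so that any polynomial slack would spoil the rate, misdescribes what the paper proves and what is achievable. The paper obtains $p^*+\delta^*$ with $\delta^*=\tfrac dm$, a genuine polynomial offset driven by the translation count (\autoref{stp:approx:sing_delta}); Remark 4.8 explains why the essentially one-dimensional translation count that Cand\`es exploits in \cite{mutilated} cannot be recovered here, so $\delta^*=0$ is out of reach by these methods. One only sends $\delta^*\to 0$ by strengthening the hypothesis to decay for all $m$, i.e.\ by increasing the power $m$ in the convolution estimate, not by squeezing logarithms. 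Related to this, ``only $\CO(1)$-many orientation bins are aligned'' is too coarse: the proof works with the full ladder of alignment levels $\Lambda^j_r$, $1\le r\le j$, each of cardinality $\sim 2^{(j-r)(d-1)}$, and relies on the precise angular decay $2^{-j}2^{-(j-r)(2t-1)}$ of \eqref{eq:gamma_est_sum_Lambda} to make the sum over $r$, and over the $\sim 2^{(j-r)(1+\delta)}$ retained translations, close. You also omit the interpolation step needed to pass from integer $t$ --- where the induction in \autoref{prop:loc_angle} actually runs --- to all real $t>0$, and the radial cutoff you propose (to reduce to finitely many hyperplane pieces) is not used in the paper, which keeps the global decay in place and absorbs it directly through \autoref{cor:Imn_higher_dim} and \autoref{lem:ridge_coeff_tail_decay}; introducing such a cutoff would create new boundary singularities that you would then have to control separately.
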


As hinted at in the beginning, the original ridgelet definition by \cite{Can98} already showed \eqref{eq:intro_approx_f} in \cite{mutilated}. However, by its construction, it is not possible to incorporate it into the kind of CDD-schemes we want to achieve. In particular, the frame elements of \cite{mutilated} have unbounded support (which is made mathematically feasible by restricting the domain to $\Omega=[0,1]^d$), and therefore, the necessary sparsity of the matrix that \ref{itm:ingr:3} alludes to is impossible to achieve.

In contrast, \cite{grohs1} constructs a frame for the full $L^2(\bbR^d)$ (resp. $\Hs(\bbR^d)$), but this necessitates frame elements $\varphi_\lambda$ which are intrinsically in $L^2(\bbR^d)$ themselves, and thus, have much more localised support. The price for this is that we need the full grid $\bbZ^d$ (under a certain transformation) of translations to cover all of $\bbR^d$ with our frame elements, while \cite{Can98} is able to make do with a one-dimensional grid. Since the proof involves counting large coefficients (in some sense), these additional $d-1$ dimensions make the proof of our result substantially more involved and require some fairly delicate estimates to work out. In this respect, we believe that the auxiliary estimates (\autoref{th:Imn_est} and \autoref{cor:Imn_higher_dim}) we have proved for this purpose are of independent interest due to their increased strength compared to previous results (see \autoref{rem:grafakos}).

Finally, for the CDD-machinery to work, we need approximation estimates in the norm of the Hilbert space $\CH=\Hs$ in which the solution lives, compare \eqref{eq:intro_approx_u}. This norm corresponds to multiplying the coefficient sequence (element-wise) with a growing weight (namely the $\VW$ from \ref{itm:ingr:2}), which further complicates matters.

However, once we achieve the proof of \autoref{th:approx_intro}, we will have showed the following, compare \cite[Cor. 6.1, Thm. 6.2]{compress}\footnote{Note that the requirements of \cite[Thm. 6.2]{compress} are stated erroneously, in the sense that the decay of the $f_i$ needs to be global (compare \eqref{eq:intro_f_glob_decay}) and not just across the interfaces of the hyperplanes.}. For a more detailed account of combining ingredients \ref{itm:ingr:1}--\ref{itm:ingr:4} into the result below, we refer to \cite[Thm. 7.1.1]{my_thesis}.

\begin{theorem}\label{th:main_result}
	For arbitrary $\vec s\in\bbSd$, consider \eqref{eq:LinTrans} with right-hand side $f\in H^t$ that is allowed to be singular across hyperplanes and satisfies the decay condition
	\begin{align}
		\abs{f(\vec x)} \le \frac{C_m}{\reg{\vec x}^m}
	\end{align}
	for all $m\in\bbN$ (which is possible for compact support or exponential decay, for example).
	
	Assuming that the absorption coefficient satisfies $\kappa\ge\gamma>0$ and $\kappa\in H^{4(t+d+1)}$, an approximand $u_\eps$ to the solution of $Au=f$ satisfying
	\begin{align}
		\norm{u-u_\eps}_{\Hs} \le \eps
	\end{align}
	can be found with the help of a numerically feasible ridgelet-based algorithm, such that, for arbitrary $\sigma<\frac td$ (and ignoring quadrature cost),
	\begin{align}
		\#\curly*{\text{arithmetic operations necessary to compute $u_\eps$}}\lesssim\eps^{-\frac 1\sigma}.
	\end{align}
\end{theorem}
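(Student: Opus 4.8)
The plan is to \emph{assemble} \autoref{th:main_result} rather than prove it from scratch: it is exactly the output of the abstract adaptive (CDD-type) solver theory once the four ingredients \ref{itm:ingr:1}--\ref{itm:ingr:4} of \autoref{ssec:cdd} have been verified for the operator $A$ of \eqref{eq:LinTrans}, the ridgelet frame $\Phi$ of \cite{grohs1}, and the model class $\FC$ of $H^t$-functions singular across hyperplanes. Concretely I would proceed in four steps: (i) recall the rate-optimality theorem for the adaptive frame scheme; (ii) certify \ref{itm:ingr:1}--\ref{itm:ingr:3} by invoking \cite{compress,grohs1}, tracking where the hypotheses $\kappa\ge\gamma>0$ and $\kappa\in H^{4(t+d+1)}$ are consumed; (iii) supply \ref{itm:ingr:4} from \autoref{th:approx} of the present paper, in particular in the solution norm $\Hs$; and (iv) combine. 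The detailed write-up of this assembly is essentially \cite[Thm.~7.1.1]{my_thesis} and \cite[Cor.~6.1, Thm.~6.2]{compress}.

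\emph{The abstract machinery.} Preconditioning \eqref{eq:intro_gen_var_prb} with $\VW$ and discretising with $\Phi$ turns $Au=f$ into an infinite, boundedly invertible, symmetric positive-definite system $\VA\Vu=\Vf$ (on the relevant subspace of $\ell^2(\Lambda)$). The adaptive algorithm of \cite{cdd}, in the frame form of \cite{Stevenson2004,Dahlke2007} (a damped Richardson / steepest-descent iteration built from $\COARSE$, $\APPLY$ and $\RHS$), carries the following guarantee: if the sought solution vector $\Vu$ lies in the approximation class $\CA^\sigma$ --- equivalently, is $N$-term approximable in $\ell^2$ at rate $N^{-\sigma}$ --- and if $\VA$ is $\sigma$-compressible with a matching $\APPLY$ and $\RHS$ produces $\eps$-approximations to $\Vf$ at cost $\lesssim\eps^{-1/\sigma}$, then the iteration, stopped at tolerance $\eps$, returns a finitely supported $\Vu_\eps$ with $\norm{\Vu-\Vu_\eps}_{\ell^2}\le\eps$ using $\lesssim\eps^{-1/\sigma}$ arithmetic operations (quadrature charged at unit cost). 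By \ref{itm:ingr:2} this $\ell^2$-error dominates $\norm{u-u_\eps}_{\Hs}$ for the reconstructed $u_\eps$, so it suffices to certify the hypotheses.

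\emph{The four ingredients.} \ref{itm:ingr:1}: boundedness and coercivity of the bilinear form induced by $A$ with respect to $\norm{\cdot}_{\Hs}$, which is where $\kappa\ge\gamma>0$ is essential, is shown in \cite{compress}. \ref{itm:ingr:2} is the Gel'fand-frame property of \cite{grohs1} for the triple $(\Hs,L^2,(\Hs)')$ with the explicit diagonal weight $\VW$. \ref{itm:ingr:3}: the preconditioned Galerkin matrix is $\sigma$-compressible for every $\sigma<t/d$ with a matching linear-cost $\APPLY$ (and $\RHS$), the main result of \cite{compress}; this is the step that consumes $\kappa\in H^{4(t+d+1)}$ --- smoother $\kappa$ forces faster off-diagonal decay of the entries $\inpr{\varphi_\lambda,A\varphi_{\lambda'}}$ --- and it is what makes the algorithm numerically feasible. \ref{itm:ingr:4}: the benchmark rate for $\FC$ is $\sigma^*(\FC)=t/d$ (\autoref{sec:benchmark}), and \autoref{th:approx} shows ridgelets attain $\sigma^*(\FC)-\delta$ for every $\delta>0$; crucially, \eqref{eq:intro_approx_u} delivers this rate \emph{in the solution norm} $\Hs$, i.e. after multiplication of the coefficient sequence by $\VW$, which is precisely the statement $\Vu\in\CA^\sigma$ for all $\sigma<t/d$. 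To apply \autoref{th:approx} to $u$ one first needs that $A^{-1}$ maps the admissible right-hand sides --- $H^t$ apart from hyperplanes, with the stated global decay --- into the analogous class for $u$: here $\kappa\ge\gamma>0$ forces $u$ to inherit the polynomial decay of $f$, and the $H^{4(t+d+1)}$-smoothness of $\kappa$ keeps $u$ in $H^t$ away from the hyperplanes; this is also the source of the corrected (global) decay hypothesis flagged in the footnote to \cite[Thm.~6.2]{compress}.

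\emph{Conclusion and main obstacle.} Feeding $\Vu\in\CA^\sigma$, the $\sigma$-compressibility of $\VA$ and the cost-$\sigma$ routine $\RHS$ into the theorem of the first step and translating back through \ref{itm:ingr:2} yields $\norm{u-u_\eps}_{\Hs}\le\eps$ at cost $\lesssim\eps^{-1/\sigma}$ for every $\sigma<t/d$; uniformity in $\vec s\in\bbSd$ follows because all constants depend on $\vec s$ only through the frame of \cite{grohs1}, which is uniform in the transport direction. The point is that essentially all the substance of \autoref{th:main_result} is outsourced --- \ref{itm:ingr:1}--\ref{itm:ingr:3} to \cite{compress,grohs1} --- so that the genuinely hard, new ingredient is \ref{itm:ingr:4}, i.e. \autoref{th:approx}, whose proof (including the sharp convolution estimates \autoref{th:Imn_est} and \autoref{cor:Imn_higher_dim}) occupies the remainder of the paper. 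Within the assembly itself the only non-routine bookkeeping is (a) checking that the redundant-frame setting is covered by the CDD machinery --- done in \cite{Stevenson2004,Dahlke2007}, working with the particular solution in the range of the analysis operator --- and (b) the closure of the model class under $A^{-1}$ needed to pass from the approximation bound for $f$ to the one for $u$ in the $\Hs$-norm, which I expect to be the most delicate point of the write-up.
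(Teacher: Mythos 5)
Your proposal is correct and follows the paper's own route: the paper does not prove \autoref{th:main_result} from scratch but assembles it from ingredients \ref{itm:ingr:1}--\ref{itm:ingr:3} (outsourced to \cite{grohs1,compress}) plus \ref{itm:ingr:4} (\autoref{th:approx}), referring to \cite[Thm.~7.1.1]{my_thesis} and \cite[Cor.~6.1, Thm.~6.2]{compress} for the bookkeeping of the CDD-type assembly. You correctly track where each hypothesis is consumed ($\kappa\ge\gamma>0$ for coercivity and for the closure of the model class under $\CS$, $\kappa\in H^{4(t+d+1)}$ for $\sigma$-compressibility of $\VA$), and you correctly identify the closure of the model class under $A^{-1}$ --- \autoref{prop:sol_smooth_except_hyp} together with \autoref{lem:decay_sol} --- and the associated global-decay correction in the footnote to \cite[Thm.~6.2]{compress} as the nontrivial glue; this matches the paper's treatment.
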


\begin{remark}\label{rem:sing_kappa}
	As a matter of fact, in \cite[Sec. 7.2]{my_thesis}, we show that \autoref{th:main_result} can be extended to $\kappa\in H^{4(t+d+1)}$ that is also allowed to be singular across hyperplanes\footnote{As long as any potential singularities of $f$ lie in hyperplanes that are \emph{parallel} to the singularity in $\kappa$.}, which is somewhat surprising, since \ref{itm:ingr:3} depends crucially on the smoothness of $\kappa$. In a nutshell, one shifts the singularity from $\kappa$ to the right-hand side $f$, where it is harmless (see above), which is possible mainly due to having an explicit formula for the solution of \eqref{eq:LinTrans}.
\end{remark}

\subsection{Impact}\label{ssec:impact}

Although \eqref{eq:LinTrans} is quite simple, having a highly efficient solver for such an equation opens the door to efficiently solve more complicated equations like the radiative transport equation (RTE),
\begin{equation}\label{eq:RTE}
	B u:=\vec s \cdot \nabla u(\vec x, \vec x) + \beta(\vec x) u(\vec x,\vec s) = f(\vec x) + \sigma(\vec x) \int_{\bbS^{d-1}} K(\vec s, \spr) u(\vec x,\spr) \d \spr.
\end{equation}
which couples the different directions $\vec s, \spr$; see e.g. \cite[Sec. 9.5]{modest2013radiative} for an introduction, resp. \cite{appl_RTE,nummeth_RTE,RTE_schwab,RTE_schwab2}, for examples of applications of \eqref{eq:RTE} and existing methods to solve it.

There are several ways to utilise a solver for \eqref{eq:LinTrans} to solve \eqref{eq:RTE}. If we neglect the scattering term for the moment, the ridgelet-based solver we develop could be used to solve \eqref{eq:RTE} by either tensor product or collocation methods in $\vec s$ (similar to techniques used in \cite{grella}, where one can additionally make use of the multiscale structure of $\Phi$ to alleviate the curse of dimensionality by balancing resolution in angle with resolution in space).

One possibility to reintroduce the scattering term is via an iterative scheme --- for example by evaluating the integral for the previous iterand and adding the result to the right-hand side. We refer to \cite{FFT-paper}, where an \ttt{FFT}-based ridgelet discretisation based on this ``source iteration'' has been implemented.

One crucial aspect in these procedures is that solutions for different $\vec s$ can be added together easily, which is satisfied by our construction (since the ridgelets achieve optimal approximation for all directions $\vec s$ simultaneously!). While we have already mentioned that uniformly refined FE methods (even if adaptive) do not work well in this context, one might have the idea to adapt the FEM mesh anisotropically. The problem with this is that the meshes for different directions $\vec s \neq \spr$ would have to be combined somehow, making cumbersome interpolation between such meshes necessary.

Considering the work already carried out in \cite{grohs1,compress}, this paper completes the picture regarding the necessary ingredients for developing a CDD-scheme for \eqref{eq:LinTrans}. The result of this --- \autoref{th:main_result} --- is very strong:  complexity here is measured in terms of arithmetic operations to be carried out by a processor and the solution is even allowed to possess singularities along lines (resp. hyperplanes in  higher dimensions) --- for the right-hand side $f$, as well as the absorption coefficient $\kappa$ (see \autoref{rem:sing_kappa}).

To illustrate the strength of these results, consider a function $f\in H^t$ apart from a finite number of line singularities (in arbitrary directions) in two dimensions. Then, the approximation error of using just $N$ ridgelets is $\CO(N^{-\frac t2})$ and the number of flops to find these coefficients is of order $\CO(N)$. For functions with medium to high Sobolev regularity (apart from the line singularities), this approximation rate represents an improvement of many orders of magnitude over wavelet or FE Methods, with respective $N$-term approximation rates of $\CO(N^{-\frac 12})$ and $\CO(N^{-\frac 14})$, irrespective of the magnitude of $t$. In terms of complexity, the advantage is greater still because the linear systems for other methods cannot usually be solved in linear time.

On the other hand, the convergence results are confined to linear advection equations \eqref{eq:LinTrans} and our analysis assumes that $\vec x$ belongs to the full space $\bbR^d$. The latter fact poses no problem if for instance the source term $f$ is compactly supported but in many applications one needs to restrict $\vec x$ to a finite domain $D\subseteq \bbR^d$ and impose inflow boundary conditions. The efficient incorporation of boundary conditions will require the construction of ridgelet frames on finite domains, which is the subject of future work\footnote{To be more precise, incorporation of inflow boundary conditions is possible with the code developed in \cite{FFT-paper} but a rigorous analysis is still lacking.}. With such a construction at hand the theoretical analysis carried out in this paper would essentially go through also for finite domains. In this regard we mention that, very recently, shearlet frames were successfully constructed on domains (\cite{shearlet-domains}), raising the hope that this approach can be transferred to the closely-related ridgelets.

\subsection{Outline}

The outline of the paper is as follows. Below, we wrap up the section by briefly introducing the most important notational conventions we will use throughout this paper. In \autoref{sec:prelim}, we cover some crucial estimates, the most important properties of the advection equation \eqref{eq:LinTrans}, as well as the model class of ``mutilated'' Sobolev functions. Furthermore, we recall the ridgelet construction of \cite{grohs1} and a few classical results we will need later on.

\autoref{sec:benchmark} deals with a minimalistic introduction to $N$-term approximation and how one can determine the best theoretically possible approximation rate of \emph{any} discretisation for a given class of functions $\FC$.

The core of the thesis is contained in \autoref{sec:approx}; aside from the differences in the ridgelet constructions (see discussion after \autoref{th:approx_intro}) and the different techniques necessary to treat them, we are able to follow the structure of \cite{mutilated} relatively closely --- establishing the localisation of the ridgelet coefficients for a function cut off at a hyper plane first in angle in \autoref{sec:loc_angle}, and then in space in \autoref{sec:loc_space}, before we proceed to the proof of the main result, \autoref{th:approx}, in \autoref{ssec:proof_approx}.

The main part of the paper wraps up with the conclusion in \autoref{sec:conclusion}, while the postponed proof of the crucial estimates in \autoref{ssec:key_est} follows in \autoref{sec:int_est}.

\subsection{Notation}

This subsections lists the most important conventions we will use throughout this paper. As usual, we conclude proofs by $\Box$; additionally, we mark the end of definitions and remarks by $\triangle$.

The letter $\bbN$ denotes the natural numbers \emph{without zero}, while $\bbN_0$ includes it. Similarly, $\bbR^+:=(0,\infty)$, while $\bbR^+_0:=[0,\infty)$.

We let $B_X(x,r):=\{x'\in X:\, \dist_X(x,x')<r\}$ be the open ball in the metric space $X$. Occasionally we omit the space if it is clear from the context. To distinguish the Euclidian norm from the other norms, we denote it by $|\vec x|$. The inner product on $\bbR^d$ is simply denoted by $\vec x \cdot \vec x'$, all other inner products are denoted by $\inpr[\CH]{\cdot,\cdot}$, where the \emph{first} argument is antilinear and the \emph{second} is linear (which is closer to the interpretation as a functional (see e.g. Bra-ket notation) and has several advantages, in our opinion).

The Fourier transform we use is
\begin{align*}
	\hat f(\vec\xi) := \bracket*{\CF (f)} (\vec\xi) := \int_{\bbR^d} f(\vec x)\ee^{-2\pi\ii \vec x \cdot \vec\xi} \d \vec x,
\end{align*}
where we will mostly omit the square brackets for improved legibility if the second term has to be used. In order to limit the amount of constants we have to carry, we define the following relation,
\begin{align*}
	A(y) \lesssim B(y) \, :\Longleftrightarrow \, \exists\, c>0:\, A(y)\le c B(y),
\end{align*}
where the constant has to be independent of $y$.
Similarly, $A\sim B$ denotes the case that both $A\lesssim B$ and $B\lesssim A$ hold.

For vector variables (and occasionally multi-indices), $i$ primes ($i=1,\ldots,3$) will always indicate the last $d-i$ components of that vector, i.e. $\vec k{}''=(k_3,k_4,\ldots,k_d)^\top\in\bbR^{d-2}$.

Square brackets around a vector --- i.e. $[\vec e_1]$ --- denote the linear span, while $[\vec e_1]^\bot$ denotes its orthogonal complement. The orthogonal projection along $\vec n$ is denoted by $\CP_{\vec n}$.

Finally, we let $H(y):=\ind_{\bbR^+}(y)$ denote the Heaviside step function, and define the \emph{regularised absolute value} $\langle\vec x\rangle:=\sqrt{1+|\vec x|^2}$ (to avoid problems with division by zero).


\section{Preparations}\label{sec:prelim}

In this section, we set up the foundations on which the rest of the paper will be built. In \autoref{ssec:key_est}, we introduce a crucial estimate that will be necessary later on (but whose proof we postpone to \autoref{sec:int_est}), while in \autoref{ssec:prop_advec}, we deal with the properties of the advection equation and the model class of solutions we will consider. In \autoref{ssec:ridgeframes}, we briefly recall the ridgelet construction of \cite{grohs1} in the necessary detail to prove our results, and we wrap up this section with some classical results about interpolation in \autoref{ssec:interp}.

\subsection{An Integral Estimate}\label{ssec:key_est}

As one of the key tools for the main proof, we introduce the following integral inequality, the proof of which we postpone to \autoref{sec:int_est}.

\begin{theorem}[{\autoref{app:th:Imn_est}}]\label{th:Imn_est}
	For $m,n\in\bbN$, $a\in\bbR^+_0$, $b\in\bbR$, $c,d\in\bbR^+$, we have
	\begin{align}
		I_{m,n}&:=\int_{-\infty}^{\infty} \frac{1}{\parens{a^2(x-b)^2+c^2}^m} \frac{1}{\parens{x^2+d^2}^n} \d x \notag \\
		&\phantom{:}= \frac{\pi}{\parens{a^2b^2+(ad+c)^2}^{m+n-1}} \frac{1}{c^{2m-1}} \frac{1}{d^{2n-1}} \sum_{\substack{i+j+2k=2(m+n)-3\\i\ge 2m-1 \, \lor\, j\ge 2n-1}} c^{m,n}_{i,j} c^{i} (ad)^{j} (ab)^{2k}\notag \\
		&\phantom{:}\lesssim \frac{a^{2n-1}}{\parens{a^2b^2+a^2d^2+c^2}^n} \frac{1}{c^{2m-1}} +  \frac{1}{\parens{a^2b^2+a^2d^2+c^2}^m} \frac{1}{d^{2n-1}}. \label{eq:Imn_est}
	\end{align}
	For an explicit representation of the constants $c^{m,n}_{i,j}$, as well as the generating functions of $I_{m,n}$ and $c^{m,n}_{i,j}$, see \autoref{sec:int_est}.
\end{theorem}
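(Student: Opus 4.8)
The plan is to compute the integral $I_{m,n}$ exactly by residues and then extract the stated upper bound by coarse estimation. First I would reduce to a cleaner form. The denominator $(a^2(x-b)^2 + c^2)^m = a^{2m}((x-b)^2 + (c/a)^2)^m$ has poles at $x = b \pm \ii c/a$, while $(x^2+d^2)^n$ has poles at $x = \pm \ii d$. Closing the contour in the upper half-plane picks up the poles at $x = b + \ii c/a$ (of order $m$) and $x = \ii d$ (of order $n$). Thus $I_{m,n} = 2\pi\ii\,(\mathrm{Res}_{x = b+\ii c/a} + \mathrm{Res}_{x=\ii d})$ of the integrand. Each residue is computed by the usual higher-order formula, i.e. differentiating the ``other'' factor $m-1$ (resp. $n-1$) times and evaluating; the key algebraic simplification is that at $x = b+\ii c/a$ the second factor becomes $((b+\ii c/a)^2 + d^2)$ to a power, and $a^2 b^2 + (ad+c)^2 = a^2\lvert (b + \ii c/a)^2 + d^2\rvert \cdot$(something)—more precisely $|(b+\ii c/a) - \ii d|^2 |(b + \ii c/a)+\ii d|^2 = \big((b)^2 + (c/a - d)^2\big)\big(b^2 + (c/a+d)^2\big)$, and multiplying through by $a^4$ and simplifying gives the factor $(a^2 b^2 + (ad+c)^2)$ appearing in the denominator (after combining with the conjugate pole, which lies in the lower half-plane but whose "partner" structure governs the modulus). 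Collecting the two residues over a common denominator $(a^2 b^2 + (ad+c)^2)^{m+n-1} c^{2m-1} d^{2n-1}$ and tracking that the numerator is a polynomial in $c$, $ad$, $(ab)^2$ of total weighted degree $2(m+n)-3$ yields the exact formula, with $c^{m,n}_{i,j}$ the combinatorial coefficients produced by the Leibniz/binomial expansions; the index restriction $i \ge 2m-1$ or $j \ge 2n-1$ reflects which residue each monomial originates from.

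Next, to get \eqref{eq:Imn_est} I would bound the exact expression termwise. Writing $P := a^2 b^2 + (ad+c)^2$, note the elementary two-sided comparison $P \sim a^2 b^2 + a^2 d^2 + c^2$ (the cross term $2acd \le a^2 d^2 + c^2$), so I may replace $P$ by $Q := a^2 b^2 + a^2 d^2 + c^2$ up to constants. For a monomial $c^{m,n}_{i,j} c^i (ad)^j (ab)^{2k}$ with $i+j+2k = 2(m+n)-3$, the prefactor contributes $\pi\, c^{-(2m-1)} d^{-(2n-1)} Q^{-(m+n-1)}$. When $i \ge 2m-1$, use $c^i (ab)^{2k} \le c^i (ab)^{2k} (ad)^j \cdot (ad)^{-j}$... more directly: bound $c^{i - (2m-1)} (ad)^j (ab)^{2k} \le Q^{(i-(2m-1)+j+2k)/2} = Q^{(2(m+n)-3 - (2m-1))/2} = Q^{n-1}$, which after cancellation leaves $c^{-(2m-1)} Q^{-m} \cdot$(a factor $a^{2n-1}$ absorbed from rewriting $d^{-(2n-1)}(ad)^{?}$); symmetrically, when $j \ge 2n-1$ one extracts $d^{-(2n-1)} Q^{-n}\cdot$nothing and is left with $Q^{-m}$ after pulling out $d$-powers. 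Bookkeeping the $a$-powers carefully recovers exactly the two terms $a^{2n-1}Q^{-n} c^{-(2m-1)}$ and $Q^{-m} d^{-(2n-1)}$ on the right of \eqref{eq:Imn_est}.

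For the generating-function statement I would, in a separate computational lemma in \autoref{sec:int_est}, sum $\sum_{m,n} I_{m,n} s^{m-1} u^{n-1}$ against the geometric-type kernels $\sum_m \binom{-m}{\cdot}$, or equivalently differentiate the base case $I_{1,1}$ (which is elementary: partial fractions in $x^2$ give $I_{1,1} = \frac{\pi}{c d}\cdot\frac{1}{a^2 b^2 + (ad+c)^2}$ after simplification) with respect to the parameters $c^2$ and $d^2$, since $\partial_{c^2}$ raises $m$ and $\partial_{d^2}$ raises $n$; organizing these derivative expansions via Faà di Bruno / exponential generating functions produces the closed form for $c^{m,n}_{i,j}$.

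The main obstacle I anticipate is the residue algebra: correctly evaluating the order-$m$ and order-$n$ derivatives and—crucially—recombining the contributions from the pole $b + \ii c/a$ and its complex conjugate so that the purely real denominator $(a^2 b^2 + (ad+c)^2)^{m+n-1}$ emerges cleanly, while simultaneously verifying that every numerator monomial satisfies the stated degree identity $i+j+2k = 2(m+n)-3$ and lands in the correct half of the index constraint. Getting the constants $c^{m,n}_{i,j}$ in genuinely closed (rather than recursive) form, as claimed, is the delicate part and is where the generating-function computation of \autoref{sec:int_est} does the real work; the inequality \eqref{eq:Imn_est}, by contrast, is robust and follows from any sufficiently careful termwise majorization even without the exact constants.
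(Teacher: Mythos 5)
Your overall strategy (exact evaluation via residues / partial fractions, then termwise estimation) is the right shape and essentially parallels the paper's own route: the partial fraction decomposition in \autoref{app:prop:pfd} is the algebraic shadow of the residue calculus you describe. The final majorization step is also genuinely robust and your sketch of it would go through after fixing the case bookkeeping (a monomial with $i\ge 2m-1$ lands on the term $Q^{-m}d^{-(2n-1)}$, and $j\ge 2n-1$ lands on $a^{2n-1}Q^{-n}c^{-(2m-1)}$, not the other way around). However, the proposal has one concrete error and one central gap.

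The concrete error: your claimed base case $I_{1,1}=\tfrac{\pi}{cd}\cdot\tfrac{1}{a^2b^2+(ad+c)^2}$ is wrong; the correct value (which \eqref{app:eq:Imn_as_coeff} recovers) is $I_{1,1}=\tfrac{\pi(c+ad)}{cd\,(a^2b^2+(ad+c)^2)}$. A quick sanity check with $a=1$, $b=0$, $c=d=1$ gives $\int(x^2+1)^{-2}\,\d x=\pi/2$, not $\pi/4$.

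The central gap is exactly the cancellation you flag as ``the delicate part'' and then defer. Summing the residues at $x=b+\ii c/a$ and $x=\ii d$ (equivalently, the PFD of \autoref{app:prop:pfd}) produces, after rationalizing, a denominator $\Delta^{m+n-1}$ where $\Delta=\delta_+\delta_-$ with $\delta_\pm=(c\pm ad)^2+a^2b^2$. Indeed $a^4\,\abs{(b+\ii c/a)^2+d^2}^2=\delta_+\delta_-$, so the modulus of a single pole's contribution involves \emph{both} factors. The theorem asserts that the numerator is divisible by $\delta_-^{m+n-1}$, leaving $\delta_+^{m+n-1}$ alone; this is the non-obvious algebraic fact that the whole generating-function machinery in \autoref{sec:int_est} is designed to establish (see the factorization in the proof of \autoref{app:prop:genfunc_Imn}, where $v(y)=\sqrt{1-\Delta y}$, $w(z)=\sqrt{1-\Delta z}$ suddenly conspire to produce $(cv+adw)^2+a^2b^2$). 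Your attribution of this to ``combining with the conjugate pole'' cannot be correct as stated: $b-\ii c/a$ lies in the lower half-plane and is not picked up by the contour. The cancellation is between the two distinct upper-half-plane residues, and it is not a formal consequence of reality of $I_{m,n}$ --- reality would only force $\sqrt{\delta_+\delta_-}$, not $\delta_+$ alone, in the denominator. Similarly, the constraint $i\ge 2m-1\lor j\ge 2n-1$ cannot be read off by tracking ``which residue each monomial originates from,'' since both residues contribute to each monomial after recombination; the paper proves it via the coefficient generating function in \autoref{app:prop:coeff_zero}. Until you supply a mechanism for these two facts rather than pointing back at \autoref{sec:int_est}, the proposal is a setup plus an outline of the easy final step, not a proof.
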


As a simple corollary for higher dimensions, we also record the following corollary of \autoref{th:Imn_est}, which is proved in \autoref{sec:int_est} as well.

\begin{corollary}[{\autoref{app:cor:Imn_higher_dim}}]\label{cor:Imn_higher_dim}
	For $m,n\ge\ceil{\frac k2}$ $c,d>0$, we have the following inequality,
	\begin{align}
		\int_{\bbR^d} \frac{1}{\parens*{|\vec x - \vec t|^2 +c^2}^m} \frac{1}{\parens*{|\vec x|^2+d^2}^n} \d \vec x  \lesssim \frac{1}{\parens*{|\vec t|^2+c^2+d^2}^n}\frac{1}{c^{2m-k-1}}+\frac{1}{\parens*{|\vec t|^2+c^2+d^2}^m}\frac{1}{d^{2n-k-1}}.
	\end{align}
\end{corollary}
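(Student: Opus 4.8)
Let $k$ denote the dimension, so that the integration is over $\bbR^k$. The plan is to peel off one coordinate, apply the one-dimensional inequality \autoref{th:Imn_est}, and then dispose of the remaining $k-1$ coordinates by an elementary polar-coordinate computation. Since each factor of the integrand is radial about its centre ($\vec t$, resp.\ the origin), after a rotation we may assume $\vec t=|\vec t|\,\vec e_1$. Writing $\vec x=(x_1,\vec y)$ with $\vec y\in\bbR^{k-1}$, we have $|\vec x-\vec t|^2+c^2=(x_1-|\vec t|)^2+(|\vec y|^2+c^2)$ and $|\vec x|^2+d^2=x_1^2+(|\vec y|^2+d^2)$, so the integral over $x_1$ is exactly $I_{m,n}$ of \autoref{th:Imn_est} with $a=1$, $b=|\vec t|$, and with $\sqrt{|\vec y|^2+c^2}$, $\sqrt{|\vec y|^2+d^2}$ in the roles of $c$, $d$. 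Inserting \eqref{eq:Imn_est} and using $|\vec t|^2+2|\vec y|^2+c^2+d^2\sim|\vec t|^2+|\vec y|^2+c^2+d^2$, the inner integral is bounded by
\begin{align*}
	\frac{1}{\parens*{|\vec t|^2+|\vec y|^2+c^2+d^2}^{n}\parens*{|\vec y|^2+c^2}^{m-\frac12}}+\frac{1}{\parens*{|\vec t|^2+|\vec y|^2+c^2+d^2}^{m}\parens*{|\vec y|^2+d^2}^{n-\frac12}}.
\end{align*}

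It then remains to integrate this over $\vec y\in\bbR^{k-1}$. The two summands are exchanged under $(m,c)\leftrightarrow(n,d)$, so it suffices to bound the first; abbreviating $Q^2:=|\vec t|^2+c^2+d^2\geq c^2$, this amounts to estimating $\int_{\bbR^{k-1}}\parens*{Q^2+|\vec y|^2}^{-n}\parens*{|\vec y|^2+c^2}^{-(m-1/2)}\,\d\vec y$, which in polar coordinates equals, up to a constant, $\int_0^\infty r^{k-2}\parens*{Q^2+r^2}^{-n}\parens*{r^2+c^2}^{-(m-1/2)}\,\d r$. The substitution $r=c\rho$ reduces this, up to the factor $c^{\,k-2m-2n}$, to $\int_0^\infty\rho^{k-2}\parens*{\beta^2+\rho^2}^{-n}\parens*{\rho^2+1}^{-(m-1/2)}\,\d\rho$ with $\beta:=Q/c\geq1$. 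Splitting the $\rho$-integral at the thresholds $\rho=1$ and $\rho=\beta$, using $\beta^2+\rho^2\sim\beta^2$ on $[0,\beta]$ and $\sim\rho^2$ on $[\beta,\infty)$, and invoking $m,n\ge\ceil{k/2}$ to guarantee that each of the three resulting one-dimensional integrals is finite, one finds each piece is $\lesssim\beta^{-2n}$ (up to a logarithm in the borderline case). Tracing the prefactors back then gives $\int_{\bbR^{k-1}}\cdots\,\d\vec y\lesssim\parens*{|\vec t|^2+c^2+d^2}^{-n}c^{\,k-2m}$, and adding the $(m,c)\leftrightarrow(n,d)$ counterpart yields the claimed inequality.

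\textbf{Main obstacle.} The genuinely delicate point is this last step: one must check that on \emph{each} of the finitely many regimes for the relative sizes of $\rho$, $1$ and $\beta$ the crude bounds collapse to the single term $\beta^{-2n}$ — that no logarithmic factor or cross term survives. The cases $m=\ceil{k/2}$ or $n=\ceil{k/2}$ are the ones to watch, and they are what pins down the exact exponent in the conclusion. The rescaling $r=c\rho$ is precisely what turns this into a bounded case analysis; the rest — the rotation, the appeal to \autoref{th:Imn_est}, and the polar change of variables — is routine.
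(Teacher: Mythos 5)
Your route diverges from the paper's after the one-dimensional integration: the paper bounds the polar-radial factor as $r^{k-1}(r^2+c^2)^{-(m-1/2)}\le(r^2+c^2)^{-(m-\ceil{k/2})}(c^2)^{-(\ceil{k/2}-k/2)}$, extends the radial integral to all of $\bbR$, and invokes \autoref{th:Imn_est} a second time (the corollary's usage in the paper even says as much in words). You instead substitute $r=c\rho$ and run an elementary three-region analysis. Both are reasonable in spirit, but your execution has two concrete problems.

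\textbf{The final exponent does not match the claim.} You arrive at $\parens*{|\vec t|^2+c^2+d^2}^{-n}c^{k-2m}$, whereas the statement asserts $\parens*{|\vec t|^2+c^2+d^2}^{-n}c^{-(2m-k-1)}=\parens*{|\vec t|^2+c^2+d^2}^{-n}c^{k+1-2m}$. These differ by a factor of $c$, and for $c<1$ your bound is the larger of the two, so it does \emph{not} imply the claimed one; your closing sentence is not justified. (As it happens, your $c^{-(2m-k)}$ is the exponent that correctly reduces to \autoref{th:Imn_est} when $k=1$, while the stated $c^{-(2m-k-1)}$ already fails to: take $k=1$, $m=2$, $n=1$, $\vec t=0$, $d=1$, where the integral grows like $c^{-3}$ while the stated right-hand side is only $\sim c^{-2}$. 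The paper's own proof writes $r^{k-1}$ where polar coordinates on $\bbR^{k-1}$ give $r^{k-2}$, a compensating off-by-one that your computation avoids. You should have noticed and flagged the mismatch rather than asserting agreement.)

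\textbf{The borderline logarithm is not a cosmetic caveat.} When $k$ is even and $m=k/2$ --- which $m\ge\ceil{k/2}$ permits --- the middle region of your case analysis is $\int_1^\beta\rho^{-1}\,\d\rho=\log\beta$, so $J\lesssim\beta^{-2n}\log\beta$, and the logarithm survives to the final bound as $\log\parens*{\sqrt{|\vec t|^2+c^2+d^2}\,/\,c}$. The inequality genuinely fails there: for $k=2$, $m=n=1$, $\vec t=0$ one has $\int_{\bbR^2}\parens*{|\vec x|^2+c^2}^{-1}\parens*{|\vec x|^2+d^2}^{-1}\d\vec x=\frac{2\pi\log(d/c)}{d^2-c^2}$, which diverges like $\log(1/c)$ as $c\to0$ while the right-hand side stays bounded. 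So your argument needs the stronger hypothesis $2m>k$ and $2n>k$ to close (which also rules out the log in Region 2 automatically); writing ``up to a logarithm'' and moving on hides precisely the case in which the conclusion is false.
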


\subsection{The Advection Equation $\&$ Mutilated Functions}\label{ssec:prop_advec}

To rotate the transport direction $\vec s$ in \eqref{eq:LinTrans} --- resp. the normals of the hyperplanes appearing in \autoref{def:Ht_except_hyp} --- into a canonical unit vector, we need the following rotation matrices.

\begin{definition}\label{def:rot_Rs}
	For any vector $\vec s\in\bbSd$, let $\Rs$ be a matrix that maps $\vec s$ to $\vec e_1=(1,0,\ldots)^\top$, and let $\Rs^{-1}=\Rs^\top$ be its inverse. This rotation is not unique in dimensions $d\ge3$, however, the ambiguity will be irrelevant. We also define the respective pullbacks for $f\in L^2(\bbR^d)$ by
	\begin{align}
		\rs f(\vec x):= f(\Rs^{-1} \vec x), \qquad \rs^{-1} f(\vec x) := f(\Rs \vec x),
	\end{align}
	thus (for continuous $f$), $\rs f(\vec e_1) =f (\vec s), \; \rs^{-1} f(\vec s) = f(\vec e_1)$.
\end{definition}

\begin{remark}
	Using these pullbacks, it's easy to see that
	\begin{align}\label{eq:trans_dir_e1}
		\vec s \cdot \nabla u + \kappa u= f\quad \Longleftrightarrow \quad \vec e_1 \cdot \nabla \rs u  + \rs \kappa \rs u= \rs f.
	\end{align}
	This makes an explicit calculation possible (see \cite[Sec. 1]{compress}), namely that with
	\begin{align}\label{eq:lintrans_expl_sol}
		y(x_1,\xp) := \ee^{-K(x_1,\xp)} \int_{-\infty}^{x_1} \rs f (t,\xp) \ee^{K(t,\xp)} \d t , \quad \text{where } \quad K(t,\xp)= \int_0^t \rs \kappa (r,\xp) \d r,
	\end{align}
	setting $u:=\rs^{-1}y$ yields an explicit solution to \eqref{eq:LinTrans} for arbitrary $f\in L^2(\bbR^d)$, as long as $0<\kappa_0 \le \kappa(\vec x) < \infty$ almost everywhere. With the $y$ from \eqref{eq:lintrans_expl_sol}, we define the solution operator for $A$ as follows,
	\begin{align}
		\CS[f]:=\rs^{-1}y, \qquad \CS\colon L^2\to \Hs.\tag*{\qedhere}
	\end{align}
\end{remark}

The following theorem shows that $\CS$ is bounded from $H^t$ to itself.

\begin{proposition}[{\cite[Thm. 2.2]{compress}, \cite[Thm. 3.3.3]{my_thesis}}]\label{prop:sol_op_bounded}
	For $\kappa\in H^{\ceil{t}+\frac d2}(\bbR^d)$ with $t\ge 0$, such that $\kappa\ge \gamma>0$, the operator $\CS$ and is bounded (at least) from $H^t\to H^t$. Furthermore
	\begin{align}\label{eq:Hs_elliptic}
		\norm{\CS[f]}_{\Hs}\sim \norm{Au}_{L^2}=\norm{f}_{L^2}.
	\end{align}
\end{proposition}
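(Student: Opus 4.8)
The plan is threefold: reduce to the normalised transport direction $\vec s=\vec e_1$, establish the $L^2$-bound and the equivalence \eqref{eq:Hs_elliptic} by an elementary convolution/energy argument, and finally bootstrap the $L^2$-bound to the $H^t\to H^t$ bound by differentiating the advection--reaction equation itself. For the first step I would use that $\Rs$ is orthogonal, so the pullback $\rs$ is a simultaneous isometry of $L^2$ and $H^t$ and, by \eqref{eq:trans_dir_e1}, maps $\Hs$ isometrically onto $H^{\vec e_1}$; moreover it intertwines $\CS$ with the solution operator for direction $\vec e_1$ and absorption coefficient $\rs\kappa$, which lies in the same Sobolev space and retains the lower bound $\gamma$. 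Hence it suffices to treat $\vec s=\vec e_1$, where \eqref{eq:lintrans_expl_sol} reads
\begin{align}
	u(x_1,\xp):=\CS[f](x_1,\xp)=\int_{-\infty}^{x_1} f(t,\xp)\,\exp\!\left(-\int_t^{x_1}\kappa(r,\xp)\d r\right)\d t.
\end{align}

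Because $\kappa\ge\gamma$, the kernel $t\mapsto\exp(-\int_t^{x_1}\kappa\,\d r)\,H(x_1-t)$ is pointwise dominated (uniformly in $\xp$) by $\ee^{-\gamma(x_1-t)}H(x_1-t)$, which, viewed as a function of $x_1-t$, lies in $L^1(\bbR)$ with mass $\gamma^{-1}$; so Young's inequality in $x_1$ gives $\norm{u(\cdot,\xp)}_{L^2(\bbR)}\le\gamma^{-1}\norm{f(\cdot,\xp)}_{L^2(\bbR)}$, and integrating in $\xp$ yields $\norm u_{L^2}\le\gamma^{-1}\norm f_{L^2}$. Since $Au=f$ by construction, $\norm{Au}_{L^2}=\norm f_{L^2}$ is immediate; and since $\kappa\in H^{\ceil t+d/2}\hookrightarrow L^\infty$, the identity $(\vec e_1\cdot\nabla)u=\partial_1u=f-\kappa u$ gives on the one hand $\norm{\partial_1u}_{L^2}\le\norm f_{L^2}+\norm\kappa_{L^\infty}\norm u_{L^2}\lesssim\norm f_{L^2}$, hence $\norm u_{\Hs}\lesssim\norm f_{L^2}=\norm{Au}_{L^2}$, and on the other hand $\norm f_{L^2}=\norm{\partial_1u+\kappa u}_{L^2}\le\norm{\partial_1u}_{L^2}+\norm\kappa_{L^\infty}\norm u_{L^2}\lesssim\norm u_{\Hs}$; together these are \eqref{eq:Hs_elliptic}.

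For the $H^t\to H^t$ bound I would first handle integer $t=N\in\bbN_0$ and then recover fractional $t$ by complex interpolation of the linear operator $\CS$ between $L^2$ and $H^{\ceil t}$. The engine of the integer case is the observation that, for any multi-index $\alpha'$ in $x_2,\dots,x_d$, applying $\partial^{\alpha'}$ to $\partial_1u+\kappa u=f$ shows that $\partial^{\alpha'}u$ solves the \emph{same} equation,
\begin{align}
	\partial_1(\partial^{\alpha'}u)+\kappa\,\partial^{\alpha'}u=\partial^{\alpha'}f-\sum_{\Vzero\ne\mu\le\alpha'}\binom{\alpha'}{\mu}(\partial^\mu\kappa)(\partial^{\alpha'-\mu}u)=:g_{\alpha'},
\end{align}
and again vanishes as $x_1\to-\infty$, so $\partial^{\alpha'}u=\CS[g_{\alpha'}]$ and therefore $\norm{\partial^{\alpha'}u}_{L^2}\le\gamma^{-1}\norm{g_{\alpha'}}_{L^2}$ by the bound just proved (first for $f$ in a dense subset, then by density). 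Since $g_{\alpha'}$ only involves $\partial^{\alpha'}f$ (controlled by $\norm f_{H^N}$) and $x'$-derivatives of $u$ of strictly lower order, an induction on $\abs{\alpha'}$ --- together with the fact that the derivatives of $\kappa$ that occur act as multipliers on the relevant $L^2$-spaces --- yields $\norm{\partial^{\alpha'}u}_{L^2}\lesssim\norm f_{H^N}$ for all $\abs{\alpha'}\le N$. Mixed and pure $x_1$-derivatives are then reduced to this case by repeatedly substituting $\partial_1u=f-\kappa u$ and expanding by the Leibniz rule: each $\partial^\alpha u$ with $\abs\alpha\le N$ turns into a finite sum of terms $(\partial^\nu\kappa)(\partial^\rho w)$, $w\in\{u,f\}$, $\abs\nu,\abs\rho\le N$, where every occurring $\partial^\rho u$ is either of strictly smaller total order or a pure $x'$-derivative already controlled; estimating termwise and summing over $\abs\alpha\le N$ gives $\norm u_{H^N}\lesssim\norm f_{H^N}$.

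The one genuinely delicate point --- and where I expect the main obstacle --- is the bookkeeping for the products $\kappa\cdot(\text{derivative of }u)$. All but the top-order term can be handled by pulling the $\kappa$-factor out in $L^\infty$, using $H^{N+d/2}\hookrightarrow W^{N-1,\infty}$; but the highest-order term $(\partial^{\alpha'}\kappa)\,u$ with $\abs{\alpha'}=N$ involves $\partial^{\alpha'}\kappa\in H^{d/2}$, which does \emph{not} embed into $L^\infty$, so there one must instead use H\"older's inequality with subcritical Lebesgue exponents, splitting $\norm{(\partial^{\alpha'}\kappa)u}_{L^2}\le\norm{\partial^{\alpha'}\kappa}_{L^p}\norm u_{L^q}$ with $\tfrac1p+\tfrac1q=\tfrac12$ and invoking $H^{d/2}\hookrightarrow L^p$ for all $p<\infty$ together with the Sobolev embedding of $H^N$. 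This is precisely why the hypothesis on $\kappa$ is $H^{\ceil t+d/2}$ and no more. A fully detailed version, tracking all these multiplier estimates in both the integer and fractional range, is carried out in \cite[Thm.~2.2]{compress} and \cite[Thm.~3.3.3]{my_thesis}.
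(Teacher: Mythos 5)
The paper does not prove this proposition --- it is imported from the references cited in its bracket --- so there is no internal proof to compare against, and I can only assess your outline on its own terms.

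Your strategy is sound and is the natural one: reduce to $\vec s=\vec e_1$ by the isometric pullback, obtain $\norm{u}_{L^2}\le\gamma^{-1}\norm{f}_{L^2}$ by dominating the kernel by $\ee^{-\gamma(x_1-t)}H(x_1-t)$ and applying Young's inequality fibre-wise, read \eqref{eq:Hs_elliptic} off $\partial_1 u=f-\kappa u$, bootstrap to integer $H^N$ by differentiating the equation and applying the $L^2$-bound to $\partial^{\alpha'}u=\CS[g_{\alpha'}]$, and recover fractional $t$ by interpolation. You also correctly flag the one delicate spot, the top Leibniz term $(\partial^{\alpha'}\kappa)u$ with $|\alpha'|=N$, since $\partial^{\alpha'}\kappa\in H^{d/2}\not\hookrightarrow L^\infty$.

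The gap is that your fix for that term is circular exactly where it matters. You invoke ``the Sobolev embedding of $H^N$'' applied to $u$ inside a H\"older estimate, but $u\in H^N$ is precisely what is being proved, and the regularity actually available before the top step is only $u\in H^{N-1}$. For $N\ge 2$ this still closes the argument ($H^1\hookrightarrow L^{2d/(d-2)}$ suffices for the H\"older pairing), but for $N=1$ you only have $u\in L^2$, and the product $H^{d/2}\cdot L^2$ does \emph{not} map into $L^2$ --- it is exactly critical, so no isotropic Sobolev/H\"older estimate will reach it. The rescue, which your write-up does not use, is the \emph{anisotropic} regularity that the equation already gives for free: since $\kappa\in H^{1+d/2}\hookrightarrow L^\infty$, one has $\partial_1 u=f-\kappa u\in L^2$, i.e.\ $u\in\Hs$, so $u(\cdot,\xp)\in H^1(\bbR)\hookrightarrow L^\infty(\bbR)$ fibre-wise. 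Combined with the trace-type embedding $H^{d/2}(\bbR^d)\hookrightarrow L^\infty(\bbR^{d-1};L^2(\bbR))$ in the direction $\vec e_1$ (a short Fourier computation), this gives
\begin{align}
	\norm{(\partial_j\kappa)u}_{L^2}
	\le \parens[\Big]{\sup_{\xp}\norm{\partial_j\kappa(\cdot,\xp)}_{L^2(\bbR)}}\parens[\Big]{\int_{\bbR^{d-1}}\norm{u(\cdot,\xp)}_{L^\infty(\bbR)}^2\d\xp}^{\frac12}
	\lesssim \norm{\kappa}_{H^{1+\frac d2}}\norm{u}_{\Hs}\lesssim\norm{f}_{L^2},
\end{align}
which closes the $N=1$ step. (Alternatively, differentiating the explicit solution formula and bounding the extra factor $\int_t^{x_1}\partial_j\kappa\,\d r$ by Cauchy--Schwarz against the decaying exponential yields the same estimate without any Leibniz detour.) With this patch --- and, for $N\ge2$, running the induction on $x_1$- and $x'$-derivatives \emph{together} at each order rather than sequentially, so that $u\in H^{N-1}(\bbR^d)$ is genuinely available at the top step --- your outline becomes a complete proof.
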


However, what we are mainly interested in in this paper are functions of the following form.

\begin{definition}\label{def:Ht_except_hyp}
	We say that a function $f$ is in $H^t$ \emph{except for $N$ hyperplanes} if there are $N$ hyperplanes $h_i$ with corresponding (normalised) orthogonal vectors $\vec n_i$ and offsets $v_i$, as well as functions $f_0,f_1,\ldots, f_N\in H^t(\bbR^d)$ such that
	\begin{align}\label{eq:Ht_except_hyp}
		f(\vec x)=f_0(\vec x)+\sum_{i=1}^N f_i(\vec x) H(\vec x \cdot \vec n_i - v_i),
	\end{align}
	where, as mentioned, $H$ is the Heaviside step function. To abbreviate the concept notationally, we will sometimes write $f\in H^t(\bbR^d\setminus\{(h_i)_{i=1}^N\})$, or $f\in H^t(\bbR^d\setminus\{h_i\})$ for short. This is justified in the sense that --- by factoring with the right choice of equivalence relation --- these are in fact Hilbert spaces again (see \cite[Rem. 3.2.2]{my_thesis}).
\end{definition}

The analysis of the ridgelet coefficients of such a function will require calculating the Fourier transform of such mutilated Sobolev functions, which --- for each term --- splits into a regular and a singular contribution (from the function and the cut-off, respectively). This follows immediately from \cite[Eq. (3.3)]{mutilated}, but we formulate it in the version we will use later on.

\begin{lemma}\label{lem:fourier_decomp}
	The Fourier transform of a function $f(\vec x)=H(\vec x\cdot \vec n-v) g(\vec x)$, where $g\in H^t$ and $t> \frac 12$, in terms of a singularity aligned with $\vec e_1$ is
	\begin{align}\label{eq:sing_fourier_rot}
		\hat f(\Rn^{-1} \vec \xi)
		&= - \frac{\ii}{2\pi\abs*{\vec\xi}^2} \vec \xi \cdot \CF[H(x_1-v)\nabla g(\Rn^{-1}\vec x)](\vec \xi) - \frac{\ii\, \xi_1}{2\pi\abs*{\vec\xi}^2} \wh{g\normalr|_{h}}(\CP_{\vec e_1}\vec \xi),
	\end{align}
	where $g\normalr|_{h}\in H^{t-\frac 12}(\bbR^{d-1})$ is the restriction to the hyperplane $h=\set{\vec x\in\bbR^d}{\vec x\cdot\vec  n=v}$. Furthermore, $\CP_{\vec n}$ is the orthogonal projection along $\vec n$ and we identify $\CP_{\vec n}\vec \xi$ with $\xip_h\in \bbR^{d-1}$, while $\wh{g\normalr|_{h}}$ is shorthand for $\CF_{\bbR^{d-1}}[g\normalr|_{h}]$.
\end{lemma}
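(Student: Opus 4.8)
The plan is to reduce to the canonical situation in which the interface is the hyperplane $\{x_1=v\}$, and then to read off $\hat f$ from the distributional gradient of the mutilated function via the elementary identity $\wh{\partial_j u}=2\pi\ii\,\xi_j\,\hat u$.

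First I would use the rotation $\Rn$ to normalise the singularity direction. Put $\wt g:=g\circ\Rn^{-1}\in H^t(\bbR^d)$ and $\wt f:=f\circ\Rn^{-1}$. Since $\Rn$ is orthogonal we have $(\Rn^{-1}\vec x)\cdot\vec n=\vec x\cdot\Rn\vec n=\vec x\cdot\vec e_1=x_1$, so that $\wt f(\vec x)=H(x_1-v)\,\wt g(\vec x)$, and moreover $\wh{\wt f}(\vec\xi)=\hat f(\Rn^{-1}\vec\xi)$ --- which is exactly the left-hand side of the claim. Hence it suffices to show, for every $\wt g\in H^t(\bbR^d)$ with $t>\tfrac12$, that
\[
\wh{\wt f}(\vec\xi)=-\frac{\ii}{2\pi|\vec\xi|^2}\,\vec\xi\cdot\CF[H(x_1-v)\nabla\wt g](\vec\xi)-\frac{\ii\,\xi_1}{2\pi|\vec\xi|^2}\,\ee^{-2\pi\ii v\xi_1}\,\CF_{\bbR^{d-1}}[\wt g(v,\cdot)](\xip);
\]
undoing the rotation then identifies $\nabla\wt g=\nabla(g\circ\Rn^{-1})$ with the gradient in the first term and $\wt g(v,\cdot)$ with the trace $g|_h\in H^{t-\frac12}(\bbR^{d-1})$, while the unimodular factor $\ee^{-2\pi\ii v\xi_1}$ is absorbed into the convention for $\wh{g|_h}$ (and is in any case irrelevant downstream, where only $\abs{\hat f}$ appears).

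For the core computation, note that the hypothesis $t>\tfrac12$ guarantees, via the Sobolev trace theorem, that $\wt g(v,\cdot)$ is a well-defined element of $H^{t-\frac12}(\bbR^{d-1})$, so the product rule is valid in $\CS'(\bbR^d)$:
\[
\nabla\wt f=\nabla\bigl(H(x_1-v)\,\wt g\bigr)=H(x_1-v)\,\nabla\wt g+\delta(x_1-v)\,\wt g\,\vec e_1 ,
\]
using $\partial_1 H(x_1-v)=\delta(x_1-v)$ and $\partial_j H(x_1-v)=0$ for $j\ge2$. Taking Fourier transforms and pairing with $\vec\xi$ gives, on $\bbR^d\setminus\{0\}$,
\[
2\pi\ii\,|\vec\xi|^2\,\wh{\wt f}(\vec\xi)=\vec\xi\cdot\wh{\nabla\wt f}(\vec\xi)=\vec\xi\cdot\CF[H(x_1-v)\nabla\wt g](\vec\xi)+\xi_1\,\CF[\delta(x_1-v)\,\wt g](\vec\xi),
\]
and the surface term computes directly to $\CF[\delta(x_1-v)\,\wt g](\vec\xi)=\ee^{-2\pi\ii v\xi_1}\int_{\bbR^{d-1}}\wt g(v,\xp)\,\ee^{-2\pi\ii\,\xp\cdot\xip}\d\xp=\ee^{-2\pi\ii v\xi_1}\,\CF_{\bbR^{d-1}}[\wt g(v,\cdot)](\xip)$. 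Dividing by $2\pi\ii|\vec\xi|^2$ then yields the displayed identity.

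The main obstacle I anticipate is making the two distributional steps rigorous for a merely $H^t$, non-smooth $\wt g$: first, justifying the Leibniz rule $\nabla(H\wt g)=H\,\nabla\wt g+\delta\,\wt g\,\vec e_1$, which is precisely where $t>\tfrac12$ is needed in order to give a meaning to the surface term $\delta(x_1-v)\wt g$ through the trace; and second, the division by $|\vec\xi|^2$, where one must observe that, although neither of the two terms on the right-hand side need be square-integrable near $\vec\xi=0$, their sum equals the genuine $L^2$-function $\hat f$ --- so one argues on $\bbR^d\setminus\{0\}$ (or regularises $H$ by a smooth cutoff and passes to the limit). All remaining work is routine bookkeeping with the rotation together with elementary Fourier identities; alternatively one may simply quote \cite[Eq.~(3.3)]{mutilated} and translate it into the present notation.
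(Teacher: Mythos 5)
Your proof is correct, and it is genuinely self-contained, whereas the paper gives no proof at all: it simply cites Eq.~(3.3) of \cite{mutilated} and translates it into the present notation. Your route --- rotate to the canonical interface $\{x_1=v\}$, apply the distributional Leibniz rule $\nabla(H\,\wt g)=H\,\nabla\wt g+\delta(x_1-v)\,\wt g\,\vec e_1$ (which is exactly where $t>\tfrac12$ enters, via the Sobolev trace theorem), take Fourier transforms, pair with $\vec\xi$, and divide by $2\pi\ii\,\abs{\vec\xi}^2$ --- is the natural ``first principles'' derivation, and it reproduces the formula including the correct signs (since $1/\ii=-\ii$). You are also right to flag the phase factor $\ee^{-2\pi\ii v\xi_1}$: the lemma as stated is literally correct only with the parametrisation of $h$ anchored at $v\vec n$ (equivalently, with the phase absorbed into $\wh{g\normalr|_{h}}$), and indeed this factor is harmless in every place the lemma is invoked, since either only $\abs{\hat f}$ is used (\autoref{prop:loc_angle}) or the paper works directly with $\CF[\tilde g\,\delta_{\{x_1=v\}}]$ rather than $\wh{g\normalr|_h}$ (\autoref{prop:loc_space}). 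Two small points worth tightening if this were to replace the citation: (i) the ambiguity of the notation $\nabla g(\Rn^{-1}\vec x)$ (gradient-of-composition vs.\ gradient evaluated at $\Rn^{-1}\vec x$) is harmless only because $\vec\xi\cdot\Rn\vec w=\Rn^{-1}\vec\xi\cdot\vec w$, and it would be cleaner to say so explicitly; (ii) for $\tfrac12<t<1$ the product $H\,\nabla\wt g$ is a product of a half-space indicator with an $H^{t-1}$ distribution ($t-1\in(-\tfrac12,0)$), which is legitimate because half-space indicators multiply $H^s$ for $\abs{s}<\tfrac12$, but this deserves a one-line remark --- in the paper's own applications $t\ge1$ anyway, so $\nabla\wt g\in L^2$ and the issue evaporates.
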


Since we are dealing with half-spaces due to the cut-off with the Heaviside function, the following extension result (see e.g. \cite[Sec. 4.5]{triebel2}) will be useful.

\begin{theorem}\label{th:half_space_ext}
	For a function $f\in H^t$, the restriction to the half-space $\set{\vec x\in\bbR^d}{x_1>0}$ can be extended to the full space in a bounded fashion, i.e. there is $g\in H^t$ such that
	\begin{align}
		\norm{g}_{H^t(\bbR^d)}  \lesssim \norm{f}_{H^t(\set{\vec x\in\bbR^d}{x_1>0})} \qquad \text{as well as} \qquad f(\vec x) = g(\vec x) \quad \forall \vec x \in \set{\vec x\in\bbR^d}{x_1>0}.
	\end{align}
	Obviously, by rotating and translating $f$ (which leaves the norms invariant), this result also holds for half-spaces separated by arbitrary hyperplanes.
\end{theorem}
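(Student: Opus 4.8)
The plan is to produce the extension operator explicitly via a Stein-type (or Hestenes-type) reflection across the hyperplane $\{x_1 = 0\}$, and then verify $H^t$-boundedness by interpolation / Fourier-analytic estimates. First I would reduce to the case of the half-space $\{x_1 > 0\}$ with the hyperplane being the coordinate plane; the general case follows by a rotation $\Rn$ and a translation, which are isometries on $H^t(\bbR^d)$, as the statement itself notes. Next, for integer $t = m \in \bbN_0$ one can use the classical higher-order reflection: set
\begin{align}
	(Ef)(x_1,\xp) := \begin{cases} f(x_1,\xp), & x_1 > 0,\\[2pt] \sum_{j=1}^{m+1} \lambda_j\, f(-j x_1, \xp), & x_1 < 0,\end{cases}
\end{align}
where the coefficients $\lambda_j$ are chosen (by solving the Vandermonde-type linear system $\sum_j \lambda_j (-j)^k = 1$ for $k = 0,1,\dots,m$) so that all partial derivatives up to order $m$ match across $x_1 = 0$; a direct computation then shows $Ef \in H^m(\bbR^d)$ with $\norm{Ef}_{H^m(\bbR^d)} \lesssim \norm{f}_{H^m(\{x_1>0\})}$.

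For non-integer $t$, I would invoke the standard fact that the same fixed operator $E$ (with enough reflection terms, i.e. taking $m = \lceil t\rceil$) is simultaneously bounded $H^{k}(\{x_1>0\}) \to H^k(\bbR^d)$ for every integer $0 \le k \le m$, and then obtain boundedness on the intermediate space $H^t$ by real or complex interpolation between $H^0 = L^2$ and $H^m$ — exactly the kind of interpolation machinery that will be recalled in \autoref{ssec:interp}. This is where I would cite \cite[Sec. 4.5]{triebel2} (or Stein's book) rather than reproving it: the existence of a universal bounded extension operator for the half-space on the whole Sobolev/Besov scale is completely classical. The restriction property $f = g$ on $\{x_1>0\}$ is immediate from the construction since $E$ leaves the values on the half-space untouched.

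The only mildly delicate point — and the one I would flag as the main obstacle if one insisted on a self-contained argument — is the interpolation step for fractional $t$: one must know that the pair $\big(L^2(\{x_1>0\}), H^m(\{x_1>0\})\big)$ interpolates to $H^t(\{x_1>0\})$ with the expected parameter, i.e. that the half-space Sobolev spaces form an interpolation scale. This again is standard (it follows from the existence of the extension operator together with the corresponding statement on $\bbR^d$, via retraction/coretraction), so in the write-up I would simply state the reflection construction, note the integer-order estimates, and appeal to \cite{triebel2} for the interpolation, treating the whole theorem as essentially a citation packaged in the form convenient for later use.
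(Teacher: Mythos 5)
The paper states this as a classical fact and simply cites \cite[Sec.~4.5]{triebel2} without giving a proof, which is precisely the treatment you recommend in your closing paragraph. Your Hestenes-reflection plus interpolation sketch (including the caveat about needing the retraction/coretraction argument to know that the half-space Sobolev scale interpolates correctly) is the standard proof of the cited result and is correct, so your proposal matches the paper's handling of this statement.
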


\begin{remark}\label{rem:hyp_normals_neg}
	Using \autoref{th:half_space_ext}, we see that, for arbitrary $\vec s$, we can restrict the representation of \eqref{eq:Ht_except_hyp} to hyperplanes satisfying $\vec s \cdot \vec n_i \le 0$, since, if $\vec s \cdot \vec n_i>0$, we can extend the mutilated $f_i$ to the full space --- absorbing it into $f_0$ --- and subtracting its extension on the other side (hence, the vector $n_i$ in the Heaviside function flips signs).
\end{remark}

The following result extends the boundedness of $\CS$ from \autoref{prop:sol_op_bounded} to the spaces $H^t(\bbR^d\setminus\{h_i\})$, which can be seen as confirmation that this class of functions is well-chosen for the behaviour of \eqref{eq:LinTrans}.

\begin{proposition}\label{prop:sol_smooth_except_hyp}
	For $f\in H^t(\bbR^d\setminus\{h_i\})$, the solution $u=\CS[f]\in H^t(\bbR^d\setminus\{h_i\})$ is of the same form \eqref{eq:Ht_except_hyp}, and furthermore, $\norm{u_i}_{H^t} \lesssim \norm{f_i}_{H^t}$ for $i=1,\ldots,N$, as well as $\norm{u_0}_{H^t}\lesssim \sum_{i=0}^N \norm{f_i}_{H^t}$. In other words, the solution operator $\CS$ is bounded on this space,
	\begin{align}
		\norm{\CS}_{H^t(\bbR^d\setminus\{h_i\})\to H^t(\bbR^d\setminus\{h_i\})}<\infty.
	\end{align}
\end{proposition}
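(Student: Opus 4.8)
The plan is to combine the explicit solution formula \eqref{eq:lintrans_expl_sol} with the structural decomposition of \autoref{def:Ht_except_hyp}, reducing everything to the single-hyperplane case via linearity of $\CS$ and \autoref{rem:hyp_normals_neg}. First I would rotate by $\Rs$ so the transport direction is $\vec e_1$; by \eqref{eq:trans_dir_e1} this is norm-preserving, so it suffices to analyze $\vec e_1 \cdot \nabla u + \kappa u = f$ with $f = f_0 + \sum_{i=1}^N f_i H(\vec x \cdot \vec n_i - v_i)$. By \autoref{rem:hyp_normals_neg} I may assume each rotated normal satisfies $\vec e_1 \cdot \vec n_i \le 0$, i.e.\ the half-space $\{\vec x \cdot \vec n_i > v_i\}$ opens ``upstream'' or transversally; crucially it never opens strictly downstream, which is exactly what keeps the Heaviside factor from being smeared along the characteristic. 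Applying $\CS$ term-by-term, it remains to show: (a) $\CS[f_0] \in H^t(\bbR^d)$, which is just \autoref{prop:sol_op_bounded}; and (b) for each $i$, $\CS[f_i H(\cdot\,\vec n_i - v_i)]$ has the form $u_i H(\cdot\,\vec n_i - v_i) + (\text{an } H^t \text{ remainder absorbed into } u_0)$ with $\norm{u_i}_{H^t} \lesssim \norm{f_i}_{H^t}$.

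The heart of the argument is (b), and the key observation is that the characteristic lines of the operator are the lines in direction $\vec e_1$, while the singularity interface is the hyperplane $\{\vec x \cdot \vec n_i = v_i\}$. Because $\vec e_1 \cdot \vec n_i \le 0$, each characteristic line crosses this hyperplane at most once, and crosses it going from the region $\{\vec x\cdot\vec n_i > v_i\}$ into $\{\vec x \cdot\vec n_i < v_i\}$ (or is parallel to it, in which case the source is identically zero on that line or identically its smooth value — the parallel case $\vec e_1 \cdot \vec n_i = 0$ is actually the easiest, as then $H(\vec x\cdot\vec n_i - v_i)$ only depends on $\xp$ and commutes with the integration in $x_1$). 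In the transversal case, fix $\xp$ and look at the 1D ODE along that characteristic: the source $\rs f_i(t,\xp)H(\cdots)$ is supported on a half-line $t > \tau(\xp)$ (or $t < \tau(\xp)$), so the formula in \eqref{eq:lintrans_expl_sol}, $y(x_1,\xp) = \ee^{-K(x_1,\xp)}\int_{-\infty}^{x_1} \rs f(t,\xp) H(\cdots)\ee^{K(t,\xp)}\,\d t$, produces $y$ which is zero for $x_1$ below the crossing and a smooth-in-$x_1$ (in fact $H^{t+1}$ across the interface, by the ODE) function above it — i.e.\ $y = (\text{something smooth across}) \cdot H(\text{shifted Heaviside})$. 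The point is that applying $\CS$ can only \emph{improve} regularity across the interface (by one derivative in the $\vec e_1$ direction), so no new singular hyperplanes are created and the old one persists in exactly the same location with a new amplitude. To extract the $H^t$ estimate I would: extend $f_i$ from its half-space to all of $\bbR^d$ via \autoref{th:half_space_ext} to get $\tilde f_i \in H^t(\bbR^d)$ with comparable norm; write $f_i H = \tilde f_i H$; apply $\CS$ to $\tilde f_i$ (globally $H^t$-bounded by \autoref{prop:sol_op_bounded}) and separately analyze the correction coming from the cut-off, using that the cut-off location relative to the characteristic flow is benign; then read off $u_i$ as the restriction of $\CS[\tilde f_i]$ (suitably corrected) to one side and absorb the other side's discrepancy — which is itself a global $H^t$ function by the same extension/boundedness chain — into $u_0$.

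The main obstacle I expect is a clean handling of the ``corner'' regularity in dimensions $d \ge 2$: even though along each individual characteristic the solution is manifestly as regular as claimed, one must control the $H^t(\bbR^d)$ norm of the pieces $u_i$ and $u_0$, and the map $f \mapsto y$ in \eqref{eq:lintrans_expl_sol} involves the integrating factor $\ee^{\pm K(t,\xp)}$ with $K$ built from $\rs\kappa$, so one needs the hypothesis $\kappa \in H^{\ceil t + d/2}$ (and $\kappa \ge \gamma > 0$) to guarantee that multiplication by $\ee^{\pm K}$ and the $x_1$-antiderivative together map the relevant $H^t$-on-a-half-space space to $H^t$. Concretely, the subtle step is verifying that the restriction-to-hyperplane trace $\rs f_i|_{\{t = \tau(\xp)\}}$, which lives in $H^{t-1/2}(\bbR^{d-1})$ by \autoref{lem:fourier_decomp}-type trace theory, propagates into an $H^t(\bbR^d)$ amplitude for $u_i$ after solving the ODE — this is where the one-derivative gain of the transport solve exactly compensates the half-derivative trace loss plus the half-derivative needed to re-glue, and it must be done with the variable-coefficient integrating factor present. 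Everything else (rotation invariance, term-by-term linearity, the parallel-hyperplane special case, and the bookkeeping of which pieces go into $u_0$ versus $u_i$) is routine once this core propagation estimate is in hand; I would isolate it as a lemma, possibly by reducing to $\kappa \equiv \text{const}$ via $u = \ee^{-K}\tilde u$ and then invoking \autoref{prop:sol_op_bounded} and standard multiplier/product estimates in $H^t$.
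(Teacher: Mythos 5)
Your plan follows essentially the same route as the paper's proof: reduce by linearity to a single term $f(\vec x)H(\vec x\cdot\vec n-v)$, use \eqref{eq:trans_dir_e1} and \autoref{rem:hyp_normals_neg} to normalize $\vec s=\vec e_1$ and $(\vec n)_1\le 0$, plug into the explicit formula \eqref{eq:lintrans_expl_sol} to see the Heaviside truncating the $t$-integral at the interface, handle the parallel case $(\vec n)_1=0$ separately, and finish with \autoref{th:half_space_ext}. The only stylistic difference is that the paper evaluates the downstream piece directly as $\CS[f]$ at the crossing point $\parens*{\tau(\xp),\xp}^\top$, $\tau(\xp)=\frac{v-\xp\cdot\vec n{}'}{(\vec n)_1}$, times a bounded integrating factor, and then extends; your ``extend $f_i$, solve globally, analyze the correction, restrict, absorb'' scheme is a more roundabout version of the same decomposition and would also work.

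However, as written your argument has a genuine unresolved step at exactly the point you flag as ``the main obstacle.'' You correctly observe that the downstream amplitude is a trace-type quantity, hence naively only $H^{t-1/2}(\bbR^{d-1})$, and that some compensation must recover the missing half derivative; but you stop at asserting that the one-derivative transport gain ``exactly compensates,'' without showing it. The compensation is real but does not follow from generic trace theory or abstract derivative counting (the $\vec e_1$-derivative gain only yields extra regularity normal to the interface when $\vec n\parallel\vec e_1$, and $\vec n$ is generally oblique). It instead comes from the explicit one-sided-integral form of the downstream value $\ee^{-K(\vec x)}\int_{-\infty}^{\tau(\xp)}f(t,\xp)\ee^{K(t,\xp)}\d t$: differentiating in $\xp$ produces, besides interior terms, a boundary term proportional to $f(\tau(\xp),\xp)$, and it is the factor $\ee^{K(\tau(\xp),\xp)-K(\vec x)}\le\ee^{-\gamma(x_1-\tau(\xp))}$ which, upon $L^2$-integration over the half-space $\{x_1>\tau(\xp)\}$, converts that $(d-1)$-dimensional trace into a quantity controlled by the full $\|f\|_{H^t(\bbR^d)}$. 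That computation (for derivatives up to $\ceil{t}$, then interpolation) is the content you would need to supply; it is also the step the paper compresses into the single clause ``Since the coordinate transformation in the first component is linear, we conclude that $\ldots\in H^t$,'' so you are right to regard it as the non-trivial part. Your proposed detour through $u=\ee^{-K}\tilde u$ to constant $\kappa$ is an alternative, but the paper does not need it here and it introduces additional multiplier estimates for $\ee^{\pm K}$ that the direct computation avoids.
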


\begin{proof}
	We begin by noting that, since the differential equation \eqref{eq:LinTrans} is linear, it suffices to deal with one term $f(\vec x) H(\vec x \cdot \vec n-v)$, and the rest follows via superposition. Furthermore, using \eqref{eq:trans_dir_e1} and \autoref{rem:hyp_normals_neg}, we can assume without loss of generality that $\vec s=\vec e_1$, and that $(\vec n)_1\le 0$. 
	
	Assuming $(\vec n)_1<0$ (i.e.~a strict inequality) for the moment, we use \eqref{eq:lintrans_expl_sol} with this right-hand side, and observe that, if $x_1$ is before the cut-off, the Heaviside function does not matter, and for everything beyond that, the integral goes just until the interface of the hyperplane,
	\begin{align}
		u(\vec x)
		&= \ee^{-K(\vec x)} \int_{-\infty}^{x_1} f (t,\xp) H\parens*{(t,\xp)^\top \cdot \vec n -v} \ee^{K(t,\xp)} \d t\\
		&=H(\vec x\cdot \vec n-v) \ee^{-K(\vec x)} \int_{-\infty}^{x_1} f (t,\xp) \ee^{K(t,\xp)} \d t +H(v-\vec x\cdot \vec n) \ee^{-K(\vec x)} \int_{-\infty}^{\frac{v-\xp \cdot \vec n'}{(\vec n)_1}} f (t,\xp) \ee^{K(t,\xp)} \d t\\
		&=H(\vec x\cdot \vec n-v) \CS[f](\vec x) +H(v-\vec x\cdot \vec n) \CS[f]\binom{\frac{v-\xp \cdot \vec n'}{(\vec n)_1}}{\xp} \ee^{K\parens*{(\frac{v-\xp \cdot \vec n'}{(\vec n)_1},\xp)^\top}} \ee^{-K(\vec x)}.
	\end{align}
	Due to \autoref{prop:sol_op_bounded}, $\CS[f]\in H^t(\bbR^d)$, with norm bounded by $\norm{f}_{H^t}$. In order for the second term to appear, $v-\vec x\cdot \vec n$ has to be greater than zero, and therefore (since $(\vec n)_1<0$) we have $x_1>\frac{v-\xp \cdot \vec n'}{(\vec n)_1}$. As the absorption satisfies $\kappa\ge\gamma>0$, $K(\cdot,\xp)$ is (strictly) monotonically increasing, and this implies that the term $\ee^{K\parens*{(\frac{v-\xp \cdot \vec n'}{(\vec n)_1},\xp)^\top}} \ee^{-K(\vec x)}$ is bounded. Since the coordinate transformation in the first component is linear, we conclude that
	\begin{align}
		\CS[f]\binom{\frac{v-\xp \cdot \vec n'}{(\vec n)_1}}{\xp} \ee^{K\parens*{(\frac{v-\xp \cdot \vec n'}{(\vec n)_1},\xp)^\top}-K(\vec x)} \in H^t\parens*{\set*{\vec x\in\bbR^d}{\vec x\cdot \vec n-v<0}},
	\end{align}
	by since multiplication with smooth enough $\kappa$ is bounded from $H^t$ to itself (which can be showing that the derivatives remain in $L^2$ and interpolation), and noting that the exponential function does not decrease the smoothness. Again by \autoref{th:half_space_ext}, we can extend this term to $y\in H^t(\bbR^d)$, and thus
	\begin{align}
		u(\vec x) &= H(\vec x\cdot \vec n-v) \parens*{\CS[f](\vec x)-y(\vec x)} + y(\vec x).
	\end{align}
	Due to the boundedness of $\CS$ on $H^t$ and $\norm{y}_{H^t} \lesssim \norm{f}_{H^t}$, this implies $\norm{u}_{H^t(\bbR^d\setminus h)}\lesssim \norm{f}_{H^t(\bbR^d\setminus h)}$.
	
	In the last remaining case --- that $(\vec n)_1=0$ (i.e.~$\vec n \bot \vec s$) --- the path of the integral never crosses the hyperplane, and thus $u=H(\vec x\cdot \vec n-v) \CS[f](\vec x)$.
\end{proof}

Lastly, we discuss how decay properties transfer to the solution.

\begin{lemma}\label{lem:decay_sol}
	If $f\in L^2$ satisfies
	\begin{align}
		\abs{f(\vec x)} &\lesssim \reg{\vec x}^{-2n},
	\end{align}
	then the solution $u=\CS[f]$ to \eqref{eq:LinTrans} also satisfies
	\begin{align}
		\abs{u(\vec x)} &\lesssim \reg{\vec x}^{-2n}.
	\end{align}
\end{lemma}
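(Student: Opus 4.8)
The plan is to work directly with the explicit solution formula \eqref{eq:lintrans_expl_sol}. After applying the rotation $\rs$ (which leaves the regularised absolute value essentially unchanged, since $|\Rs^{-1}\vec x| = |\vec x|$), it suffices to bound
\[
	|y(x_1,\xp)| = \ee^{-K(x_1,\xp)} \abs*{\int_{-\infty}^{x_1} \rs f(t,\xp)\, \ee^{K(t,\xp)} \d t},
\]
where $K(t,\xp) = \int_0^t \rs\kappa(r,\xp)\d r$ and $\kappa \ge \gamma > 0$. The key structural fact is that $K(\cdot,\xp)$ is strictly increasing with slope bounded below by $\gamma$, so that $\ee^{K(t,\xp) - K(x_1,\xp)} \le \ee^{-\gamma(x_1 - t)}$ for $t \le x_1$. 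Thus
\[
	|y(x_1,\xp)| \lesssim \int_{-\infty}^{x_1} \ee^{-\gamma(x_1-t)}\, \reg{(t,\xp)}^{-2n} \d t.
\]

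The remaining task is to show that this convolution of an exponentially decaying kernel against $\reg{(t,\xp)}^{-2n}$ is itself $\lesssim \reg{(x_1,\xp)}^{-2n}$. I would split the integration interval at the midpoint $t = x_1/2$ (or, more carefully, according to whether $|t| \le |x_1|/2$ or not, to handle both signs of $x_1$). On the region where $t$ is comparable to $x_1$, one has $\reg{(t,\xp)} \sim \reg{(x_1,\xp)}$ is false in general — rather $\reg{(t,\xp)} \ge$ something; the clean way is: when $|x_1 - t|$ is small, $\reg{(t,\xp)} \gtrsim \reg{(x_1,\xp)}$ fails, so instead note $\reg{(t,\xp)}^{-2n} \le \reg{(t,\xp)}^{-2n}$ and use that near $t=x_1$ the polynomial weight is already of the right size because $\reg{(x_1,\xp)} \le \reg{(t,\xp)} \cdot \reg{x_1 - t}$ (a Peetre-type inequality), giving $\reg{(t,\xp)}^{-2n} \le \reg{(x_1,\xp)}^{-2n}\reg{x_1-t}^{2n}$; then $\reg{x_1-t}^{2n}$ is killed by $\ee^{-\gamma(x_1-t)}$ upon integration. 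On the complementary region $|x_1 - t| \ge \tfrac12|x_1|$ (say), the exponential factor is $\le \ee^{-\gamma|x_1|/2}$, which decays faster than any polynomial in $|x_1|$, and the integral of $\reg{(t,\xp)}^{-2n}$ over that region is at most a constant (independent of $\xp$, since $\reg{(t,\xp)}^{-2n} \le \reg{t}^{-2n}$ is integrable in $t$ once $2n \ge 2$; for $n$ this large, no issue, and in any case one can afford a crude bound), so that piece is $\lesssim \ee^{-\gamma|x_1|/2} \lesssim \reg{(x_1,\xp)}^{-2n}$.

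The main obstacle — though it is more bookkeeping than genuine difficulty — is handling the $\xp$-dependence uniformly: the decay $\reg{(t,\xp)}^{-2n}$ involves all coordinates, and one must make sure the bounds are uniform in $\xp$, so that after undoing the rotation the estimate $|u(\vec x)| \lesssim \reg{\vec x}^{-2n}$ holds genuinely as a function of the full vector. This is taken care of by always using the Peetre inequality $\reg{(x_1,\xp)} \le \sqrt2\, \reg{(t,\xp)}\, \reg{x_1 - t}$ in the full variable, so the $\xp$-part never needs to be separated out. A secondary point to check is that $\kappa$ being merely in some Sobolev space (rather than bounded) is enough: only the lower bound $\kappa \ge \gamma$ is used here, so no upper bound or smoothness of $\kappa$ is needed for this lemma, and the argument goes through verbatim.
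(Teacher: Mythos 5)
Your route is genuinely different from the paper's and, carried out cleanly, more elementary: the paper turns the exponential into a polynomial $\ee^{-\gamma(x_1-t)}\lesssim\reg{x_1-t}^{-2m}$ and then invokes the specialised integral estimate \eqref{eq:Imn_est} of \autoref{th:Imn_est}, whereas you appeal to the Peetre inequality $\reg{(x_1,\xp)}\le\sqrt{2}\,\reg{(t,\xp)}\,\reg{x_1-t}$. That inequality alone already finishes the proof in one line, with no domain splitting at all: pulling $\reg{(x_1,\xp)}^{-2n}$ out of the integral leaves $\int_0^\infty\ee^{-\gamma s}\reg{s}^{2n}\d s$, a finite constant depending only on $n$ and $\gamma$ — and you half-note this yourself at the end. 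The split you actually outline, however, contains a false step: the far-region bound $\ee^{-\gamma|x_1|/2}\lesssim\reg{(x_1,\xp)}^{-2n}$ fails whenever $|x_1|$ is small and $|\xp|$ is large (take $x_1=0$ and $|\xp|\to\infty$), precisely because bounding the $t$-integral of $\reg{(t,\xp)}^{-2n}$ by a constant discards all decay in $\xp$. Retaining that decay forces you back to the Peetre argument, which handles both regions identically and makes the split vacuous; so the fix is simply to delete the split. Your remark that only the lower bound $\kappa\ge\gamma>0$ is used here — no smoothness or upper bound on $\kappa$ — is correct and matches what the paper's proof uses.
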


\begin{proof}
	Using the solution formula from \eqref{eq:lintrans_expl_sol} (for $\vec s =\vec e_1$), we calculate
	\begin{align}
		\abs{u(\vec x)} &\lesssim \int_{-\infty}^{x_1} \reg*{(t,\xp)^\top}^{-2n} \ee^{K(t,\xp)-K(x_1,\xp)} \d t
		\le \int_{-\infty}^{x_1} \reg*{(t,\xp)^\top}^{-2n} \ee^{-\gamma(x_1-t)} \d t,
	\end{align}
	since the strict monotonicity of $K$ (due to $\gamma>0$) implies $K(t,\xp)-K(x_1,\xp)\le \gamma(t-x_1)$ for $t\le x_1$. At this point, we use the fact that for arbitrary $m$,
	\begin{align}
		\ee^{-\gamma y} \lesssim \reg{y}^{-2m}, \qquad \text{for} \qquad y\ge 0.
	\end{align}
	Inserting this into the above and resolving the square root from $\reg{y}=\sqrt{1+|y|^2}$, we continue
	\begin{align}
		\abs{u(\vec x)}
		&\lesssim \int_{-\infty}^{x_1} \reg*{(t,\xp)^\top}^{-2n} \reg{x_1-t}^{-2m} \d t \le \int_{-\infty}^{\infty} \frac{1}{(t^2+|\xp|^2+1)^{n}} \frac{1}{((t-x_1)^2+1)^{m}} \d t \\
		&\lesssim \frac{1}{(x_1^2+|\xp|^2+2)^n} + \frac{1}{(x_1^2+|\xp|^2+2)^m}\frac{1}{(|\xp|^2+1)^{n-\frac 12}}
		\le \frac{1}{(|\vec x|^2+1)^n} = \reg{\vec x}^{-2n},
	\end{align}
	where the last line uses \eqref{eq:Imn_est}, and $m\ge n$ (which we we are free to choose this large).
\end{proof}

\subsection{The Ridgelet Construction \cite{grohs1}}\label{ssec:ridgeframes}

We recall the definition of a ridgelet frame from \cite{grohs1,compress}, where details on the construction can be found. The starting point is a (sufficiently) smooth partition of unity in frequency (constructed from a given window function),
\begin{align}\label{eq:partititon_unity}
	\parens*{\psi_\jl}_{j\in\bbN_0,\, \ell\in\{0,\ldots,L_j\}} \quad \text{such that} \quad \sum_{j=0}^\infty \sum_{\ell=0}^{L_j} \hat \psi_\jl^2 = 1,
\end{align}
supported on (approximate) polar rectangles
\begin{align}\label{eq:supp_Pjl}
	\supp \hat \psi_\jl 
	&\subseteq P_\jl:= \set[\Big]{
	\vec \xi \in \bbR^d}{ 2^{j-1}<|\vec \xi| < 2^{j+1}, \, \smash{\frac{\vec \xi}{|\vec \xi|}} \in B_\bbSd(\vec s_\jl, 2^{-j+1})}, 
\end{align}
where the vectors $\vec s_\jl$ are an approximately uniform sampling of points on the sphere for each scale $j$, with average distance $\sim 2^{-j}$ and cardinality $\#\{\vec s_\jl\}_{\ell} \lesssim 2^{j(d-1)}$, see \cite{borup} for the construction or \cite{grohs1,compress} for more details on their relation to ridgelets.

\begin{definition}\label{def:phi_jlk}
	Using the functions $\hat \psi_\jl$, a Parseval frame for $L^2(\bbR^d)$ is defined by 
	\begin{align}
		\varphi_\jlk = \, 2^{-\frac{j}{2}} T_{U_\jl \vec k}\,\psi_\jl, \quad
		j\in\bbN_0, \, \ell \in \{0,\ldots,L_j\}, \, \vec k \in \bbZ^d, 
	\end{align}
	with $T$ the translation operator, $T_{\vec y}f(\cdot) := f(\cdot-\vec y)$, and $U_\jl:= \Rjl^{-1} D_{2^{-j}}$, where $\Rjl:=R_{\vec s_\jl}$ is the transformation introduced at the beginning of the section, and $D_a$ dilates the first component, $D_a \vec k := (a \, k_1, k_2,\ldots,k_d)^\top$. The rotation $\Rjl$ is arbitrary (to the extent that it is ambiguous) but fixed. Whenever possible, we will subsume the indices of $\varphi$ by $\lambda=(\jlk)$.
\end{definition}

\begin{assumption}\label{assump:psi_smooth}
	The window functions 
	\begin{align}\label{eq:psi_trafo}
		\hat \psi_{(\jl)} (\vec \eta):= \hat\psi_\jl(U_\jl^{-\top}\vec \eta) 
	\end{align}
	have bounded derivatives \emph{independently} of $j$ and $\ell$. Thus, for all $n$ up to an upper bound $N$ dependent on the differentiability of the window functions (or possibly for all $n\in\bbN$ if the window functions are $\CC^\infty$), we have the estimate
	\begin{align}
		\norm*{\hat \psi_{(\jl)}}_{\CC^n} \le\beta_{n}.
	\end{align}
	In \cite[Lem. B.1]{compress}, it is shown that this assumption can be satisfied with a reasonable (and still quite flexible) choice of window functions.
\end{assumption}

\begin{definition}
	We consider the diagonal matrix
	\begin{align}
		\VW=(\VW_{\lambda,\lambda'})_{\lambda,\lambda'\in\Lambda} \quad \text{with} \quad \VW_{\lambda,\lambda'}:= \begin{cases}
		0, & \lambda\neq\lambda',\\
		1+2^j \abs{\vec s\cdot \vec s_\jl}, & \lambda=\lambda',
	\end{cases}
	\end{align}
	which is the right choice of weight to make $\Phi$ a frame for $\Hs$ (see \cite[Thm. 10]{grohs1}, \cite[Thm. 4.3]{compress}), i.e.
	\begin{align}\label{eq:RidgeletFrame}
		\norm{f}_{\Hs} \sim \norm{\VW \inpr{\Phi,f}}_{\ell^2}.
	\end{align}
\end{definition}

\subsection{Interpolation between Spaces}\label{ssec:interp}

We recall the scale of sequence spaces necessary for our analysis, as well as some classical interpolation results in the context we will work in (the mentioned sequence spaces, Sobolev spaces, and operators between them).

\begin{definition}
	The \emph{Lorentz spaces} of sequences are defined as follows (\cite[p. 89]{DeVore1998}),
	\begin{align}
		\ell^p_q:=\set[\bigg]{(c_n)_{n\in\bbN}}{\snorm{c_\bbN}_{\ell^p_q}:=\parens[\bigg]{\sum_{n\in\bbN}(c^*_n)^q n^{\frac{q}{p}-1}}^{\frac{1}{q}}<\infty},
	\end{align}
	where $c^*_n$ is the decreasing rearrangement of $(\abs{c_n})_{n\in\bbN}$, $0< p <\infty$ and $0< q < \infty$. For $q=\infty$, we have
	\begin{align}
		\ell^p_\infty:=\set{(c_n)_{n\in\bbN}}{\snorm{c_\bbN}_{\ell^p_\infty}:=\sup_{n\in\bbN} c^*_n n^{\frac 1p} <\infty}=:\ell^p_w,
	\end{align}
	which is also called the \emph{weak $\ell^p$-space}. It should be noted that, technically, the norms involved are generally only quasinorms, i.e.
	\begin{align}\label{eq:lpw_triangle_ineq}
		\snorm{a_\bbN+b_\bbN}_{\ell^p_q}\le K_{p,q} \parens*{\snorm{a_\bbN}_{\ell^p_q}+\snorm{b_\bbN}_{\ell^p_q}}.
	\end{align}
	Additionally, it can be shown that $\ell^p_p=\ell^p$, as well as $\ell^p_{q_1}\subseteq \ell^p_{q_2}$ for $q_1\le q_2$.
\end{definition}

\begin{theorem}\label{thm:interp}
	For Sobolev spaces with $t>0$, the real interpolation with $L^2$ depending on $0<\theta<1$ yields (\cite[Thm. 6.4.5]{interp_sp})
	\begin{align}\label{eq:interp_sob}
		(L^2,H^t)_{\theta,2}=H^{t\theta}.
	\end{align}
	On the sequence space side, we have that (\cite[Thm. 5.3.1]{interp_sp})
	\begin{align}\label{eq:interp_seq_sp}
		(\ell^2,\ell^p_w)_{\theta,2}=\ell^{\bar{p}}_2, \quad \text{where} \quad \bar{p}=\parens[\Big]{\frac{1-\theta}{2} + \frac{\theta}{p}}^{-1}.
	\end{align}
	Finally, for compatible couples\footnote{I.e. both spaces are linear subspaces of a larger Hausdorff topological vector space and the embeddings are continuous, c.f. \cite[Def. 3.1.1]{interp_op}} $(X_0,X_1)$ and $(Y_0,Y_1)$ and an admissible linear operator $T$, i.e.
	\begin{align}
		T:X_0+X_1 \to Y_0+Y_1, \quad \text{such that} \quad T\bigr|_{X_i}:X_i\to Y_i \quad \text{and} \quad \norm{T}_{X_i\to Y_i}<\infty \quad \text{for}\quad  i=0,1,
	\end{align}
	it holds that (\cite[Thm. 3.1.2]{interp_sp}, \cite[Thm. 5.1.12]{interp_op}), for all $\theta\in(0,1)$,
	\begin{align}\label{eq:interp_op}
		\norm{T}_{(X_0,X_1)_{\theta,q}\to(Y_0,Y_1)_{\theta,q}}< \infty.
	\end{align}
\end{theorem}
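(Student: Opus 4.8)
All three assertions are classical; the plan is simply to specialise known interpolation results, and below I indicate the mechanism behind each.

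For \eqref{eq:interp_sob}, I would pass to the Fourier side, where $H^s(\bbR^d)$ becomes the weighted space $L^2\big((1+|\vec\xi|^2)^{s/2}\,\d\vec\xi\big)$. Real interpolation with second index $2$ of a couple of weighted $L^2$-spaces $\big(L^2(w_0\,\d\vec\xi),L^2(w_1\,\d\vec\xi)\big)$ is again a weighted $L^2$-space, with weight $w_0^{1-\theta}w_1^\theta$ — this is a direct $K$-functional computation, or an instance of the Stein--Weiss theorem. Choosing $w_0\equiv1$ and $w_1=(1+|\vec\xi|^2)^{t/2}$ gives the weight $(1+|\vec\xi|^2)^{t\theta/2}$, i.e.\ $H^{t\theta}$, which is \cite[Thm.~6.4.5]{interp_sp}.

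For \eqref{eq:interp_seq_sp}, I would invoke the general formula for real interpolation of Lorentz sequence spaces: when $p_0\neq p_1$, one has $(\ell^{p_0}_{q_0},\ell^{p_1}_{q_1})_{\theta,q}=\ell^{p}_q$ with $\tfrac1p=\tfrac{1-\theta}{p_0}+\tfrac{\theta}{p_1}$, regardless of $q_0,q_1$. Specialising to $p_0=q_0=2$ (so $\ell^2_2=\ell^2$), $p_1=p$, $q_1=\infty$ (so $\ell^p_\infty=\ell^p_w$), and outer index $q=2$ produces $\bar p$ exactly as stated and the space $\ell^{\bar p}_2$; this is \cite[Thm.~5.3.1]{interp_sp}.

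Finally, \eqref{eq:interp_op} is the defining (``exact interpolation functor'') property of the $K$-method. With $K(t,a;X_0,X_1)=\inf_{a=a_0+a_1}\big(\|a_0\|_{X_0}+t\|a_1\|_{X_1}\big)$, boundedness of $T$ at both endpoints yields, for every splitting $a=a_0+a_1$, the bound $\|Ta_0\|_{Y_0}+t\|Ta_1\|_{Y_1}\le M_0\|a_0\|_{X_0}+tM_1\|a_1\|_{X_1}$ with $M_i=\|T\|_{X_i\to Y_i}$; rescaling $t\mapsto (M_0/M_1)t$ and taking the infimum gives $K(t,Ta;Y_0,Y_1)\le M_0\,K((M_1/M_0)t,a;X_0,X_1)$, and inserting this into the norm $\big(\int_0^\infty (t^{-\theta}K(t,a))^q\,\tfrac{\d t}{t}\big)^{1/q}$ and changing variables proves the claim — see \cite[Thm.~3.1.2]{interp_sp}, \cite[Thm.~5.1.12]{interp_op}. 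Since everything here is standard there is no real obstacle; the only mildly technical ingredient, were one to want a self-contained treatment, is the explicit $K$-functional computation for weighted $L^2$-spaces and for Lorentz sequence spaces underpinning the first two identities.
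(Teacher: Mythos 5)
Your sketch is correct and matches what the cited references ([interp\_sp, Thms.~3.1.2, 5.3.1, 6.4.5] and [interp\_op, Thm.~5.1.12]) establish; the paper itself supplies no proof here — this theorem is stated purely as a recall of classical interpolation facts with citations, which is exactly how you treat it. The only point worth keeping an eye on is the convention in your first item: you write $H^s$ as $L^2$ with weight $(1+|\vec\xi|^2)^{s/2}$, which is consistent only if $L^2(w)$ is understood as $\{f: fw\in L^2\}$ (so that $\|f\|^2=\int|f|^2w^2$); with the more common convention $\|f\|^2=\int|f|^2w$, the weight should be $(1+|\vec\xi|^2)^{s}$ — either way the interpolated weight exponent works out to give $H^{t\theta}$, so the conclusion is unaffected.
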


\section{$N$-Term Approximation $\&$ Benchmark Rates}\label{sec:benchmark}

We briefly recall some core concepts of (non-linear) approximation theory, see \cite{DeVore1998} for a survey.

\begin{definition}\label{def:nonlin_basics}
	Consider a (relatively) compact class $\FC\subseteq \CH$, where $\CH$ is a separable Hilbert space, as well as a dictionary $\Phi=(\varphi_\lambda)_{\lambda\in\Lambda}\in\CH$. To measure the mentioned quality of approximation in dependence of $N$, we introduce the set
	\begin{align}
		\Sigma_N(\Phi):=\set[\bigg]{f=\sum_{\lambda\in\Lambda_N} c_\lambda \varphi_\lambda}{\#\Lambda_N\le N},
	\end{align}
	which is non-linear (as $f,g\in \Sigma_N(\Phi)$ generally don't imply $f+g\in\Sigma_N(\Phi)$) --- consequently, analysis of the quantity
	\begin{align}
		\varsigma_N(f,\Phi):=\inf_{g\in \Sigma_N(\Phi)} \norm{f-g}_\CH
	\end{align}
	is referred to as the study of \emph{non-linear approximation}.
\end{definition}

Comparing, $\varsigma_N(f,\Phi)$ with $N^{-\alpha}$, we have the following definition of approximation spaces (which can be done in much greater generality, see \cite[Sec. 4.1]{DeVore1998}).

\begin{definition}
	Let
	\begin{align}
		\CA^\alpha_q(\Phi):=\set{f\in \CH}{\norm{f}_{\CA^\alpha_q}:=\snorm{f}_{\CA^\alpha_q}+\norm{f}_\CH <\infty},
	\end{align}
	where
	\begin{align}
		\snorm{f}_{\CA^\alpha_q} :=
		\begin{cases}
			\parens{\sum_{n\in\bbN} \parens*{n^\alpha \varsigma_n(f,\Phi)}^q \frac 1n}^{\frac 1q}, & 0<q<\infty,\\
			\sup_{n\in\bbN} n^\alpha \varsigma_n(f,\Phi), & q=\infty.
		\end{cases}\tag*{\qedhere}
	\end{align}
	%
\end{definition}

These spaces are fairly well-understood --- especially if $\Phi$ forms a basis for $\CH$ --- and we will see that the similarity to the definition of Lorentz spaces is more than incidental.

\begin{theorem}
	For $\alpha>0$ and $0<p<q\le \infty$, we have $\CA^\alpha_p(\Phi)\subseteq \CA^\alpha_q(\Phi)$. More importantly, the approximation spaces are interpolation spaces; for $0<\alpha<r$ and $0<p,q\le \infty$, we have (\cite[Thm. 2]{DeVore1998})
	\begin{align}
		\CA^\alpha_q(\Phi)=(\CH,\CA^r_p(\Phi))_{\frac{\alpha}{r},q}.
	\end{align}
	Finally, if $\Phi$ is a basis for $\CH$, we have the following complete characterisation of the approximation spaces $\CA^\alpha_q(\Phi)$, namely that (\cite[Thm. 4]{DeVore1998})
	\begin{align}\label{eq:approx_charact_basis}
		f\in \CA^\alpha_q(\Phi) \quad \Longleftrightarrow \quad \inpr{\Phi,f}:=\parens{\inpr{\varphi_\lambda,f}}_{\lambda\in\Lambda} \in \ell^{p(\alpha)}_q \quad \text{where} \quad p(\alpha)=\parens[\Big]{\alpha+\frac 12}^{-1}.
	\end{align}
\end{theorem}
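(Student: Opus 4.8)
All three assertions are of a purely ``soft'' nature and I would reduce them to elementary facts about (Lorentz) sequence spaces, the bridge being a dyadic discretisation of the error sequence. First I would record the two structural facts that $\varsigma_{n+1}(f,\Phi)\le\varsigma_n(f,\Phi)$ (immediate from $\Sigma_n(\Phi)\subseteq\Sigma_{n+1}(\Phi)$) and $\varsigma_1(f,\Phi)\le\norm{f}_\CH$ (since $0\in\Sigma_1(\Phi)$), and then invoke the standard equivalence for non-increasing nonnegative sequences,
\begin{align}
	\Big(\sum_{n\in\bbN}\big(n^\alpha\varsigma_n(f,\Phi)\big)^q\tfrac1n\Big)^{1/q}\sim\Big(\sum_{j\ge0}\big(2^{j\alpha}\varsigma_{2^j}(f,\Phi)\big)^q\Big)^{1/q},
\end{align}
with the obvious $\sup$-modification for $q=\infty$. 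Granting this dyadic description, the embedding $\CA^\alpha_p(\Phi)\subseteq\CA^\alpha_q(\Phi)$ for $p<q$ is nothing but $\ell^p\subseteq\ell^q$ applied to the sequence $\big(2^{j\alpha}\varsigma_{2^j}(f,\Phi)\big)_j$, the $\norm{\cdot}_\CH$-part of the norm being harmless.

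For the interpolation identity I would verify the two inequalities that drive the abstract ``approximation spaces are interpolation spaces'' machinery for the couple $(\CH,\CA^r_p(\Phi))$. The Jackson-type estimate $\varsigma_n(f,\Phi)\lesssim n^{-r}\norm{f}_{\CA^r_p(\Phi)}$ is read off the dyadic description (each dyadic block is dominated by the full norm). The Bernstein-type estimate --- for $g\in\Sigma_n(\Phi)$ one has $g\in\CA^r_p(\Phi)$ with $\snorm{g}_{\CA^r_p(\Phi)}\lesssim n^r\norm{g}_\CH$ --- follows because $\varsigma_m(g,\Phi)=0$ for $m\ge n$ while $\varsigma_m(g,\Phi)\le\norm{g}_\CH$ always, so $\snorm{g}_{\CA^r_p}^p\le\norm{g}_\CH^p\sum_{m<n}m^{rp-1}\lesssim(n^r\norm{g}_\CH)^p$. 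Together with the trivial $\Sigma_n(\Phi)+\Sigma_n(\Phi)\subseteq\Sigma_{2n}(\Phi)$ and density of $\bigcup_n\Sigma_n(\Phi)$ in $\CH$, these let one show that the $K$-functional of the couple is equivalent to a weighted combination of the errors, $K(n^{-r},f;\CH,\CA^r_p(\Phi))\sim n^{-r}\big(\sum_{m\le n}(m^r\varsigma_m(f,\Phi))^p\tfrac1m\big)^{1/p}+\varsigma_n(f,\Phi)$: the ``$\lesssim$'' half uses Jackson to build a near-best approximant in $\Sigma_n(\Phi)$, the ``$\gtrsim$'' half uses Bernstein together with a telescoping decomposition of any admissible splitting $f=f_0+f_1$. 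Feeding this equivalence into the definition of $(\CH,\CA^r_p(\Phi))_{\alpha/r,q}$ and applying the monotonicity/dyadic trick once more collapses it to $\snorm{f}_{\CA^\alpha_q(\Phi)}$. \emph{This $K$-functional computation is the step I expect to be the main obstacle} --- not that any single estimate is hard, but assembling the near-best approximants and controlling the interplay of the two norms across all scales simultaneously is where the work lies; it is precisely the content invoked from \cite[Thm.~2]{DeVore1998}.

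For the final characterisation, the place where $\Phi$ being a \emph{basis} enters (and where I would take it orthonormal, which is the natural reading in a Hilbert space) is that greedy selection is then optimal: $\varsigma_N(f,\Phi)^2=\sum_{n>N}(c^*_n)^2$, with $(c^*_n)_n$ the decreasing rearrangement of $\big(\abs{\inpr{\varphi_\lambda,f}}\big)_{\lambda\in\Lambda}$. From here the equivalence $f\in\CA^\alpha_q(\Phi)\iff\inpr{\Phi,f}\in\ell^{p(\alpha)}_q$ with $1/p(\alpha)=\alpha+\tfrac12$ is a Stechkin-type computation: a discrete Hardy inequality turns the $N^\alpha$-reweighted, $\ell^q$-summed tail norms $\big(\sum_{n>N}(c^*_n)^2\big)^{1/2}$ into $\big(\sum_n(c^*_n\,n^{1/p(\alpha)})^q\tfrac1n\big)^{1/q}=\snorm{\inpr{\Phi,f}}_{\ell^{p(\alpha)}_q}$, the exponent being forced by the bookkeeping $-\alpha=\tfrac12-\tfrac1{p(\alpha)}$; the $\CH$-norm matches $\norm{\inpr{\Phi,f}}_{\ell^2}$ and is absorbed since $\ell^{p(\alpha)}_q\subseteq\ell^2$ for $\alpha>0$. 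Alternatively --- and this is the shortcut I would actually prefer --- one gets the characterisation for free by combining the interpolation identity just proved with the interpolation of Lorentz sequence spaces \eqref{eq:interp_seq_sp} and a single base case $\CA^r_p(\Phi)=\ell^{p(r)}_p$ (again the Stechkin lemma, but needed only for one convenient pair $(r,p)$), after which \eqref{eq:interp_sob}-style matching of exponents does the rest.
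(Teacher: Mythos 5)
The paper does not prove this theorem --- it is a recollection of classical results and is simply cited to \cite[Thm.~2, Thm.~4]{DeVore1998} --- so there is no ``paper's route'' to compare against. Your sketch reproduces the standard argument from the approximation-theory literature (DeVore's survey, and DeVore--Lorentz for the $K$-functional equivalence), and it is essentially correct: the dyadic reduction $\snorm{f}_{\CA^\alpha_q}\sim\norm{(2^{j\alpha}\varsigma_{2^j}(f))_j}_{\ell^q}$ gives the $p<q$ embedding for free; the Jackson and Bernstein estimates plus $\Sigma_n+\Sigma_n\subseteq\Sigma_{2n}$ yield the two-sided $K$-functional bound that collapses $(\CH,\CA^r_p)_{\alpha/r,q}$ back to $\CA^\alpha_q$; and for an orthonormal basis the identity $\varsigma_N(f)^2=\sum_{n>N}(c^*_n)^2$ combined with a discrete Hardy inequality gives the Stechkin-type characterisation with the exponent bookkeeping $1/p(\alpha)=\alpha+\tfrac12$.

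Two remarks. First, your flag that ``basis'' must really mean ``orthonormal basis'' (or at least a suitably normalised unconditional/Riesz basis) is a genuine and correct observation: greedy selection is not near-optimal for an arbitrary Schauder basis, so the equivalence \eqref{eq:approx_charact_basis} would fail as stated without that hypothesis. The paper's phrasing is loose here, but harmless in context since ridgelets form a Parseval frame and the result is only invoked in the weaker one-sided form of \autoref{prop:wlp_Nterm_frame}. Second, the $K$-functional upper bound is slightly more work than the single-splitting you indicate: to control $\norm{g_n}_{\CA^r_p}$ for a near-best $g_n\in\Sigma_n$ one really needs the telescoping decomposition $g_{2^J}=\sum_{j\le J}(g_{2^j}-g_{2^{j-1}})$ with each increment in $\Sigma_{2^{j+1}}$, applying Bernstein per block and summing in $\ell^p$; your phrasing hides this but you clearly know it is there, and it is exactly the content of the cited theorem.
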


For arbitrary dictionaries, \eqref{eq:approx_charact_basis} does not hold in general, but at least we can salvage one implication for frames (follows from \cite[Lem. 3.1]{kutyniok_lemvig}, which is carried out in \cite[Prop. 2.3.5]{my_thesis}; see also \cite[Sec. 8]{DeVore1998}).

\begin{proposition}\label{prop:wlp_Nterm_frame}
	For a frame $\Phi$ with canonical dual frame $\wt \Phi$, we have the implication that
	\begin{align}
		\inpr{\Phi,f} \in \ell^{p(\alpha)}_w \quad \text{with} \quad p(\alpha)=\parens[\Big]{\alpha+\frac 12}^{-1} \quad \Longrightarrow \quad f\in \CA^\alpha(\wt\Phi).
	\end{align}
\end{proposition}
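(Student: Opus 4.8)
The plan is to run the standard ``keep the $N$ largest coefficients'' argument, where the frame hypothesis enters only to convert a norm in $\CH$ into an $\ell^2$-norm of coefficients. Since $\Phi$ is a frame, its canonical dual $\wt\Phi$ reconstructs every $f\in\CH$, so writing $c=(c_\lambda)_{\lambda\in\Lambda}:=\inpr{\Phi,f}$ we have $f=\sum_{\lambda\in\Lambda}c_\lambda\wt\varphi_\lambda$ (and, in passing, $f\in\CH$ is automatic, while $c\in\ell^{p(\alpha)}_w\subseteq\ell^2$ since $p(\alpha)<2$). For a given $N\in\bbN$ I would let $\Lambda_N\subseteq\Lambda$ index the $N$ largest of the numbers $\abs{c_\lambda}$ (ties broken arbitrarily) and put $g_N:=\sum_{\lambda\in\Lambda_N}c_\lambda\wt\varphi_\lambda\in\Sigma_N(\wt\Phi)$; it then remains to estimate $\norm{f-g_N}_\CH=\norm*{\sum_{\lambda\notin\Lambda_N}c_\lambda\wt\varphi_\lambda}_\CH$.

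The first step passes to sequence space: the canonical dual of a frame with bounds $0<A\le B$ is again a frame, with bounds $B^{-1}\le A^{-1}$, hence Bessel, so its synthesis operator $d\mapsto\sum_\lambda d_\lambda\wt\varphi_\lambda$ maps $\ell^2(\Lambda)$ boundedly into $\CH$ (with norm $\le A^{-1/2}$). Applying it to the truncated sequence $c\cdot\ind_{\Lambda\setminus\Lambda_N}$ yields $\norm{f-g_N}_\CH\lesssim\norm{c\cdot\ind_{\Lambda\setminus\Lambda_N}}_{\ell^2}$ with an absolute constant.

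The second step is the elementary tail bound for weak $\ell^p$. Denoting by $c^*_n$ the decreasing rearrangement of $(\abs{c_\lambda})_\lambda$, one has $\norm{c\cdot\ind_{\Lambda\setminus\Lambda_N}}_{\ell^2}^2=\sum_{n>N}(c^*_n)^2$; using the defining estimate $c^*_n\le\norm{c}_{\ell^{p}_w}\,n^{-1/p}$ with $p=p(\alpha)$, and noting $2/p=2\alpha+1>1$, this is $\le\norm{c}_{\ell^{p}_w}^2\sum_{n>N}n^{-2/p}\lesssim\norm{c}_{\ell^{p}_w}^2\,N^{1-2/p}$, where the exponent is exactly $1-2/p=-2\alpha$ by the choice $p(\alpha)=(\alpha+\tfrac12)^{-1}$. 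Combining the two steps gives $\varsigma_N(f,\wt\Phi)\le\norm{f-g_N}_\CH\lesssim\norm{\inpr{\Phi,f}}_{\ell^{p}_w}\,N^{-\alpha}$ for every $N$, i.e. $\sup_N N^{\alpha}\varsigma_N(f,\wt\Phi)<\infty$, which is precisely $f\in\CA^\alpha(\wt\Phi)$ (the case $q=\infty$).

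I do not expect a substantive obstacle --- the argument is short and all constants are absolute (they depend only on the frame bounds of $\Phi$ and on $p$), which is what makes the bound uniform in $N$. Two points want care: (i) that one may synthesise an \emph{arbitrary} $\ell^2$-sequence against the dual frame and dominate the result by a frame bound --- this is exactly where ``frame'' is used, and the reason the basis characterisation~\eqref{eq:approx_charact_basis} can only be salvaged here as a one-sided implication; and (ii) the bookkeeping $p(\alpha)=(\alpha+\tfrac12)^{-1}\Longleftrightarrow 1-2/p=-2\alpha$, which ensures the summability exponent lands on $N^{-2\alpha}$.
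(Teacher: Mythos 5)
Your proof is correct and is precisely the standard ``keep the $N$ largest coefficients'' argument that the paper invokes via its citations (\cite[Lem.~3.1]{kutyniok_lemvig}, \cite[Sec.~8]{DeVore1998}): bound the synthesis of the discarded tail by the upper frame bound of $\wt\Phi$ in $\ell^2$, then use the weak-$\ell^p$ tail estimate $\sum_{n>N}(c_n^*)^2\lesssim\snorm{c}_{\ell^p_w}^2 N^{1-2/p}=\snorm{c}_{\ell^p_w}^2 N^{-2\alpha}$. The bookkeeping $1-2/p(\alpha)=-2\alpha$ and the identification of $\CA^\alpha$ with the $q=\infty$ case are both handled correctly.
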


Consequently, we can ask how well a given dictionary approximates $\FC$. This is easily done by defining
\begin{align}
	\sigma^*(\FC,\Phi):=\sup\set{\sigma>0}{\FC\subseteq A^\sigma(\wt \Phi)}.
\end{align}
The more interesting question is, how well \emph{any} dictionary could possibly approximate $\FC$ (in terms of $N$-term approximation). This can be done very abstractly with encoding/decoding schemes, see \cite{hypercubes_don,hypercubes}.

\begin{definition}\label{def:encoding_rate}
For a class  of signals $\FC\subseteq \CH$ with $\CH$ a separable Hilbert space, we define:
	\begin{itemize}
	\item
		An \emph{encoding/decoding pair} $(E,D)\in \CE\CD(R)$ consists of two mappings
		\begin{align}
			E:\FC\to \{0,1\}^R, \quad D:\{0,1\}^R\to \CH,
		\end{align}
		where we call $R$ the \emph{runlength} of $(E,D)$ and $\CE\CD:=\bigcup_{R\in\bbN} \CE\CD(R)$ is the set of all such pairs.
	\item
		The \emph{distortion} of $(E,D)$ is defined as
		\begin{align}
			\delta(E,D):=\sup_{f\in\FC} \norm{ f-D(E(f))}_\CH.
		\end{align}
	\item
		The \emph{optimal encoding rate} is defined as
		\begin{align}
			\sigma^*(\FC):= \sup\set{\sigma>0}{ \exists C>0 \;\forall N\in \bbN \;\exists (E_N,D_N)\in \CE\CD(N): \delta(E_N,D_N)\le C N^{-\sigma}}\tag*{\qedhere}
		\end{align}
	\end{itemize}
\end{definition}

\begin{remark}
	The quantity $\sigma^*(\FC)$ limits the best approximation rate of $\FC$ for any decoding scheme, in particular, for the discretisation with \emph{any} dictionary (as long as the natural restriction of polynomial depth search\footnote{This is to exclude pathological behaviour --- namely, choosing arbitrary functions from a countable set $\Phi$ which is dense in $\CH$ This would give a perfect $1$-term approximation, but no way to efficiently compute the approximation or even store the dictionary. Polynomial depth search means that the set $\Lambda_N$ we select for our $N$-term approximation may only search through the first $\pi(N)$ dictionary elements, where $\pi$ is an arbitrary polynomial.} is imposed), i.e.
	\begin{align}
		\sigma^*(\FC,\Phi)\le \sigma^*(\FC).
	\end{align}

	It is also intimately related to the \emph{Kolmogorov} (or metric) $\eps$-entropy (see e.g. \cite{kolmog_entr}) --- which we will denote by $H_\eps(\FC)$ --- in the sense that (\cite[Rem. 5.10]{hypercubes})
	\begin{align}
		\sigma^*(\FC)=\sup \set[\Big]{\sigma>0}{\sup_{\eps>0} \eps^{\frac 1\sigma} H_\eps(\FC) < \infty}.\tag*{\qedhere}
	\end{align}
\end{remark}

The interesting thing --- despite its very general definition --- is that $\sigma^*(\FC)$ can be estimated with the help of the following definition and \autoref{thm:hypercubes}.

\begin{definition}\label{def:hypercubes}
	Let $\FC\subseteq \CH$ be a signal class in a separable Hilbert space $\CH$.
	\begin{itemize}
	\item
		We say $\FC$ \emph{contains an embedded orthogonal hypercube of dimension $m$ and sidelength $\delta$} if there exist $f_0\in\FC$ and orthogonal functions $(\psi_i)_{i=1}^m\in \CH$ with $\norm{\psi_i}_\CH=\delta$, such that the collection of vertices
		\begin{align}
			\Fh\parens*{f_0,(\psi_i)_{i=1}^m}:=\set[\bigg]{h=f_0+\sum_{i=1}^{m}\eps_i \psi_i}{\eps_i\in\{0,1\}}
		\end{align}
		can be embedded into $\FC$.
	\item
		For $p>0$, $\FC$ is said to \emph{contain a copy of} $\ell^p_0$ if there exists a sequence of orthogonal
		hypercubes $(\Fh_k)_{k\in\bbN}$ with dimensions $m_k$ and sidelength $\delta_k$ embedded in $\FC$, such that $\delta_k\to0$ and for some constant $C>0$
		\begin{align}\label{eq:def_lp0}
			\delta_k\ge C m_k^{-\frac 1p}, \quad \forall k\in\bbN.
		\end{align}
	\end{itemize}
\end{definition}

\begin{remark}\label{rem:hypercubes}
	The motivation behind \autoref{def:hypercubes} is, on the one hand, that we know precisely how many bits we need to encode the vertices of a hypercube, and if $\FC$ contains such hypercubes, then it must be at least as complex.
	
	On the other hand, $\ell^p$ trivially contains orthogonal hypercubes of dimension $m$ and sidelength $m^{-\frac 1p}$; in fact every $\ell^p_q$ with $q>0$ contains a copy of $\ell^p_0$, which partly motivates the choice of notation (since $\ell^p_{q_1}\subseteq \ell^p_{q_2}$ for $q_1\le q_2$).
	
	Even more, since $\ell^{p_1}\subseteq \ell^{p_2}$ for $p_1\le p_2$, it is perhaps not surprising (and easy to check from \eqref{eq:def_lp0}) that $\ell^p_q$ with $q>0$ contains a copy of $\ell^\tau_0$ for all $\tau \le p$.
\end{remark}

As hinted in \autoref{rem:hypercubes}, being able to precisely understand the complexity of hypercubes allowed \cite[Thm. 2]{hypercubes_don} to prove the following landmark result. Since the proof is highly non-trivial, we also refer to \cite[Thm. 5.12]{hypercubes}, where an elementary proof of this result was given.

\begin{theorem}\label{thm:hypercubes}
	If the signal class $\FC\subseteq \CH$ contains a copy of $\ell^p_0$ for $p\in(0,2]$, the optimal encoding rate satisfies
	\begin{align}
		\sigma^*(\FC) \le \frac{2-p}{2p}.
	\end{align}
\end{theorem}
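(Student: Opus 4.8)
The plan is to argue by contradiction, exploiting that an embedded copy of $\ell^p_0$ forces $\FC$ to be ``combinatorially rich'' in a way incompatible with a fast encoding rate. Concretely, suppose $\sigma^*(\FC)>\frac{2-p}{2p}$, so that there are $\sigma>\frac{2-p}{2p}$, a constant $C>0$, and pairs $(E_N,D_N)\in\CE\CD(N)$ with $\delta(E_N,D_N)\le C N^{-\sigma}$ for every $N$. Let $(\Fh_k)$ be the orthogonal hypercubes from \autoref{def:hypercubes} witnessing $\ell^p_0\hookrightarrow\FC$, with dimension $m_k$, sidelength $\delta_k\to 0$, and $\delta_k\ge c_0 m_k^{-1/p}$ as in \eqref{eq:def_lp0} (renaming the constant to avoid a clash with the $C$ above). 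I would first record that $m_k\to\infty$: a bounded subsequence of $(m_k)$ would, via $\delta_k\ge c_0 m_k^{-1/p}$, keep $\delta_k$ bounded away from $0$, contradicting $\delta_k\to 0$.

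The first real step is to extract, inside each $\Fh_k$, a large family of \emph{pairwise well-separated} vertices. Writing a vertex as $h_\eps=f_0+\sum_{i=1}^{m_k}\eps_i\psi^{(k)}_i$ with $\eps\in\{0,1\}^{m_k}$, orthogonality of the $\psi^{(k)}_i$ together with $\norm{\psi^{(k)}_i}_\CH=\delta_k$ gives $\norm{h_\eps-h_{\eps'}}_\CH^2=\delta_k^2\,d_H(\eps,\eps')$, where $d_H$ is the Hamming distance. I would then invoke a Gilbert--Varshamov-type packing bound (or, fully self-contained, a greedy argument that repeatedly deletes a Hamming ball of radius $m_k/4$) to obtain a code $\CV_k\subseteq\{0,1\}^{m_k}$ with $\#\CV_k\ge 2^{\gamma m_k}$ for an absolute constant $\gamma>0$ and pairwise Hamming distance $\ge m_k/4$; the corresponding vertices lie in $\FC$ and are pairwise at distance $\ge\tfrac12\delta_k\sqrt{m_k}$ in $\CH$.

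The second step is a pigeonhole/counting argument against the encoders. Choose $N_k:=\lceil\gamma m_k\rceil-1$, so that $2^{N_k}<\#\CV_k$; hence $E_{N_k}$ collapses two distinct vertices $h_\eps\ne h_{\eps'}$ ($\eps,\eps'\in\CV_k$) to the same bitstring, so $D_{N_k}(E_{N_k}(h_\eps))=D_{N_k}(E_{N_k}(h_{\eps'}))$ and the triangle inequality yields
\begin{align}
	\tfrac12\,\delta_k\sqrt{m_k}\ \le\ \norm{h_\eps-h_{\eps'}}_\CH\ \le\ 2\,\delta(E_{N_k},D_{N_k})\ \le\ 2C\,N_k^{-\sigma}.
\end{align}
For large $k$ one has $N_k\ge\tfrac{\gamma}{2}m_k$, so this reads $\delta_k\lesssim m_k^{-\sigma-1/2}$; combined with $\delta_k\ge c_0 m_k^{-1/p}$ it forces $m_k^{\,\sigma+1/2-1/p}\lesssim 1$ for all large $k$, and since $m_k\to\infty$ we conclude $\sigma\le\frac1p-\frac12=\frac{2-p}{2p}$, contradicting the choice of $\sigma$. (For $p>2$ the asserted bound is vacuous, which is why $p\in(0,2]$ is imposed.)

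I expect the main obstacle to be the packing step: producing exponentially many ($2^{\gamma m_k}$) vertices that nonetheless stay $\sim\delta_k\sqrt{m_k}$ apart, with constants that survive intact to the final inequality. Everything else — checking $m_k\to\infty$, the choice of $N_k$, the confusion bound via the triangle inequality, and the constant bookkeeping — is routine. An alternative to naming a code is a direct covering argument (the $2^{N}$ balls of radius $\delta(E,D)$ about the decoded points must cover all $2^{m_k}$ hypercube vertices, and a Hamming ball of the corresponding radius contains too few vertices unless $N\gtrsim m_k$); this trades the code for a binomial-coefficient estimate, and I would use whichever turns out cleaner to write down.
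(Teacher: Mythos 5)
Your proof is correct. The chain of reasoning --- the orthogonality identity $\norm{h_\eps-h_{\eps'}}_\CH^2=\delta_k^2\,d_H(\eps,\eps')$, a Gilbert--Varshamov packing of $\{0,1\}^{m_k}$ at minimum Hamming distance $m_k/4$ yielding $\ge 2^{\gamma m_k}$ pairwise $\tfrac12\delta_k\sqrt{m_k}$-separated elements of $\FC$, the pigeonhole collision of the encoder at runlength $N_k=\lceil\gamma m_k\rceil-1$, the two-sided triangle-inequality bound $\tfrac12\delta_k\sqrt{m_k}\le 2\delta(E_{N_k},D_{N_k})$, and the final comparison with $\delta_k\ge c_0 m_k^{-1/p}$ using $m_k\to\infty$ --- is sound and forces $\sigma\le\frac1p-\frac12=\frac{2-p}{2p}$, contradicting the hypothesis. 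The paper does not give its own proof and instead points to \cite{hypercubes_don} (the original entropy-based argument) and to \cite[Thm.~5.12]{hypercubes} for an ``elementary'' proof; your argument is exactly the elementary route (direct packing + confusion counting, no detour through Kolmogorov entropy), so it is in the spirit of the second reference rather than the first.

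Two small remarks. First, the alternative covering argument you sketch at the end is arguably even cleaner to write up, since it avoids naming a code: the $2^N$ metric balls of radius $\delta(E_N,D_N)$ around the decoded images must cover all $2^{m_k}$ vertices, any such ball contains at most $\sum_{j\le 4\delta^2/\delta_k^2}\binom{m_k}{j}$ vertices (because two vertices in the same ball are within Hamming distance $4\delta^2/\delta_k^2$ of each other), and a Stirling-type bound on that partial binomial sum with $N\sim m_k$ gives the same exponent. Second, a cosmetic point: you should note that $N_k\ge 1$ (hence $E_{N_k}$ is defined) once $m_k$ is large, which is guaranteed by your observation that $m_k\to\infty$; as written the inequality $N_k\ge\frac{\gamma}{2}m_k$ already implicitly contains this, so the gap is purely expository.
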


Finally, since we are interested in functions in the Sobolev space $H^t$, computing the optimal encoding rate of this class with the tools we have just introduced is a simple adaptation of \cite[Thm. 5.17]{hypercubes}.

\begin{lemma}[{\cite[Lem. 2.3.11]{my_thesis}}]\label{lem:Ht_complexity}
	The Sobolev space $H^t(\bbR^d)\subseteq L^2(\bbR^d)$ contains a copy of $\ell^{p^*}_0$, where $p^*=\parens*{\frac td + \frac 12}^{-1}$. In particular, $\sigma^*\parens*{H^t(\bbR^d)}\le \frac td$.
\end{lemma}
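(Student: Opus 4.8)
\emph{Proof plan.} The strategy is to build, for each scale $j\in\bbN$, an orthogonal hypercube sitting inside the unit ball
\begin{align}
	\FC:=\set*{f\in H^t(\bbR^d)}{\norm{f}_{H^t}\le1},
\end{align}
with dimension $m_j$ and sidelength $\delta_j$ so calibrated that the resulting sequence of hypercubes realises a copy of $\ell^{p^*}_0$ in the sense of \autoref{def:hypercubes}; \autoref{thm:hypercubes} then yields the claim. Fix a nonzero $\psi\in C_c^\infty(\bbR^d)$ with $\supp\psi\subseteq(0,1)^d$. For $j\in\bbN$ set $\Lambda_j:=\set{\vec\nu\in\bbZ^d}{0\le\nu_i\le 2^j-1\ \forall i}$, $m_j:=\#\Lambda_j=2^{jd}$, and, with a small absolute constant $c>0$ to be chosen below, put
\begin{align}
	\psi_{j,\vec\nu}(\vec x):= c\,2^{-jt}\,\psi\parens*{2^j\vec x-\vec\nu},\qquad \vec\nu\in\Lambda_j .
\end{align}
The supports $\supp\psi_{j,\vec\nu}\subseteq 2^{-j}(\vec\nu+(0,1)^d)$ are pairwise disjoint, hence $(\psi_{j,\vec\nu})_{\vec\nu\in\Lambda_j}$ is orthogonal in $L^2(\bbR^d)$, and by translation invariance and a dilation each member has the same $L^2$-norm
\begin{align}
	\delta_j:=\norm{\psi_{j,\vec\nu}}_{L^2}= c\,2^{-jt}\,2^{-jd/2}\,\norm{\psi}_{L^2}\ \sim\ 2^{-j\parens{t+d/2}},
\end{align}
which tends to $0$ as $j\to\infty$.

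The heart of the argument is to check that every vertex of the hypercube spanned by $(\psi_{j,\vec\nu})_{\vec\nu\in\Lambda_j}$ with base point $f_0:=0$ lies in $\FC$. For $S\subseteq\Lambda_j$ write $h_S:=\sum_{\vec\nu\in S}\psi_{j,\vec\nu}$. Using $\widehat{\psi(2^j\cdot-\vec\nu)}(\vec\xi)=2^{-jd}\ee^{-2\pi\ii 2^{-j}\vec\nu\cdot\vec\xi}\hat\psi(2^{-j}\vec\xi)$ and the substitution $\vec\eta=2^{-j}\vec\xi$, one obtains for any fixed $s\ge 0$ and all $j$ large enough
\begin{align}
	\norm{\psi(2^j\cdot-\vec\nu)}_{H^s}^2=2^{-jd}\int_{\bbR^d}\parens*{1+2^{2j}\abs{\vec\eta}^2}^s\abs{\hat\psi(\vec\eta)}^2\d\vec\eta\ \sim\ 2^{-jd}\,2^{2js},
\end{align}
the upper bound from $(1+2^{2j}|\vec\eta|^2)^s\le(1+2^{2j})^s(1+|\vec\eta|^2)^s$, the lower bound because $\hat\psi$ cannot vanish on a neighbourhood of the origin. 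For integer $s$ the $H^s$-norm is assembled from local derivatives, so disjointness of supports gives the exact identity $\norm{h_S}_{H^s}^2=\sum_{\vec\nu\in S}\norm{\psi_{j,\vec\nu}}_{H^s}^2\lesssim m_j\,c^2\,2^{-2jt}\,2^{-jd}\,2^{2js}$. Applying this with $s=0$ and $s=\lceil t\rceil$ and interpolating via $H^t=(L^2,H^{\lceil t\rceil})_{t/\lceil t\rceil,2}$ (\autoref{thm:interp}) together with the elementary interpolation inequality $\norm{h}_{H^t}\lesssim\norm{h}_{L^2}^{1-\theta}\norm{h}_{H^{\lceil t\rceil}}^{\theta}$, $\theta=t/\lceil t\rceil$, we get, using $m_j=2^{jd}$ and $\theta\lceil t\rceil=t$,
\begin{align}
	\norm{h_S}_{H^t}^2\lesssim\parens*{m_j\,c^2\,2^{-2jt}\,2^{-jd}}^{1-\theta}\parens*{m_j\,c^2\,2^{-2jt}\,2^{-jd}\,2^{2j\lceil t\rceil}}^{\theta}= m_j\,c^2\,2^{-jd}\,2^{-2jt}\,2^{2j\theta\lceil t\rceil}= c^2,
\end{align}
uniformly in $j$ and $S$. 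Fixing $c>0$ small enough independently of $j$ therefore forces $\norm{h_S}_{H^t}\le1$, i.e. $\Fh\parens*{0,(\psi_{j,\vec\nu})_{\vec\nu\in\Lambda_j}}\subseteq\FC$.

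It remains to match the sidelength--dimension inequality \eqref{eq:def_lp0}. With $p^*=\parens*{\tfrac td+\tfrac12}^{-1}$ one computes $m_j^{-1/p^*}=2^{-jd(t/d+1/2)}=2^{-j(t+d/2)}\sim\delta_j$, so there is $C>0$ with $\delta_j\ge C\,m_j^{-1/p^*}$ for all (large) $j$; since moreover $\delta_j\to0$, the family $\bigl(\Fh\bigl(0,(\psi_{j,\vec\nu})_{\vec\nu\in\Lambda_j}\bigr)\bigr)_j$ certifies that $H^t(\bbR^d)$ contains a copy of $\ell^{p^*}_0$. Because $t\ge0$ forces $p^*\in(0,2]$, \autoref{thm:hypercubes} applies and gives
\begin{align}
	\sigma^*\parens*{H^t(\bbR^d)}\le\frac{2-p^*}{2p^*}=\frac1{p^*}-\frac12=\parens*{\frac td+\frac12}-\frac12=\frac td,
\end{align}
as claimed.

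The only genuinely delicate point is the uniform-in-$j$ bound $\norm{h_S}_{H^t}\lesssim c$: a termwise triangle inequality over the $m_j=2^{jd}$ summands is hopelessly lossy, and for non-integer $t$ the non-locality of the fractional Sobolev norm rules out exact additivity over disjoint supports, so one is forced either to interpolate as above or to estimate directly the $H^t$-norm of the Dirichlet-type sum $\sum_{\vec\nu\in\Lambda_j}\ee^{-2\pi\ii 2^{-j}\vec\nu\cdot\vec\xi}\hat\psi(2^{-j}\vec\xi)$. All remaining steps are routine bookkeeping with dyadic scalings.
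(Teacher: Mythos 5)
Your proof is correct, and it follows the standard hypercube-embedding construction that the paper defers to (the thesis lemma it cites is itself advertised as a ``simple adaptation'' of the analogous result for bounded domains in the hypercubes reference). The calibration $m_j=2^{jd}$, $\delta_j\sim 2^{-j(t+d/2)}$ is exactly what yields $p^*=(t/d+1/2)^{-1}$, and the only genuinely delicate step --- the uniform-in-$j$ bound $\norm{h_S}_{H^t}\lesssim c$ for non-integer $t$, where direct additivity of the Sobolev norm over disjoint supports fails --- is handled cleanly by the log-convexity interpolation $\norm{h}_{H^t}\lesssim\norm{h}_{L^2}^{1-\theta}\norm{h}_{H^{\lceil t\rceil}}^{\theta}$ with $\theta=t/\lceil t\rceil$, using exact additivity only at the integer endpoints. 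Two minor cosmetic remarks: the phrase ``because $\hat\psi$ cannot vanish on a neighbourhood of the origin'' is a slightly garbled justification of a lower bound that your argument never actually uses (only the upper bound $\norm{h_S}_{H^t}^2\lesssim c^2$ matters, and $\delta_j$ is computed exactly, not bounded below); and the case $t=0$ makes $\theta=0/0$ formally undefined, though then $H^0=L^2$ and the bound is immediate without interpolation. Neither affects the validity of the argument.
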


\begin{remark}\label{rem:benchmark_Ht_ex_hyp}
	The model class of functions $H^t(\bbR\setminus\{h_i\})$ --- see \autoref{def:Ht_except_hyp} --- we will deal with regarding \eqref{eq:LinTrans} consists of functions in $H^t$ that are allowed to have cut-offs across hyperplanes. This is trivially a superset of $H^t$, and so it is clear that
	\begin{align}
		\sigma^*\parens*{H^t(\bbR\setminus\{h_i\})}\le \sigma^*\parens*{H^t(\bbR^d)} \le \frac td.
	\end{align}
	In other words, these ``mutilated'' $H^t$-functions have --- potentially --- even worse approximation rates than functions in $H^t$.
	
	Nevertheless, although the possibilities of achieving this benchmark seem slim (considering the very abstract and extremely general definition of $\sigma^*(\FC)$), there is some hope: there \emph{are} constructions for several classes $\FC$ that \emph{do} achieve $\sigma^*(\FC)$, cf. \cite[Chap. 5]{hypercubes}, and even in the more complicated case of allowing cut-offs, wavelets (for example) can be shown to achieve the best possible approximation rate for piece-wise $\CC^k$-functions over an interval, cf. \cite[Cor. 5.36]{hypercubes}, and point-like singularities in $\bbR^d$ in general.
	
	To summarise, we have now found the definitive benchmark to aim for, since, in view of \autoref{prop:wlp_Nterm_frame}, we know that $\inpr{\Phi,f}\in \ell^{p^*}_w$ with $p^*=\parens*{\frac td + \frac 12}^{-1}$ is the \emph{absolute best} that is (even theoretically) possible for generic functions in $H^t$ (or the superset  $H^t(\bbR\setminus\{h_i\})$) --- any $p<p^*$ would imply an $N$-term approximation rate that we have proved (see \autoref{lem:Ht_complexity}) to be impossible.
\end{remark}

\section{Approximation Properties}\label{sec:approx}

Compared to the discussion in \autoref{sec:introduction}, the only addition we are now able to make is to formulate the goals in terms of conditions on the ridgelet coefficient sequence, as well as a finer statement about the relation between the decay of $f$ \eqref{eq:f_glob_decay} and the deviation $\delta$ in \autoref{th:approx_intro}. \cite[Thm. 4.1]{mutilated} shows that for a function $f\in H^t(\bbR^d\setminus \{h_i\})$,  supported (compactly) on $[0,1]^d$, the ridgelet coefficient sequence $\parens*{\inpr*{\phi_{j,\ell,k},f}}_{j,\ell,k}$  belongs to the $\ell^p_w$ space with the best possible $p=p^*:=\parens*{\frac td + \frac 12}^{-1}$, which implies the optimal $N$-term approximation rate $\frac td$ in the $L^2$-norm.

In contrast, we will show that the coefficient sequence $\inpr{\Phi,f}:=\parens*{\inpr{\varphi_\lambda,f}}_\lambda$ with $f\in H^t(\bbR^d\setminus \{h_i\})$ satisfying the following polynomial decay condition,
\begin{align}
	\abs{f(\vec x)} \lesssim \reg{\vec x}^{-2m},
\end{align}
is in $\ell^{p^*+\frac dm}_w$. In particular, if the above decay holds for all $m\in\bbN$ (which is satisfied for example if $f$ has exponential decay or compact support), then $\inpr{\Phi,f}\in \ell^{p^*+\delta^*}_w$ for arbitrary $\delta^*>0$.

Finally, in the context of CDD-schemes (see \autoref{ssec:cdd}) ---  we need to consider $N$-term approximation with regard to the $\Hs$-norms instead of the $L^2$ norm. Due to \eqref{eq:RidgeletFrame}, this corresponds to bounding the $\ell^p_w$-norm of the weighted coefficient sequence $\VW\inpr{\Phi,f}$, compare also with \autoref{cor:approx} in this regard. Since the weights $w_{\lambda}\sim \reg*{2^j \vec s\cdot \sjl}$ grow with scale $j$, this causes extra work as well.

\subsection{Main Theorem}

We begin straight away by formulating the goal of this section, \autoref{th:approx}. Once we will have proved this result, we will have satisfied ingredient \ref{itm:ingr:4} from \autoref{ssec:cdd}.

\begin{theorem}\label{th:approx}
	Let $f$ be a function in $L^2(\bbR^d)$ such that $f \in H^t(\bbR^d)$ apart from $N$ hyperplanes --- i.e.~of the form \eqref{eq:Ht_except_hyp} --- such that the following decay condition (recalling $\reg{y}=\sqrt{1+|y|^2}$)
	\begin{align}\label{eq:f_glob_decay}
		\abs{f(\vec x)} \lesssim \reg{\vec x}^{-2m}
	\end{align}
	holds for a certain $m\in \bbN$. Then, if $\kappa\in H^{\ceil{t}+\frac d2}$ (for the terms involving $\CS[f]$),
	\begin{align}
		\inpr{\Phi,f}_{L^2} &:= \parens*{\inpr{\varphi_\lambda,f}_{L^2}}_{\lambda\in\Lambda}\in \ell^{p^* + \frac dm}_w,\\
		\VW\inpr{\Phi,\CS[f]}_{L^2} &:= \parens*{w_\lambda\inpr{\varphi_\lambda,\CS[f]}_{L^2}}_{\lambda\in\Lambda}\in \ell^{p^* + \frac dm}_w,
	\end{align}
	i.e.~the (weighted) ridgelet coefficient sequences belong to the $\ell^p_w$-space with $p=p^* + \frac dm$, where $p^*= (\frac td + \frac 12)^{-1}$ would be the best possible $p$ for $f\in H^t(\bbR^d)$, \emph{even without singularities}(!), and the deviation from the optimal value\footnote{For $\inpr{\Phi,f}$, the absence of $\VW$ is not completely without effect, in that the deviation from the optimal $p$ can be bounded by $\frac{d}{2n}+\eps$ with arbitrarily small $\eps>0$.} decays linearly with $m$.
	
	If the decay condition \eqref{eq:f_glob_decay} holds for arbitrarily large $m\in\bbN$ --- which is satisfied for exponential decay or compact support, for example --- then, for arbitrary $\delta^*>0$, $\inpr{\Phi,f}_{L^2} \in \ell^{p^* + \delta^*}_w$ and  $\VW \inpr{\Phi,\CS[f]}_{L^2} \in \ell^{p^* + \delta^*}_w$.
\end{theorem}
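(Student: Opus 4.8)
The ridgelet transform is linear and $\ell^p_w$ is a quasi-normed space (see \eqref{eq:lpw_triangle_ineq}), so it suffices to bound $\inpr{\Phi,\cdot}$ separately on the smooth part $f_0\in H^t$ (handled by a simpler version of the argument below, there being no singular Fourier contribution) and on each single summand $f_i(\vec x)H(\vec x\cdot\vec n_i-v_i)$ of \eqref{eq:Ht_except_hyp}. Invoking \autoref{th:half_space_ext} and \autoref{rem:hyp_normals_neg}, I would normalise to $\vec s=\vec e_1$ and $\vec s\cdot\vec n\le0$ and, after absorbing an $H^t$-extension into $f_0$, reduce to one term $f=g\,H(\vec x\cdot\vec n-v)$ with $g\in H^t$ inheriting the decay $\reg{\vec x}^{-2m}$. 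For the assertions about $\CS[f]$: \autoref{prop:sol_smooth_except_hyp} gives that $\CS[f]$ is again of the form \eqref{eq:Ht_except_hyp} with the same hyperplanes, and \autoref{lem:decay_sol} gives the same decay, so the $L^2$-part of the $\CS[f]$-claim follows verbatim from the $f$-claim; the weighted part is dealt with at the end.

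\textbf{Localisation in angle.} Using \autoref{lem:fourier_decomp}, $\hat f(\Rn^{-1}\cdot)$ splits into a regular contribution --- essentially $\CF$ of an $H^t$-function times $H(x_1-v)$ --- and a singular contribution $\propto\xi_1\abs{\vec\xi}^{-2}\wh{g|_h}(\CP_{\vec e_1}\vec\xi)$ concentrated along the ray through $\vec n$. Pairing with $\varphi_\jlk$, whose frequency support is the polar rectangle $P_\jl$ of angular width $\sim2^{-j}$ about $\vec s_\jl$: when $\vec s_\jl$ is not within $\CO(2^{-j})$ of $\pm\vec n$, repeated integration by parts in frequency (licensed by \autoref{assump:psi_smooth}) yields decay of $\abs{\inpr{\varphi_\jlk,f}}$ of arbitrary polynomial order in the rescaled angular distance $2^j\dist_{\bbSd}(\vec s_\jl,\{\pm\vec n\})$. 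Hence, per scale $j$, only $\CO(1)$ angular wedges carry non-negligible coefficients, with a summable tail.

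\textbf{Localisation in space and counting.} For the aligned wedges I would estimate $\inpr{\varphi_\jlk,f}$ directly in physical space, exploiting (i) the jump of $f$ across $h$, (ii) the $\reg{\cdot}^{-\infty}$ concentration of $\varphi_\jlk$ on its plate of thickness $\sim2^{-j}$ centred at $U_\jl\vec k$, and (iii) the global decay $\reg{\vec x}^{-2m}$ of $f$. The cross terms are convolution-type integrals of exactly the shape controlled --- \emph{sharply} --- by \autoref{th:Imn_est} and \autoref{cor:Imn_higher_dim}, and it is this sharpness that produces the precise exponent: one obtains $\abs{\inpr{\varphi_\jlk,f}}\lesssim2^{-j(t+d/2)}$ times decay in $2^j\dist(U_\jl\vec k,h)$ times a polynomial factor in $\reg{U_\jl\vec k}$ whose order is governed by $m$. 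Then, for each $j$ and each threshold $\eta$, I would count the translates $\vec k$ whose coefficient exceeds $\eta$ --- those near $h$ within a radius dictated by $m$ --- and sum the resulting (geometric-type) series over $j$ to read off $\inpr{\Phi,f}\in\ell^{p^*+d/m}_w$; letting $m\to\infty$ gives $\ell^{p^*+\delta^*}_w$ for every $\delta^*>0$. For the weighted sequence, $w_\lambda\lesssim1+2^j\abs{\vec s\cdot\vec s_\jl}$ inflates only wedges with $\vec s_\jl$ near $\pm\vec s$: if $\vec s\not\perp\vec n$ these are \emph{not} aligned with $\pm\vec n$, so the angular decay of the previous step already beats the extra $2^j$; if $\vec s\perp\vec n$ the weight on the aligned wedges is $\CO(1)$ --- either way the exponent is unchanged (alternatively, one notes that $w_\lambda\inpr{\varphi_\lambda,\CS[f]}$ is, up to the harmless ``$1+$'', the ridgelet coefficient of $(\vec s\cdot\nabla)\CS[f]=f-\kappa\CS[f]$, which is again a function in our class).

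\textbf{Main obstacle.} I expect the crux to be the third step: producing coefficient bounds simultaneously sharp in $j$, in $\dist(U_\jl\vec k,h)$ and in $\reg{U_\jl\vec k}$, so that the full $(d-1)$-dimensional grid of translates --- which has no analogue in the one-dimensional construction of \cite{mutilated} --- can be summed with a loss of only $d/m$ in the exponent; this is precisely what forces the use of the tailored \autoref{th:Imn_est} in place of a generic convolution estimate. Keeping track of the interplay between the weight $\VW$ and the angular localisation in the $\CS[f]$ case is the secondary difficulty.
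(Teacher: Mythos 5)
Your proposal correctly identifies the overall skeleton of the argument --- reduce to a single mutilated term, localise in angle, localise in space, count coefficients, and note the tailored convolution estimate as the key tool. But Step~2 and the coefficient bound you posit in Step~3 contain a misconception that is fatal to the argument as sketched. You claim that integration by parts in frequency gives decay of $\abs{\inpr{\varphi_\jlk,f}}$ of \emph{arbitrary} polynomial order in the rescaled angular distance, so that only $\CO(1)$ wedges per scale matter. This is false for the singular contribution: $\wh{g\normalr|_h}$ lies only in $H^{t-\frac12}(\bbR^{d-1})$, and the corresponding part of $\hat f$ decays transversally to the ray through $\vec n$ with rate governed by $t$, not at will. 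Concretely, what \autoref{prop:loc_angle} actually proves is an \emph{$\ell^2$-sum} bound over the wedge $\Lambda^j_r$ of size $2^{-j}2^{-(j-r)(2t-1)}$, and the $\sim 2^{(j-r)(d-1)}$ wedges at distance $\sim 2^{-r}$ from $\vec n$ all carry contributions that must be counted. As a consequence, your proposed pointwise bound $\abs{\inpr{\varphi_\jlk,f}}\lesssim 2^{-j(t+d/2)}\cdot(\text{spatial decay})$ is not attainable: the aligned singular coefficients are genuinely of size $\sim 2^{-j/2}$ pointwise (cf.~\eqref{eq:gamma_est_w}), and the $t$-dependence only emerges in aggregate.

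Because the angular information is available only in $\ell^2$ and the spatial information in the pointwise form \eqref{eq:gamma_est_w}, the two cannot simply be multiplied into a single coefficient bound and then counted. The device the paper uses to combine them --- log-convexity of $\ell^p$-norms, interpolating the $\ell^2$ angular bound against an $\ell^q$ bound for small $q$ on the spatial tail --- is the key technical step that your sketch omits entirely, and without it the counting does not close. You also omit the reverse H\"older argument needed for the regular-part large coefficients (the regular part is not trivial even though it lacks a singular Fourier contribution), and the real-interpolation step (\autoref{thm:interp}) needed to pass from $t\in\bbN$ to general $t>0$. Finally, for the weighted sequence $\VW\inpr{\Phi,\CS[f]}$ your heuristic ``$w_\lambda$ times the coefficient is the coefficient of $\vec s\cdot\nabla u$'' is suggestive but only approximately true; the paper instead splits $A=A_{\mathrm{c}}+\kappa_0$, exploits that $A_{\mathrm{c}}$ commutes with the convolution that defines the angular localisation, and uses $\Hs$-ellipticity \eqref{eq:Hs_elliptic} to reduce the weighted estimate to the same $L^2$-on-$P^j_r$ integral as in the unweighted case --- a cleaner and rigorous route.
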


We split some essential parts of the proof into separate results, namely the \emph{localisation in angle} of the ridgelet coefficients in \autoref{prop:loc_angle} and --- after proving decay rates for (modified) ridgelets in space in \autoref{lem:decay_ridgelet} --- the \emph{localisation in space} for the response to the singularity in \autoref{prop:loc_space}, as well as the general decay of the coefficients for functions satisfying \eqref{eq:f_glob_decay} in \autoref{lem:ridge_coeff_tail_decay}.

Before we continue, let us briefly state the following corollary of \autoref{th:approx}, recalling the non-linear set $\Sigma_N$ of functions being linear combinations of less than $N$ terms from $\Phi$ (from \autoref{def:nonlin_basics}).

\begin{corollary}\label{cor:approx}
	For $f \in H^t(\bbR^d\setminus \{h_i\})$ such that \eqref{eq:f_glob_decay} is satisfied for all $n\in\bbN$, we have, for arbitrary $\delta>0$,
	\begin{align}
		&\inf_{g\in \Sigma_N(\Phi)} \norm{f-g}_{L^2} \lesssim N^{-\frac{t}{d}+\delta},\\
		&\inf_{g\in \Sigma_N(\Phi)} \norm{\CS[f]-g}_{\Hs} \lesssim N^{-\frac{t}{d}+\delta}.
	\end{align}
\end{corollary}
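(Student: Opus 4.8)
The plan is to obtain both inequalities as direct consequences of \autoref{th:approx}, combined with \autoref{prop:wlp_Nterm_frame} and the elementary thresholding bound for weak-$\ell^p$ sequences; no new analysis is required, so this really is a corollary. Fix $\delta>0$. Since the decay \eqref{eq:f_glob_decay} is assumed for every $n\in\bbN$, \autoref{th:approx} yields, for an arbitrarily small $\delta^*>0$, that $\inpr{\Phi,f}_{L^2}\in\ell^{p}_w$ and $\VW\inpr{\Phi,\CS[f]}_{L^2}\in\ell^{p}_w$ with $p:=p^*+\delta^*$. Note that $p<2$ once $\delta^*$ is small (because $p^*=(\tfrac td+\tfrac12)^{-1}<2$ for $t>0$) and that $\tfrac1p-\tfrac12\uparrow\tfrac td$ as $\delta^*\downarrow0$; we fix $\delta^*=\delta^*(\delta)$ small enough that $\tfrac1p-\tfrac12>\tfrac td-\delta$. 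The only general fact used is that a sequence $(c_\lambda)\in\ell^p_w$ has decreasing rearrangement obeying $c^*_n\lesssim n^{-1/p}\,\|(c_\lambda)\|_{\ell^p_w}$, so after discarding the $N$ largest entries the remaining $\ell^2$-tail satisfies $\bigl(\sum_{n>N}(c^*_n)^2\bigr)^{1/2}\lesssim\bigl(\sum_{n>N}n^{-2/p}\bigr)^{1/2}\lesssim N^{1/2-1/p}$, using $2/p>1$.

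For the $L^2$-statement I would simply invoke \autoref{prop:wlp_Nterm_frame}: since $\Phi$ is a Parseval frame for $L^2(\bbR^d)$ it is its own canonical dual, so $\inpr{\Phi,f}_{L^2}\in\ell^{p}_w$ with $p=(\alpha+\tfrac12)^{-1}$, i.e.\ $\alpha:=\tfrac1p-\tfrac12$, gives $f\in\CA^\alpha(\Phi)$, that is $\inf_{g\in\Sigma_N(\Phi)}\norm{f-g}_{L^2}=\varsigma_N(f,\Phi)\lesssim N^{-\alpha}\le N^{-t/d+\delta}$ by the choice of $\delta^*$. (Equivalently and explicitly: keeping the $N$ largest coefficients $\inpr{\varphi_\lambda,f}_{L^2}$ and using that the synthesis operator of a Parseval frame has norm at most $1$ bounds the error by the $\ell^2$-tail above, i.e.\ by $N^{1/2-1/p}$.)

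For the $\Hs$-statement one extra remark is needed, since $\Phi$ is only a frame, not a basis, for $\Hs$. First, $\CS[f]\in\Hs$ --- either from \autoref{prop:sol_op_bounded}, or directly because $(\vec s\cdot\nabla)\CS[f]=f-\kappa\CS[f]\in L^2$. Let $\Lambda_N$ collect the indices of the $N$ largest \emph{weighted} coefficients $d_\lambda:=w_\lambda\inpr{\varphi_\lambda,\CS[f]}_{L^2}$ and set $u_N:=\sum_{\lambda\in\Lambda_N}\inpr{\varphi_\lambda,\CS[f]}_{L^2}\varphi_\lambda\in\Sigma_N(\Phi)$. Because $\Phi$ is a Parseval frame for $L^2$, $\CS[f]-u_N=\sum_{\lambda\notin\Lambda_N}\inpr{\varphi_\lambda,\CS[f]}_{L^2}\varphi_\lambda$, and the boundedness of the synthesis map $(c_\lambda)\mapsto\sum_\lambda c_\lambda\varphi_\lambda$ from the weighted space $\ell^2_\VW$ into $\Hs$ --- which is part of $\Phi$ being a frame for $\Hs$ with weight $\VW$, established in \cite{grohs1,compress} and underlying \eqref{eq:RidgeletFrame} --- gives $\norm{\CS[f]-u_N}_{\Hs}\lesssim\|(d_\lambda)_{\lambda\notin\Lambda_N}\|_{\ell^2}$. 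Since $(d_\lambda)\in\ell^{p}_w$, the $\ell^2$-tail estimate from the first paragraph bounds this by $N^{1/2-1/p}\lesssim N^{-t/d+\delta}$, which is the claim.

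There is no genuine obstacle once \autoref{th:approx} is in hand; the only point demanding care is the $\Hs$-case, where the error of the truncated reconstruction must be controlled by the truncated \emph{weighted} coefficient sequence rather than by the analysis coefficients of the residual. This is legitimate precisely because $\Phi$ is simultaneously a Parseval frame for $L^2$ and, after the diagonal rescaling by $\VW$, a frame for $\Hs$, so that the relevant (weighted) synthesis operator stays bounded and the non-orthogonality of the frame costs only a uniform constant.
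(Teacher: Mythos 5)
Your proof is correct and follows the same route as the paper: \autoref{th:approx} gives weak-$\ell^p$ membership of the (weighted) ridgelet coefficients, and \autoref{prop:wlp_Nterm_frame} together with \eqref{eq:RidgeletFrame} then converts this into the claimed $N$-term approximation rates. You helpfully make explicit the point the paper leaves implicit in the $\Hs$ case --- namely that the thresholded error is controlled via the boundedness of the synthesis map from the weighted $\ell^2$ space into $\Hs$, which is indeed part of the Gel'fand-frame property established in \cite{grohs1,compress}.
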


\begin{proof}
	This statement is a direct consequence of \autoref{prop:wlp_Nterm_frame} and \eqref{eq:RidgeletFrame}, respectively, that $\Phi$ is also a frame (without the weight $\VW$) for $L^2$.
\end{proof}

\subsection{Localisation in Angle}\label{sec:loc_angle}

In \autoref{lem:fourier_decomp}, we considered how the Fourier transform of a function $f(\vec x)=H(\vec x\cdot \vec n-v) g(\vec x)$ splits into two parts, compare \eqref{eq:sing_fourier_rot}. Based on this decomposition, we will likewise split the coefficient sequence in two, where our main interest is with the \emph{singular} contribution arising from the cut-off.

The crucial property of ridgelets we want to explore in this section concerns the fact that large coefficients will only appear for ridgelets that are aligned with the singularity, and we are able to describe the decay of the coefficient in terms of the smoothness of $g$ and the angle between $\sjl$ and $\vec n$.

\begin{proposition}\label{prop:loc_angle}
	Let $f(\vec x)=H(\vec x\cdot \vec n-v) g(\vec x)$ with $g\in H^t$, $t\in\bbN$. Furthermore, let $\kappa\in H^{t+\frac d2}$ and $\kappa\ge \gamma>0$. Then, the sequences
	\begin{align}
		\alpha_\lambda&:=\inpr*{\varphi_\lambda,f}=\inpr*{\hat\varphi_\lambda,\hat f}=\alpha^\rmr_\lambda+\alpha^\rms_\lambda,\\
		\beta_\lambda&:=\inpr*{\varphi_\lambda,\CS[f]}=\inpr*{\hat\varphi_\lambda,\CF[\CS[f]]}= \beta^\rmr_\lambda+\beta^\rms_\lambda,
	\end{align}
	can be split into a \emph{regular contribution} $\alpha^\rmr$ resp.~$\beta^\rmr$ (coming from the function) and a \emph{singular contribution} $\alpha^\rms$ resp.~$\beta^\rms$ (coming from the cut-off). The regular part of both sequences shows decay with scale $j$ proportional to $t$,
	\begin{align}
		\sum_{\ell,\vec k} \abs*{\alpha^\rmr_{\smash[t]{\jlk}}}^2 &\lesssim \eps_j^2 2^{-2jt}\norm{g}_{H^t}^2,\\
		\sum_{\ell,\vec k} \abs*{w_\jlk \beta^\rmr_{\smash[t]{\jlk}}}^2 &\lesssim \eps_j^2 2^{-2jt}\norm{g}_{H^t}^2, \label{eq:gamma_est_reg}
	\end{align}
	where $\sum_j \eps_j^2 \le 1$. Defining the angle $\theta_\jl(\vec n):=\arccos (\sjl\cdot\vec n)$ that the vectors $\sjl$ enclose with $\vec n$ and setting
	\begin{align}\label{eq:Lambda_jr}
		\Lambda^j_r(\vec n):=\begin{cases}
			\set{\ell}{2^{-r}\le \abs*{\sin \theta_\jl(\vec n)} \le 2^{-r+1}}, & 1\le r<j, \\
			\set{\ell}{\abs*{\sin \theta_\jl(\vec n)} \le 2^{-j+1}}, & r=j,
		\end{cases}
	\end{align}
	the singular part of both sequences is \emph{localised in angle} for a given scale $j$ as follows,
	\begin{align}
	\begin{split}\label{eq:gamma_est_sum_Lambda}
		&\sum_{\ell\in\Lambda^j_r(\vec n)} \!\! \sum_{\vec k} \abs*{\alpha^\rms_{\smash[t]{\jlk}}}^2 \lesssim 2^{-j} 2^{-(j-r)(2t-1)}\norm{g}_{H^t}^2, \\
		&\sum_{\ell\in\Lambda^j_r(\vec n)} \!\! \sum_{\vec k} \abs*{w_\jlk \beta^\rms_{\smash[t]{\jlk}}}^2\lesssim 2^{-j} 2^{-(j-r)(2t-1)}\norm{g}_{H^t}^2.
	\end{split}
	\end{align}
\end{proposition}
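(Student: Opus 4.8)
The whole argument lives on the Fourier side, via $\inpr{\varphi_\lambda,f}=\inpr{\hat\varphi_\lambda,\hat f}$ and the observation that $\hat\varphi_\jlk$ is, up to the modulation $\ee^{-2\pi\ii U_\jl\vec k\cdot\vec\xi}$ and the factor $2^{-j/2}$, just $\hat\psi_\jl$, supported in the polar rectangle $P_\jl$. The workhorse is a localised Bessel estimate: because $\Phi$ is the Parseval frame of \autoref{def:phi_jlk}, built from translates of $\psi_\jl$ along the lattice $U_\jl\bbZ^d$ of covolume $2^{-j}$, a routine Poisson–summation argument (using that $\hat\psi_{(\jl)}$ is supported in a fixed box by \autoref{assump:psi_smooth}) gives $\sum_{\vec k}\abs{\inpr{\varphi_\jlk,h}}^2\lesssim\int_{P_\jl}\abs{\hat h(\vec\xi)}^2\d\vec\xi$ for every $h\in L^2$; summing over $\ell$ with $\sum_\ell\hat\psi_\jl^2\le1$ (supported in $2^{j-1}<\abs{\vec\xi}<2^{j+1}$) gives the scale-$j$ bound $\sum_{\ell,\vec k}\abs{\inpr{\varphi_\jlk,h}}^2\lesssim\int_{2^{j-1}<\abs{\vec\xi}<2^{j+1}}\abs{\hat h}^2$, and since $w_\jlk$ is essentially constant, $\approx 1+\abs{\vec s\cdot\vec\xi}$, on $P_\jl$, also the weighted version $\sum_{\ell,\vec k}w_\jlk^2\abs{\inpr{\varphi_\jlk,h}}^2\lesssim\int_{2^{j-1}<\abs{\vec\xi}<2^{j+1}}(1+\abs{\vec s\cdot\vec\xi})^2\abs{\hat h}^2$ (the global counterpart of which is \eqref{eq:RidgeletFrame}). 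Besides this I will only use the trivial high-frequency bound $\int_{\abs{\vec\eta}\sim\rho}\abs{\hat h}^2\lesssim\rho^{-2s}\norm{h}_{H^s}^2$.

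\textbf{The splitting, and the regular part.} Working in the frame rotated so that $\vec n$ becomes $\vec e_1$, I first replace $g$ by an $H^t$-extension $\bar g$ of its restriction to $\set{\vec x}{x_1>v}$, with $\norm{\bar g}_{H^t}\lesssim\norm{g}_{H^t}$ (\autoref{th:half_space_ext}), so $f=H(x_1-v)\bar g$ where it matters. One application of \autoref{lem:fourier_decomp} peels off the singular term $\propto\tfrac{\xi_1}{\abs{\vec\xi}^2}\wh{\bar g|_h}(\CP_{\vec e_1}\vec\xi)$ and leaves a multiple of $\tfrac1{\abs{\vec\xi}^2}\vec\xi\cdot\CF[H(x_1-v)\nabla\bar g]$; since $H(x_1-v)\nabla\bar g$ is \emph{not} in $H^{t-1}$ (its transform decays only like $\abs{\xi_1}^{-1}$ along $\xi_1$, because of the trace of $\nabla\bar g$), I iterate \autoref{lem:fourier_decomp} a total of $t$ times — and this is exactly where $t\in\bbN$ enters. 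The outcome is a splitting $\alpha_\lambda=\alpha^\rmr_\lambda+\alpha^\rms_\lambda$ (and likewise $\beta$) in which $\hat\alpha^\rms$ is a finite sum over $k=0,\dots,t-1$ of terms $\tfrac{\xi_1\,q_k(\vec\xi)}{\abs{\vec\xi}^{2(k+1)}}\wh{\tau_k}(\CP_{\vec e_1}\vec\xi)$, with $q_k$ homogeneous of degree $k$ and $\tau_k$ a trace $\partial^\beta\bar g|_h$, $\abs\beta=k$, so $\tau_k\in H^{t-k-1/2}(\bbR^{d-1})$ with $\norm{\tau_k}_{H^{t-k-1/2}}\lesssim\norm{g}_{H^t}$, whereas the regular remainder $f^\rmr$ obeys $\abs{\hat f^\rmr(\vec\xi)}\lesssim\abs{\vec\xi}^{-t}\abs{\CF[H(x_1-v)\nabla^t\bar g](\vec\xi)}$ with $\nabla^t\bar g\in L^2$. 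The regular estimate is then immediate: the Bessel bound and $\abs{\vec\xi}\sim2^j$ give $\sum_{\ell,\vec k}\abs{\alpha^\rmr_\jlk}^2\lesssim 2^{-2jt}\int_{\abs{\vec\xi}\sim2^j}\abs{\CF[H(x_1-v)\nabla^t\bar g]}^2=:2^{-2jt}\eps_j^2\norm{g}_{H^t}^2$, and $\sum_j\eps_j^2\lesssim1$ because $\sum_j\int_{\abs{\vec\xi}\sim2^j}\abs{\CF[\cdot]}^2\lesssim\norm{H(x_1-v)\nabla^t\bar g}_{L^2}^2\le\norm{\nabla^t\bar g}_{L^2}^2\lesssim\norm{g}_{H^t}^2$. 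For $\beta^\rmr$ one runs the same argument with $\CS[f]$ in place of $f$ — by \autoref{prop:sol_smooth_except_hyp} the solution is again of mutilated form with controlled constituents — using the weighted Bessel bound and $\CS\colon H^t\to\set{h}{(\vec s\cdot\nabla)h\in H^t}$, which follows from \autoref{prop:sol_op_bounded}, boundedness of multiplication by $\kappa\in H^{t+d/2}$, and the identity $(\vec s\cdot\nabla)\CS[h]=h-\kappa\CS[h]$ (the PDE itself), to absorb the weight.

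\textbf{The singular part.} Fix $k$ and $r$. The Bessel bound, with $\abs{\xi_1}\le\abs{\vec\xi}\sim2^j$ and $\abs{q_k(\vec\xi)}\le\abs{\vec\xi}^k$ on $P_\jl$, gives $\sum_{\vec k}\abs{\alpha^{\rms,k}_\jlk}^2\lesssim 2^{-2j(k+1)}\int_{P_\jl}\abs{\wh{\tau_k}(\CP_{\vec e_1}\vec\xi)}^2\d\vec\xi$. The geometric heart is that for $\ell\in\Lambda^j_r(\vec n)$ (see \eqref{eq:Lambda_jr}) — so $\sjl$ encloses an angle $\theta_\jl$ with $\vec n$, $\abs{\sin\theta_\jl}\sim2^{-r}$ — the tangential image $\CP_{\vec e_1}(P_\jl)\subseteq\bbR^{d-1}$ is a tube $S_\jl$: a segment of length $\sim2^{j-r}$ in the direction $\CP_{\vec e_1}\sjl$, thickened by a ball of radius $\sim1$ (so $\abs{\vec\eta}\sim2^{j-r}$ throughout $S_\jl$), and the change of variables from $\int_{P_\jl}\cdots\d\vec\xi$ to $\int_{S_\jl}\cdots\d\vec\eta$ costs a factor $\lesssim 2^r$. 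Since moreover $\#\Lambda^j_r(\vec n)\sim2^{(j-r)(d-1)}$ and the tubes $\set{S_\jl}{\ell\in\Lambda^j_r(\vec n)}$ essentially tile the shell $\abs{\vec\eta}\sim2^{j-r}$ (bounded overlap), summing over $\ell$ and using $\int_{\abs{\vec\eta}\sim2^{j-r}}\abs{\wh{\tau_k}}^2\lesssim2^{-2(j-r)(t-k-1/2)}\norm{g}_{H^t}^2$ yields, after a one-line exponent computation in which $k=0$ is the binding case (as $r\le j$), $\sum_{\ell\in\Lambda^j_r(\vec n)}\sum_{\vec k}\abs{\alpha^{\rms,k}_\jlk}^2\lesssim 2^{-j}2^{-(j-r)(2t-1)}\norm{g}_{H^t}^2$; summing the finitely many $k$ finishes $\alpha^\rms$.

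\textbf{The weighted (solution) case, and the main obstacle.} For $\beta^\rms$ there is an extra twist. By the explicit solution formula (see the proof of \autoref{prop:sol_smooth_except_hyp}) the singular part of $\CS[f]$ is nonzero \emph{only} for hyperplanes with $\vec n\perp\vec s$, and on such a hyperplane $\vec s$ is tangential, so $\CS[f]\big|_h$ carries one extra $\vec s$-derivative of regularity (again via $(\vec s\cdot\nabla)\CS[h]=h-\kappa\CS[h]$). On $P_\jl$ one then has the dichotomy $w_\jlk\lesssim1$ \emph{or} $\abs{\vec s\cdot\CP_{\vec e_1}\vec\xi}\gtrsim w_\jlk$, hence in either case $w_\jlk\,\abs{\wh{\CS[f]|_h}(\CP_{\vec e_1}\vec\xi)}\lesssim\abs{\hat v(\CP_{\vec e_1}\vec\xi)}$ for some $v$ with $\norm{v}_{H^{t-1/2}}\lesssim\norm{g}_{H^t}$, so the weight is absorbed at the cost of a harmless factor $2^{r-j}\le1$; the higher-order ($k\ge1$) terms have enough spare decay that the crude bound $w_\jlk\lesssim 2^{j-r}$ suffices. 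The step I expect to be genuinely delicate is this singular estimate: getting the geometry of $\CP_{\vec e_1}(P_\jl)$, the cardinality of $\Lambda^j_r(\vec n)$, and the tube overlap precise enough to produce exactly $2^{-j}2^{-(j-r)(2t-1)}$, and making the weight-versus-tangential-smoothing dichotomy airtight uniformly in $\ell$; by contrast, the iteration generating the $\tau_k$ is bookkeeping-heavy but conceptually routine, as are the two Bessel bounds and the regular estimate.
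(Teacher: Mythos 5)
Your overall blueprint matches the paper's closely: rotate so that $\vec n = \vec e_1$, apply the frame (Bessel) bound to a Fourier-localised version of $f$, split by iterating \autoref{lem:fourier_decomp}, and estimate the two pieces separately. However, there are two genuine gaps, both in the singular part.

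\textbf{The overlap of the tubes $S_\jl$ is not bounded.} You correctly observe that for $\ell\in\Lambda^j_r(\vec n)$ the projection $S_\jl = \CP_{\vec e_1}(P_\jl)\subset\bbR^{d-1}$ is essentially a tube of length $\sim 2^{j-r}$ and thickness $\sim 1$, and that the change of variables costs a factor $2^r$. But the assertion that these tubes tile the shell $\{\abs{\vec\eta}\sim 2^{j-r}\}$ with bounded overlap is false: the shell has volume $\sim 2^{(j-r)(d-1)}$, each tube has volume $\sim 2^{j-r}$, and $\#\Lambda^j_r\sim 2^{(j-r)(d-1)}$, so the total tube volume exceeds the shell volume by a factor $\sim 2^{j-r}$, i.e.\ the overlap is $\sim 2^{j-r}$. (Already for $d=2$ this is clear: all the $S_\jl$ with $\ell\in\Lambda^j_r$ are, up to constants, the \emph{same} interval $\pm[2^{j-r-1},2^{j-r+2}]$ in $\bbR$, and there are $\sim 2^{j-r}$ of them.) With bounded overlap your chain produces
\begin{align}
\sum_{\ell\in\Lambda^j_r}\sum_{\vec k}\abs{\alpha^{\rms,0}_\lambda}^2 \lesssim 2^{-2j}\cdot 2^r\cdot\int_{\abs{\vec\eta}\sim 2^{j-r}}\abs{\wh{\tau_0}}^2\,\d\vec\eta \lesssim 2^{-j}\,2^{-(j-r)\cdot 2t}\,\norm{g}_{H^t}^2,
\end{align}
which is an extra factor $2^{-(j-r)}$ smaller than \eqref{eq:gamma_est_sum_Lambda} — the argument as written ``proves too much'', which signals the error. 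With the correct overlap $\sim 2^{j-r}$, the aggregate Jacobian factor becomes $2^r\cdot 2^{j-r} = 2^j$ (this is exactly what the paper's spherical-coordinate computation delivers as the factor $\tfrac{2^j}{\abs{\sin\theta}}$ integrated over $\abs{\sin\theta}\sim 2^{-r}$), and the exponent then comes out correctly. So the final answer is right, but only after repairing the overlap count.

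\textbf{The weight in the singular part of $\beta$.} Your reduction via ``$\CS[f]$ is mutilated only when $\vec n\perp\vec s$, where $\vec s$ is tangential and supplies an extra $\vec s$-derivative'' has two problems. First, the premise is incorrect: by \autoref{prop:sol_smooth_except_hyp}, $u=\CS[f]$ is of the mutilated form $u_0 + H(\vec x\cdot\vec n-v)\,u_1$ for every admissible $\vec n$, not only for $\vec n\perp\vec s$; what is true is that $u_1\normalr|_h = 0$ when $\vec s\cdot\vec n\neq0$ (the half-space extension in the paper's proof is chosen precisely to match $\CS[f]$ on the interface). But that kills only the $k=0$ term of your iterated singular expansion; the higher traces $\partial^\beta u_1\normalr|_h$, $\abs{\beta}=k\ge1$, are nonzero in general. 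Second, in the regime $\vec s\cdot\vec n\neq0$, for $\ell\in\Lambda^j_r$ one has $\sjl$ within angle $\sim 2^{-r}$ of $\pm\vec n$, so $w_\jl = 1 + 2^j\abs{\vec s\cdot\sjl}\sim 2^j$ (not $\lesssim 2^{j-r}$), and the ``crude bound suffices for $k\ge1$'' step does not close: at level $k$ the singular contribution is only smaller than $k=0$ by $\sim 2^{-2kr}$, which does not absorb the squared weight $\sim 2^{2j}$ unless $kr\ge j$. The paper sidesteps this entirely: it splits $\kappa = \gamma + \kappa_0$, observes that the constant-absorption operator $\Ac = \vec s\cdot\nabla + \gamma$ commutes with the Fourier localiser $\chi$, and uses $\norm{u*\chi}_{\Hs}\sim\norm{\Ac(u*\chi)}_{L^2} = \norm{\fc*\chi}_{L^2}$ with $\fc = f - \kappa_0 u$. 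This turns the weighted estimate for $\CS[f]$ into an \emph{unweighted} estimate for the modified right-hand side $\fc$, whose mutilated constituent $g_1 = g - \kappa_0 u_1$ still lies in $H^t$; the cut-off analysis for $\alpha$ then applies verbatim. This step is not interchangeable with your per-level dichotomy, and your treatment of $\beta^\rms$ needs to be replaced by something of this type.

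The remaining parts — the local (weighted and unweighted) Bessel bounds, the iteration of \autoref{lem:fourier_decomp} down to a regular remainder, and the regular-part estimate with $\sum_j\eps_j^2\lesssim1$ — are correct and essentially coincide with the paper's induction.
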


\begin{proof}
	From \cite{grohs1}, we know that
	\begin{align}
		&\norm{f}_{L^2} \sim \norm{\inpr{\Phi,f}}_{\ell^2}=\norm{\alpha_\Lambda}_{\ell^2} \label{eq:ridge_frame}\\
		&\norm{\CS[f]}_{\Hs} \sim \norm{\VW \inpr{\Phi,\CS[f]}}_{\ell^2}=\norm{\VW \beta_\Lambda}_{\ell^2}, \label{eq:ridge_Hs_stable}
	\end{align}
	where $u:=\CS[f]\in\Hs$ for $f\in L^2$ (cf. \eqref{eq:Hs_elliptic}). We define
	\begin{align}
		P^j_r(\vec n):=\bigcup_{\ell \in \Lambda^j_r(\vec n)} P_\jl,
	\end{align}
	and let $\hat\chi$ be the indicator function of this set. Obviously, $\hat \chi$ has an inverse Fourier transform, which we name $\chi$. Since $\curly{j}\times \Lambda^j_r\times \bbZ^d$ is a subset of all ridgelets intersecting $P^j_r$, we can apply \eqref{eq:ridge_frame} to $f*\chi$ (which, by looking at the Fourier side, is obviously in $L^2$) we arrive at
	\begin{align}\label{eq:alpha_loc_fourier}
		\sum_{\ell\in\Lambda^j_r}\sum_{\vec k} \abs*{\alpha_\lambda}^2
		\lesssim \int \abs*{\hat f \hat \chi}^2 \d \vec \xi
		=\int_{P^j_r(\vec n)} \abs*{\hat f}^2 \d \vec \xi.
	\end{align}
	To get the same (localised!) estimate in Fourier space for second sequence, we need to do a bit more. First, we split the absorption into its parts
	\begin{align}
		Au=\vec s \cdot \nabla u + (\gamma+\kappa_0)u = f \quad \Longleftrightarrow \quad \Ac u:=\vec s\cdot \nabla u+\gamma u=f-\kappa_0 u =:\fc,
	\end{align}
	and observe that $u$ is also the solution of the differential equation $\Ac u=\fc$. From \autoref{prop:sol_smooth_except_hyp} we know that $u(\vec x)=u_0(\vec x)+H(\vec x\cdot\vec  n-v)u_1(\vec x)$ is of the same form as $f$,  with $\norm{u_i}_{H^t}\lesssim\norm{g}_{H^t}$, and again, the multiplication with $\kappa$ also does not pose a problem (see \cite[Prop. 3.3.5]{my_thesis}).
	
	Thus, $\fc(\vec x)= g_0(\vec x) + H(\vec x\cdot\vec  n-v)g_1(\vec x)$ with $\norm{g_i}_{H^t}\lesssim\norm{g}_{H^t}$. The gain of this change of differential equation is that $\Ac$ trivially satisfies the requirements for \eqref{eq:Hs_elliptic}, and therefore $\norm{u}_{\Hs}\sim\norm[]{\Ac u}_{L^2}$ holds for $\Ac$ as well. Now, crucially, due to the constant absorption term in $\Ac$, the operator commutes with convolution, i.e.~$\Ac(u*\chi)= (\Ac u)*\chi=\fc*\chi$ with $\chi$ as above. Thus, using the same argument as for \eqref{eq:alpha_loc_fourier} and applying \eqref{eq:ridge_Hs_stable} for $u*\chi$,
	\begin{align}
		\sum_{\ell\in\Lambda^j_r}\sum_{\vec k} \abs*{w_\lambda\beta_\lambda}^2
		&\lesssim \norm*{u*\chi}_{\Hs}^2 \sim \norm*{\Ac (u*\chi)}_{L^2}^2 = \norm*{\fc*\chi}_{L^2}^2 \\
		&= \int_{P^j_r} \abs*{\hat{\fc}}^2 \d \vec \xi
		\le \int_{P^j_r(\vec n)} \abs*{\hat g_0}^2 \!+ \abs*{\CF[H(\vec x\cdot\vec  n-v)g_1(\vec x)](\vec \xi)}^2\d \vec \xi.
	\end{align}
	The term involving $\hat g_0$ will contribute to the regular part $\VW \beta^\rmr$, and the required inequality \eqref{eq:gamma_est_reg} is easily proved since, for $t\in\bbR^+$, $\norm{h}_{H^t}\sim\norm*{\reg*{\vec \xi}^{t}\hat h(\vec \xi)}_{L^2}$, thus
	\begin{align}
		\int_{P^j_r(\vec n)} \abs*{\hat g_0}^2\d \vec \xi =\int_{P^j_r(\vec n)} \frac{\reg*{\vec \xi}^{2t}}{\reg*{\vec \xi}^{2t}} \abs*{\hat g_0}^2\d \vec \xi \lesssim 2^{-2jt}\int_{P^j_r(\vec n)} \reg*{\vec \xi}^{2t} \abs*{\hat g_0}^2\d \vec \xi  \lesssim 2^{-2jt} \norm{g_0}_{H^t}  \lesssim 2^{-2jt} \norm{g}_{H^t},
	\end{align}
	using the fact that $|\vec \xi|\gtrsim 2^j$ for $\vec \xi \in P^j_r(\vec n)$.
	
	Aside from the contribution above, both sequences have now been dominated by the same kind of integral, and so we can deal with both cases together from now on. We will work with $g$ (and thus the estimate for $\alpha$), but by replacing any occurrence of it with $g_1$ (and recalling $\norm{g_1}_{H^{t}}\lesssim \norm{g}_{H^{t}}$) will automatically yield the estimate for $\VW\beta$.
	
	Setting
	\begin{align}
		f(\vec x)=\tilde f(\Rn \vec x) \quad \text{where} \quad \tilde f(\vec x) := H(x_1-v)\tilde g(\vec x) \quad \text{and} \quad \tilde g(\vec x):=g(\Rn^{-1}\vec x)
	\end{align}
	like in the proof of \autoref{lem:fourier_decomp}, we continue from \eqref{eq:alpha_loc_fourier} as follows. Transforming with $\vec \zeta=\Rn\vec \xi$, we use \eqref{eq:sing_fourier_rot} and the fact that $(a+b)^2\le 2(a^2+b^2)$,
	\begin{align}
		\int_{P^j_r(\vec n)} \abs*{\hat f(\vec \xi)}^2 \d \vec \xi
		&= \int_{\Rn P^j_r(\vec n)} \abs*{\hat f(\Rn^{-1}\vec \zeta)}^2 \d \vec \zeta\\
		&\lesssim \int_{\Rn P^j_r(\vec n)}  \abs*{\vec\zeta}^{-2} \abs*{\CF[H(x_1-v)\nabla \tilde g(\vec x)](\vec \zeta)}^2 + \abs*{\vec\zeta}^{-2} \abs*{\wh{g\normalr|_{h}}(\CP_{\vec e_1}\vec \zeta)}^2 \d \vec \zeta.
	\end{align}
	Each $P_\jl$ making up $P^j_r(\vec n)$ is a polar rectangle (compare \eqref{eq:supp_Pjl}), and with the help of the transform $\CR(r,\vec s):=r\vec s$, we can easily describe effect of $\Rn$,
	\begin{align}
		\Rn P_\jl = \Rn\CR\parens*{[2^{j-1},2^{j+1}]\times B_{\bbSd}(\sjl, 2^{-j+1})} = \CR\parens*{[2^{j-1},2^{j+1}]\times B_{\bbSd}(\Rn\sjl, 2^{-j+1})}.
	\end{align}
	Thus,
	\begin{align}
		\sin\theta_{\Rn\sjl}(\vec e_1) = \arccos (\Rn\sjl\cdot \vec e_1) = \sin\theta_\jl(\vec n) \quad \text{and therefore} \quad \Rn P^j_r(\vec n) = P^j_r(\vec e_1).
	\end{align}
	
	We begin the induction with $t=0$, where the claim follows trivially with $\alpha^\rms_\lambda\equiv0$ resp.~$\beta^\rms_\lambda\equiv0$. Suppose now that the claim holds for $t-1\in \bbN$, i.e.~$g\in H^{t-1}$ and thus also $\tilde g\in H^{t-1}$. Then, by the above, the induction hypothesis implies
	\begin{align}
		\int_{P^j_r(\vec n)} \abs*{\hat f}^2 \d \vec \xi
		&\lesssim \int_{P^j_r(\vec e_1)}  \abs*{\vec\xi}^{-2} \abs*{\CF[H(x_1-v)\nabla \tilde g(\vec x)](\vec \xi)}^2 \d \vec \xi + \! \int_{P^j_r(\vec e_1)} \abs*{\vec\xi}^{-2} \abs*{\wh{g\normalr|_{h}}(\CP_{\vec e_1}\vec \xi)}^2 \d \vec \xi \\
		&\lesssim  \eps_j^2 (\delta^j_r)^2 2^{-j(2t-1)-2j}+ 2^{-3j} 2^{-(j-r)(2(t-1)-1)} + 2^{-2j} \!\! \int_{P^j_r(\vec e_1)} \abs*{ \wh{g\normalr|_{h}} (\CP_{\vec e_1} \vec \xi)}^2 \d \vec \xi, \label{eq:gamma_est_ind_hyp}
	\end{align}
	because, again, $\abs*{\vec \xi}\gtrsim 2^j$ on $P^j_r(\vec e_1)$. The factors $\eps_j$ and $\delta^j_r$ satisfy
	\begin{align}
		\sum_{r=1}^j (\delta^j_r)^2 =1 \qquad \text{and} \qquad \sum_{j=1}^\infty \eps^2_j =1.
	\end{align}
	The last integral can be treated (for $1\le r<j$) as follows
	\begin{align}
		\int_{P^j_r(\vec e_1)} \abs*{ \wh{g\normalr|_{h}} (\CP_{\vec e_1} \vec \xi)}^2 \d \vec \xi
		&= \int_{P^j_r(\vec e_1)} \abs*{ \wh{g\normalr|_{h}} (\CP_{\vec e_1} T(r \cos \theta, r \sin\theta, \phi))}^2 r^{d-1} (\sin\theta)^{d-2}\d r \d \phi \d \theta,
		\intertext{
	where, after having transformed into spherical coordinates, we split off the first component $r\cos \theta$  ($\theta$ being the angle to the $\xi_1$-axis) and treat the rest as $d-1$-dimensional spherical coordinates with radius $r\sin\theta$, i.e.~$T(x,s,\phi)=(x,s\cos\phi_1,s\sin\phi_1\cos\phi_2,\ldots)^\top$. Consequently, we transform with $r'=r\sin\theta$,
		}
		&\lesssim \int_{\CP_\theta P^j_r(\vec e_1)} \frac{2^j}{\abs{\sin \theta}} \int_{\bbS^{d-2}} \int_{2^{j-1}\abs{\sin\theta}}^{2^{j+1}\abs{\sin\theta}} \abs*{\wh{g\normalr|_{h}}(r',\phi)}^2 (r')^{d-2} \d r' \d \phi \d \theta,
	\end{align}
	where one power of $r\lesssim 2^j$ has been estimated, $\abs{\sin \theta}$ is bounded from below due to $r<j$ and $\CP_\theta$ denotes the projection onto the relevant interval $[0,2\pi)$ for $\theta$ (which results in two intervals symmetric around $\pi$ due to the way $\Lambda^j_r$ is defined; we can treat both cases as one since only the absolute value of $\sin\theta$ is involved).
	
	The integrals over $r'$ and $\phi$ can again be interpreted as a $d-1$-dimensional integral, allowing us to continue the estimation due to $g\normalr|_{h}\in H^{t-\frac 12}(\bbR^{d-1})$, for the same reason as above (i.e.~the identification of the $H^t$-norm with a weighted $L^2$-norm in Fourier space),
	\begin{align}
		\int_{P^j_r(\vec e_1)} \abs*{\wh{g\normalr|_{h}} (\CP_{\vec e_1} \vec \xi)}^2 \d \vec \xi
		&\lesssim \int_{\CP_\theta P^j_r(\vec e_1)} \frac{2^j}{\abs{\sin \theta}} \int_{\bbS^{d-2}}\int_{2^{j-1}\abs{\sin\theta}}^{2^{j+1}\abs{\sin\theta}} \abs*{\wh{g\normalr|_{h}}(r',\phi)}^2 (r')^{d-2} \frac{\reg{r'}^{2t-1}}{\reg{r'}^{2t-1}} \d r' \d \phi \d \theta \\
		&\lesssim \int_{\CP_\theta P^j_r(\vec e_1)} \frac{2^j}{\abs{\sin \theta}} \frac{1}{\reg{2^j\sin\theta}^{2t-1}} \int_{\bbR^{d-1}} \abs*{\wh{g\normalr|_{h}}(\xip)}^2 \reg*{\xip}^{2t-1} \d \xip \d \theta \\
		&\lesssim \int_{\CP_\theta P^j_r(\vec e_1)} \frac{2^j}{\abs{\sin \theta}} \frac{1}{\reg{2^j\sin\theta}^{2t-1}} \norm{g\normalr|_{h}}_{H^{t-\frac 12}(\bbR^{d-1})}^2 \d \theta\\
		&\le 2^{-j(2t-2)} \int_{\CP_\theta P^j_r(\vec e_1)} \abs{\sin\theta}^{-2t} \d \theta \,\norm{g}_{H^{t}(\bbR^{d})}^2. \label{eq:branch_r1}
	\end{align}
	For $1<r<j$, we continue by transforming with $y=\sin\theta$ (the functional determinant $(\cos \theta)^{-1}$ is bounded from above by a constant due to the restriction that $r<j$)
	\begin{align}
		\int_{\CP_\theta P^j_r} \abs{\sin\theta}^{-2t} \d \theta \lesssim \bracket*{-y^{-2t+1}}_{2^{-r}-2^{-j}}^{2^{-r+1}+2^{-j}} = 2^{(-2t+1)(-r-1)}-2^{(-2t+1)(-r+2)}\lesssim 2^{r(2t-1)},
	\end{align}
	because $r>1$. For $r=1$, we have $\abs{\sin\theta}\gtrsim1$ and thus, the integral in \eqref{eq:branch_r1} can be estimated by a constant, and is trivially less than (a constant independent of $j$ times) $2^{r(2t-1)}$.
	
	Finally, for $r=j$, the support $P^j_j$ is contained in a cylinder aligned with the $\xi_1$-axis (cf. \cite[Prop. A.6]{compress}), and therefore
	\begin{align}
		\int_{P^j_j} \abs*{\wh{g\normalr|_{h}} (\CP_{\vec e_1} \vec \xi)}^2 \d \vec \xi \le \int_{[-2^{j+1},2^{j+1}]\times B_{\bbR^{d-1}}(0,8)} \abs*{\wh{g\normalr|_{h}} (\CP_{\vec e_1} \vec \xi)}^2 \d \vec \xi \lesssim 2^j \norm*{\wh{g\normalr|_{h}}}_{L^2(\bbR^{d-1})}^2 \le 2^j \norm{g}_{H^{t}(\bbR^{d})}^2.
	\end{align}
	Returning to \eqref{eq:gamma_est_ind_hyp} and collecting all the factors, we see that the latter two terms (combined into $\alpha^\rms_\lambda$ resp.~$\beta^\rms_\lambda$) satisfy \eqref{eq:gamma_est_sum_Lambda}, as claimed.
\end{proof}

\subsection{Localisation in Space}\label{sec:loc_space}

Similarly to the localisation in angle (i.e.~$\ell$), we require a localisation in space (i.e.~$\vec k$) for a function cut off across a hyperplane, which we will establish in this section. Before we are able to tackle the proof, we need to investigate the spatial decay of the ridgelets (and a modified variant) in physical space.

Finally, after having dealt with the singular functions in \autoref{prop:loc_space}, we investigate how spatial decay of a general $L^2$-function transfers to the ridgelet coefficients in \autoref{lem:ridge_coeff_tail_decay}.

\begin{lemma}\label{lem:decay_ridgelet}
	For arbitrary $n\in\bbN$ and an arbitrary rotation $R\in \mathrm{SO}(d)$, we have that,
	\begin{align}
		\abs{\varphi_\lambda(\vec x)} &\lesssim \frac{2^{\frac j2}}{\reg*{U_\jl^{-1}\vec x-\vec k}^{2m}}, \label{eq:ridge_est} \\
		\abs[\Big]{\CF^{-1}\bracket[\Big]{\frac{\xi_1}{\abs*{\vec \xi}^2}\hat\varphi_\lambda (R \vec \xi)}(\vec x)} &\lesssim \frac{2^{-\frac j2}}{\reg*{U_\jl^{-1}R\vec x-\vec k}^{2n}},\label{eq:mod_ridge_est}
	\end{align}
	where the implicit constants depends only on $n$ (and the choice of window function used to construct $\hat\psi$).
\end{lemma}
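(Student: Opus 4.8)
The plan is to prove both estimates by the same device: undoing the anisotropic affine change of variables built into $U_{j,\ell}=R_{j,\ell}^{-1}D_{2^{-j}}$ so as to reduce everything to the \emph{normalised} window $\hat\psi_{(j,\ell)}=\hat\psi_{j,\ell}\circ U_{j,\ell}^{-\top}$ from \eqref{eq:psi_trafo}. Two properties of this window do all the work. First, by \autoref{assump:psi_smooth}, $\norm*{\hat\psi_{(j,\ell)}}_{\CC^{2n}}\le\beta_{2n}$ \emph{uniformly} in $j,\ell$ (for windows smooth enough to admit $2n$ derivatives; for $\CC^\infty$ windows, for every $n$). Second, its support is $U_{j,\ell}^\top P_{j,\ell}=D_{2^{-j}}R_{j,\ell}P_{j,\ell}$; since $\abs*{\vec\xi}\sim2^j$ on $P_{j,\ell}$ by \eqref{eq:supp_Pjl}, while $R_{j,\ell}$ rotates the essentially radial long direction of $P_{j,\ell}$ onto $\vec e_1$ and $D_{2^{-j}}$ then compresses it to size $\sim1$ (the transverse directions of $P_{j,\ell}$ already having size $\sim1$), this support lies in a fixed compact $K\subset\bbR^d\setminus\{0\}$, independent of $j,\ell$. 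Combined with the elementary bound
\begin{align*}
	\abs*{\CF^{-1}h(\vec y)}\lesssim\norm*{h}_{\CC^{2n}}\reg{\vec y}^{-2n},\qquad h\in\CC^{2n}\text{ with }\supp h\subseteq K,
\end{align*}
which follows from $(1+4\pi^2\abs*{\vec y}^2)^n\CF^{-1}h=\CF^{-1}[(1-\Delta)^nh]$ by estimating $\norm{(1-\Delta)^nh}_{L^1}\le\abs*{K}\norm*{h}_{\CC^{2n}}$, this is all we shall need.

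\medskip\noindent\textbf{The two reductions.}
For \eqref{eq:ridge_est}, substituting $\vec\eta=U_{j,\ell}^\top\vec\xi$ in $\psi_{j,\ell}=\CF^{-1}\hat\psi_{j,\ell}$ and using $\det U_{j,\ell}=2^{-j}$ gives $\psi_{j,\ell}(\vec x)=2^{j}\CF^{-1}[\hat\psi_{(j,\ell)}](U_{j,\ell}^{-1}\vec x)$, so that by \autoref{def:phi_jlk}
\begin{align*}
	\varphi_{j,\ell,\vec k}(\vec x)=2^{-\frac j2}\psi_{j,\ell}(\vec x-U_{j,\ell}\vec k)=2^{\frac j2}\CF^{-1}[\hat\psi_{(j,\ell)}]\parens*{U_{j,\ell}^{-1}\vec x-\vec k},
\end{align*}
and the displayed Fourier-decay bound with $h=\hat\psi_{(j,\ell)}$ yields \eqref{eq:ridge_est}. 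For \eqref{eq:mod_ridge_est}, writing $\hat\varphi_{j,\ell,\vec k}(\vec\xi)=2^{-j/2}\ee^{-2\pi\ii\vec\xi\cdot U_{j,\ell}\vec k}\hat\psi_{j,\ell}(\vec\xi)$, performing the rotation $\vec\zeta=R\vec\xi$ (Jacobian $1$; it turns $\hat\psi_{j,\ell}(R\vec\xi)$ into $\hat\psi_{j,\ell}(\vec\zeta)$ and the multiplier $\xi_1/\abs*{\vec\xi}^2$ into $\vec r_1\cdot\vec\zeta/\abs*{\vec\zeta}^2$ with $\vec r_1:=R\vec e_1$), and then $\vec\eta=U_{j,\ell}^\top\vec\zeta$ as above, one arrives at
\begin{align*}
	\CF^{-1}\Bigl[\tfrac{\xi_1}{\abs*{\vec\xi}^2}\hat\varphi_{j,\ell,\vec k}(R\vec\xi)\Bigr](\vec x)=2^{\frac j2}\CF^{-1}[m_{j,\ell}\,\hat\psi_{(j,\ell)}]\parens*{U_{j,\ell}^{-1}R\vec x-\vec k},
\end{align*}
where $m_{j,\ell}(\vec\eta):=(\vec r_1\cdot U_{j,\ell}^{-\top}\vec\eta)/\abs*{U_{j,\ell}^{-\top}\vec\eta}^2$ is the pull-back by $U_{j,\ell}^{-\top}$ of the smooth (off the origin), $(-1)$-homogeneous multiplier $m_0\colon\vec\zeta\mapsto\vec r_1\cdot\vec\zeta/\abs*{\vec\zeta}^2$. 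The key point is that $m_{j,\ell}$ carries an extra factor $2^{-j}$: since $U_{j,\ell}^{-\top}=R_{j,\ell}^\top D_{2^j}$ and $\vec\eta\in K$ has first coordinate $\sim1$, one has $\abs*{U_{j,\ell}^{-\top}\vec\eta}\sim2^j$, hence $\abs*{m_{j,\ell}}\sim2^{-j}$ and — as explained next — $\norm*{m_{j,\ell}}_{\CC^{2n}(K)}\lesssim2^{-j}$; then $\norm*{m_{j,\ell}\hat\psi_{(j,\ell)}}_{\CC^{2n}}\lesssim2^{-j}\beta_{2n}$, and the Fourier-decay bound gives $\abs*{\CF^{-1}[m_{j,\ell}\hat\psi_{(j,\ell)}](\vec y)}\lesssim2^{-j}\reg{\vec y}^{-2n}$, which after the prefactor $2^{j/2}$ is exactly \eqref{eq:mod_ridge_est}.

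\medskip\noindent\textbf{The main obstacle.}
The one genuinely non-routine step is the uniform bound $\norm*{m_{j,\ell}}_{\CC^{2n}(K)}\lesssim2^{-j}$: I must verify that the anisotropic dilation $U_{j,\ell}^{-\top}=R_{j,\ell}^\top D_{2^j}$ inside $m_{j,\ell}$ neither destroys the uniform $\CC^{2n}$-control nor swallows the gain. Differentiating $\vec\eta\mapsto m_0(U_{j,\ell}^{-\top}\vec\eta)$ by the chain rule, every $\partial_{\eta_1}$ can pull out at most the factor $2^j$ from the single stretched column of $U_{j,\ell}^{-\top}$, but simultaneously each derivative of the $(-1)$-homogeneous $m_0$, evaluated where $\abs*{\vec\zeta}\sim2^j$, contributes a compensating $2^{-j}$; derivatives in $\eta_2,\dots,\eta_d$ produce the $2^{-j}$ without the amplification. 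Since $m_0$ is smooth with all derivatives bounded on $\{\vec\zeta:\abs*{\vec\zeta}\sim2^j\}$ — precisely where the argument of $m_0$ lands, because $\abs*{U_{j,\ell}^{-\top}\vec\eta}\sim2^j$ on $K$, so the singularity at the origin is never approached — the net effect of any multi-index of order $\le2n$ is a factor $\lesssim2^{-j}$, uniformly in $j,\ell$; the Leibniz rule against the uniformly $\CC^{2n}$-bounded $\hat\psi_{(j,\ell)}$ then closes the estimate. Everything else (the two changes of variables, the Fourier-decay lemma, and the description of $K$) is routine.
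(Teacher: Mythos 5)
Your proof is correct and follows essentially the same route as the paper: you pull back to the normalised window $\hat\psi_{(\jl)}$ via the change of variables $\vec\eta = U_\jl^{\top}R\vec\xi$, then obtain spatial decay from compact support plus a uniform $\CC^{2n}$-bound, the only delicate point being the uniform estimate $\|d_\jl\|_{\CC^{2n}(K)}\lesssim 2^{-j}$ on the pulled-back multiplier. Where the paper establishes this last bound by an explicit splitting of $d_\jl$ into a real-part trick for the $\eta_1$-part plus a Fa\`a di Bruno computation for the rest, you obtain it more conceptually from the $(-1)$-homogeneity of $m_0(\vec\zeta)=\vec r_1\cdot\vec\zeta/|\vec\zeta|^2$ combined with the chain rule: each $\partial_{\eta_1}$ can amplify by at most $2^{j}$, each derivative of $m_0$ evaluated at $|\vec\zeta|\sim 2^{j}$ loses a factor $2^{-j}$, and the net exponent $\alpha_1-(1+|\alpha|)\le -1$ is always $\le -1$, which is a pleasingly clean alternative to the paper's explicit bookkeeping.
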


\begin{proof}
	We start by proving \eqref{eq:mod_ridge_est}, inserting the definition and transforming by $\vec \xi = R^{-1}U_\jl^{-\top}\vec\eta$, which yields (because, for rotations, $R^{-1}=R^{\top}$)
	\begin{align}
		\CF^{-1}\bracket[\Big]{\frac{\xi_1}{\abs*{\vec \xi}^2}\hat\varphi_\lambda (R\vec \xi)}(\vec x)
		&= 2^{-\frac{j}{2}}\int \frac{\xi_1}{\abs*{\vec \xi}^2} \hat \psi_\jl(R\vec \xi) \exp\parens*{2\pi\ii (\vec \xi\cdot \vec x-R\vec \xi \cdot U_\jl \vec k)} \d \vec \xi\\
		&= 2^{-\frac{j}{2}}\int \frac{\xi_1}{\abs*{\vec \xi}^2} \hat \psi_\jl(R\vec \xi) \exp\parens*{2\pi\ii R\vec \xi  \cdot U_\jl( U_\jl^{-1}R\vec x- \vec k)} \d \vec \xi\\
		&=2^{\frac{j}{2}}\int \frac{\parens*{R^{-1}U_\jl^{-\top} \vec \eta}_1}{\abs*{U_\jl^{-\top} \vec \eta}^2} \hat \psi_{(\jl)}(\vec \eta) \exp\parens*{2\pi\ii\vec \eta \cdot (U_\jl^{-1}R\vec x-\vec k)} \d \vec \eta. \label{eq:mod_ridge_est_trafo}
	\end{align}
	due to the representation from \autoref{assump:psi_smooth}. The idea now is to use integration by parts and
	\begin{align}\label{eq:laplace_exp}
		\Delta_{\vec \eta} \exp\parens*{2\pi\ii \vec \eta \cdot (U_\jl^{-1}R\vec x-\vec k)} = - (2\pi)^2 \abs*{U_\jl^{-1}R\vec x-\vec k}^2 \exp\parens*{2\pi\ii \vec \eta \cdot  (U_\jl^{-1}R\vec x-\vec k)}.
	\end{align}
	to generate sufficiently high powers of $\abs*{U_\jl^{-1}R\vec x-\vec k}$ in the denominator.
	
	The support $U_\jl^{\top}P_\jl \subseteq \bracket*{\frac 14,2}\times B_{\bbR^{d-1}}(0,4)$ is bounded from above and below (compare again \cite[Prop. A.6]{compress}) and so we just have to bound the derivatives independently of $j$. Due to \autoref{assump:psi_smooth}, this is not a problem for the $\psi_{(\jl)}(\vec \eta)$. If we can show the same for the fraction in front, we will have the desired estimate due to the product rule.
	\begin{align}
		d_\jl(\vec \eta):=\frac{\parens*{R^{-1}U_\jl^{-\top} \vec \eta}_1}{\abs*{U_\jl^{-\top} \vec \eta}^2} =\frac{\parens*{R^{-1}\Rjl^{-1} D_{2^{j}} \vec \eta}_1}{\abs*{D_{2^{j}} \vec \eta}^2} = \frac{2^jc_1\eta_1+c_2\eta_2+\ldots +c_d\eta_d}{2^{2j}\eta_1^2+\eta_2^2+\ldots +\eta_d^2}.
	\end{align}
	Due to the fact that the factor $2^j$ only appears in connection with $\eta_1$, we only need to consider what happens when deriving by $\eta_1$, as all other derivatives just produce higher powers of the denominator without adding powers of $2^j$ in the numerator. By splitting the term into $\eta_1$ and the rest, respectively, and adding
	\begin{align}
		0=-\ii c_1\sqrt{\eta_2^2+\ldots+\eta_d^2}+\ii c_1\sqrt{\eta_2^2+\ldots+\eta_d^2}
	\end{align}
	in the numerator of the first term, we see that (since the quantity we're interested is clearly real),
	\begin{align}
		\Dpn{k}{\eta_1} d_\jl = \Re\parens[\bigg]{\Dpn{k}{\eta_1}\frac{c_1}{2^j\eta_1+\ii\sqrt{\eta_2^2+\ldots+\eta_d^2}}} + \Dpn{k}{\eta_1}\frac{c_2\eta_2+\ldots +c_d\eta_d}{2^{2j}\eta_1^2+\eta_2^2+\ldots +\eta_d^2}.
	\end{align}
	The first term resolves to
	\begin{align}
		\Re\parens[\Bigg]{\frac{c_1 (-1)^k k! 2^{jk}}{\parens[\Big]{2^j\eta_1+\ii\sqrt{\eta_2^2+\ldots+\eta_d^2}}^{k+1}}},
	\end{align}
	where, clearly, the power of $2^j$ is always at least one higher in the denominator than in the numerator. For the second term, we anticipate Fa\`{a} di Bruno's formula \eqref{app:eq:faa_di_bruno} --- the generalisation of the chain rule to arbitrary order of differentiation --- and the generating function machinery of \autoref{ssec:genfunc} to deal with the Bell polynomials that are involved, i.e.~\eqref{app:eq:genfunc_bell}. Then, with ``inner'' function $g(x):=2^{2j}x^2$,
	\begin{align}
		B_{n,k}(2^{2j+1}x,2^{2j+1},0,\ldots)= 2^{2jk}\frac{n!}{k!} \parens[\Big]{\Dn{n}{t} t^k(2x+t)^k}\Bigr|_{t=0}, \quad \text{for} \quad \ceil{\frac n2}\le k\le n,
	\end{align}
	and thus
	\begin{align}
		\Dpn{k}{\eta_1} \frac{c_2\eta_2+\ldots +c_d\eta_d}{2^{2j}\eta_1^2+\eta_2^2+\ldots +\eta_d^2} = \sum_{k=\ceil{\frac n2}}^n \frac{2^{2jk} p_k(\eta_1) (c_2\eta_2+\ldots +c_d\eta_d)}{(2^{2j}\eta_1^2+\eta_2^2+\ldots +\eta_d^2)^{k+1}},
	\end{align}
	where $p_k(\eta_1)=(-1)^k n!\parens{\Dn{n}{t} t^k(2\eta_1+t)^k}\bigr|_{t=0}$ is a polynomial in $\eta_1$ independent of $2^j$. In particular, in terms of powers of $2^j$, the power in the denominator is always at least two larger than in the numerator.
	
	Taken together, we can see that because  $\abs{\vec \eta}\sim 1$ for $\vec \eta \in U_\jl^{\top}P_\jl$, for any multi-index $\alpha$ with $\abs{\alpha}\ge 0$,
	\begin{align}\label{eq:mod_ridge_djl}
		\Dpi{\alpha}[d_\jl]{\vec\eta}(\vec \eta) \lesssim 2^{-j}.
	\end{align}
	
	This allows us to apply \eqref{eq:laplace_exp} to \eqref{eq:mod_ridge_est_trafo} --- boundary terms vanish because of compact support. Consequently, using \cite[Cor. C.3]{compress}, we achieve
	\begin{align}
	\MoveEqLeft
		\abs[\Big]{\CF^{-1}\bracket[\Big]{\frac{\xi_1}{\abs*{\vec \xi}^2}\hat\varphi_\lambda (R\vec \xi)}(\vec x)}\\
		&= \abs[\bigg]{2^{\frac j2} \frac{(-4\pi^2)^n}{\abs*{U_\jl^{-1}R\vec x-\vec k}^{2n}} \int_{U_\jl^{\top}P_\jl} \Delta^n\parens{d_\jl(\vec\eta) \hat \psi_{(\jl)}(\vec \eta)} \exp\parens*{2\pi\ii\vec \eta \cdot (U_\jl^{-1}R\vec x-\vec k)} \d \vec \eta}\\
		&\lesssim \frac{2^{\frac j2}}{\abs*{U_\jl^{-1}R\vec x-\vec k}^{2n}} \int_{U_\jl^{\top}P_\jl} \abs*{d_\jl(\vec \eta)}_{\CC^{2n}} \, \abs*{\hat \psi_{(\jl)}(\vec \eta)}_{\CC^{2n}} \d \vec \eta \lesssim  \frac{2^{-\frac j2}}{\abs*{U_\jl^{-1}R\vec x-\vec k}^{2n}}.
	\end{align}
	Finally, this implies
	\begin{align}
		\abs[\Big]{\CF^{-1}\bracket[\Big]{\frac{\xi_1}{\abs*{\vec \xi}^2}\hat\varphi_\lambda (R\vec \xi)}(\vec x)}
		&\lesssim  \frac{2^{-\frac j2}}{\reg*{U_\jl^{-1}R\vec x-\vec k}^{2n}},
	\end{align}
	because the compact support of $\hat\varphi_\lambda$ implies that the function on the left-hand side belongs to $\CC^\infty(\bbR^d)$ --- i.e.~has no singularities --- with respect to $\vec x$ (and is bounded independently of $\vec k$) and therefore, we may replace the absolute value with its regularised version (up to a constant).
	
	The proof of \eqref{eq:ridge_est} proceeds along the same lines, with the simplification that we do not have to consider $d_\jl$ or its derivatives --- which also removes the factor $2^{-j}$ that we had gained from \eqref{eq:mod_ridge_djl}. This finishes the proof.
\end{proof}

With these tools in hand, we proceed to the promised localisation in space for a function cut off across a hyper plane.

\begin{proposition}\label{prop:loc_space}
	Let $f(\vec x)=H(\vec x \cdot \vec n-v)g(\vec x)$ with $g\in H^{\frac 12}(\bbR^d)$, cut off across the hyperplane $h=\set{\vec x}{\vec x\cdot \vec n=v}$, such that the restriction $g\normalr|_{h}\in L^2(\bbR^{d-1})$ satisfies
	\begin{align}
		\abs*{g(\vec x)\delta_{\{\vec x\cdot \vec n=v\}}}=\abs*{g\normalr|_{h}(\xp_h)} &\lesssim \reg{\xp_h}^{-2m} = \reg{\CP_{\vec n}\vec x}^{-2m} \quad \text{almost everywhere}, \label{eq:decay_sing_f}
		\intertext{
	where $\xp_h\in\bbR^{d-1}$ is identified with $\CP_{\vec n}\vec x=\vec x - (\vec x\cdot \vec n)\vec n$.
	For the solution $u=\CS[f]$, the mutilated part (cf. \autoref{prop:sol_smooth_except_hyp}) is of the same form, i.e.~$u(\vec x)=u_0(\vec x)+H(\vec x \cdot \vec n-v)u_1(\vec x)$, and we require
	}
		\abs*{u_1(\vec x)\delta_{\{\vec x\cdot \vec n=v\}}}=\abs*{u_1\normalr|_{h}(\xp_h)} &\lesssim \reg{\xp_h}^{-2m} = \reg{\CP_{\vec n}\vec x}^{-2m}\quad \text{almost everywhere}.\label{eq:decay_sing_u}
	\end{align}
	Condition \eqref{eq:decay_sing_f} does not necessarily imply \eqref{eq:decay_sing_u}, but the latter follows for example from the stronger assumption that $\abs{g(\vec x)} \lesssim \reg{\vec x}^{-2m}$, compare \autoref{lem:decay_sol}.
	
	If the ridgelets are constructed with a window function such that \eqref{eq:mod_ridge_est} holds for $m$ sufficiently large, we have the following \emph{localisation in space} in terms of a modified translation parameter $\vec t(\vec k)$,
	\begin{align}\label{eq:gamma_est_w}
		\abs*{\alpha^\rms_\lambda}&\lesssim \frac{1}{a} \frac{2^{-\frac j2}}{(\abs[]{\tpp} + \rho_1^2+\rho_2^2+1)^m}, &
		\abs*{w_\lambda\beta^\rms_\lambda}&\lesssim \frac{1}{a} \frac{2^{\frac j2}}{(\abs[]{\tpp} + \rho_1^2+\rho_2^2+1)^m},
		\intertext{
	where, with $\theta_{\vec n}:=\arccos(\vec n\cdot \vec e_1)$ and $\phi:=\theta_\jl-\theta_{\vec n}$,
		}
		\vec t(\vec k)&=\vec k-vU_\jl^{-1}\vec n, & \rho_1(\vec t)&:= \lcopywidth{\frac 1a}{\frac{1}{a^2}} (\cos \phi\, t_1-2^j\sin\phi\, t_2),\\
		a&:=\sqrt{2^{2j} \sin^2 \phi + \cos^2\phi}, &
		\rho_2(\vec t)&:= \frac{1}{a^2}\parens*{2^j\sin\phi\, t_1 + \cos\phi\, t_2}.
	\end{align}
	The essence \eqref{eq:gamma_est_w} is, in slightly obscured form, that the magnitude of the coefficients of the modified translation parameter $\vec t$ appear unchanged --- for $d-1$ dimensions --- in the decay of the ridgelet coefficients, while in one dimension (basically, along $t_1$), the magnitude is attenuated by a factor $\sim 2^{-j}$, as soon as $\sin\phi$ is not negligible. This behaviour corresponds directly to the way the grid of translations in \autoref{def:phi_jlk} is refined in one direction (resp.~the shape of the ridgelets themselves).
\end{proposition}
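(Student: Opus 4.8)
The plan is to isolate the singular contribution of the coefficient, rewrite it by Plancherel as a single integral over the cut-off hyperplane against the ``modified ridgelet'' $\Phi_\lambda:=\CF^{-1}\bigl[\tfrac{\xi_1}{\abs{\vec\xi}^2}\hat\varphi_\lambda(\Rn^{-1}\vec\xi)\bigr]$ of \autoref{lem:decay_ridgelet}, and then to estimate that integral with the sharp inequalities \autoref{th:Imn_est} and \autoref{cor:Imn_higher_dim}.

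First I would recall from \autoref{lem:fourier_decomp} that, after rotating the singularity onto $\vec e_1$ by $\Rn$, the first term of \eqref{eq:sing_fourier_rot} (the one with $\nabla g$) contributes only to the regular part $\alpha^\rmr_\lambda$ already handled in \autoref{prop:loc_angle}, whereas the second term is, up to a unimodular phase $\ee^{\pm2\pi\ii v\xi_1}$ encoding the offset $v$, the rotated singular part $\hat f^\rms$ whose pairing with $\hat\varphi_\lambda$ is $\alpha^\rms_\lambda$. For $\CS[f]$ the same computation applies, with $g$ replaced by the corresponding mutilated source in the $\Ac$-reduction of \autoref{prop:loc_angle}'s proof (whose restriction to the hyperplane obeys the same polynomial decay under the hypotheses \eqref{eq:decay_sing_f} and \eqref{eq:decay_sing_u}), plus one extra factor $w_\lambda\le1+2^j\lesssim2^j$ which turns $2^{-\frac j2}$ into $2^{\frac j2}$; so it is enough to prove the first inequality in \eqref{eq:gamma_est_w}. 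Viewing $\wh{g\normalr|_{h}}(\CP_{\vec e_1}\vec\xi)$ as the $\bbR^d$-Fourier transform of the distribution $g\normalr|_{h}$ carried by the hyperplane $h$, Plancherel then turns $\alpha^\rms_\lambda$ into a constant multiple of the integral of $\overline{\Phi_\lambda}$ against $g\normalr|_{h}$ over $h$, with $\Phi_\lambda$ exactly the modified ridgelet of \eqref{eq:mod_ridge_est} for the rotation $R=\Rn^{-1}\in\mathrm{SO}(d)$ (the phase merely translating it along $\vec e_1$).

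Inserting the decay bounds \eqref{eq:mod_ridge_est} and \eqref{eq:decay_sing_f}, and simplifying the argument $U_\jl^{-1}\Rn^{-1}\vec x-\vec k$ by $U_\jl^{-1}\Rn^{-1}\vec e_1=U_\jl^{-1}\vec n$ --- which is where the phase produces the shift in $\vec t(\vec k)=\vec k-vU_\jl^{-1}\vec n$ --- I would reach
\[
\abs{\alpha^\rms_\lambda}\lesssim 2^{-\frac j2}\int_{\bbR^{d-1}}\frac{1}{\reg{\vec t(\vec k)-\vec u(\xp)}^{2n}}\,\frac{1}{\reg{\xp}^{2m}}\d\xp,\qquad \vec u(\xp):=U_\jl^{-1}\Rn^{-1}(0,\xp)^\top=D_{2^j}\Rjl\Rn^{-1}(0,\xp)^\top.
\]
Now $\xp\mapsto\vec u(\xp)$ parametrizes a hyperplane in $\bbR^d$, and the sole effect of the anisotropic dilation $D_{2^j}$ is to stretch it --- by the factor $a=\sqrt{2^{2j}\sin^2\phi+\cos^2\phi}$ --- along one eigendirection (the one enclosing angle $\phi$ with the dilation axis), while acting isometrically on the complementary $d-2$ directions. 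Fixing the rotation $\Rjl$ (which is ambiguous for $d\ge3$) so that these blocks line up with the coordinate axes and completing the square in each, one obtains $\reg{\vec t(\vec k)-\vec u(\xp)}^2\sim a^2(y-\rho_2)^2+\rho_1^2+\abs{\vec z-\vec c}^2+1$, where $y\in\bbR$ is the stretched coordinate and $\vec z\in\bbR^{d-2}$ the remaining ones, $\abs{\vec c}=\abs{\tpp}$, $\rho_2$ is the shift in the stretched direction and $\rho_1$ the residual there --- precisely the quantities of the statement.

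Finally, splitting each of the two factors of the integrand into a product of a piece in $y$ and a piece in $\vec z$ (legitimate since $\reg{P}^{-2\nu}\le\reg{P_1}^{-2\nu'}\reg{P_2}^{-2\nu''}$ whenever $\reg{P}^2\ge\reg{P_1}^2$, $\reg{P}^2\ge\reg{P_2}^2$ and $\nu=\nu'+\nu''$), the integral factors into a one-dimensional integral in $y$, estimated by \autoref{th:Imn_est}, times a $(d-2)$-dimensional integral in $\vec z$, estimated by \autoref{cor:Imn_higher_dim}; assembling the two bounds, and using $a\ge1$, gives $\abs{\alpha^\rms_\lambda}\lesssim\tfrac1a\,2^{-\frac j2}(\abs{\tpp}+\rho_1^2+\rho_2^2+1)^{-m}$, i.e.\ \eqref{eq:gamma_est_w}, and the $\beta$-estimate follows as above. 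The main obstacle is precisely this last step --- producing the prefactor $\tfrac1a$ rather than an uncontrolled power of $a$, which matters since $a$ may be as large as $2^j$. This is exactly what the sharpness of \autoref{th:Imn_est} buys: after completing the square the one-dimensional integral has the form $\int_{\bbR}(a^2(y-\rho_2)^2+c^2)^{-\nu}(y^2+1)^{-N}\d y$ with $c^2\sim\rho_1^2+1$, and the first term of the identity \eqref{eq:Imn_est} carries the numerator $a^{2N-1}$ --- exactly one power short of the $a^{2N}$ produced by $(a^2\rho_2^2+a^2+c^2)^N\ge a^{2N}(\rho_2^2+1)^N$ --- so that a single factor $a^{-1}$ survives. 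Two further points need attention: fixing $\Rjl$ so that the quadratic form block-diagonalizes with the stated $a,\rho_1,\rho_2$ and the $\vec z$-block is an honest translate (giving $\abs{\vec c}=\abs{\tpp}$); and taking the ridgelet decay exponent $n$ in \eqref{eq:mod_ridge_est} --- ``$m$ sufficiently large'' in the hypothesis --- a fixed amount larger than the spatial-decay exponent $m$, so that the surplus powers of $(\rho_1^2+1)$ coming out of the estimates can be traded for the powers of $(\rho_2^2+1)$ and $(\abs{\tpp}+1)$ needed to build the symmetric denominator $(\abs{\tpp}+\rho_1^2+\rho_2^2+1)^m$.
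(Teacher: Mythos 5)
Your setup is correct and essentially identical to the paper's: by Plancherel the singular coefficient becomes an integral over the cut-off hyperplane of the modified ridgelet $\CF^{-1}[\xi_1|\vec\xi|^{-2}\hat\varphi_\lambda(\Rn^{-1}\vec\xi)]$ against $g\normalr|_h$, the phase shifts the translation to $\vec t(\vec k)$, and the anisotropic dilation $D_{2^j}$ stretches exactly one direction of the plane by $a$ --- all as in the paper, up to the change of variables being centred slightly differently. The gap is in the last step. You propose to decouple the $y$-integral from the $\vec z$-integral by splitting both decay factors with exponents summing to the originals, $\reg{P}^{-2\nu}\le\reg{P_1}^{-2\nu'}\reg{P_2}^{-2\nu''}$ with $\nu=\nu'+\nu''$ (which is the correct split condition --- taking $\nu'=\nu''=\nu$ fails). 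But then the spatial-decay budget $m$ must be divided as $m=m_1+m_2$ between the two factored integrals. \autoref{th:Imn_est} applied to the one-dimensional piece $\int(a^2(y-\rho_2)^2+\rho_1^2+1)^{-n_1}(y^2+1)^{-m_1}\,\d y$ can, after extracting the crucial $\tfrac1a$, give at most $m_1$ powers of $(\rho_2^2+1)^{-1}$; and \autoref{cor:Imn_higher_dim} applied to the decoupled $\int_{\bbR^{d-2}}(|\vec z-\vec c|^2+1)^{-n_2}(|\vec z|^2+1)^{-m_2}\d\vec z$ gives at most $\min(n_2,m_2)\le m_2$ powers of $(|\tpp|^2+1)^{-1}$. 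To reach the symmetric target $(\abs{\tpp}^2+\rho_1^2+\rho_2^2+1)^{-m}$ you need both of these to reach $m$ (test $\rho_1=\tpp=0$, $\rho_2\to\infty$, and then $\rho_1=\rho_2=0$, $|\tpp|\to\infty$), which forces $m_1\ge m$ and $m_2\ge m$ simultaneously --- impossible. Taking the ridgelet exponent $n$ very large does not help: surplus decay in the independent variable $\rho_1$ cannot be ``traded'' for decay in $\rho_2$ or $|\tpp|$, since $\rho_1,\rho_2,\tpp$ can be varied independently.

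The paper avoids this loss precisely by \emph{not} factoring: it integrates in $\zpp$ first, carrying the full exponent $m$ and allowing the constants $p^2=a^2z_2^2+\rho_1^2+1$ and $q^2=(z_2-\rho_2)^2+1$ to remain functions of $z_2$, so that \autoref{cor:Imn_higher_dim} leaves behind a term of the form $(\abs{\tpp}^2+p^2+q^2)^{-m}$ still coupling $\tpp$ to $z_2$, and then the subsequent one-dimensional integration in $z_2$ --- again with the full $m$ --- produces the joint decay. That sequential two-pass structure is where the proposition genuinely leans on the \emph{sharpness} of \autoref{th:Imn_est}, and it is the step your factoring discards.
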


\begin{proof}
	Since $g$ and $u_1$ satisfy exactly the same condition (i.e.~\eqref{eq:decay_sing_f} and \eqref{eq:decay_sing_u}), we can deal with both cases at the same time (here exemplarily for $\alpha^\rms_\lambda$).
	Transforming with $\vec\zeta=\Rn\vec \xi$, using \eqref{eq:sing_fourier_rot} and applying the Plancherel identity, this results in (because $\zeta_1$ is real)
	\begin{align}
		\alpha^\rms_\lambda
		&=\frac{-\ii}{2\pi}\int \overline{\hat \varphi_\lambda (\vec \xi)} \frac{(\Rn\vec \xi)_1}{\abs*{\vec\xi}^2}  \wh{g\normalr|_{h}}(\CP_{\vec n}\vec \xi) \d \vec \xi
		=\frac{ -\ii}{2\pi}\int \overline{\hat \varphi_\lambda (\Rn^{-1}\vec \zeta) \frac{\zeta_1}{\abs*{\vec \zeta}^2}} \CF\bracket*{g(\Rn^{-1}\vec x)\delta_{\{\vec x\cdot \vec e_1=v\}}}(\vec \zeta) \d \vec \zeta \\
		&= \frac{-\ii}{2\pi} \int \overline{\CF^{-1}\bracket[\Big]{\frac{\xi_1}{\abs*{\vec \xi}^2}\hat\varphi_\lambda (\Rn^{-1}\vec \xi)}\!(\vec x)} \, g(\Rn^{-1}\vec x)\delta_{\{x_1=v\}} \d \vec x\\
		&= \frac{-\ii}{2\pi} \int_{[\vec e_1]^\bot} \overline{\CF^{-1}\bracket[\Big]{\frac{\xi_1}{\abs*{\vec \xi}^2}\hat\varphi_\lambda (\Rn^{-1}\vec \xi)}\!\binom{v}{\xp}} \, g\parens{\Rn^{-1}\binom{v}{\xp}} \d \xp,
	\end{align}
	where $[\vec e_1]$ denotes the span of $\vec e_1$, and $[\vec e_1]^\bot$ its orthogonal complement. Furthermore, it is easy to check that $\Rn^{-1}\binom{v}{\xp}= \Rn^{-1}\binom{0}{\xp}+v\vec n\in h$, as well as $\CP_{\vec n} \Rn^{-1}\binom{v}{\xp} = \Rn^{-1}\binom{0}{\xp}$. Therefore, using \eqref{eq:mod_ridge_est} and \eqref{eq:decay_sing_f},
	\begin{align}
		\abs{\alpha^\rms_\lambda}
		&\lesssim 2^{-\frac j2}\int_{[\vec e_1]^\bot} \frac{1}{\reg*{U_\jl^{-1}\Rn^{-1}\binom{v}{\xp}-\vec k}^{2n}} \frac{1}{\reg*{\Rn^{-1} \binom{0}{\xp}}^{2m}} \d \xp\\
		&= 2^{-\frac j2}\int_{[\vec e_1]^\bot} \frac{1}{\reg*{U_\jl^{-1}\Rn^{-1}\binom{0}{\xp}-\vec t}^{2n}} \frac{1}{\reg*{\binom{0}{\xp}}^{2m}} \d \xp,
	\end{align}
	where we have split off the component involving $v$ in the first denominator, i.e.
	\begin{align}
		U_\jl^{-1}\binom{v}{\xp}-\vec k=U_\jl^{-1}\binom{0}{\xp}-\vec t, \quad \text{using} \quad \vec t:=\vec k-vU_\jl^{-1}\Rn^{-1}\vec e_1=\vec k-vU_\jl^{-1}\vec n.
	\end{align}
	Since $U_\jl^{-1}\Rn^{-1}[\vec e_1]^\bot$ is still a hyperplane, the first denominator attains its minimum when the left term corresponds to the orthogonal projection of $\vec t$ onto that plane. The normal vector transforms with the transpose of the inverse of the transformation, i.e.
	\begin{align}
		U_\jl^{-1}\Rn^{-1}[\vec e_1]^\bot \bot (U_\jl^{-1}\Rn^{-1})^{-\top}\vec e_1 = U_\jl^\top \vec n,
		\quad \text{and we set} \quad
		\sst:= \frac{U_\jl^\top \vec n}{\abs*{U_\jl^\top \vec n}}.
	\end{align}
	Thus, the minimiser of the first denominator is  $\tau(\vec t):= \Rn U_\jl(\vec t-\sst(\sst\cdot \vec t)) \in [\vec e_1]^\bot$. By the transformation $\binom{0}{\yp}=\binom{0}{\xp}+\tau(\vec t)$, we have
	\begin{align}
		\abs{\alpha^\rms_\lambda}&\lesssim 2^{-\frac j2}\int_{[\vec e_1]^\bot} \frac{1}{\reg*{U_\jl^{-1}\Rn^{-1}\binom{0}{\yp} - \sst (\sst\cdot \vec t)}^{2n}} \frac{1}{\reg*{\binom{0}{\yp} - \tau(\vec t)}^{2m}} \d \yp\\
		&= 2^{-\frac j2}\int_{[\vec e_1]^\bot} \frac{1}{\parens*{\abs*{U_\jl^{-1}\Rn^{-1}\binom{0}{\yp}}^2 + (\sst\cdot \vec t)^2+1}^{n}} \frac{1}{\reg*{\binom{0}{\yp} - \tau(\vec t)}^{2m}} \d \yp, \label{eq:gamma_est_y_trafo}
	\end{align}
	where the two summands in the first denominator are now orthogonal to each other, and the last equality follow by Pythagoras' theorem.
	
	We will now try to construct a rotation, which will transform the terms in the integral in a way that it can be dealt with. For reasons of clarity, this will be a two-stop process, first constructing the main rotation $Q$, and then a small correction $T$. The condition on the rotation (that $\Rjl \sjl=\vec e_1$) fixes the first component of $\Rjl\vec e_1$ to be $\cos \theta_\jl$. Otherwise, we have the freedom to choose $\Rjl$ such that
	\begin{align}
		\Rjl \vec e_1=\begin{pmatrix}
		\phantom{-}\cos \theta_\jl \\ -\sin\theta_\jl\\0\\ \vdots
		\end{pmatrix},
	\qquad \text{and in the same way,} \qquad
		\Rn \vec e_1=\begin{pmatrix}
		\phantom{-}\cos \theta_{\vec n} \\ -\sin\theta_{\vec n}\\0\\ \vdots
		\end{pmatrix}.
	\end{align}
	We may choose further images for our rotations, as long as we maintain the property that $R\vec s\cdot R\spr=\vec s\cdot\spr$ for any pair of images $R\vec s,R\spr$ we choose for vectors $\vec s, \spr\in\bbSd$, because rotations maintain the intrinsic (geodesic) distance between points on the sphere. Defining $\sjl':=P_{\vec e_1}\sjl$ (with norm $\abs*{\sin\theta_\jl}$), we may therefore choose
	\begin{align}
		\Rn \frac{\sjl'}{\sin\theta_\jl}=\begin{pmatrix}
		\sin \theta_{\vec n} \\ \cos\theta_{\vec n}\\0\\ \vdots
		\end{pmatrix},
	\end{align}
	since, by our choice for $\Rn \vec e_1$ above, $\Rn \vec e_1\cdot \Rn \frac{\sjl'}{\sin\theta_\jl} = 0 =  \vec e_1 \cdot \frac{\sjl'}{\sin\theta_\jl}$. In case that $\sjl=\vec e_1$ (and thus $\sin\theta_\jl=0$), we take $\Rn e_2= (\sin \theta_{\vec n}, \cos\theta_{\vec n}, 0,\ldots)^\top$, which is a permissible choice for the same reason.
	
	Once more in the same fashion, we are able to determine a rotation $Q$ in the following way
	\begin{align}
	Q^{-1} \vec e_1=\Rn \vec e_1 \qquad \text{and} \qquad Q^{-1}\vec e_2 = \Rn\frac{\sjl'}{\sin\theta_\jl},
	\end{align}
	whereby we achieve
	\begin{align}
		Q^{-1} &= \left(\begin{array}{cc|c}
			\phantom{-}\cos \theta_{\vec n} & \sin \theta_{\vec n} & 0 \\
			-\sin\theta_{\vec n} & \cos\theta_{\vec n} & 0 \\ \hline
			0 & 0 &  (Q'')^{-1}
		\end{array}\right),\\
		\Rjl \Rn^{-1}Q^{-1} &= \left(\begin{array}{cc|c}
			\phantom{-}\cos \theta_\jl & \sin \theta_\jl & 0 \\
			-\sin\theta_\jl & \cos\theta_\jl & 0  \\ \hline
			0 & 0 &  (V'')^{-1}
		\end{array}\right). \label{eq:RjlQ}
	\end{align}
	The form of the second matrix can be seen since, due to the choice of $Q^{-1}\vec e_1$, the first column is just $\Rjl \vec e_1$, while the second column is calculated as follows. The first entry evaluates to
	\begin{align}
		\vec e_1 \cdot \Rjl \Rn^{-1} Q^{-1} \vec e_2
		&= \Rjl^{-1} \vec e_1 \cdot \Rn^{-1} \Rn \frac{\sjl'}{\sin\theta_\jl} = \sjl \cdot \frac{\CP_{\vec e_1} \sjl}{\sin\theta_\jl}  = \frac{\sum_{k=2}^d(\sjl)_k^2}{\sin\theta_\jl} = \frac{1-(\sjl)_1^2}{\sin\theta_\jl} \\
		&=\frac{1-\cos^2 \theta_\jl}{\sin\theta_\jl} = \sin \theta_\jl,
	\end{align}
	since $(\sjl)_1=\sjl \cdot \vec e_1= \cos \theta_\jl$. As the norm of the first row is thus one already, the other entries must be zero. Similarly,
	\begin{align}
		\vec e_2 \cdot \Rjl \Rn^{-1}Q^{-1} \vec e_2 = \vec e_2 \cdot  \Rjl \parens{\!\sjl - \begin{pmatrix}
		(\sjl)_1\\0\\ \vdots
		\end{pmatrix}\!} \frac{1}{\sin\theta_\jl} = \frac{\vec e_2}{\sin\theta_\jl} \!\cdot\! (\vec e_1 - \cos\theta_\jl \Rjl \vec e_1) = \cos\theta_\jl
	\end{align}
	implies the zeros in the second row and column. The matrices $Q''$ resp.~$V''$ (restricted to have determinant one) represent the degrees of freedom we still have in choosing the rotation $Q$.
	
	Finally, we define the two matrices
	\begin{align}
		S_{\theta} = \begin{pmatrix}
			\cos \theta &- \sin\theta \\ \sin\theta & \phantom{-}\cos \theta
		\end{pmatrix}, \qquad T:= \begin{pmatrix}
			S_{\theta_{\vec n}} & 0 \\ 0 & \bbI_{d-2}
		\end{pmatrix},
	\end{align}
	where $S_{\theta}$ is the standard rotation matrix in two dimensions satisfying $S_{\theta}S_{\phi}=S_{\theta+\phi}$. Inverting \eqref{eq:RjlQ}, we see that
	\begin{align}
		Q&= \begin{pmatrix}
			S_{\theta_{\vec n}} & 0 \\ 0 & Q''
		\end{pmatrix},
		& Q\Rn\Rjl^{-1} &= \begin{pmatrix}
			S_{\theta_\jl} & 0 \\ 0 & V''
		\end{pmatrix},
		& T^{-1}Q &= \begin{pmatrix}
			\bbI_{2} & 0 \\ 0 & Q''
		\end{pmatrix},
	\end{align}
	and consequently, setting $\phi:=\theta_\jl-\theta_{\vec n}$,
	\begin{align}\label{eq:RjlQ_inv}
		T^{-1}Q\Rn\Rjl^{-1} &= \begin{pmatrix}
			S_{\phi} & 0 \\ 0 & V''
		\end{pmatrix},
		&\Rjl \Rn^{-1} &=\Rjl \Rn^{-1}Q^{-1}Q
		=\begin{pmatrix}
			S_{-\phi} & 0 \\ 0 & (V'')^{-1}Q''
		\end{pmatrix}.
	\end{align}
	
	Coming back to \eqref{eq:gamma_est_y_trafo}, for reasons that will become apparent shortly, we transform with $\binom{0}{\zp} = T^{-1}Q\binom{0}{\yp}$ (since $T^{-1}Q$ leaves the first component invariant!) to arrive at
	\begin{align}
		\abs{\alpha^\rms_\lambda}
		&\!\begin{multlined}[t][\textwidth-\mathindent-\widthof{$\abs{\alpha^\rms_\lambda}$}-\multlinegap]
			\lesssim 2^{-\frac j2}\int_{[\vec e_1]^\bot} \! \parens{\abs*{D_{2^j}\Rjl \Rn^{-1}Q^{-1}T\zp}^2 + (\sst\cdot \vec t)^2+1}^{-n} \cdot \ldots \\
			\ldots \cdot \reg*{Q^{-1}T\parens*{(0,\zp)^\top - T^{-1}Q\tau(\vec t)}}^{-2m} \d \zp
		\end{multlined}\\
		&\!\begin{multlined}[t][\textwidth-\mathindent-\widthof{$\abs{\alpha^\rms_\lambda}$}-\multlinegap]
			=2^{-\frac j2}\int_{[\vec e_1]^\bot} \! \parens{\abs*{\parens*{-2^j \sin \phi\, z_2, \cos \phi\, z_2, (V'')^{-1}\zpp}^\top}^2 + (\sst\cdot \vec t)^2+1}^{-n} \cdot \ldots \label{eq:gamma_est_z_trafo} \\
			\ldots \cdot \reg*{(0,\zp)^\top - T^{-1}Q\tau(\vec t)}^{-2m} \d \zp,
		\end{multlined}
	\end{align}
	where we used the (inverse of the) first equality from \eqref{eq:RjlQ_inv} to evaluate the first denominator. The next step is to calculate $T^{-1}Q\tau(\vec t)$, which we tackle as follows, this time with the second equality in \eqref{eq:RjlQ_inv},
	\begin{align}
		U_\jl^\top \vec n &= D_{2^{-j}}\Rjl\Rn^{-1}\vec e_1 = \parens*{2^{-j}\cos \phi,-\sin \phi,0,\ldots}^\top,\\
		\abs*{U_\jl^\top \vec n}&=2^{-j}\sqrt{2^{2j} \sin^2\phi +\cos\phi}=:2^{-j} a.
	\end{align}
	From there on, we compute
	\begin{align}
		\sst = \frac{1}{a} \parens*{\cos \phi,-2^j\sin \phi,0,\ldots}^\top, \qquad \sst \cdot \vec t =\frac{1}{a}(\cos\phi \,t_1 - 2^j\sin\phi \,t_2) = \rho_1(\vec t),
	\end{align}
	and therefore, again with \eqref{eq:RjlQ_inv},
	\begin{align}
		T^{-1}Q\tau(\vec t)
		&=T^{-1}Q\Rn \Rjl^{-1}D_{2^{-j}}\parens*{\vec t - (\sst \cdot \vec t)\sst}\\
		&=T^{-1}Q\Rn \Rjl^{-1} \parens{
		\begin{pmatrix}
			2^{-j}t_1\\t_2\\ \tpp
		\end{pmatrix}
		-\frac{\rho_1(\vec t)}{a}
		\begin{pmatrix}
			2^{-j}\cos\phi\\-2^j\sin\phi\\ 0
		\end{pmatrix}} \\
		&= \begin{pmatrix}
			2^{-j}\cos \phi \, t_1 - \sin \phi \, t_2\\2^{-j}\sin\phi \, t_1 + \cos\phi \, t_2\\ V''\tpp
		\end{pmatrix}
		-\frac{\rho_1(\vec t)}{a}
		\begin{pmatrix}
			2^{-j}\cos^2\phi+2^j\sin^2\phi\\2^{-j}\sin\phi\cos\phi -2^j\sin\phi\cos\phi\\ 0
		\end{pmatrix} \\
		&= \begin{pmatrix}
			0 \\ \frac{1}{a^2}\parens*{2^{j} \sin \phi \, t_1 + \cos \phi \, t_2} \\ V''\tpp
		\end{pmatrix}
		= \begin{pmatrix}
			0 \\ \rho_2(\vec t) \\ V''\tpp
		\end{pmatrix},
	\end{align}
	where the first two components simplify substantially, after inserting $\rho_1(\vec t)$, and expanding the first term with $\frac{a^2}{a^2}$. We also note that $\rho_1^2+a^2\rho_2^2=t_1^2+t_2^2$.
	
	Continuing from \eqref{eq:gamma_est_z_trafo}, we can now bring this in the following form (for convenience, we will not further denote the dependence of $\rho_1,\,\rho_2$ on $\vec t$)
	\begin{align}
		\abs{\alpha^\rms_\lambda}
		&\lesssim 2^{-\frac j2} \int_{-\infty}^{\infty} \int_{[\vec e_1,\vec e_2]^\bot} \parens*{|\zpp|^2 + {\underbrace{a^2z_2^2+\rho_1^2+1}_{=:p^2}}}^{-n} \parens*{\abs[]{\zpp-V''\tpp}^2+{\underbrace{(z_2 - \rho_2)^2+1}_{=:q^2}}}^{-m}  \d \zpp \d z_2.
	\end{align}
	Using \autoref{cor:Imn_higher_dim} for $\zpp$ (basically applying \autoref{th:Imn_est} in direction $V''\tpp$, transforming to polar coordinates and then once more \autoref{th:Imn_est} with respect to the radius), this yields
	\begin{align}
		\abs{\alpha^\rms_\lambda}
		&\lesssim 2^{-\frac j2} \int_{-\infty}^{\infty} p^{-2n} \parens{\abs[]{\tpp} + p^2+q^2}^{-m} \d z_2\\
		&\le 2^{-\frac j2} \int_{-\infty}^{\infty} \parens{a^2z_2^2+ \rho_1^2+1}^{-m} \parens{(z_2 - \rho_2)^2+\abs[]{\tpp} + \rho_1^2+1}^{-m}  \d z_2
		\intertext{
	and again with the help of \eqref{eq:Imn_est},
		}
		&\lesssim  \frac{2^{-\frac j2}}{(a^2\rho_2^2+a^2(\abs[]{\tpp} + \rho_1^2+1)+\rho_1^2+1)^m} \parens[\bigg]{\frac{a^{2m-1}}{(\rho_1^2+1)^{\frac{2m-1}2}} + \frac{1}{(\abs[]{\tpp} + \rho_1^2+1)^{\frac{2m-1}2}}}\\
		&\le \frac{2^{-\frac j2}}{(\abs[]{\tpp} + \rho_1^2+\rho_2^2+1)^m} \parens{\frac{1}{a} + \frac{1}{a^{2m}}} \le \frac{1}{a} \frac{2^{-\frac j2}}{(\abs[]{\tpp} + \rho_1^2+\rho_2^2+1)^m},
	\end{align}
	since $1\le a\le 2^{j}$, which is what we wanted to show for $\alpha^\rms_\lambda$.
	
	Repeating the whole procedure for $u_1$ to estimate $w_\lambda\beta_\lambda^\rms$ increases the right-hand side by $2^j$ at worst, due to the weight (it is not apparent if or how the additional smoothness in the direction of $\vec s$ can be utilised, since we are only integrating over a hyperplane), and the proof is complete.
\end{proof}

As the last result in this section, we consider how general decay of the function being tested against the ridgelet frame transfers to the coefficients.

\begin{lemma}\label{lem:ridge_coeff_tail_decay}
	For a function $f\in L^2$ satisfying the decay condition
	\begin{align}
		\abs{f(\vec x)} \lesssim \reg{\vec x}^{-2m}
	\end{align}
	for some $n\in\bbN$, the ridgelet coefficients satisfy
	\begin{align}\label{eq:ridge_coeff_tail_decay}
		\abs*{\inpr{\varphi_\lambda,f}}\lesssim \frac{2^{-\frac j2}}{\parens*{\parens*{\frac{k_1}{2^j}}^2+\abs[]{\kp}^2+1}^m}.
	\end{align}
\end{lemma}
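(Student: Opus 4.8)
The plan is to pair the pointwise spatial decay of the ridgelet with the assumed decay of $f$ and then transport the resulting integral through the anisotropic dilation $U_\jl$. Writing $\inpr{\varphi_\lambda,f}=\int_{\bbR^d}\overline{\varphi_\lambda(\vec x)}\,f(\vec x)\d\vec x$, passing to absolute values, and invoking \autoref{lem:decay_ridgelet} (reading \eqref{eq:ridge_est} with a free exponent $2n$ in place of the misprinted $2m$, which is legitimate for any $n$ allowed by the smoothness of the window, cf.\ \autoref{assump:psi_smooth}) together with the hypothesis $\abs{f(\vec x)}\lesssim\reg{\vec x}^{-2m}$, one obtains
\begin{align}
	\abs*{\inpr{\varphi_\lambda,f}}\;\lesssim\;2^{\frac j2}\int_{\bbR^d}\frac{1}{\reg*{U_\jl^{-1}\vec x-\vec k}^{2n}}\,\frac{1}{\reg*{\vec x}^{2m}}\d\vec x.
\end{align}

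Next I would substitute $\vec y:=U_\jl^{-1}\vec x$. Since $U_\jl=\Rjl^{-1}D_{2^{-j}}$ with $\Rjl$ a rotation, $\d\vec x=\abs{\det U_\jl}\d\vec y=2^{-j}\d\vec y$, so the prefactor collapses to $2^{\frac j2}\cdot 2^{-j}=2^{-\frac j2}$, which is exactly the power in \eqref{eq:ridge_coeff_tail_decay}; moreover $\abs{U_\jl\vec y}=\abs{D_{2^{-j}}\vec y}$ because $\Rjl^{-1}\in\mathrm{SO}(d)$, whence $\reg{U_\jl\vec y}=\reg{D_{2^{-j}}\vec y}$. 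Since $\reg{D_{2^{-j}}\vec k}^2=\parens*{\frac{k_1}{2^j}}^2+\abs{\kp}^2+1$, the claim is thereby reduced to the $j$- and $\vec k$-uniform integral estimate
\begin{align}
	\int_{\bbR^d}\frac{1}{\reg*{\vec y-\vec k}^{2n}}\,\frac{1}{\reg*{D_{2^{-j}}\vec y}^{2m}}\d\vec y\;\lesssim\;\frac{1}{\reg*{D_{2^{-j}}\vec k}^{2m}}.
\end{align}

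This I would prove with the Peetre-type inequality $\reg{\vec a}\le\sqrt2\,\reg{\vec a-\vec b}\,\reg{\vec b}$ applied to $\vec a=D_{2^{-j}}\vec k$ and $\vec b=D_{2^{-j}}\vec y$, combined with the elementary but crucial fact that $D_{2^{-j}}$ is a contraction, $\abs{D_{2^{-j}}(\vec k-\vec y)}\le\abs{\vec k-\vec y}$ since $2^{-j}\le1$: this gives $\reg{D_{2^{-j}}\vec y}^{-2m}\le 2^{m}\,\reg{\vec y-\vec k}^{2m}\,\reg{D_{2^{-j}}\vec k}^{-2m}$, so that $\reg{D_{2^{-j}}\vec k}^{-2m}$ factors out of the integral and what remains is $\int_{\bbR^d}\reg{\vec y-\vec k}^{-2(n-m)}\d\vec y=\int_{\bbR^d}\reg{\vec z}^{-2(n-m)}\d\vec z$, a finite constant independent of $j$ and $\vec k$ as soon as $n>m+\tfrac d2$ — an exponent admissible in \eqref{eq:ridge_est} for a sufficiently smooth window function. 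I do not expect a genuine obstacle here beyond bookkeeping; the single point deserving care is that every constant stays uniform in $j$, which is precisely what the contraction bound $\norm{D_{2^{-j}}}\le1$ secures, since it is what allows bounding $\reg{D_{2^{-j}}(\vec k-\vec y)}$ by $\reg{\vec k-\vec y}$ and not in the wrong direction.
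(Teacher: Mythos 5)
Your proof is correct, and it is genuinely different from (and more elementary than) the paper's. The paper transforms only by the rotation $\vec y=\Rjl\vec x$ (Jacobian $=1$), keeping the anisotropic dilation inside the first denominator, and then invokes \autoref{cor:Imn_higher_dim} in $\yp$ followed by \eqref{eq:Imn_est} in $y_1$ --- essentially feeding the integral into the machinery of \autoref{sec:int_est}. You instead absorb the \emph{entire} affine map $U_\jl$ into the substitution, which moves the $j$-dependence cleanly from the prefactor (collapsing $2^{j/2}\cdot 2^{-j}$ to the correct $2^{-j/2}$) and from the first denominator into the second, turning it into $\reg{D_{2^{-j}}\vec y}^{-2m}$. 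Then a single Peetre peeling $\reg{D_{2^{-j}}\vec k}\le\sqrt2\,\reg{D_{2^{-j}}(\vec k-\vec y)}\,\reg{D_{2^{-j}}\vec y}$, combined with the contraction $\norm{D_{2^{-j}}}\le1$ (so $\reg{D_{2^{-j}}(\vec k-\vec y)}\le\reg{\vec k-\vec y}$, which is the direction you need), factors out $\reg{D_{2^{-j}}\vec k}^{-2m}$ and leaves a translation-invariant remainder $\int\reg{\vec z}^{-2(n-m)}\d\vec z$, finite uniformly in $j$ and $\vec k$ once $n>m+\tfrac d2$. This is a clean bookkeeping argument that completely bypasses \autoref{th:Imn_est}. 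What the paper's route buys is reuse of infrastructure: \autoref{cor:Imn_higher_dim} is needed anyway for \autoref{prop:loc_space}, where the two decaying profiles are not related by a single global contraction and a Peetre-type factoring is no longer available, so the authors simply use the same tool twice. Your route is the sharper choice for this particular lemma; just note that the requirement $n>m+\tfrac d2$ is what forces the window smoothness (via \autoref{assump:psi_smooth}), whereas the paper's version only needs $m,n\gtrsim d$ but requires $m$ itself to be large enough in the convolution estimate. Your observation that \eqref{eq:ridge_est} should read $2n$ rather than $2m$ in the exponent is also correct --- it is indeed a typo given the quantifier in the lemma statement.
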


\begin{proof}
	Using \eqref{eq:ridge_est} and the required decay of $f$, we estimate and then transform by $\vec y = \Rjl \vec x$, yielding
	\begin{align}
		\abs*{\inpr{\varphi_\lambda,f}}
		&=\abs[\bigg]{\int  \overline{\varphi_\lambda(\vec x)} f(\vec x) \d \vec x}
		\le \int  \frac{2^{\frac j2}}{\reg*{U_\jl^{-1}\vec x-\vec k}^{2n}} \frac{1}{\reg*{\vec x}^{2m}} \d \vec x
		= \int \frac{2^{\frac j2}}{\reg*{D_{2^j}\vec y-\vec k}^{2n}}  \frac{1}{\reg*{\vec y}^{2m}} \d \vec y\\
		&=2^{\frac j2} \int_{-\infty}^{\infty}\int_{\bbR^{d-1}} \frac{1}{\parens*{\abs[]{\yp-\kp}^2+(2^jy_1-k_1)^2+1}^{n}}  \frac{1}{\parens*{\abs[]{\yp}^2+(y_1)^2+1}^{m}} \d \yp \d y_1.
	\end{align}
	By \autoref{cor:Imn_higher_dim}, this can be estimated in the following way,
	\begin{align}
		\abs*{\inpr{\varphi_\lambda,f}}
		&\lesssim 2^{\frac j2} \int_{-\infty}^{\infty} \frac{1}{\parens*{\abs[]{\kp}^2+(2^jy_1-k_1)^2+y_1^2+1}^{m}}  \frac{1}{\parens*{(2^jy_1-k_1)^2+1}^{m}} \d y_1\\
		&\le 2^{\frac j2} \int_{-\infty}^{\infty} \frac{1}{\parens*{2^{2j}\parens*{y_1-\frac{k_1}{2^j}}^2+1}^{m}} \frac{1}{\parens*{y_1^2+\abs[]{\kp}^2+1}^{m}} \d y_1,
		\intertext{
	which, by \eqref{eq:Imn_est}, is less than a multiple of
		}
		&\lesssim  \frac{2^{\frac j2}}{\parens*{2^{2j}\parens*{\frac{k_1}{2^j}}^2+2^{2j}(\abs[]{\kp}^2+1)+1}^{m}} \parens[\bigg]{2^{j(2m-1)}  + \frac{1}{\parens*{\abs[]{\kp}^2+1}^{\frac{2m-1}{2}}}} \le \frac{2^{-\frac j2}}{\parens*{\parens*{\frac{k_1}{2^j}}^2+\abs[]{\kp}^2+1}^m}.
	\end{align}
\end{proof}

\subsection{Proof of \autoref{th:approx}}\label{ssec:proof_approx}

With the results of Sections \ref{sec:loc_angle} and \ref{sec:loc_space}, we are now in a position to prove \autoref{th:approx}.

\begin{proof}[Proof of \autoref{th:approx}]
	Recall that $\alpha_\lambda=\inpr*{\varphi_\lambda,f}$ and $\beta_\lambda=w_\lambda\inpr*{\varphi_\lambda,\CS[f]}$ from \autoref{prop:loc_angle}. Thus, the claim of the theorem will follow if we are able to prove
	\begin{align}
		\norm{\alpha_\Lambda}_{\ell^{p^* + \frac dm}_w} \lesssim \sum_{i=0}^N\norm{f_i}_{H^t} \qquad \text{and} \qquad
		\norm{\VW \beta_\Lambda}_{\ell^{p^* + \frac dm}_w} \lesssim \sum_{i=0}^N\norm{f_i}_{H^t},
	\end{align}
	where the deviation $\frac dn$ from the best possible value $p^*=\parens*{\frac td + \frac 12}^{-1}$ (for $f_i\in H^t$), decays with $n$.
	
	\step{Preparations}
	First off, we note that due to the linearity of the ridgelet coefficients, resp.~the differential equation \eqref{eq:LinTrans}, it suffices to treat a function $f(\vec x)=H(\vec x\cdot \vec n-v)g(\vec x)$ and the rest follows through superposition. Also, we will be able to prove the results for $\alpha_\Lambda$ and $\beta_\Lambda$ almost completely simultaneously, and will mostly deal with $\beta_\Lambda$ (noting, where appropriate, the mitigating factors in the easier $\alpha_\Lambda$-case). The only additional thing we have to check for $u=\CS[f]$ is that the decay conditions for \autoref{lem:ridge_coeff_tail_decay} holds, but this follows directly from \autoref{lem:decay_sol}.
	
	Furthermore, we recall from \autoref{prop:sol_smooth_except_hyp} that $u(\vec x)=u_0(\vec x)+H(\vec x\cdot \vec n-v)u_1(\vec x)$ with $\norm{u_i}_{H^t}\lesssim \norm{g}_{H^t}$. Due to \eqref{eq:f_glob_decay}, we also have the necessary conditions to apply \autoref{prop:loc_space},
	\begin{align}
		\abs*{g\normalr|_{h}(\CP_{\vec n}\vec x)} &\lesssim \reg{\CP_{\vec n}\vec x}^{-2m}, \qquad \abs*{u_1\normalr|_{h}(\CP_{\vec n}\vec x)}\lesssim \reg{\CP_{\vec n}\vec x}^{-2m}.
	\end{align}
	
	We begin by choosing $\delta>0$, and will show membership of $\VW\beta_\Lambda$ in $\ell^{p'}_w$, where $p'=p^* + \delta^{*}$ and $\delta^{*}=\delta\parens{t+\frac d2}^{-1}$. The modified $\delta^{*}$ will have to satisfy a lower bound to ensure that all the arguments hold, and we will then that $\frac dm$ is enough to achieve this bound.
	Following the decomposition $\beta_\lambda=\beta^\rmr_\lambda+\beta^\rms_\lambda$, we will show this for both subsequences separately.
	
	\step{Singular part, large coefficients}
	We begin with $\VW\beta^\rms_\Lambda$, splitting the sequence into two parts, namely the \emph{tail}
	\begin{align}\label{eq:approx_main_sing_tail}
		T_\jl&:=\set[\Big]{\vec k\in\bbZ^d}{\abs*{V_\jl\vec t(\vec k)} > \parens*{2^{j+1}\abs*{\sin\theta_\jl}}^{\frac \delta d}+\frac{\sqrt{d}}{2}},
	\end{align}
	and its complement $T_\jl^\compl:=\bbZ^d\setminus T_\jl$. Here, $\vec t(\vec k)$ is defined as in \autoref{prop:loc_space}, and similarly, $a$ and $\phi$ are reused to define  $V_\jl$ as the following block-diagonal matrix (with determinant $\frac 1a$),
	\begin{align}
	V_\jl= \left(\begin{array}{cc|c}
		\frac{\cos\phi}{a} & \frac{2^j\sin\phi}{a} & 0 \\
		\frac{2^j\sin\phi}{a^2} & \frac{\cos\phi}{a^2} & 0 \\ \hline
		0 & 0 &\bbI_{d-2}
	\end{array}\right).
	\end{align}
	
	As the first step, we will show membership of the sequence $\VW\beta^\rms_\Lambda\normalr|_{\{\lambda:\vec k\not\in T_\jl\}}$ in $\ell^{p'}_w$. 
	Taking the number of elements whose absolute value exceeds $\eps$, i.e.
	\begin{align}
		N(\eps):=\card*{\lambda}{k \not\in T_\jl \land \abs{w_\lambda\beta^\rms_\lambda}\ge \eps},
	\end{align}
	an equivalent definition of the $\ell^{p'}_w$-norm is
	\begin{align}
		\norm*{\VW\beta^\rms_\Lambda\normalr|_{\{\lambda:\vec k\not\in T_\jl\}}}_{\ell^{p'}_w}\sim\sup_{\eps>0} \eps N^{\frac 1{p'}}(\eps) \lesssim \norm{g}_{H^t},
	\end{align}
	which is what we will show in the following.
	
	To do so, we define a further subset of $N(\eps)$,
	\begin{align}
		N_{j,r}(\eps):=\card*{(\ell,\vec k)}{\ell\in\Lambda^j_r \land k \not\in T_\jl \land \abs{w_\lambda\beta^\rms_\lambda}\ge \eps}.
	\end{align}
	Clearly, \eqref{eq:gamma_est_sum_Lambda} implies that $\abs{w_\lambda\beta^\rms_\lambda}^2 \le C 2^{-j} \norm{g}^2_{H^t}$, and therefore $N_{j,r}(\eps)=0$ if $2^j\ge C\eps^{-2}\norm{g}^2_{H^t}$.
	
	Next, we need to determine the cardinality of $T_\jl^\compl$ for $\ell \in \Lambda^j_r$. First we estimate the sum by an integral (where the maximum of each cell of size $[0,1]^d$ can be at most $\frac{\sqrt{d}}2$ away from the value of the function at $\vec k \in \bbZ^d$), and then transform with $\vec x=V_\jl\vec t(\vec k)=V_\jl (\vec k-vU_\jl^{-1}\vec n)$ --- introducing a factor $a$ from the determinant --- to estimate
	\begin{align}
		\# T_\jl^\compl
		&\le \int \ind_{\curly*{\abs[]{V_\jl \vec t(\vec k)}\le \parens{2^{j+1}\abs{\sin\theta_\jl}}^{\frac \delta d}+\sqrt{d}}} \d \vec k = \int a \,\ind_{\curly*{\abs[]{\vec x}\le \parens{2^{j+1}\abs{\sin\theta_\jl}}^{\frac \delta d}+\sqrt{d}}}\d \vec x\\
		&= a \,\mu\parens[\Big]{B_{\bbR^{d}}\parens*{0, \parens*{2^{j+1}\abs*{\sin\theta_\jl}}^{\frac \delta d}+\sqrt{d}}} \lesssim 2^{(j-r)(1+\delta)}. \label{eq:card_Sjl}
	\end{align}
	Here, $\mu$ is the $d$-dimensional Lebesgue measure.
	The restriction defining $T_\jl$ may now seem somewhat arbitrary, but the result is that the number of translations in the singular set is essentially one-dimensional (even though the set of translations is $d$-dimensional!), which gives rise to the following bound for the cardinality $N_{j,r}$ (regardless of the condition that the absolute value be larger than $\eps$),
	\begin{align}\label{eq:card_Njr}
		N_{j,r}(\eps)\lesssim \underbrace{2^{(j-r)(d-1)}}_{\mathllap{\#\Lambda^j_r}\lesssim} \cdot \underbrace{2^{(j-r)(1+\delta)}}_{\mathllap{\#T_\jl^\compl}\lesssim} = 2^{(j-r)(d+\delta)}.
	\end{align}
	As we shall see, the factor $d+\delta$ will directly influence the best $p$ possible for the $\ell^p_w$-norm of the subsequence involving $\vec k \in T_\jl$, in the sense that we will achieve
	\begin{align}\label{eq:est_p4sing}
		p=\frac{2(d+\delta)}{2t+(d+\delta)} \le \frac{2(d+\delta)}{2t+d} = \parens[\Big]{\frac td + \frac 12}^{-1} + \frac{2\delta}{2t+d} = p^*+\delta^{*} =p',
	\end{align}
	where we recall $\delta^{*}=\delta\parens*{t+\frac d2}^{-1}$.
	
	Of course, restricting the translations to achieve this bound is only half the battle --- everything we exclude now needs to be bounded afterwards --- but this shall work out in our favour (and make sense of the definition of $T_\jl$), because the tail is precisely defined to contain only the small coefficients.
	
	Let $\eta:=\eps/\norm{g}_{H^t}$.  Then, by \eqref{eq:gamma_est_sum_Lambda} and \eqref{eq:card_Njr},
	\begin{align}
		N_{j,r}(\eps)\lesssim \min\parens*{2^{(j-r)(d+\delta)},\eta^{-2} 2^{-j} 2^{-(j-r)(2t-1)}}.
	\end{align}
	Calculating the maximal $r$ such that the second term is the minimum, we find
	\begin{align}
		\eta^{-2} 2^{-j} 2^{-(j-r)(2t-1)} \le 2^{(j-r)(d+\delta)} \qquad \Longleftrightarrow \qquad r\le j- \log_2(\eta^{-\frac 2\sigma} 2^{-\frac{j}\sigma}),
	\end{align}
	where $\sigma:=d+\delta+2t-1$. Therefore,
	\begin{align}
		N_j(\eps)=\sum_{r=1}^j N_{j,r}(\eps) \lesssim \min\parens*{2^{j(d+\delta)},\eta^{-2} 2^{-j} (\eta^{-\frac 2\sigma} 2^{-\frac{j}\sigma})^{2t-1}}=\min\parens*{2^{j(d+\delta)},\eta^{-\frac{2(d+\delta)}\sigma} 2^{-\frac{j(d+\delta)}\sigma}}
	\end{align}
	Again, we determine the critical $j$ where the minimum switches from one term to the other, and see that
	\begin{align}
		\eta^{-\frac{2(d+\delta)}\sigma} 2^{-\frac{j(d+\delta)}\sigma} \le 2^{j(d+\delta)} \qquad \Longleftrightarrow \qquad \eta^{-\frac{2}{\sigma+1}} \le 2^j,
	\end{align}
	which implies
	\begin{align}
		N_j(\eps)\lesssim\begin{cases}
			2^{j(d+\delta)}, & 2^j\le  \eta^{-\frac{2}{\sigma+1}}, \\
			\eta^{-\frac{2(d+\delta)}\sigma} 2^{-\frac{j(d+\delta)}\sigma}, & \eta^{-\frac{2}{\sigma+1}} \le 2^j \le C\eta^{-2},\\ 
			0, & 2^j\ge C\eta^{-2}.
		\end{cases}
	\end{align}
	Finally, we have
	\begin{align}
		N(\eps)
		&=\sum_{j=0}^\infty N_j(\eps)\lesssim \sum_{j\colon 2^j \le \eta^{-2/(\sigma+1)}} 2^{j(d+\delta)} + \sum_{j\colon\eta^{-2/(\sigma+1)}\le 2^j} \eta^{-\frac{2(d+\delta)}\sigma} 2^{-\frac{j(d+\delta)}\sigma} \\
		&\lesssim \eta^{-\frac{2(d+\delta)}{\sigma+1}}+ \eta^{-\frac{2(d+\delta)}\sigma + \frac{d+\delta}\sigma \frac{2}{\sigma+1}} \lesssim \eta^{-\frac{2(d+\delta)}{2t+d+\delta}} \le \eta^{-p'},
	\end{align}
	by \eqref{eq:est_p4sing} since $\eta<1$, which finishes the argument for the first subsequence.
	
	\step{Singular part, tail}
	Next we will show membership of the sequence $\VW\beta^\rms_\Lambda\normalr|_{\{\lambda:\vec k\in T_\jl\}}$ in $\ell^{p'}\subseteq \ell^{p'}_w$. We begin by taking $q<1$, and --- after applying \eqref{eq:gamma_est_w} --- again estimate the sum by an integral (recalling that the maximum per cell can be at most $\frac{\sqrt{d}}2$ away from the value at $\vec k$, which cancels with the shift in the right-hand side of the defining inequality of $T_\jl$),
	\begin{align}
		\sum_{\vec k \in T_\jl} \abs{w_\lambda \beta^\rms_\lambda}^q
		&\lesssim \sum_{\vec k \in T_\jl} \frac{2^{\frac {jq}2} a^{-q}}{\parens*{|\tpp|^2+\rho_1^2+\rho_2^2+1}^{mq}} \\
		&\le \int \ind_{\curly*{\abs[]{V_\jl \vec t}> \sqrt[d/\delta]{2^{j+1}\abs{\sin\theta_\jl}}}} \frac{2^{\frac {jq}2} a^{-q}}{\parens*{|\tpp|^2+\rho_1^2+\rho_2^2+1}^{mq}} \d \vec k.
	\end{align}
	Calculating $V_\jl \parens*{t_1, t_2, t_3,\ldots}^\top=\parens*{\rho_1, \rho_2, t_3,\ldots}^\top$,
	we transform the first term by $\vec x=V_\jl^{-1} (\vec k-U_\jl^{-1}\vec n)$ to yield
	\begin{align}
		\sum_{\vec k \in T_\jl} \abs{w_\lambda \beta^\rms_\lambda}^q 
		&\lesssim \int \ind_{\curly*{\abs[]{\vec x} > \parens[]{2^{j+1}\abs{\sin\theta_\jl}}^{\frac \delta d}}} \frac{2^{\frac {jq}2}}{(|\vec x|^2+1)^{mq}} a^{1-q}\d \vec x \\
		&\lesssim  \int_{\parens[]{2^{j+1}\abs{\sin\theta_\jl}}^{\frac \delta d}}^\infty \frac{2^{\frac {jq}2}}{(r^2+1)^{mq}} r^{d-1} a^{1-q} \d r \\
		&\lesssim  2^{\frac {jq}2} 2^{(j-r)(-\frac{2mq-d}{d}\delta+1-q)}
		\lesssim 2^{\frac {jq}2}2^{(j-r)(-\frac{2mq\delta}{d}+1+\delta)}. \label{eq:est_transl_sing}
	\end{align}
	%
	Summing over $\ell \in \Lambda^j_r$, which has cardinality $\# \Lambda^j_r\lesssim 2^{(j-r)(d-1)}$, this implies
	\begin{align}
		\sum_{\ell\in\Lambda^j_r}\sum_{\vec k \in T_\jl} \abs{w_\lambda \beta^\rms_\lambda}^q \lesssim 2^{\frac {jq}2}2^{-(j-r)(\frac{2mq\delta}{d}-d-\delta)}.
	\end{align}
	
	At first glance this estimate might seem of questionable benefit, since the first term has a positive power of $j$. However, \eqref{eq:gamma_est_sum_Lambda} and the following interpolation inequality\footnote{Log-convexity of $L^p$-norms; follows from applying H\"older's inequality to $\abs{f}=\abs{f}^\theta \abs{f}^{1-\theta}$.} for a sequence $(c_i)_{i\in I}$ will save the day,
	\begin{align}
		\norm{c}_{\ell^p} \le \norm{c}_{\ell^q}^\theta \norm{c}_{\ell^2}^{1-\theta} \qquad \text{where} \qquad \frac 1p = \frac{\theta}q + \frac{1-\theta}{2}.
	\end{align}
	In particular, we have that
	\begin{align}
		\parens[\bigg]{\sum_{\ell\in\Lambda^j_r}\sum_{\vec k \in T_\jl} \abs{w_\lambda \beta^\rms_\lambda}^p}^{\frac 1p} 
		&\lesssim \parens{2^{\frac {j}2}2^{-(j-r)(\frac{2m\delta}{d}-\frac{d+\delta}{q})}}^\theta \parens{2^{-\frac j2} 2^{-(j-r)(t-\frac12)}}^{1-\theta}\\
		&=2^{-j(\frac 12 -\theta)} 2^{-(j-r)\parens{\theta(\frac{2m\delta}{d}-\frac{d+\delta}{q})+(1-\theta)(t-\frac12)}}, \label{eq:sum_gamma_p_theta}
	\end{align}
	which implies that for $\theta<\frac 12$ (i.e.~$q$ sufficiently small) and $n$ sufficiently large, we have achieved a finite $\ell^p$-norm for the tail of the singular part, as we can now simply sum over $r$:
	\begin{align}
		\sum_{\ell}\sum_{\vec k \in T_\jl} \abs{w_\lambda \beta^\rms_\lambda}^p
		&\lesssim 2^{-jp(\frac 12 -\theta)}.
	\end{align}
	The condition $\theta<\frac 12$ prescribes an upper bound for $q$ depending on the desired $p$, namely that $\theta<\frac 12 \Longleftrightarrow \frac 1q > \frac 2p-\frac 12$.
	
	In the same way as above, we get that
	\begin{align}
		\sum_{\ell\in\Lambda^j_r}\sum_{\vec k \not\in S_\jl} \abs{\alpha^\rms_\lambda}^q \lesssim 2^{-\frac {jq}2}2^{-(j-r)(\frac{2nq\delta}{d}-d-\delta)},
	\end{align}
	which lets us set $q=p'$ directly without having to interpolate (which leads to a slightly improved estimate for $\delta^*$ than the one below, compare also Step 7).
	
	\step{Singular part, estimating $\delta^{*}$}[\label{stp:approx:sing_delta}]
	To determine a lower bound for $\delta^{*}$ in dependence of $n$, we observe that $\frac{2m\delta}{d}>\frac{d+\delta}{q}$ has to hold for the second exponent in \eqref{eq:sum_gamma_p_theta} to be negative (actually this is a source for potential improvement for $\delta$, since, due to $\theta<\frac 12$, we could afford to let the first term become slightly negative; we will not deal with this, though). Together with the condition for $\theta$ and for the $p$ we want to achieve, this implies
	\begin{align}
		\frac{2m\delta}{d(d+\delta)}>\frac{1}{q}>\frac{2}{p^*+\delta^{*}}-\frac 12 = \frac{2t+d}{d+\delta}-\frac 12,
	\end{align}
	where we recalled $p^*=(\frac td+\frac 12)^{-1}$ and $\delta^{*}=\delta(t+\frac d2)^{-1}$. From this, we can use this to deduce a worst-case lower bound for the deviation from the optimal $p^*$ to make our argument work, namely
	\begin{align}
		\delta\parens[\Big]{\frac{2m}{d}+\frac 12}> 2t+\frac d2 \qquad \Longrightarrow \qquad \delta^{*}>2 \smash{\underbrace{\frac{t+\frac d4}{t+\frac d2}}_{<1}} \frac{1}{\frac {2m}d+\frac 12},
	\end{align}
	which is satisfied in particular if
	\begin{align}
	\delta^{*}\ge \frac{4d}{4n+d}. 
	\end{align}
	Said otherwise, everything we showed is true if we choose $\delta^{*}=\frac dm$, which, conversely, means that the coefficient sequence is \emph{at least} in the space $\ell^{p^*+\frac dm}_w$, and possibly in an even smaller space $\ell^{p^*+\bar{\delta}}_w$, i.e.~with $\bar{\delta}\le\frac dm$.
	
	\step{Regular part, large coefficients}[\label{stp:approx:reg_large}]
	Like for the singular part, we split the sequence into two parts, again defining a \emph{tail}
	\begin{align}\label{eq:tail_reg}
		T&:=\set[\Big]{\vec k\in\bbZ^d}{\abs*{D_{2^{-j}}\vec k} > 2^{j\frac{\delta}{d}}+\frac{\sqrt{d}}2},
	\end{align}
	only this time, both subsequences will be summable in $\ell^{p'}\subseteq \ell^{p'}_w$. Like before, we estimate the cardinality of the complement $T^\compl:=\bbZ^d\setminus T$,
	\begin{align}
		\#T^\compl \le \int \ind_{\{D_{2^{-j}}\vec k<2^{j\frac{\delta}{d}}+\sqrt{d}\}} \d \vec k = 2^j \int_{0}^{2^{j\frac{\delta}{d}}+\sqrt{d}} r^{d-1} \d r \lesssim 2^{j(1+\delta)}.
	\end{align}
	
	The following estimate will be the key to finish this step. For sequences $f,g$ with $g>0$ almost everywhere and some $q>1$, we apply H\"older's inequality,
	\begin{align}
		\norm*{\abs{f}^{\frac 1q}}_{\ell^1} = \norm*{\abs{fg}^{\frac 1q}\abs{g}^{-\frac 1q}}_{\ell^1} \le \norm*{\abs{fg}^{\frac 1q}}_{\ell^q} \norm*{\abs{g}^{-\frac 1q}}_{\ell^{q'}} = \norm*{fg}_{\ell^1}^{\frac 1q} \norm*{\abs{g}^{-\frac 1{q-1}}}_{\ell^{1}}^{\frac{q-1}q}
	\end{align}
	where $q'=\frac{q}{q-1}$, which is sometimes called the \emph{reverse H\"older inequality}, because then, by taking $q$\nth powers and bringing the last factor to the left-hand side, ``$\norm{fg}_{\ell^1}\ge \norm{f}_{\ell^{\frac 1q}} \norm{g}_{\ell^{-\frac{1}{q-1}}}$'', if we allowed negative exponents in the $\ell^p$-quasi-norms.
	
	In our case, we chose $q=\frac 2{p'}$, $f:=\abs{\VW\beta^\rmr_\Lambda}^2$ and $g=\ind_{\{j,\vec k\in T^\compl\}}$, allowing us to apply the above inequality,
	\begin{align}
		\norm*{\VW\beta^\rmr_\Lambda\normalr|_{j,T^\compl}}_{\ell^{p'}}^2 = \parens[\bigg]{\sum_{\ell=1}^{L_j} \sum_{\vec k\in T^\compl}\abs{w_\lambda \beta^\rms_\lambda}^{p'}}^{\frac 2{p'}} = \norm*{\VW\beta^\rmr_\Lambda\normalr|_{j,T^\compl}}_{\ell^{\frac 1q}} \le \norm*{\VW\beta^\rmr_\Lambda\normalr|_{j,T^\compl}}_{\ell^2}^2 \norm*{\ind_{\{j,\vec k\in T^\compl\}}}_{\ell^{1}}^{q-1}.
	\end{align}
	Since $g$ is constant, the last norm simply evaluates to $L_j\cdot\#T^\compl\lesssim 2^{j(d+\delta)}$ and therefore, using \eqref{eq:gamma_est_reg}, we have
	\begin{align}
		\norm*{\VW\beta^\rmr_\Lambda\normalr|_{j,T^\compl}}_{\ell^{p'}}^2 \lesssim \eps_j^2 2^{-2jt} 2^{j(d+\delta)(\frac 2{p'}-1)}.
	\end{align}
	Computing
	\begin{align}
		\frac{1}{p'}=\parens[\Big]{p^*+\frac{\delta}{t+\frac d2}}^{-1} =\parens[\Big]{p^*+\frac{\delta}{t+\frac d2}}^{-1}=\frac{\frac td +\frac 12}{1+ \frac \delta d}
	\end{align}
	makes it clear that the second exponent simplifies to
	\begin{align}
		2jd\parens[\Big]{1+\frac \delta d}\parens[\bigg]{\frac{\frac td +\frac 12}{1+ \frac \delta d}-\frac 12}
		=2jd\parens[\Big]{\frac td - \frac{\delta}{2d}}=2j\parens[\Big]{t - \frac{\delta}{2}},
	\end{align}
	and thus
	\begin{align}
		\norm*{\VW\beta^\rmr_\Lambda\normalr|_{j,T^\compl}}_{\ell^{p'}} \lesssim \eps_j 2^{-j\frac{\delta}{2}},
	\end{align}
	which is summable in $j$.
	
	\step{Regular part, tail}
	For the tail of $\VW\beta^\rmr_\Lambda$, we need to use the decay of $f$ resp.~$\CS[f]$ to apply \autoref{lem:ridge_coeff_tail_decay},
	\begin{align}
		\sum_{\vec k\in T} \abs{w_\lambda \beta^\rmr_\lambda}^{p'}
		&\lesssim \sum_{\vec k\in T}  \frac{2^{-\frac {jp'}2}}{\parens*{\parens*{\frac{k_1}{2^j}}^2+\abs[]{\kp}^2+1}^{mp'}}
		\le 2^{-\frac {jp'}2} \int \ind_{\{\abs[]{D_{2^{-j}}\vec k} > 2^{j\frac{\delta}{d}}\}} \frac{1}{\parens*{\parens*{\frac{k_1}{2^j}}^2+\abs[]{\kp}^2+1}^{mp'}} \d \vec k.
		\intertext{
	Transforming with $\vec t=D_{2^{-j}}\vec k$, we are able to continue,
		}
		&= 2^{-\frac {jp'}2} \int_{\abs[]{\vec k}>2^{j\frac{\delta}{d}}}  \frac{2^j}{\parens*{\abs[]{\vec t}^2+1}^{mp'}} \d \vec t
		= 2^{j\parens[]{1-\frac{p'}2}} \!\! \int_{2^{j\frac{\delta}{d}}}^\infty  r^{d-1-2mp'} \d r = 2^{-j\parens[]{(2mp'-d)\frac{\delta}{d}+\frac{p'}2-1}}.
	\end{align}
	Since this estimate is independent of $\ell$, we can sum over the $L_j\sim 2^{j(d-1)}$ directions on scale $j$ to arrive at
	\begin{align}\label{eq:sum_reg_tail}
		\sum_{\ell=1}^{L_j} \sum_{\vec k\in T} \abs{w_\lambda \beta^\rmr_\lambda}^q \lesssim 2^{-j\parens{\frac{2mp'\delta}{d}+\frac{p'}2-d-\delta}}.
	\end{align}
	For $m$ sufficiently large, resp.~$\delta$ not too small (see below), the exponent is negative, and we can thus sum in $j$ and have shown that the tail of the regular part is in $\ell^{p'}\subseteq \ell^{p'}_w$.
	
	\step{Regular part, estimating $\delta^{*}$}
	Similarly to \autoref{stp:approx:sing_delta}, we check which condition $\delta$ (resp.~$\delta^{*}$) needs to satisfy so that the results from above hold (i.e.~that the exponent \eqref{eq:sum_reg_tail} is negative). Apart from the negligible term $\frac{p'}2$ in the exponent, this is the same condition we had for the tail of the singular part, only that now, we don't need to interpolate,
	\begin{align}
		\frac{2m\delta}{d(d+\delta)}>\frac{1}{p'}=\frac{1}{p^*+\delta^{*}} = \frac{t+\frac d2}{d+\delta}.
	\end{align}
	Therefore, the condition for $\delta^{*}$ becomes
	\begin{align}
		\delta>\frac{d}{2m}\parens[\Big]{t+\frac d2} \qquad \Longrightarrow \qquad \delta^{*}> \frac{d}{2m},
	\end{align}
	which is weaker than the condition from \autoref{stp:approx:sing_delta}, and satisfied in particular for $\frac dm$.
	
	This allows us to round up the results thus far, and we observe that, taken together, Steps 1--\thestep\ prove the claim of the theorem for $t\in\bbN$.
	
	\step{Interpolating in $t$}
	What remains to show is to extend this to the half-line $t>0$ via interpolation theory. Taking the operators defined by
	\begin{align}
		T_1 f:=\inpr{\Phi,f}, \qquad T_2 f:=\VW\inpr{\Phi,\CS[f]},
	\end{align}
	we know that, by the frame property, resp.~by \eqref{eq:ridge_Hs_stable},
	\begin{align}
		\norm{T_i}_{L^2\to\ell^2}&<\infty, \quad i=1,2.
	\end{align}
	On the other hand, as we have just proved above for any $t\in\bbN$, the $T_i$ also satisfy
	\begin{align}
		\norm{T_i}_{H^t\to\ell^{p^*+\delta^*}_w}&<\infty, \quad i=1,2,
	\end{align}
	where $\delta^*\le \frac dm$ and $\frac{1}{p^*}=\frac{t}{d}+\frac 12$. Using the results from \autoref{thm:interp}, we see that
	\begin{align}
		\norm{T_i}_{H^{t\theta}\to\ell^{\bar{p}}_2}&<\infty,
	\end{align}
	where $\frac{1}{\bar{p}}=\frac{1-\theta}{2} + \frac{\theta}{p^*+\delta^*}$. Letting $\bbR^+\ni \bar{t}=t\theta$ (regardless of the choice of $t\in\bbN$ and $0<\theta<1$), we compute
	\begin{align}
		\frac{1}{\bar{p}}=\frac{1-\theta}{2} + \frac{\theta(t+\frac d2)}{d+\delta}=\frac{\bar{t}}{d+\delta}+\frac{\theta}{2}\parens[\Big]{\frac{d}{d+\delta}-1} +\frac 12,
	\end{align}
	and thus, since $\theta<\frac 12$, we can deduce
	\begin{align}
		\bar{p}=\frac{d+\delta}{\bar{t}+\frac d2 + \frac{\delta}{2}(1-\theta)} \le \frac{d+\delta}{\bar{t}+\frac d2} = \parens[\Big]{\frac {\bar{t}}d+\frac 12}^{-1} +\delta^{*}.
	\end{align}
	Finally, we remark that $\ell^{\bar{p}}_2\subseteq \ell^{\bar{p}}_w$, and thus the proof is finished, since for arbitrary $t\in\bbR^+$, we have shown that for $f\in H^{t}(\bbR^d)$ with solution $\CS[f]$, it holds that
	\begin{align}
	\inpr{\Phi,f}\in\ell^{p^*+\frac dm}_w \quad \text{as well as} \quad \VW\inpr{\Phi,\CS[f]}\in\ell^{p^*+\frac dm}_w \quad \text{where} \quad p^*=\parens[\Big]{\frac td+\frac 12}^{-1}.
	\end{align}
\end{proof}

\subsection{Implications}

To conclude this section, we briefly discuss differences to the proof and results in \cite{mutilated}.

\begin{remark}
	An obvious question that presents itself with regard to \cite{mutilated} is why Cand\`es was able to achieve $p=p^*$, while we only achieve this value up to an arbitrarily small $\delta^*$. Aside from the fact that the functions we treat do not have to have compact support, one major source for this loss is the fact that we have so many more translations to deal with ($d$ dimensions vs. one in \cite{mutilated}).
	
	To achieve $p=p^*$, we would have to:
	\begin{enumerate}[(A)]
	\item
		Bound \eqref{eq:card_Sjl} in a way that depends purely linearly from $2^{j-r}$ --- since we can see that the deviation $\delta$ directly impacts \eqref{eq:est_p4sing} through \eqref{eq:card_Njr}.
	\item
		On the other hand, the $\ell^q$ norm of the tail in \eqref{eq:est_transl_sing} still has to have an exponent of $2^{j-r}$ that is negative for sufficiently large $m$.
	\end{enumerate}
	The transformation $V_\jl$ in $T_\jl$ is already chosen in a way to be able to optimally estimate \eqref{eq:est_transl_sing} by achieving radial symmetry, and due to its functional determinant $a\sim 2^{j-r}$, (A) becomes almost impossible. One may squeeze a slight improvement out of \eqref{eq:gamma_est_w} by pulling a factor $a^{\frac 1m}$ in the denominator, which would reduce the determinant of transforming with $V_\jl$ to $a^{1-\frac{d}{2m}}$. Still, this improvement is not enough, as it would force $\delta\le\frac{d}{2m}$ in (A), making (B) impossible, since $m$ would disappear from the exponent. Also, the $\ell^p$-interpolation does not save us, due to the fact that the condition on $\theta$ would necessitate $q<0$.
	
	One solution would be if the fraction $\frac 1a$ in \eqref{eq:gamma_est_w} had any power that grew with $m$, even if it were as slow as $\log(m)$ --- as this would save (B) in the above scenario. But in general, due to the sharpness of \eqref{eq:Imn_est}, this seems unlikely to be possible. Nevertheless, we do not rule out that $\delta^*$ could be eliminated through other proof techniques for functions with compact support.
\end{remark}

\begin{remark}
	Finally, while we were able to follow the general approach of \cite{mutilated} --- i.e.~localisation in angle, localisation in space, split off tails etc. --- we had very different issue to deal with. On the one hand, we were able to avoid the sampling estimates in Fourier space (which is essentially due to the fact that Cand\`es construction is supported on $[\sjl]$ in frequency, while our construction diffuses this to the sets $P_\jl$ where we are able to integrate), but on the other hand, issues like the localisation in space became much more intricate and required ``heavy machinery'' (in the form of \autoref{th:Imn_est}) to deal with. The removal of the condition of compact support --- resp.~being also able to quantify the coefficient decay for functions with only polynomial decay --- is a welcome bonus, as is the fact that we were able to avoid the numerous case distinctions that are often necessary in other proofs of optimality (compare e.g.~\cite{curvelet_cartoon}).
\end{remark}

\section{Conclusion}\label{sec:conclusion}

Wrapping up, the fact that ridgelets approximate mutilated Sobolev functions with the benchmark rate --- up to arbitrarily small $\delta>0$ --- for this class (as set out in \autoref{sec:benchmark}) allows us to combine \autoref{th:approx} with \cite{compress} to achieve \autoref{th:main_result}. To the best of our knowledge,
this is the first construction of an optimally adapted PDE solver with non-standard frames and for non-elliptic
problems.

As indicated in \autoref{ssec:impact}, this can be utilised for solving more involved transport equations \eqref{eq:RTE}, based on the fact that the ridgelet frame $\Phi$ covers all direction $\vec s$ simultaneously, while the multiscale structure makes it possible to alleviate the curse of dimensionality, for example by the ``sparse discrete ordinates method'' (see e.g. \cite{grella}).

Finally, as a numerical check of the claimed results, we return to \autoref{fig:sing_sol}, which is the solution of \eqref{eq:LinTrans} with a source function that is a box function times a Gaussian, rotated so that the edges of the box are aligned with the transport direction $\vec s$ (which is chosen to not coincide with any of the $\sjl$ in the frame). Since --- apart from the singularities --- the function is $\CC^\infty$, we even observe super-polynomial decay, in the sense that in the doubly logarithmic plot \autoref{fig:sing_sol:nterm}, the curve has non-zero curvature (as far as we were able to calculate) and overtakes any straight line (which would correspond to a fixed power $N^{-\sigma}$).

Furthermore, the localisation in angle discussed in \autoref{prop:loc_angle}, is also confirmed theoretically. Indeed, the results are again very convincing, in that, for at least the first $100'000$ largest coefficients (on scales $1$--$10$), the corresponding rotational parameters $\ell$ are all either the $\sjl$ closest to the direction $\vec s$ or its immediate left/right neighbours --- across all scales! To put this into perspective, there are $2^{j+1}$ different $\sjl$ on each scale, and we can discard \emph{all but three} of them without discernible loss in accuracy. For further numerical results in this context see \cite[Sec. 8.2]{my_thesis}.

\begin{figure}
	\setlength{\plotsize}{0.5\linewidth}
	{
	\subcaptionbox{Solution for singular $f$\label{fig:sing_sol:rhs}}{%
		\tikzsetnextfilename{box_sol_sml}%
%
%
\definecolor{myblue}{rgb}{0,0,0.56078}%
\begin{tikzpicture}[trim axis left,trim axis right]
\begin{axis}[%
	width=\plotsize,
	xlabel={\footnotesize$x_1$},
	ylabel={\footnotesize$x_2$},
	xlabel shift={-0.75em},
	ylabel shift={-0.75em},
	scale only axis,
	tick align=outside,
	clip=false,
	xmin=-1,
	xmax=1,
	xmajorgrids,
	ymin=-1,
	ymax=1,
	ymajorgrids,
	zmin=0,
	zmax=2,
	zmajorgrids,
	every tick label/.append style={font=\footnotesize},
	axis x line*=bottom,
	axis y line*=left,
	axis z line*=left
	]
	
	\addplot3 graphics[points={
	(-0.99665,-0.98692,9.7769e-11) => (55.7081,105.3937)
	(0.97341,0.98941,1.7626e-10) => (377.9119,156.585)
	(0.023055,0.054493,1.0839) => (223.5853,168.63)
	(0.96699,-0.96354,2.7661e-09) => (260.4731,37.6406)
	}]{figures/source/box_sol.png};
	
	\def\myxtick{0.055}
	\def\myytick{0.07}
	\def\myztick{0.19}
	\def\mylw{0.005}
	
	\pgfplotsinvokeforeach{0,1,2}
	{
	\draw[white] (-1,-1,#1) -- (-1-\myxtick,-1,#1);
	\draw[white] (-1,-1,#1) -- (-1,-1-\myytick,#1);
	\draw[very thin,gray] (-1,-1,#1) -- (-1,-1-\myytick,#1);
	}
	\pgfplotsinvokeforeach{-1,-0.5,0,0.5,1}
	{
	\draw[white] (#1,-1,0) -- (#1,-1,-\myztick);		
	\draw[white] (#1,-1,0) -- (#1,-1-\myytick,0);		
	\draw[very thin,gray] (#1,-1,0) -- (#1,-1-\myytick,0);
	}
	\pgfplotsinvokeforeach{-1,-0.5,0,0.5,1}
	{
	\draw[white] ( 1,#1,0) -- ( 1,#1,-\myztick);		
	\draw[very thin,gray] (1,#1,0) -- (1+\myxtick,#1,0);
	}
	\draw[very thin,gray] (1,-0.99,0) -- (1,-1-\myytick,0);
	\draw[myblue,thick] (-1+\mylw,-1+\mylw,0) -- (1-\mylw,-1+\mylw,0) -- (1-\mylw,1-\mylw,0) -- (-1+\mylw,1-\mylw,0) -- cycle;
	\draw[white,thick] (-1,1,0.02) -- (1,1,0.02);
	\draw[gray!50] (-1,-1,0) -- (-1,1,0) -- (1,1,0);
	\pgfplotsinvokeforeach{-1,-0.5,0,0.5,1}
	{
	\draw[gray!50] (#1,1,0) -- (#1,1,1);		
	}
	\draw (-1,-1,2+\myztick) -- (-1,-1,0) -- (1,-1,0) -- (1,1,0);
\end{axis}
\end{tikzpicture}
	\hfill\setlength{\plotsize}{0.35\linewidth}%
	\subcaptionbox{$N$-term approximation rate\label{fig:sing_sol:nterm}}{%
		\tikzsetnextfilename{Nterm_sing}%
%
%
\definecolor{mycolor1}{rgb}{0.00000,0.44700,0.74100}%
\definecolor{mycolor2}{rgb}{0.85000,0.32500,0.09800}%
\definecolor{mycolor3}{rgb}{0.92900,0.69400,0.12500}%
\definecolor{mycolor4}{rgb}{0.49400,0.18400,0.55600}%
\definecolor{mycolor5}{rgb}{0.46600,0.67400,0.18800}%
\definecolor{mycolor6}{rgb}{0.30100,0.74500,0.93300}%
\definecolor{mycolor7}{rgb}{0.63500,0.07800,0.18400}%
\begin{tikzpicture}[trim axis left,trim axis right]
\begin{axis}[%
width=\plotsize,
height=\plotsize,
scale only axis,
unbounded coords=jump,
xmode=log,
xmin=10,
xmax=100000,
xminorticks=true,
xlabel={\footnotesize Number $N$ of ridgelets used for reconstruction},
ylabel={\footnotesize Relative $L^2$-error},
ymode=log,
ymin=0.0001,
ymax=1,
yminorticks=true,
every tick label/.append style={font=\footnotesize},
axis background/.style={fill=white}
]
\addplot [color=mycolor1,solid]
  table[row sep=crcr]{%
20	0.400253717065241\\
40	0.366785504405462\\
60	0.338509829701719\\
80	0.316387129535475\\
100	0.298352573420227\\
120	0.280634959648918\\
140	0.267123514012344\\
160	0.257420908415868\\
180	0.246439015727762\\
200	0.235438273250285\\
220	0.224129991955433\\
240	0.214240368735786\\
260	0.206878160115163\\
280	0.200572998868481\\
300	0.19436767238386\\
320	0.186646207582423\\
340	0.181035630558091\\
360	0.172262317933199\\
380	0.166070580759316\\
400	0.162442187751984\\
800	0.108580436982768\\
1200	0.0843392508718165\\
1600	0.0679698273867626\\
2000	0.0576735604547602\\
2400	0.0492195014500669\\
2800	0.0433007324274309\\
3200	0.0385861094737698\\
3600	0.0349202409971326\\
4000	0.0316176713327697\\
4400	0.0287405442230186\\
4800	0.0264194175659328\\
5200	0.0240999953836969\\
5600	0.0225593169424807\\
6000	0.0210117661654486\\
6400	0.0195951034171552\\
6800	0.0182806139710217\\
7200	0.0171503153004934\\
7600	0.0159448866974774\\
8000	0.0150906198045825\\
8400	0.0140564489142905\\
8800	0.0132138752063825\\
9200	0.0122219395625939\\
9600	0.0115101689122792\\
10000	0.0109216705466815\\
10400	0.0103085232003954\\
10800	0.00976979387441293\\
11200	0.00924494300372729\\
11600	0.00883383117411731\\
12000	0.00839110968598781\\
12400	0.00796460378357287\\
12800	0.00762300093971332\\
13200	0.00722006890584702\\
13600	0.00687734649969845\\
14000	0.00651938988139802\\
14400	0.00620588737989716\\
14800	0.00598469378887903\\
15200	0.00574531492491916\\
15600	0.00548596570592125\\
16000	0.00523961520802978\\
16400	0.00502165242048476\\
16800	0.00482798692859305\\
17200	0.00466677309842402\\
17600	0.00451050142666545\\
18000	0.00438855641449497\\
18400	0.004221055358441\\
18800	0.00405914161472572\\
19200	0.00393174112448324\\
19600	0.00378151802125968\\
20000	0.00365345307986607\\
20400	0.00353070779612971\\
20800	0.00339701722126172\\
21200	0.00327899434893389\\
21600	0.0031526015892518\\
22000	0.00303667657880326\\
22400	0.00294452374252543\\
22800	0.00284946762908533\\
23200	0.00277847333102539\\
23600	0.00269896689218675\\
24000	0.0026277154401427\\
24400	0.00254922964864998\\
24800	0.0024748771892525\\
25200	0.00241529983111379\\
25600	0.00236178581916897\\
26000	0.0022964832724088\\
26400	0.00223757907358718\\
26800	0.00217395019344824\\
27200	0.00212648563584064\\
27600	0.00207077951127554\\
28000	0.00200920196004147\\
28400	0.00195448142533384\\
28800	0.00191967790726867\\
29200	0.00188008318821874\\
29600	0.00183895817498489\\
30000	0.00179534029018666\\
30400	0.00175361562623122\\
30800	0.00169336844801087\\
31200	0.0016533593948564\\
31600	0.00161606290646892\\
32000	0.0015779240651339\\
32400	0.00154184076086434\\
32800	0.00151380817109019\\
33200	0.0014810376168379\\
33600	0.00145121566181024\\
34000	0.00142878018424134\\
34400	0.00140017358476354\\
34800	0.00138069597404368\\
35200	0.00135142132568477\\
35600	0.00132248550887982\\
36000	0.00129223086797639\\
36400	0.00127344965135168\\
36800	0.00124470223314735\\
37200	0.00122379648552762\\
37600	0.00119355868423493\\
38000	0.00116059038551262\\
38400	0.0011374959440566\\
38800	0.00111776376168619\\
39200	0.00109904841167898\\
39600	0.0010809249556525\\
40000	0.00106285023117039\\
40400	0.00104047588901495\\
40800	0.00102105565726185\\
41200	0.00100294922641198\\
41600	0.000985722612921455\\
42000	0.000967676505179647\\
42400	0.000948932975021804\\
42800	0.000935229438152228\\
43200	0.000922382849881713\\
43600	0.000905494936270761\\
44000	0.000888831842153098\\
44400	0.000876943305193061\\
44800	0.000863315800418887\\
45200	0.000850960614230924\\
45600	0.000835967917428397\\
46000	0.000821830003966122\\
46400	0.000809061485684631\\
46800	0.000795350211189422\\
47200	0.000782110551465521\\
47600	0.000768457986853257\\
48000	0.000753186598026425\\
48400	0.000740340935229143\\
48800	0.000727796560565427\\
49200	0.00071308792151991\\
49600	0.00069845331374364\\
50000	0.000688474087192801\\
50400	0.000679067961363729\\
50800	0.000670538542828765\\
51200	0.00065887648085558\\
51600	0.000647298567210067\\
52000	0.000632983007731799\\
52400	0.000623564027527762\\
52800	0.000614181668982787\\
53200	0.000603394125595843\\
53600	0.00059364788088969\\
54000	0.000585661734952078\\
54400	0.000579103366229247\\
54800	0.000571304128517272\\
55200	0.000563410587440862\\
55600	0.00055512359262748\\
56000	0.000546905226205307\\
56400	0.00054015688425051\\
56800	0.00053283670393377\\
57200	0.000526909095219483\\
57600	0.00052002033934293\\
58000	0.000514331853675557\\
58400	0.000505697985153354\\
58800	0.000500099631121234\\
59200	0.000493476624603522\\
59600	0.000487775559589444\\
60000	0.000481569027871918\\
60400	0.000475900483311146\\
60800	0.000469602845969755\\
61200	0.00046461611445901\\
61600	0.000458956247964244\\
62000	0.000451970168187945\\
62400	0.000448013577242851\\
62800	0.000441510664180219\\
63200	0.000438061138160377\\
63600	0.000434066993498719\\
64000	0.000429367272453699\\
64400	0.000425115691160745\\
64800	0.000418216821850554\\
65200	0.00041246371566326\\
65600	0.000406137112244872\\
66000	0.000400770216458566\\
66400	0.000396930542467272\\
66800	0.000392800633085096\\
67200	0.000388776503544264\\
67600	0.000383139037339925\\
68000	0.000377877249708062\\
68400	0.000373894953447406\\
68800	0.000370050828652463\\
69200	0.000365585935834494\\
69600	0.0003610132336397\\
70000	0.000356545819844313\\
70400	0.000351711836799892\\
70800	0.000346929612170912\\
71200	0.00034339962166354\\
71600	0.000339418865941148\\
72000	0.000335835915393634\\
72400	0.000331547640088736\\
72800	0.000327987697482023\\
73200	0.000325193918131843\\
73600	0.000321568752370638\\
74000	0.000317717806265301\\
74400	0.000313646350621568\\
74800	0.000309484515597228\\
75200	0.000306249968934425\\
75600	0.000303620055126596\\
76000	0.000300807050142987\\
76400	0.000297555314091918\\
76800	0.000294070538141204\\
77200	0.000291419389271804\\
77600	0.000288889817890459\\
78000	0.000285349145129653\\
78400	0.000282489171646123\\
78800	0.00028070618930509\\
79200	0.00027745895804107\\
79600	0.000274091012512273\\
80000	0.000271313016121651\\
80400	0.000268285165636292\\
80800	0.000265319647105734\\
81200	0.000262181950524269\\
81600	0.000260003112804151\\
82000	0.000258360563712762\\
82400	0.000255901189380579\\
82800	0.000252929456095915\\
83200	0.000249949999516145\\
83600	0.000247467112081717\\
84000	0.000245688273276501\\
84400	0.000244225013762706\\
84800	0.000241065312292236\\
85200	0.000239088896276795\\
85600	0.000236419334484655\\
86000	0.000233313033108251\\
86400	0.00023046836685325\\
86800	0.000228304308198342\\
87200	0.000226922279115262\\
87600	0.000225096548423486\\
88000	0.000223082756242354\\
88400	0.000220452230144545\\
88800	0.000217944605917957\\
89200	0.000215769225289692\\
89600	0.000213658267773207\\
90000	0.000211339127243784\\
90400	0.000209065381045004\\
90800	0.000207084934007732\\
91200	0.000204662628784685\\
91600	0.000202828621227884\\
92000	0.000200830545299225\\
92400	0.000199024157373629\\
92800	0.000196987236491504\\
93200	0.000194913443942539\\
93600	0.000192820120215648\\
94000	0.000190542174216383\\
94400	0.000188606317975546\\
94800	0.000186705999381065\\
95200	0.000184708794412508\\
95600	0.000183088389434769\\
96000	0.000180740078300988\\
96400	0.0001789685277761\\
96800	0.000176992368307468\\
97200	0.0001743346636375\\
97600	0.000173192759950084\\
98000	0.000171538963282594\\
98400	0.000170385207228866\\
98800	0.00016855922242743\\
99200	0.000167654459535419\\
99600	0.000165925496380674\\
100000	0.000164509483176166\\
};

\end{axis}
\end{tikzpicture}
	}
	\caption{Solution to the advection equation for a singular right-hand side, as well the $N$-term approximation rate of the ridgelet frame}\label{fig:box_sol}
\end{figure}

\section{An Integral (In)Equality}\label{sec:int_est}

The following result turned out to be necessary for the proof of \autoref{th:approx}, but has proven useful in other contexts as well (e.g.~\autoref{lem:decay_sol}). It is very well-suited for quantifying interactions between (offset) decaying functions --- particularly for convolutions (see \autoref{app:cor:Imn_higher_dim}) --- and substantially stronger than other results of this type we are aware of (see \autoref{rem:grafakos}).

\subsection{Main Theorem \texorpdfstring{$\&$}{and} Consequences}

The following theorem is formulated not in its most general form, but in a form that any one-dimensional problem of this type can be transformed into.

\begin{theorem}\label{app:th:Imn_est}
	For $m,n\in\bbN$, $a\in\bbR^+_0$, $b\in\bbR$, $c,d\in\bbR^+$, we have
	\begin{align}
		I_{m,n}&:=\int_{-\infty}^{\infty} \frac{1}{\parens{a^2(x-b)^2+c^2}^m} \frac{1}{\parens{x^2+d^2}^n} \d x \notag \\
		&\phantom{:}= \frac{\pi}{\parens{(c+ad)^2+a^2 b^2}^{m+n-1}} \frac{1}{c^{2m-1}} \frac{1}{d^{2n-1}} \sum_{\substack{i+j+2k=2(m+n)-3\\i\ge 2m-1 \, \lor\, j\ge 2n-1}} c^{m,n}_{i,j} c^{i} (ad)^{j} (ab)^{2k}\notag \\
		&\phantom{:}\lesssim  \frac{a^{2n-1}}{\parens{(c+ad)^2+a^2 b^2}^n} \frac{1}{c^{2m-1}} + \frac{1}{\parens{(c+ad)^2+a^2 b^2}^m} \frac{1}{d^{2n-1}} \label{app:eq:Imn_est_best}\\
		&\phantom{:}\le \frac{a^{2n-1}}{\parens{a^2b^2+a^2d^2+c^2}^n} \frac{1}{c^{2m-1}} + \frac{1}{\parens{a^2b^2+a^2d^2+c^2}^m} \frac{1}{d^{2n-1}}. \label{app:eq:Imn_est}
	\end{align}
	Furthermore, the generating function for $I_{m,n}$ is
	\begin{align}\label{app:eq:genfunc_Imn_intro}
		\sum_{m,n\ge 1} I_{m,n} y^m z^n = \frac{\pi yz}{\sqrt{c^2-\smash{y}}\sqrt{d^2-z}} \frac{\sqrt{c^2-\smash{y}}+a\sqrt{d^2- z}}{(\sqrt{c^2-\smash{y}}+a\sqrt{d^2- z})^2+a^2b^2},
	\end{align}
	and with $h(v,w):=(v+w)^2+1$, we also have a generating function for the coefficients,
	\begin{multline}\label{app:eq:genfunc_coeff}
		\sum_{\substack{i,j\ge 0\\m,n\ge 1}} c^{m,n}_{i,j} v^i w^j y^m z^n \\
		=\frac{h(v,w)yz}{\sqrt{1-h(v,w)y}\sqrt{1-h(v,w)z}} \frac{v\sqrt{1-h(v,w)y}+w\sqrt{1-h(v,w)z}}{\parens*{v\sqrt{1-h(v,w)y}+w\sqrt{1-h(v,w)z}}^2+1}.\qquad
	\end{multline}
	The coefficients are zero \emph{unless} the following conditions are satisfied,
	\begin{align}
	i+j&\equiv 1\bmod{2}, & i+j&\le 2(m+n)-3, & (i&\ge 2m-1 \lor j\ge 2n-1).
	\end{align}
	If these are satisfied, the coefficients can be calulated as follows,
	\begin{align}
		c^{m,n}_{i,j}
		\begin{multlined}[t][\linewidth-\mathindent-\widthof{$c^{m,n}_{i,j}$}-\multlinegap]
			=\sum_{r=0}^{i}\sum_{s=0}^{j} \delta_{\{r+s\equiv1\bmod{2}\}} (-1)^{m+n+\frac{r+s-1}{2}} \binom{r+s}{s}  \cdot\ldots \\
			\ldots \cdot  \binom{\frac{r-1}{2}}{m-1}\binom{\frac{s-1}{2}}{n-1} \binom{m+n-1}{m+n-1-\frac{i+j-r-s}{2}} \binom{i+j-r-s}{i-r}.
		\end{multlined}
	\end{align}
	Another representation of the coefficients can be found in \autoref{app:prop:coeff_expl_long}.
\end{theorem}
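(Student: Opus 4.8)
The plan is to pass to the two-parameter generating function $\sum_{m,n\ge1}I_{m,n}y^mz^n$, evaluate it in closed form by reducing to a single elementary integral, and then read off both the coefficients $c^{m,n}_{i,j}$ and the bounds \eqref{app:eq:Imn_est_best}--\eqref{app:eq:Imn_est} from it. For $|y|<c^2$ and $|z|<d^2$ the geometric series $\sum_{m\ge1}y^m\bigl(a^2(x-b)^2+c^2\bigr)^{-m}=y\bigl(a^2(x-b)^2+c^2-y\bigr)^{-1}$ and its $z$-analogue converge uniformly in $x\in\bbR$, so summation and integration may be interchanged to give
\[
  \sum_{m,n\ge1}I_{m,n}\,y^mz^n
  = yz\int_{-\infty}^{\infty}\frac{\d x}{\bigl(a^2(x-b)^2+(c^2-y)\bigr)\bigl(x^2+(d^2-z)\bigr)}.
\]
Writing $p:=\sqrt{c^2-y}$ and $q:=\sqrt{d^2-z}$, the integrand is a product of two shifted Lorentzians, which I would evaluate by partial fractions in $x$ (the odd parts contributing an explicit shift term) or, equivalently, by closing the contour in the upper half-plane and collecting the residues at the now \emph{simple} poles $x=b+\ii p/a$ and $x=\ii q$; with $A:=ab+\ii(p-aq)$ and $B:=ab+\ii(p+aq)$ the sum of residues simplifies via the identity $aq\,\overline B+pB=(p+aq)A$, and multiplying by $yz$ produces exactly \eqref{app:eq:genfunc_Imn_intro}. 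This residue/partial-fraction evaluation is the only genuinely analytic step and is routine.

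To recover $I_{m,n}=\tfrac1{m!\,n!}\,\partial_y^m\partial_z^n$ of the generating function at $y=z=0$, I would substitute $y=c^2-p^2$, $z=d^2-q^2$, under which $\partial_y=-\tfrac1{2p}\partial_p$, $\partial_z=-\tfrac1{2q}\partial_q$, and $y=z=0$ corresponds to $p=c$, $q=d$; the generating function then reads $\pi(c^2-p^2)(d^2-q^2)(p+aq)\big/\bigl(pq\,((p+aq)^2+a^2b^2)\bigr)$. Expanding this about $p=c$, $q=d$ --- equivalently, applying Fa\`{a} di Bruno's formula together with the Bell-polynomial generating-function identities of \autoref{ssec:genfunc} to the composite radicals $\sqrt{c^2-y}$, $\sqrt{d^2-z}$ --- writes $I_{m,n}$ as a rational multiple of a polynomial in $c$, $ad$ and $ab$; collecting terms over the common prefactor $\pi\bigl((c+ad)^2+a^2b^2\bigr)^{-(m+n-1)}c^{-(2m-1)}d^{-(2n-1)}$ yields the stated sum, with the numbers $c^{m,n}_{i,j}$ and their parity and support constraints ($i+j$ odd, $i+j\le2(m+n)-3$, and $i\ge2m-1$ or $j\ge2n-1$) emerging from the same bookkeeping. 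The generating function \eqref{app:eq:genfunc_coeff} for the $c^{m,n}_{i,j}$ is then deduced from \eqref{app:eq:genfunc_Imn_intro} by the rescaling $v=c/(ab)$, $w=d/b$, $\tilde y=c^2M^2y/(ab)^2$, $\tilde z=d^2M^2z/(ab)^2$ with $M^2:=(c+ad)^2+a^2b^2=(ab)^2\bigl((v+w)^2+1\bigr)$, after the prefactors cancel.

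The estimates \eqref{app:eq:Imn_est_best}--\eqref{app:eq:Imn_est} then require no further analysis. Since $M^2=(c+ad)^2+a^2b^2$ dominates each of $c^2$, $(ad)^2$ and $(ab)^2$, I split the finite, uniformly bounded sum according to whether $i\ge2m-1$ or $j\ge2n-1$: using $i+j+2k=2(m+n)-3$, in the first case $c^i(ad)^j(ab)^{2k}\le c^{2m-1}M^{2n-2}$ and in the second $c^i(ad)^j(ab)^{2k}\le(ad)^{2n-1}M^{2m-2}$; dividing by the prefactor gives $\pi M^{-2m}d^{-(2n-1)}$ respectively $\pi a^{2n-1}M^{-2n}c^{-(2m-1)}$, i.e.~the two summands of \eqref{app:eq:Imn_est_best}, and \eqref{app:eq:Imn_est} follows from $(c+ad)^2\ge c^2+a^2d^2$, which yields $M^2\ge a^2b^2+a^2d^2+c^2$.

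I expect the main obstacle to be the bookkeeping in the coefficient extraction: converting the closed-form generating function into the explicit double binomial sum for $c^{m,n}_{i,j}$ requires tracking the Bell polynomials produced by repeated differentiation of the composite square roots and checking that the support conditions come out correctly; by comparison, the residue evaluation and the elementary majorizations are straightforward.
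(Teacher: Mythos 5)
Your route to the generating function is correct and substantially simpler than the paper's. The paper first computes an explicit partial fraction decomposition of $P_{m,n}$ with recursively defined coefficients (\autoref{app:prop:pfd}), then derives generating functions for those PFD coefficients (\autoref{app:prop:genfunc}), and only then integrates term by term and resums to obtain \eqref{app:eq:genfunc_Imn_intro} (\autoref{app:prop:genfunc_Imn}). You instead sum the geometric series $\sum_m y^m (a^2(x-b)^2+c^2)^{-m} = y/(a^2(x-b)^2+c^2-y)$ \emph{inside} the integral (which is legitimate for $|y|<c^2$, $|z|<d^2$ by uniform convergence), reducing the two-parameter family to the single case $m=n=1$ with $c,d$ replaced by $p=\sqrt{c^2-y}$, $q=\sqrt{d^2-z}$, and evaluate that one integral by residues. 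I checked the residue computation: with $A=ab+\ii(p-aq)$, $B=ab+\ii(p+aq)$ one finds $a^2(\ii q-b)^2+p^2 = A\bar B$ and $a^2(b+\ii p/a-b)^2\cdot\text{etc.}$ gives the pole at $b+\ii p/a$ contributing $a/(pBA)$, the pole at $\ii q$ contributing $1/(qA\bar B)$, and your identity $aq\bar B+pB=(p+aq)A$ collapses the sum to $\pi(p+aq)/(pq\,|B|^2)$, which is exactly \eqref{app:eq:genfunc_Imn_intro}. This bypasses the entire PFD machinery of \autoref{app:prop:pfd}--\autoref{app:prop:genfunc} and the delicate factorisation of the quartic $\Delta(c^2y-a^2d^2z)^2-\ldots$ that the paper needs in \autoref{app:prop:genfunc_Imn}; it is a genuine shortcut. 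I also verified your rescaling for \eqref{app:eq:genfunc_coeff}: with $v=c/(ab)$, $w=d/b$, $\tilde y=hc^2y$, $\tilde z=hd^2z$ (equivalently $\tilde y=c^2M^2y/(ab)^2$) the prefactors cancel cleanly and the stated formula drops out.

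The one place you should not dismiss as bookkeeping is the support condition $i\ge 2m-1\lor j\ge 2n-1$. This is not a side remark --- it is the load-bearing fact in your derivation of \eqref{app:eq:Imn_est_best}: without it the sum contains terms like $c^0(ad)^0(ab)^{2(m+n)-3}$, whose quotient by the prefactor is $\pi(ab)^{2(m+n)-3}M^{-2(m+n-1)}c^{-(2m-1)}d^{-(2n-1)}$ and matches neither summand on the right-hand side. So the estimate step you describe as ``requiring no further analysis'' silently assumes exactly the thing you defer. The paper proves it in \autoref{app:prop:coeff_zero} by expanding \eqref{app:eq:genfunc_coeff} in geometric/binomial series, splitting by the parity of $r,s$, and invoking the vanishing of $\binom{p}{m-1}$ for integer $p<m-1$; this is short once \eqref{app:eq:genfunc_coeff} is in hand (which you have), but it is an argument, not a tautology, and the explicit double binomial formula you quote for $c^{m,n}_{i,j}$ falls out of the same expansion. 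If you carry out that expansion (the rest of your outline already sets it up correctly), your proof is complete and noticeably leaner than the paper's.
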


\begin{remark}\label{rem:grafakos}
	In \cite[App. B.1]{grafakos}, it is shown that for dimension $k\ge 1$, powers $m,n>k$, factors $p,q>0$ and vectors $\vec r, \vec s\in \bbR^k$, the following inequality holds
	\begin{align}
		\int_{\bbR^k} \frac{p^k}{\parens{1+p\abs{\vec x-\vec r}}^m} \frac{q^k}{\parens{1+q\abs{\vec x-\vec s}}^n} \dd \vec x \lesssim \frac{\min(p,q)^k}{\parens{1+\min(p,q)\abs{\vec r-\vec s}}^{\min(m,n)}}.
	\end{align}
	Applied to our context (using the fact that $1+x^2\ge \frac 12 (1+\abs{x})^2\ge \frac 12 (1+x^2)$, resp.~$k=1$), implies
	\begin{align}
		I_{m,n} \lesssim \frac{1}{a} \frac{1}{(b^2+d^2)^{\min(m,n)}} \frac{1}{c^{2m-1}} \frac{1}{d^{2n-2\min(m,n)}} + \frac{1}{(a^2b^2+c^2)^{\min(m,n)}} \frac{1}{c^{2m-2\min(m,n)}} \frac{1}{d^{2n-1}}.
	\end{align}
	Even though we know that in our case, we can take $m$ to be arbitrarily large (but fixed), this only yields
	\begin{align}\label{app:eq:grafakos}
		I_{m,n} \lesssim \frac{1}{a} \frac{1}{(b^2+d^2)^{n}} \frac{1}{c^{2m-1}} + \frac{1}{\parens{a^2b^2+c^2}^{n}} \frac{1}{c^{2(m-n)}} \frac{1}{d^{2n-1}}.
	\end{align}
	In one of the key estimates that we need for \autoref{th:approx} (see the proof of \autoref{prop:loc_space}), $c$ and $d$ will contain variables in other dimensions to be integrated over, and the fact that one term now has three factors renders an second application of \eqref{app:eq:grafakos} impossible in this case. We would need to accept considerable slack in the estimates (by dropping one factor in the second term of the right-hand side of \eqref{app:eq:grafakos}), which would make achieving sufficiently strong estimates much more difficult (if not impossible).
	
	Compare also with \autoref{app:cor:Imn_higher_dim}, where we are able to leverage \eqref{app:eq:Imn_est} into higher dimensions as well --- in our case obtaining a denominator that contains both $p$ and $q$ (in the notation of this remark), see \eqref{eq:Imn_higher_dim}.
\end{remark}

\begin{remark}\label{rem:Imn_est_decay_a}
	The estimate has one deficiency in terms of the behaviour of $a$ --- namely, that $I_{m,n}$ always decreases with increasing $a$ (albeit much slower than might be expected; the decay with $b,c,d$ is much more pronounced), whereas the first term in the estimates increases with $a$ until around
	\begin{align}
		a\sim \sqrt{\frac{n c^2}{b^2+d^2}} \quad \text{for \eqref{app:eq:Imn_est}}
		\quad \text{resp.} \quad
		a\sim \frac{ncd+\sqrt{n(b^2+d^2)c^2}}{b^2+d^2} \quad \text{for \eqref{app:eq:Imn_est_best}}
	\end{align}
	and only then starts to decrease with $a$.
	
	This is not avoidable, as the first term of \eqref{app:eq:Imn_est_best} actually appears as such (modulo a constant) in the explicit representation of $I_{m,n}$, but there, its growth is eliminated by the decay with $a$ of terms like the second one in \eqref{app:eq:Imn_est_best}, which have a higher weight in practice.
	
	Consequently, barring a more precise analysis of the constant's dependence on $m$ and $n$, the estimate can be made more efficient in some cases by directly estimating away $a\ge1$ within $I_{m,n}$ and then applying \eqref{app:eq:Imn_est_best} --- namely when
	\begin{align}
		1& \ll a \lesssim \frac{(b^2+(c+d)^2)^n}{(b^2+d^2)^n} 
	\end{align}
	if only the first term should be minimised, which is the case if $a\sim c \gg b,d$, for example. Considering both terms simultaneously, estimating $a\ge1$ is still beneficial in the following regime,
	\begin{align}
		1& \ll a \lesssim \frac{(b^2+(c+d)^2)^n}{(b^2+d^2)^n}\frac{(b^2+(c+d)^2)^{m}d^{2n-1}}{(b^2+(c+d)^2)^m d^{2n-1}+(b^2+(c+d)^2)^n c^{2m-1}}.\tag*{\qedhere}
	\end{align}
\end{remark}

\begin{proof}[Proof of \autoref{app:th:Imn_est}]
	The proof is split into several parts. First, we need to determine the partial fraction decomposition (PFD) of the integrand, which we do in \autoref{app:prop:pfd}. Since $m$ and $n$ are arbitrary, we will only be able to formulate a recursion at first. However, we can leverage this recursion into explicit generating functions for the terms appearing in the PFD, which we do in \autoref{app:prop:genfunc}. This machinery is necessary, unfortunately, since mere induction is hopelessly inadequate for the task at hand.
	
	With the help of these two tools, we are able to calculate the generating function \eqref{app:eq:genfunc_Imn_intro}
	which we prove in \autoref{app:prop:genfunc_Imn}. 
	
	In the form \eqref{app:eq:genfunc_Imn_intro}, we have already achieved (essentially) the crucial cancellation (compared to the PFD) that eliminates the ``bad'' factors from the denominator. However, what remains to be shown compared to \autoref{app:th:Imn_est} is that $c^{m,n}_{i,j}=0$ if $i\le 2(m-1) \land j\le 2(n-1)$. To gain explicit control over these coefficients, we first ``disassemble'' the function \eqref{app:eq:genfunc_Imn_intro} into its parts (by differentiation) in \autoref{app:prop:coeff_expl_long}, which yields another formula for $c^{m,n}_{i,j}$ (which is more complex, but without binomial coefficients of non-integers).
	
	Then, inserting the ``indicators'' we need, we put it back together to arrive at the formula \eqref{app:eq:genfunc_coeff} in \autoref{app:prop:genfunc_coeff}. Finally, we take apart \eqref{app:eq:genfunc_coeff} one last time in a different way that allows us to conclude that the required coefficients are actually zero in \autoref{app:prop:coeff_zero}. This will finish the proof. Finally, in \autoref{rem:conject_coeff_pos}, we mention the conjecture that, always, $c^{m,n}_{i,j}\ge0$, which, however, we have not (seriously) attempted to prove.
\end{proof}

Before we continue, we record an corollary of \autoref{app:th:Imn_est} for higher dimensions.

\begin{corollary}\label{app:cor:Imn_higher_dim}
	For $m,n\in\bbN$  and $c,d>0$, as well as vectors $\vec r,\vec s \in\bbR^k$ and invertible matrices $A,B$ such that $AB^{-1}$ is diagonalisable\footnote{The restriction that $AB^{-1}$ has to be diagonalisable is obviously artificial and can be removed in principle (although the formula would become much more complicated).}, we assume that two functions satisfy
	\begin{align}
		\abs{f(\vec x)}\lesssim \parens*{\abs*{A(\vec x+\vec r)}^2+c^2}^{-m}
		\quad \text{and} \quad
		\abs{g(\vec x)}\lesssim \parens*{\abs*{B(\vec x+\vec s)}^2+d^2}^{-n}.
	\end{align}
	Then, if $m,n> \ceil*{\frac{k}{2}}$, we have the following estimate for the convolution of $f$ and $g$,
	\begin{multline}
		\abs{[f*g](\vec t)}
		\lesssim\frac{1}{\abs{\det B}} \biggl( \frac{\norm[]{BA^{-1}}^{k}}{\parens*{|B(\vec r+\vec s +\vec t)|^2+d^2+\|BA^{-1}\|^2 c^2}^{n}} \frac{1}{c^{2m-k-1}} + \ldots \\
					\ldots+\frac{\norm[]{BA^{-1}}^{2m}}{\parens*{|B(\vec r+\vec s +\vec t)|^2+d^2+\|BA^{-1}\|^2 c^2}^{m}} \frac{1}{d^{2n-k-1}} \biggr)
	\end{multline}
	
	and --- in a dual way --- the same estimate holds after concurrently switching $\vec r\leftrightarrow\vec s$, $A\leftrightarrow B$, $c\leftrightarrow d$ and $m\leftrightarrow n$, i.e.~we can choose the minimum of the two.
	
	Specialising to $A=B=\bbI$ and $\vec r=\vec s =0$, we see that
	\begin{align}\label{eq:Imn_higher_dim}
		\abs{[f*g](\vec t)} \lesssim \frac{1}{\parens*{|\vec t|^2+c^2+d^2}^n}\frac{1}{c^{2m-k-1}} + \frac{1}{\parens*{|\vec t|^2+c^2+d^2}^m}\frac{1}{d^{2n-k-1}}.
	\end{align}
\end{corollary}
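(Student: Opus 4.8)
The plan is to reduce Corollary \ref{app:cor:Imn_higher_dim} to the one-dimensional estimate \eqref{app:eq:Imn_est} by an induction on the dimension $k$, passing through polar coordinates. First I would handle the reduction of the general matrix/vector statement to the special case $A=B=\bbI$, $\vec r=\vec s=0$. Writing $[f*g](\vec t)=\int f(\vec y)g(\vec t-\vec y)\d\vec y$ and substituting $\vec z=B(\vec y+\vec r)$ (which contributes $\abs{\det B}^{-1}$) turns the second factor into $\parens{|\vec z|^2+d^2}^{-n}$ (after absorbing the shift by $\vec r+\vec s+\vec t$) and the first into $\parens{|BA^{-1}\vec z - B(\vec r+\vec s+\vec t)|^2+c^2}^{-m}$ up to the operator-norm comparison $\abs{A(\vec y+\vec r)}\ge\|BA^{-1}\|^{-1}|B(\vec y+\vec r)|$ — here is where $AB^{-1}$ diagonalisable is used, so that $BA^{-1}$ acts as a (complex-)diagonal rescaling and the coordinate-wise estimates go through cleanly. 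After this change of variables the only remaining object is an integral of the form $\int_{\bbR^k}\parens{|M\vec z - \vec w|^2+c^2}^{-m}\parens{|\vec z|^2+d^2}^{-n}\d\vec z$ with $M=BA^{-1}$, and the factors $\|BA^{-1}\|^k$, $\|BA^{-1}\|^{2m}$ in the claimed bound are exactly what is picked up from comparing $M$ to the identity; so it suffices to prove \eqref{eq:Imn_higher_dim}.

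For \eqref{eq:Imn_higher_dim} itself the induction goes as the parenthetical hint in the proof of \autoref{prop:loc_space} suggests: split off one coordinate, say $x_1$, and first integrate in the remaining $k-1$ variables using the inductive hypothesis, then transform the surviving one-dimensional integral to polar form and apply \autoref{th:Imn_est} once more on the radial variable. Concretely, for the base case $k=1$ the statement is literally \eqref{app:eq:Imn_est} with $a=1$, $b$ replaced by the component of $\vec t$. For the inductive step, after fixing $x_1$, I would view the inner $(k-1)$-dimensional integral as a convolution-type object with shifted centres $\vec t'$ (the last $k-1$ components of $\vec t$) and with $c^2,d^2$ replaced by $c^2+(x_1-t_1)^2$ and $d^2+x_1^2$ respectively; the inductive hypothesis then bounds it by a sum of two terms each of the shape $\parens{|\vec t'|^2+c^2+d^2+x_1^2+(x_1-t_1)^2}^{-p}\cdot(\text{power of }c\text{ or }d)$ with $p\in\{m-\lceil (k-1)/2\rceil,\, n-\lceil(k-1)/2\rceil\}$ — note $m,n>\lceil k/2\rceil$ guarantees these exponents stay large enough (at least $>1$) for the next integration to converge. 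Finally one integrates in $x_1$ the resulting rational functions, each of which is (after completing the square in $x_1$) precisely of the form handled by \autoref{th:Imn_est} with $a=\sqrt2$ or similar, yielding two more terms; collecting exponents shows the net drop in the power of $c$ (resp. $d$) is $k$ rather than $1$, which is the content of the $2m-k-1$ and $2n-k-1$ exponents, while the surviving denominators combine into $\parens{|\vec t|^2+c^2+d^2}^{n}$ and $\parens{|\vec t|^2+c^2+d^2}^{m}$.

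The main obstacle I expect is bookkeeping the exponents carefully through the induction so that the ``worst'' term does not accumulate a denominator power that is too small to be integrable at the next stage: each integration step consumes roughly $\lceil 1/2\rceil$ worth of a power and also re-distributes powers between the $\parens{\cdots}^{-p}$ factor and the bare $c^{-r}$, $d^{-r}$ factors, and one must check the hypothesis $m,n>\lceil k/2\rceil$ is exactly strong enough to keep both branches above the convergence threshold at every intermediate dimension. A secondary subtlety is ensuring that when \autoref{th:Imn_est} is applied with its $b$-parameter equal to a linear function of $\vec t$, the ``$a^2b^2+a^2d^2+c^2$'' in the denominator of \eqref{eq:Imn_est} really does assemble into the clean form $|\vec t|^2+c^2+d^2$ after all coordinates are integrated — this uses that at each step the new ``$b$'' is one coordinate of $\vec t$ and the accumulated ``$d^2$''-part already contains the sum of squares of the coordinates integrated so far, so Pythagoras closes the sum. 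Everything else (the polar-coordinate Jacobian $r^{k-2}$, absorbing it into the exponents, the operator-norm comparisons) is routine once the induction is set up, so I would state the $k=1$ case as a direct citation of \autoref{th:Imn_est} and present the inductive step as one display-line estimate followed by one application of \autoref{th:Imn_est} on the radial variable.
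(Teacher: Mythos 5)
Your matrix reduction (change of variables by $B$, then the operator-norm comparison $|A\vec v|\ge\norm{BA^{-1}}^{-1}|B\vec v|$ to replace the $A$-scaling by a scalar) is essentially the move the paper makes by diagonalising $AB^{-1}$ and bounding the diagonal from below by its smallest eigenvalue in magnitude; both collapse the ``elliptic'' scaling to a single parameter $a$. The deviation is in how the $k$-dimensional integral is then evaluated. The paper does \emph{not} induct on $k$: it applies \autoref{th:Imn_est} exactly twice --- once after splitting off the single coordinate $w_1$ aligned with the shift, and once more on the radial variable after passing to polar coordinates in the remaining $k-1$ dimensions --- absorbing the Jacobian $r^{k-2}$ via $(r^2+c^2)^{(2m-1)/2}\ge(r^2+c^2)^{m-\lceil k/2\rceil}(c^2)^{\lceil k/2\rceil-k/2}$, which is where the hypothesis $m,n>\lceil k/2\rceil$ and the target exponents $2m-k-1$, $2n-k-1$ come from.

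Your induction, as written, does not close. First, the stated consequence of the inductive hypothesis is wrong: applying \eqref{eq:Imn_higher_dim} in dimension $k-1$ with $C^2=c^2+(x_1-t_1)^2$, $D^2=d^2+x_1^2$ gives denominator exponents $n$ and $m$ (not $m-\lceil(k-1)/2\rceil$, $n-\lceil(k-1)/2\rceil$), with the remaining drop carried by the bare factors $C^{-(2m-k)}$, $D^{-(2n-k)}$ --- and these still depend on $x_1$ through $(x_1-t_1)^2$ resp.\ $x_1^2$. Hence the surviving $x_1$-integral has two \emph{different} shifted quadratics (centres $t_1$ resp.\ $0$ in the bare factor, and $t_1/2$ with stretch $\sqrt2$ after completing the square in the big denominator) and a potentially half-integer exponent $(2m-k)/2$ that must be rounded to an integer at a further cost in $c$ before \autoref{th:Imn_est} is applicable; none of this is acknowledged. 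Second, your final step --- ``transform the surviving one-dimensional integral to polar form and apply \autoref{th:Imn_est} once more on the radial variable'' --- is internally inconsistent: once the $k-1$ inner dimensions are integrated away, there is nothing left to pass to polar coordinates. An induction along these lines can be rescued, but it repeats the half-integer bookkeeping $k$ times instead of the paper's twice, and the naive shortcut of bounding the $x_1$-dependent $C,D$ by $c,d$ in the bare factors gives only $(|\vec t|^2+c^2+d^2)^{-(n-1/2)}c^{-(2m-k)}$, half a power short of the claimed $(|\vec t|^2+c^2+d^2)^{-n}c^{-(2m-k-1)}$ --- which is exactly why the full force of \autoref{th:Imn_est} is needed at every step.
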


\begin{proof}
	We begin by inserting the definition, using the assumed estimates and transforming by $\vec y=B(\vec x+\vec s)$.
	\begin{align}
		\abs{[f*g](\vec t)}
		&= \abs[\bigg]{\int f(\vec t-\vec x) g(\vec x) \d \vec x}
		\lesssim \int \parens*{\abs*{A(\vec t-\vec x+\vec r)}^2+c^2}^{-m} \parens*{\abs*{B(\vec x+\vec s)}^2+d^2}^{-n} \d \vec x \label{app:eq:est_conv_def}\\
		&=\frac{1}{\abs{\det B}} \int \parens*{\abs*{A(\vec r+\vec t-B^{-1}\vec y +\vec s)}^2+c^2}^{-m} \parens*{\abs{\vec y}^2+d^2}^{-n} \d \vec y.
	\end{align}
	Decomposing $AB^{-1}=VDV^{-1}$ with $D$ being a diagonal matrix with eigenvalues sorted by descending absolute value and $V$ being the (unitary) matrix of corresponding eigenvectors, we set $\vec u:=V^{-1}A(\vec r+\vec s+ \vec t)$ and continue by transforming with $\vec z= V^{-1}\vec y$,
	\begin{align}
		\abs{[f*g](\vec t)}
		&\lesssim \frac{1}{\abs{\det B}} \int \parens*{\abs*{-VDV^{-1}\vec y+A(\vec r+\vec s+ \vec t)}^2+c^2}^{-m} \parens*{\abs{\vec y}^2+d^2}^{-n} \d \vec y\\
		&= \frac{1}{\abs{\det B}} \int \parens*{\abs*{D\vec z-\vec u}^2+c^2}^{-m} \parens*{\abs{\vec z}^2+d^2}^{-n} \d \vec z.
	\end{align}
	The problem we face now is that the above integral behaves ``elliptically'' in some sense --- and in a way we can't remove by suitable stretching --- because each component appears \emph{both} as $z_i$ and as $\lambda_i z_i$. With \eqref{app:eq:Imn_est} in mind, there are two ways out of this. On the one hand, we could shave off dimension after dimension (since the $z_i$ are decoupled, we can apply \eqref{app:eq:Imn_est} in each dimension sequentially), but this blows up the number of terms to (potentially) $2^k$, and we would ``lose'' at least half a power of either denominator in each step (or every second step, if one is careful).
	
	The second way --- which we will choose --- is to (effectively) make the matrix $D$ a multiple of the identity (thus removing the ``elliptic'' influences), by estimating it with its smallest eigenvalue (by magnitude) as follows below. In view of \autoref{rem:Imn_est_decay_a}, it is not unlikely that this might even be the more efficient estimate in many cases. A combination of the two methods is of course also possible, in fact, if $AB^{-1}$ is not diagonalisable, it is necessary to ``cut apart'' the Jordan blocks in the way described above.
	
	Each entry of $D\vec z-\vec u$ contributes a term $(\lambda_i z_i- u_i)^2=\abs{\lambda_i}^2\parens*{z_i- \frac{u_i}{\lambda_i}}^2$ to the absolute value, and we can estimate this from below (thus estimating the integral from above) by $|\lambda_d|^2(z_i-\frac{u_i}{\lambda_i})^2$. For notational ease, we set $a:=\abs{\lambda_d}$ as well as $\vec v:=D^{-1}\vec u$, and continue from above,
	\begin{align}
		\int \parens*{\abs*{D(\vec z-D^{-1}\vec u)}^2+c^2}^{-m} \parens*{\abs{\vec z}^2+d^2}^{-n} \d \vec z
		&\le \int \parens*{a^2\abs{\vec z-\vec v}^2+c^2}^{-m} \parens*{\abs{\vec z}^2+d^2}^{-n} \d \vec z.
	\end{align}
	Now we set $R:=R_{\vec v/ \abs{\vec v}}$ (compare \autoref{def:rot_Rs}) --- satisfying $R\vec v=\parens*{|\vec v|,0,\ldots}^\top$ --- and transform with $\vec w:=R\vec z$ using the invariance of the Euclidian norm under rotations,
	\begin{multline}
		\int \parens*{a^2\abs{\vec z-\vec v}^2+c^2}^{-m} \parens*{\abs{\vec z}^2+d^2}^{-n} \d \vec z\\
		=\int_{\bbR^k} \frac{1}{\parens*{a^2(w_1 - |\vec v|)^2 + a^2|\vec w'|^2 +c^2}^m} \frac{1}{\parens*{w_1^2+|\vec w'|^2+d^2}^n} \d \vec w,
	\end{multline}
	where $\vec w=\binom{w_1}{\vec w'}$, i.e.~$\vec w'$ represents the $k-1$ lower components of $\vec w$.
	
	We split off the integration in $w_1$ and apply \eqref{app:eq:Imn_est},
	\begin{align}
		\MoveEqLeft
		\int_{\bbR^{k-1}} \int_{-\infty}^{\infty } \frac{1}{\parens*{a^2(w_1 - |\vec v|)^2 + a^2|\vec w'|^2 +c^2}^m} \frac{1}{\parens*{w_1^2+|\vec w'|^2+d^2}^n} \d w_1 \d \vec w'\\
		&\!\begin{multlined}[t][\linewidth-\mathindent-2em-\multlinegap]
			\lesssim \int_{\bbR^{k-1}}  \frac{a^{2n-1}}{\parens*{a^2|\vec v|^2 + 2a^2|\vec w'|^2+c^2+a^2d^2}^n} \frac{1}{\parens*{a^2|\vec w'|^2+c^2}^{\frac{2m-1}{2}}} + \ldots\\
			\ldots + \frac{1}{\parens*{a^2|\vec v|^2 + 2a^2|\vec w'|^2+c^2+a^2d^2}^m} \frac{1}{\parens*{|\vec w'|^2+d^2}^{\frac{2n-1}{2}}} \d \vec w'
		\end{multlined}\\
		&\!\begin{multlined}[t][\linewidth-\mathindent-2em-\multlinegap]
			\le \int_{0}^\infty  \frac{a^{2n-k}}{\parens*{r^2+a^2|\vec v|^2+c^2+a^2d^2}^n} \frac{r^{k-1}}{\parens*{r^2+c^2}^{\frac{2m-1}{2}}} \d r + \ldots\\
			\ldots + \int_{0}^\infty \frac{1}{\parens*{a^2r^2+a^2|\vec v|^2+c^2+a^2d^2}^m} \frac{r^{k-1}}{\parens*{r^2+d^2}^{\frac{2n-1}{2}}} \d r
		\end{multlined}\\
		&\!\begin{multlined}[t][\linewidth-\mathindent-2em-\multlinegap]
			\le \frac{1}{2} \int_{-\infty}^\infty  \frac{a^{2n-k}}{\parens*{r^2+a^2|\vec v|^2+c^2+a^2d^2}^n} \frac{1}{\parens{r^2+c^2}^{m-\ceil{\frac{k}{2}}}} \frac{1}{(c^2)^{\ceil{\frac{k}{2}}-\frac{k}{2}}} \d r + \ldots\\
			\ldots + \frac{1}{2}\int_{-\infty}^\infty \frac{1}{\parens*{a^2r^2+a^2|\vec v|^2+c^2+a^2d^2}^m} \frac{1}{\parens{r^2+d^2}^{n-\ceil{\frac{k}{2}}}} \frac{1}{(d^2)^{\ceil{\frac{k}{2}}-\frac{k}{2}}} \d r,
		\end{multlined}
	\end{align}
	where we split the integrals and transformed the first term by $\vec x'=\frac{1}{a}\vec w'$ before changing to polar coordinates. We then extended the integral over $r$ to $-\infty$ in order to be able to apply \eqref{app:eq:Imn_est} once more,
	\begin{align}
		&\!\begin{multlined}[t][\linewidth-\mathindent-\multlinegap]
			\lesssim \frac{a^{2n-k}}{\parens*{a^2|\vec v|^2 +c^2+a^2d^2}^{m-\ceil{\frac k2}+n-\frac 12}}\frac{1}{(c^2)^{\ceil{\frac{k}{2}}-\frac{k}{2}}}  + \frac{a^{2n-k}}{\parens*{a^2|\vec v|^2 +c^2+a^2d^2}^{n}} \frac{1}{c^{2m-k-1}} + \ldots \\
			\ldots + \frac{a^{2n-2\ceil{\frac{k}{2}}-1}}{\parens*{a^2|\vec v|^2 +c^2+a^2d^2}^{n-\ceil{\frac k2}+m-\frac 12}} \frac{1}{(d^2)^{\ceil{\frac{k}{2}}-\frac{k}{2}}}  + \frac{1}{\parens*{a^2|\vec v|^2 +c^2+a^2d^2}^{m}} \frac{1}{d^{2n-k-1}}
		\end{multlined}\\
		&\lesssim \frac{a^{2n-k}}{\parens*{a^2|\vec v|^2 +c^2+a^2d^2}^{n}} \frac{1}{c^{2m-k-1}} + \frac{1}{\parens*{a^2|\vec v|^2 +c^2+a^2d^2}^{m}} \frac{1}{d^{2n-k-1}}, 
	\end{align}
	because, obviously, $a^2|\vec v|^2 +c^2+a^2d^2>c^2,a^2d^2$. Now, $a=\abs{\lambda_d}$, the smallest eigenvalue of $AB^{-1}$, corresponds to the inverse of the largest eigenvalue of $BA^{-1}$, which itself is equal to the matrix norm $\norm{BA^{-1}}$. Furthermore, $|D^{-1}\vec u|=|VD^{-1}\vec u|=|B(\vec r+\vec s +\vec t)|$. Putting everything together, we arrive at
	\begin{align}
		\abs{[f*g](\vec t)}
		&\!\begin{multlined}[t][\linewidth-\mathindent-\widthof{$\abs{[f*g](\vec t)}$}-\multlinegap]
			\lesssim \frac{1}{\abs{\det B}} \biggl( \frac{\norm[]{BA^{-1}}^{k}}{\parens*{|B(\vec r+\vec s +\vec t)|^2+d^2+\|BA^{-1}\|^2 c^2}^{n}} \frac{1}{c^{2m-k-1}} + \ldots \\
			\ldots+\frac{\norm[]{BA^{-1}}^{2m}}{\parens*{|B(\vec r+\vec s +\vec t)|^2+d^2+\|BA^{-1}\|^2 c^2}^{m}} \frac{1}{d^{2n-k-1}} \biggr).
		\end{multlined}
	\end{align}
	Depending on the quantities in question (but certainly in the case that $\norm{AB^{-1}}\ll\norm{BA^{-1}}$), we transform differently from \eqref{app:eq:est_conv_def},
	\begin{align}
		\abs{[f*g](\vec t)}
		&\lesssim \int \parens*{\abs*{A(\vec t-\vec x+\vec r)}^2+c^2}^{-m} \parens*{\abs*{B(\vec x+\vec s)}^2+d^2}^{-n} \d \vec x \\
		&=\frac{1}{\abs{\det A}} \int \parens*{\abs{\vec y}^2+c^2}^{-m} \parens*{\abs*{B(\vec t+\vec s-A^{-1}\vec y+\vec r}^2+d^2}^{-n} \d \vec y \\
		&=\frac{1}{\abs{\det A}} \int \parens*{\abs{\vec y}^2+c^2}^{-m} \parens*{\abs*{BA^{-1}\vec y - A(\vec r+ \vec s + \vec t)}^2+d^2}^{-n} \d \vec y.
	\end{align}
	Proceeding like before, this means that the convolution \emph{also} satisfies
	\begin{align}
		\abs{[f*g](\vec t)}
		&\!\begin{multlined}[t][\linewidth-\mathindent-\widthof{$\abs{[f*g](\vec t)}$}-\multlinegap]
			\lesssim \frac{1}{\abs{\det A}} \biggl( \frac{\norm[]{AB^{-1}}^{2n}}{\parens*{|A(\vec r+ \vec s + \vec t)|^2+c^2+\|AB^{-1}\|^2 d^2}^{n}} \frac{1}{c^{2m-k-1}} + \ldots \\
			\ldots+\frac{\norm[]{AB^{-1}}^{k}}{\parens*{|A(\vec r+ \vec s + \vec t)|^2+c^2+\|AB^{-1}\|^2 d^2}^{m}} \frac{1}{d^{2n-k-1}} \biggr).
		\end{multlined}
	\end{align}
	and we can choose the one that is smaller.
\end{proof}

\subsection{Some Basic Generating Function Theory}\label{ssec:genfunc}

The main idea of the approach of generating functions --- see e.g.~\cite{wilf} --- can be described as follows: take a sequence $g_n$ (recursively defined, for example) and calculate
\begin{align}
	G(x):=\ops{g_n}{x^n}:=\sum_{n\ge 0} g_n x^n,
\end{align}
where ``ops'' stands for \emph{ordinary power series} (as opposed to exponential power series, which we will not need).
If $G$ can be identified with a known function, $g_n$ can be recovered as
\begin{align}\label{app:eq:def_coeff_op}
	g_n=\frac{1}{n!}\Dn{n}[G(x)]{x}\biggr|_{x=0}=: \coeff{x^n} G(x),
\end{align}
which is often possible, even if the recursion for $g_n$ cannot be resolved by induction. The notation $[x^n]$ will henceforth denote the $n$\nth coefficient of $G$ with respect to $x$. Two properties follow immediately from the definition,
\begin{align}
	\coeff{x^n}(x^k G(x)) &= \coeff*{x^{n-k}} G(x), \label{app:eq:coeff_shift} \\
	\coeff{x^n}G(\beta x) &=\beta^n\coeff{x^n}G(x), \label{app:eq:coeff_factor}
\end{align}
where $\beta\in\bbR$, as a simple consequence of the chain rule.

The main tool to calculate $G$ are basic identities from the theory of power series, with the advantage that we can do all calculations purely formally at first, while ultimately, if the resulting $G$ turns out to be convergent in a ball of radius $R>0$ then all our formal calculations are actually justified analytically as well. This kind of freedom is especially useful if $g_n$ is itself a partial sum of the sequence $f_{n,k}$, since we can freely interchange the order of summation, i.e.
\begin{align}
	G(x):=\sum_{n\ge 0} \sum_{k=0}^n f_{n,k} x^n = \sum_{n\ge 0} \sum_{k\ge 0} \ind_{\{k\le n\}} f_{n,k} x^n = \sum_{k\ge 0} \sum_{n\ge k} f_{n,k} x^n = \sum_{k\ge 0} x^k \smash{\overbrace{\sum_{n\ge 0} f_{n-k,k} x^n}^{=F_k(x)}}.
\end{align}
If --- as will often be the case --- we can calculate $F_k(x)$ and consequently $G(x)$, we may then find $g_n=\sum_{k=0}^n f_{n,k}$ as $\coeff{x^n} G$.

We consider --- again only formally --- a function $F(x)=\ops{f_n}{x^n}$ generated by $\{f_n\}$, as well as another one generated by $\{g_n\}$, $G(x)=\ops{g_n}{x^n}$; then
\begin{align}
	\ops{f_{n+1}}{x^n}&=\frac{F(x)-F(0)}{x}, \label{app:eq:ops_shift}\\
	F(x)G(x)&=\ops[\bigg]{\sum_{k=0}^n f_n g_{n-k}}{x^n}. \label{app:eq:ops_prod}
\end{align}

Furthermore, the last essential ingredient is having the identification of as many power series as possible with known functions. But before we define the binomial coefficient for $\alpha\in\bbC$ (we will only need $\alpha\in\bbR$) and $n\in\bbN_0$,
\begin{align}\label{eq:binom_alpha}
	\binom{\alpha}{n}:=\frac{\alpha(\alpha-1)\cdot\ldots\cdot(\alpha-n+1)}{n!}= \frac{\Gamma(\alpha+1)}{\Gamma(n+1)\Gamma(\alpha-n+1)},
\end{align}
where the last equality holds in the limit $\alpha'\to\alpha$ if one of the $\Gamma$-terms has a singularity at $\alpha$.
By reversing the signs and order of the factors in the numerator, we see that the following identity holds
\begin{align}\label{app:eq:binom_alpha}
	\binom{\alpha}{n}=(-1)^n\binom{n-\alpha-1}{n}.
\end{align}
We only list the power series identities we will need (\cite[Sec 2.5]{wilf}):
\begin{alignat}{2}
	\frac{1}{1-x} & =\sum_{n\ge 0} x^n, && \label{app:eq:ops_geom} \\
	\frac{1}{(1-x)^{k+1}} & =\sum_{n\ge 0} \binom{n+k}{n} x^n, && k\in\bbN_0, \label{app:eq:ops_geom_k} \\
	(1+x)^\alpha & =\sum_{n\ge 0} \binom{\alpha}{n} x^n, &\quad& \alpha\in\bbR, \label{app:eq:ops_geom_alpha} \\
	\frac{1}{\sqrt{1-4x}} & =\sum_{n\ge 0}\binom{2n}{n} x^n, && \label{app:eq:ops_sqrt} \\
	\parens[\bigg]{\frac{1-\sqrt{1-4x}}{2x}}^k & =\sum_{n\ge 0} \frac{k}{n+k}\binom{2n+k-1}{n} x^n, && k\in\bbN. \label{app:eq:ops_sqrt_k}
\end{alignat}
Note that, due to \eqref{app:eq:binom_alpha}, \eqref{app:eq:ops_geom_alpha} generalises both \eqref{app:eq:ops_geom} and \eqref{app:eq:ops_geom_k}.

\subsection{Partial Fraction Decomposition}

Before we can think about the integration in $I_{m,n}$, we first need to figure out the partial fraction decomposition (PFD) of the integrand.

\begin{proposition}\label{app:prop:pfd}
	Using
	\begin{align}\label{app:eq:pfd_Delta}
		\Delta:=\delta_+\delta_-:=((c+ad)^2+a^2 b^2)((c-ad)^2+a^2 b^2)
	\end{align}
	it holds that
	\begin{align}
	\begin{split}\label{app:eq:pfd}
		P_{m,n}&:=\frac{1}{\parens{a^2(x-b)^2+c^2}^m} \frac{1}{\parens{x^2+d^2}^n} = \\
		&\phantom{:}= \sum_{k=1}^{m} \frac{a^{2n}}{\Delta^{n+k-1}} \frac{r^n_k + a^2b\,x\,s^n_k}{(a^2(x-b)^2+c^2)^{m-k+1}} + \sum_{\ell=1}^{n} \frac{a^{2(\ell-1)}}{\Delta^{m+\ell-1}} \frac{t^\ell_m +a^2b\,x\,u^\ell_m}{(x^2+d^2)^{n-\ell+1}},
	\end{split}
	\end{align}
	where, for $k,\ell\in\bbN$, the coefficients can be calculated recursively:
	\mathtoolsset{showonlyrefs=false}
	\begin{subequations}\label{app:eqs:pfd_recursion}
	\begin{align}
		r^\ell_k&=\sum_{k'=1}^k r^1_{k-k'+1}r^{\ell-1}_{k'} + a^2b^2 (\Delta u^1_{k-k'}- (a^2b^2+c^2) u^1_{k-k'+1}) u^{\ell-1}_{k'} \label{app:eq:pfd_recursion:r} \\
		t^\ell_k&=\sum_{k'=1}^k t^1_{k-k'+1}r^{\ell-1}_{k'} + a^4b^2d^2u^1_{k-k'+1} u^{\ell-1}_{k'} \label{app:eq:pfd_recursion:t} \\
		u^\ell_k&=\sum_{k'=1}^k  u^{1}_{k-k'+1} r^{\ell-1}_{k'}-t^1_{k-k'+1}u^{\ell-1}_{k'}  = -s^\ell_k \label{app:eq:pfd_recursion:u}
	\end{align}
	\end{subequations}
	The calculation first needs to resolve $\ell\to\ell-1\to\ldots\to 1$ back to initial values
	\begin{subequations}\label{app:eqs:pfd_init}
	\begin{align}
		r^1_k&=r^1_1 t^{1}_{k-1} + a^2b^2(a^2b^2+c^2) u^1_1 u^1_{k-1}, \label{app:eq:pfd_init:r}\\
		t^1_k&=t^1_1 t^{1}_{k-1} - a^4b^2d^2 u^1_1 u^1_{k-1}, \label{app:eq:pfd_init:t}\\
		u^1_k&=u^1_1 t^{1}_{k-1} + t^1_1 u^1_{k-1} = -s^1_k, \label{app:eq:pfd_init:u}
	\end{align}
	\end{subequations}
	which themselves can be resolved by recurring back (in $k$) to
	\begin{subequations}\label{app:eqs:pfd_init_init}
	\begin{align}
		r^1_1&=3a^2b^2+a^2d^2-c^2, & && r^1_0&=-1, \label{app:eq:pfd_init_init:r}\\
		t^1_1&=\phantom{3}a^2b^2-a^2d^2+c^2, &\text{resp.}&& t^1_0&=1, \label{app:eq:pfd_init_init:t}\\
		u^1_1&=2 = -s^1_1, & && u^1_0&=0=s^1_0. \label{app:eq:pfd_init_init:u}
	\end{align}
	\end{subequations}\mathtoolsset{showonlyrefs=true}%
	Furthermore, we have the following important relation between the coefficients
	\begin{align}\label{app:eq:r_plus_t_eq_u}
		r^\ell_k+t^\ell_k= 2a^2b^2 u^\ell_k.
	\end{align}
\end{proposition}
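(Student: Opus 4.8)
```latex
\textbf{Plan of proof for Proposition~\ref{app:prop:pfd}.}
The statement is a long list of interlocking claims: the shape \eqref{app:eq:pfd} of the PFD, the recursions \eqref{app:eqs:pfd_recursion}, their resolution down through \eqref{app:eqs:pfd_init} to the seeds \eqref{app:eqs:pfd_init_init}, and finally the linear relation \eqref{app:eq:r_plus_t_eq_u}. The plan is to treat this as an induction on $m+n$, peeling off one factor of the denominator at a time and tracking how the numerators transform.

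First I would fix the \emph{form} of the decomposition. Over $\bbR$ the denominator factors into two quadratics, $(a^2(x-b)^2+c^2)^m$ and $(x^2+d^2)^n$, which are coprime; hence there exist unique polynomials $A(x)$ of degree $<2m$ and $B(x)$ of degree $<2n$ with $P_{m,n}=A/(a^2(x-b)^2+c^2)^m + B/(x^2+d^2)^n$, and these further decompose into the stated sums over $k$ and $\ell$ with numerators that are \emph{affine} in $x$ (degree $\le1$): this is because the defining recursion, as I will set it up, only ever produces degree-$\le1$ numerators. So the only real content is identifying the coefficients. I would write $r^n_k,s^n_k$ for the constant and linear parts of the numerator of the $(m-k+1)$-th power on the $a^2(x-b)^2+c^2$ side, and $t^\ell_m,u^\ell_m$ likewise on the $x^2+d^2$ side, with the $\Delta$-powers and $a$-powers as factored out in \eqref{app:eq:pfd}, where $\Delta=\delta_+\delta_-$ is the resultant-type quantity \eqref{app:eq:pfd_Delta} measuring how far apart the two pairs of complex poles sit.

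The inductive engine: to pass from $(m,n-1)$ to $(m,n)$, multiply the known decomposition of $P_{m,n-1}$ by $1/(x^2+d^2)$ and re-expand. The term $B_{n-1}(x)/(x^2+d^2)^{n-1}$ simply becomes $B_{n-1}(x)/(x^2+d^2)^n$, contributing directly to the $\ell=1$ part. Each term $A_k(x)/(a^2(x-b)^2+c^2)^{m-k+1}$ must be multiplied by $1/(x^2+d^2)$ and re-split; this is a $2\times 2$ partial-fraction step whose elementary solution — evaluating $A_k$ at the poles $x=\pm id$ and $x=b\pm ic/a$ — produces precisely the bilinear combinations appearing in \eqref{app:eqs:pfd_recursion}, with the cross-terms of the form $r^1_{k-k'+1}r^{\ell-1}_{k'}$ etc., and the $a^2b^2$, $a^4b^2d^2$ prefactors arising from expanding $(x-b)$ against $x$ and collecting the constant-vs-linear parts. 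The convolution structure over $k'$ reflects that lowering $\ell$ by one shifts all the $k$-indices. Iterating this down to $\ell=1$ gives \eqref{app:eqs:pfd_init}; iterating the $\ell=1$ recursion down in $k$ to $k=1$ gives \eqref{app:eqs:pfd_init_init}, which I would simply check by hand from the base case $P_{1,1}$: decompose $\frac{1}{(a^2(x-b)^2+c^2)(x^2+d^2)}$ directly, read off $r^1_1=3a^2b^2+a^2d^2-c^2$, $t^1_1=a^2b^2-a^2d^2+c^2$, $u^1_1=2$ (and the degenerate $r^1_0=-1$, $t^1_0=1$, $u^1_0=0$ as the ``empty'' starting values that make the recursion's indexing consistent). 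The symmetry $u=-s$ is built into the sign convention for the linear coefficient on the two sides.

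The relation \eqref{app:eq:r_plus_t_eq_u}, $r^\ell_k+t^\ell_k=2a^2b^2u^\ell_k$, I would prove by a parallel induction: it holds for the seeds ($r^1_1+t^1_1=4a^2b^2=2a^2b^2\cdot 2=2a^2b^2u^1_1$, and trivially at index $0$), and the recursions \eqref{app:eq:pfd_recursion:r}, \eqref{app:eq:pfd_recursion:t}, \eqref{app:eq:pfd_recursion:u} are compatible with it — adding \eqref{app:eq:pfd_recursion:r} and \eqref{app:eq:pfd_recursion:t} and substituting the inductive hypothesis $r^{\ell-1}_{k'}=2a^2b^2u^{\ell-1}_{k'}-t^{\ell-1}_{k'}$ into the right side, after using the identity $\Delta = ((a^2b^2+c^2)-a^2d^2)^2 + \ldots$ relating $\Delta$ to $a^2b^2+c^2$ and $a^2d^2$, collapses to $2a^2b^2$ times the right side of \eqref{app:eq:pfd_recursion:u}. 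Conceptually this relation just says the residue of $P_{m,n}$ is real in an appropriate sense, or equivalently encodes that $x\mapsto -x$ combined with $b\mapsto -b$ is a symmetry.

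\textbf{Main obstacle.} The bookkeeping is the whole difficulty: there is no deep idea, but the convolution recursions in \eqref{app:eqs:pfd_recursion} must be matched \emph{exactly} — signs, the placement of $a^2b^2$ versus $a^4b^2d^2$, the off-by-one shifts $k-k'+1$ versus $k-k'$ — and getting the $2\times 2$ re-split right while keeping the $\Delta$-power normalization from \eqref{app:eq:pfd} consistent is error-prone. I would do the base case $P_{1,1}$ in full, verify one step of the recursion ($P_{1,2}$ from $P_{1,1}$ and $P_{2,1}$ from $P_{1,1}$) completely symbolically, and only then assert the general pattern; the relation \eqref{app:eq:r_plus_t_eq_u} is then the useful consistency check that the signs were not botched. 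The truly hard analytic work — showing the miraculous cancellation of the ``bad'' $\Delta$-factors when one actually integrates — is deferred to \autoref{app:prop:genfunc_Imn} and is not needed here.
```
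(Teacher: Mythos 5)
Your plan is essentially the same as the paper's: verify the $P_{1,1}$ seed, then induct by multiplying by one more factor and re-expanding, tracking the coefficients through convolution-type recursions, with the relation \eqref{app:eq:r_plus_t_eq_u} proved by a parallel induction that serves as a sign check. The one place where your description glosses over the actual technical structure is the re-splitting step. You describe multiplying $A_k(x)/(a^2(x-b)^2+c^2)^{m-k+1}$ by $1/(x^2+d^2)$ as a ``$2\times 2$ partial-fraction step'' solvable by evaluating $A_k$ at the four poles. But that object has a pole of order $m-k+1$ at $x=b\pm\ii c/a$, so a single residue evaluation does \emph{not} suffice; you need a Taylor expansion to order $m-k$ there, or equivalently a recursive re-split. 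The paper handles exactly this by running \emph{two nested} inductions: an inner induction on $m$ at $n=1$, which establishes the decomposition of $\frac{1}{(a^2(x-b)^2+c^2)^j(x^2+d^2)}$ for all $j$ (and simultaneously derives the update rules \eqref{app:eqs:pfd_init}), and then an outer induction on $n$ which uses the $P_{j,1}$ formulas as subroutines to re-split each term and produce the convolution recursions \eqref{app:eqs:pfd_recursion}. Your plan, as written, conflates these into a single $(m,n-1)\to(m,n)$ step. This is fixable and not a conceptual gap --- you could equally replace the paper's inner induction by an explicit residue/Taylor computation at the repeated pole, or organize the argument as you suggest but invoking the already-established $n=1$ case whenever you re-split --- but your current description of the re-split as a one-shot pole evaluation is not accurate, and that is precisely the place where the convolution structure over $k'$ originates. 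One small remark in your favor: appealing to uniqueness of the PFD to pin down the \emph{form} \eqref{app:eq:pfd} before computing the coefficients is a clean framing the paper does not bother to make; the paper just constructs the decomposition directly by induction.
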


\begin{proof}
	First off, we calculate the PFD for the case $m,n=1$,
	\begin{align}
		\frac{1}{a^2(x-b)^2+c^2} \frac{1}{x^2+d^2} = \frac{a^{2}}{\Delta} \frac{3a^2 b^2+a^2 d^2-c^2 -2a^2 b\,x}{a^2(x-b)^2+c^2} + \frac{1}{\Delta} \frac{a^2 b^2-a^2 d^2+c^2 +2a^2 b\,x}{x^2+d^2}
	\end{align}
	from which we can read off the left half of \eqref{app:eqs:pfd_init_init}. Next, assuming \eqref{app:eq:pfd} for $m$ and $n=1$ we use induction in $m$,
	\begin{align}\label{app:eq:pfd_indm_hyp}
		P_{m+1,1}
		&= \sum_{k=1}^{m} \frac{a^{2}}{\Delta^{k}} \frac{r^1_k +a^2b\,x\,s^1_k}{(a^2 (x-b)^2+c^2)^{m-k+2}} +  \frac{1}{\Delta^{m}} \frac{t^1_m +a^2b\,x\,u^1_m}{(a^2y^2+c^2)(x^2+d^2)}.
	\end{align}
	We continue by splitting the second term with the help of \eqref{app:eqs:pfd_init_init},
	\begin{align}
	\MoveEqLeft
		\frac{1}{\Delta^{m}} \frac{t^1_m +a^2b\,x\,u^1_m}{(a^2(x-b)^2+c^2)(x^2+d^2)} \\
		&= \frac{a^{2}}{\Delta^{m+1}} \frac{(r^1_1 +a^2b\,x\,s^1_1)(t^1_m +a^2b\,x\,u^1_m)}{a^2 (x-b)^2+c^2} + \frac{1}{\Delta^{m+1}} \frac{(t^1_1 +a^2b\,x\,u^1_1)(t^1_m +a^2b\,x\,u^1_m)}{x^2+d^2}\\
		&\!\begin{multlined}[t][\linewidth-\mathindent-2em-\multlinegap]
			=\frac{a^{2}}{\Delta^{m+1}} \frac{r^1_1 t^1_m +a^2b\,x(r^1_1 u^1_m+s^1_1 t^1_m)+a^4 b^2 x^2 s^1_1 u^1_m}{a^2 (x-b)^2+c^2} +\ldots \\
			\ldots+ \frac{1}{\Delta^{m+1}} \frac{t^1_1 t^1_m +a^2b\,x(t^1_1 u^1_m+u^1_1 t^1_m)+a^4 b^2 x^2 u^1_1 u^1_m)}{x^2+d^2}
		\end{multlined}
	\end{align}
	Considering \eqref{app:eq:pfd_init_init:u}, we collect the $x^2$-terms in the numerators,
	\begin{align}
		\frac{2a^4 b^2 u^1_m}{\Delta^{m+1}} \parens[\bigg]{\frac{x^2}{x^2+d^2} - \frac{a^2 x^2}{a^2 (x-b)^2 + c^2}} =\frac{2a^4 b^2 u^1_m}{\Delta^{m+1}} \parens[\bigg]{1 - \frac{d^2}{x^2+d^2} - 1 + \frac{a^2b^2+c^2-2a^2b\,x}{a^2 (x-b)^2 + c^2}}.
	\end{align}
	Coming back to \eqref{app:eq:pfd_indm_hyp}, we set the abbreviation $f_1(x):= a^2 (x-b)^2 + c^2$ for the first denominator, and compute that
	\begin{multline}
		P_{m+1,1} = \sum_{k=1}^{m} \frac{a^{2n}}{\Delta^{k}} \frac{r^1_k +a^2b\,x\,s^1_k}{f_1(x)^{m-k+2}} +   \frac{\smash{\overbrace{t^1_1 t^1_m - a^4 b^2 d^2 u^1_1 u^1_m}^{t^1_{m+1}}}+ a^2 b\,x (\smash{\overbrace{t^1_1 u^1_m + u^1_1 t^1_m}^{u^1_{m+1}}})}{\Delta^{m+1}(x^2+d^2)}  + \ldots\\
		\ldots+\frac{a^{2}}{\Delta^{m+1}f_1(x)} \parens*{{\underbrace{r^1_1 t^1_m\! + a^2 b^2 (a^2b^2\!+c^2) u^1_1 u^1_m}_{r^1_{m+1}}}\!+ a^2 b\,x ({\underbrace{r^1_1 u^1_m\! + s^1_1 t^1_m-4a^2b^2 u^1_m}_{=-u^1_1 t^1_m-t^1_1 u^1_m=s^1_{m+1}}})},
	\end{multline}
	because $r^1_1-4a^2 b^2 =-t^1_1$ and $s^1_1=-u^1_1$. This coincides with the recurrences in \eqref{app:eqs:pfd_init}, as claimed.
	
	As claimed in \eqref{app:eq:r_plus_t_eq_u}, the identity $r^1_1+t^1_1= 4a^2 b^2$ can be extended (here first for $\ell=1$),
	\begin{align}
	\begin{split}\label{app:eq:r_plus_t_eq_u:one}
		r^1_k+t^1_k
		&=(r^1_1+t^1_1) t^1_{k-1} +2a^2 b^2 (a^2 b^2-a^2d^2+c^2) u^1_{k-1}\\
		&= 4a^2b^2 t^1_{k-1}+2a^2 b^2 t^1_1 u^1_{k-1}=2a^2b^2u^1_k,
	\end{split}
	\end{align}
	which will help cut short some computations below.
	
	Now we come to the induction in $n$. The case for arbitrary $m$ and $n=1$ has been proved above, which covers the base case. Under the induction hypothesis that \eqref{app:eq:pfd} holds for $m,n\in\bbN$, we have
	\begin{align}\label{app:eq:pfd_indn_hyp}
		P_{m,n+1}
		= \sum_{k=1}^{m} \frac{1}{\Delta^{n+k-1}} \frac{a^{2n}(r^n_k + a^2b\,x\,s^n_k)}{f_1(x)^{m-k+1}}\frac{1}{x^2+d^2} + \sum_{\ell=1}^{n} \frac{1}{\Delta^{m+\ell-1}} \frac{a^{2(\ell-1)}(t^\ell_m +a^2b\,x\,u^\ell_m)}{(x^2+d^2)^{n-\ell+2}}.
	\end{align}
	Clearly, we can now apply our previous knowledge (the case with $n=1$), i.e.
	\begin{multline}
		\frac{1}{(a^2(x-b)^2+c^2)^{m-k+1}}\frac{1}{x^2+d^2}\\
		=\sum_{k'=1}^{m-k+1} \frac{1}{\Delta^{k'}} \frac{a^{2}(r^1_{k'} +a^2b\,x\,s^1_{k'})}{(a^2 (x-b)^2+c^2)^{m-k+1-k'+1}} +  \frac{1}{\Delta^{m-k+1}} \frac{t^1_{m-k+1} +a^2b\,x\,u^1_{m-k+1}}{x^2+d^2}
	\end{multline}
	Inserting this into \eqref{app:eq:pfd_indn_hyp} yields
	\begin{align}
	\MoveEqLeft
		\sum_{k=1}^{m} \frac{a^{2n}}{\Delta^{n+k-1}} \frac{r^n_k + a^2b\,x\,s^n_k}{(a^2(x-b)^2+c^2)^{m-k+1}}\frac{1}{x^2+d^2} \\
		&\!\begin{multlined}[t][\linewidth-\mathindent-2em-\multlinegap]
			=\sum_{k=1}^{m}\sum_{k'=1}^{m-k+1} \frac{a^{2(n+1)}}{\Delta^{n+k+k'-1}} \frac{r^1_{k'} r^n_k + a^2b\,x(r^1_{k'} s^n_k+s^1_{k'}r^n_k)+a^4 b^2 x^2 s^1_{k'}s^n_k}{(a^2(x-b)^2+c^2)^{m-k-k'+2}} + \ldots \\
			\ldots + \sum_{k=1}^{m} \frac{a^{2n}}{\Delta^{m+n}} \frac{t^1_{m-k+1} r^n_k +a^2b\,x(t^1_{m-k+1}s^n_k + u^1_{m-k+1} r^n_k)+a^4 b^2 x^2 u^1_{m-k+1}s^n_k}{x^2+d^2}.
		\end{multlined}
	\end{align}
	We replace the undesired quadratic part $a^4b^2x^4$ depending on the denominator of the term we're dealing with,
	\begin{align}
		a^4 b^2 x^2&=a^2 b^2 \parens*{a^2 (x-b)^2 + c^2}\parens[\Big]{1-\frac{a^2b^2+c^2-2a^2b\,x}{a^2 (x-b)^2 +c^2}},\\
		a^4 b^2 x^2&=a^4 b^2 \parens*{x^2+ d^2}\parens[\Big]{1-\frac{d^2}{x^2+d^2}},
	\end{align}
	and change the summation indices as follows (demonstrated for only one term):
	\begin{align}
		\sum_{k=1}^{m}\sum_{k'=1}^{m-k+1} r^1_{k'} r^n_k
		&=\sum_{k=0}^{m-1}\sum_{k'=1}^{m-k} r^1_{k'} r^n_{k+1}
		=\sum_{k=0}^{m-1}\sum_{k'=0}^{m-k-1}  r^1_{k'+1} r^n_{k+1}
		=\sum_{k''=0}^{m-1}\sum_{k+k'=k''}  r^1_{k+1} r^n_{k'+1}\\
		&=\sum_{k''=0}^{m-1}\sum_{k'=0}^{k''} r^1_{k''-k'+1} r^n_{k'+1}
		=\sum_{k=1}^{m}\sum_{k'=0}^{k-1} r^1_{k-k'} r^n_{k'+1}
		=\sum_{k=1}^{m}\sum_{k'=1}^{k}  r^1_{k-k'+1} r^n_{k'}
	\end{align}
	This leads to
	\begin{align}
	\MoveEqLeft
		\sum_{k=1}^{m} \frac{a^{2n}}{\Delta^{n+k-1}} \frac{r^n_k + a^2b\,x\,s^n_k}{(a^2(x-b)^2+c^2)^{m-k+1}}\frac{1}{x^2+d^2} \\
		&= \sum_{k=1}^{m} \frac{a^{2(n+1)}}{\Delta^{n+k}} \biggl(\frac{\sum_{k'=1}^{k} r^1_{k-k'+1} r^n_{k'}-a^2 b^2 (a^2b^2+c^2) s^1_{k-k'+1}s^n_{k'}}{(a^2(x-b)^2+c^2)^{m-k+1}} + \ldots\\
		&\phantomrel\ldots + \frac{a^2b\,x\sum_{k'=1}^{k} s^1_{k-k'+1}r^n_{k'}+(r^1_{k-k'+1}-2a^2b^2 u^1_{k-k'+1})s^n_{k'}}{(a^2(x-b)^2+c^2)^{m-k+1}}  + \ldots \\
		&\!\begin{multlined}[t][\linewidth-\mathindent-2em-\multlinegap]
			\phantomrel\ldots + \frac{\sum_{k'=1}^{k} a^2 b^2 s^1_{k-k'+1}s^n_{k'}}{(a^2(x-b)^2+c^2)^{m-k}} \biggr)  + \sum_{k'=1}^{m} \frac{a^{2n}}{\Delta^{m+n}} \biggl(\frac{t^1_{m-k'+1} r^n_{k'} + a^4 b^2 d^2 s^1_{m-k'+1}s^n_{k'}}{x^2+d^2}+ \ldots \\
			\phantomrel\ldots + \frac{a^2b\,x(t^1_{m-k'+1}s^n_{k'} + u^1_{m-k'+1} r^n_{k'})}{x^2+d^2} +  a^4 b^2 u^1_{m-k'+1}s^n_{k'} \biggr).
		\end{multlined}
	\end{align}
	The terms with $x^2+d^2$ in the denominator exactly match the claimed $t$- and $u$-terms from \eqref{app:eqs:pfd_recursion} for $k=m$ and $\ell=n+1$, and thus we only have to deal with the remaining terms having powers of $a^2(x-b)^2+c^2$ in the denominator (as well as the very last term). To harmonise those powers, we perform an index shift for the third term (except the last summand, which will cancel with the very last term above), to arrive at
	\begin{align}
		\sum_{k=2}^{m} \frac{a^{2(n+1)}}{\Delta^{n+k-1}} \frac{\sum_{k'=1}^{k-1} a^2 b^2 s^1_{k-k'}s^n_{k'}}{(a^2(x-b)^2+c^2)^{m-k+1}} + \frac{a^{2(n+1)}}{\Delta^{m+n}} \sum_{k'=1}^{m} a^2 b^2 s^n_{k'} (\smash{\overbrace{s^1_{m-k'+1}+u^1_{m-k'+1}}^{=0}}).
	\end{align}
	Here, we can extend the summation to $\sum_{k=1}^m\sum_{k'=1}^k$, because all additional terms are zero.
	
	Furthermore, due to \eqref{app:eq:r_plus_t_eq_u:one}, we can simplify the second term (containing all terms with the factor $a^2 b\,x$) to
	\begin{align}
		\sum_{k=1}^{m} \sum_{k'=1}^{k} \frac{a^{2(n+1)}}{\Delta^{n+k}} \frac{a^2b\,x(t^1_{k-k'+1} u^n_{k'}-u^1_{k-k'+1}r^n_{k'})}{(a^2(x-b)^2+c^2)^{m-k+1}}.
	\end{align}
	Therefore,
	\begin{align}
		&\!\begin{multlined}[t][\linewidth-\mathindent-\multlinegap]
			P_{m,n+1}= \sum_{k=1}^{m} \frac{a^{2(n+1)}}{\Delta^{n+k}} \biggl( \frac{\sum_{k'=1}^{k} r^1_{k-k'+1} r^n_{k'}+a^2 b^2 u^n_{k'}(\Delta u^1_{k-k'} - (a^2b^2+c^2) u^1_{k-k'+1})}{(a^2(x-b)^2+c^2)^{m-k+1}} +\ldots \\
			\ldots+ \frac{a^2b\,x\sum_{k'=1}^{k} t^1_{k-k'+1} u^n_{k'}-u^1_{k-k'+1}r^n_{k'}}{(a^2(x-b)^2+c^2)^{m-k+1}} \biggr) + \sum_{\ell=1}^{n+1} \frac{a^{2(\ell-1)}}{\Delta^{m+\ell-1}} \frac{t^\ell_m +a^2b\,x\,u^\ell_m}{(x^2+d^2)^{n-\ell+2}},
		\end{multlined}
	\end{align}
	which now also matches the $r$- and $s$-terms in \eqref{app:eqs:pfd_recursion}, and in particular also $u^{n+1}_m=-s^{n+1}_m$.
	
	Finally, we want to extend \eqref{app:eq:r_plus_t_eq_u:one} to \eqref{app:eq:r_plus_t_eq_u}, which can be done as follows,
	\begin{align}
		r^\ell_k+t^\ell_k
		&=\sum_{k'=1}^k (r^1_{k-k'+1}+t^1_{k-k'+1})r^{\ell-1}_{k'} + a^2b^2 (\Delta u^1_{k-k'}- t^1_1 {\underbrace{u^1_{k-k'+1}}_{=\mathrlap{u^1_1 t^{1}_{k-k'} + t^1_1 u^1_{k-k'}}}}) u^{\ell-1}_{k'}\\[-0.245cm]
		&=\sum_{k'=1}^k 2a^2b^2 u^1_{k-k'+1} r^{\ell-1}_{k'} + a^2b^2 \parens*{\underbrace{(\overbrace{\Delta-(t^1_1)^2}^{=4a^4b^2d^2}) u^1_{k-k'}- 2t^1_1 t^1_{k-k'}}_{=-2t^1_{k-k'+1}}} u^{\ell-1}_{k'}=2a^2b^2 u^\ell_k.\tag*{\qedhere}
	\end{align}
\end{proof}

\begin{remark}
	We note that the recursion \eqref{app:eqs:pfd_init} is formulated in terms of $t$- and $u$-terms because we chose to induce in $m$ first --- inducting over $n$ first would lead to a different update rule \eqref{app:eqs:pfd_init}! 
\end{remark}

\subsection{Generating Functions for the PFD}

The following result about coupled recursions answers an obvious question to ask in this context, and as such, is certainly known already. However, since the solution is quite trivial, we have not searched for a reference, but rather proved it ourselves.

\begin{proposition}\label{app:prop:genfunc_2d_rec}
	For a recursion
	\begin{align}
		\binom{g_k}{h_k}=M \binom{g_{k-1}}{h_{k-1}} = M^{k-i} \binom{g_i}{h_i} \qquad \text{with} \qquad M=\begin{pmatrix}
			m_{1,1}& m_{1,2}\\ m_{2,1} & m_{2,2}
		\end{pmatrix},
	\end{align}
	with arbitrary $M$ and initial value $\binom{g_i}{h_i}$, where $i\ge 0$, the generating functions $G(y)=\ops*{g_k}{y^k}$ and $H(y)=\ops*{h_k}{y^k}$ are given by
	\begin{align}\label{app:eq:genfunc_2d_rec}
		G(y)=y^i \frac{(m_{1,2}h_i-m_{2,2}g_i)y+g_i}{\det(M) y^2 - \trace(M) y +1}, \qquad H(y)=y^i \frac{(m_{2,1}g_i- m_{1,1}h_i)y+h_i}{\det(M) y^2 - \trace(M) y +1}.
	\end{align}
\end{proposition}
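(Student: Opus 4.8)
The plan is to set up the two generating functions and exploit the linear recursion directly. First I would fix the matrix $M$ and the starting index $i\ge 0$, and write $G(y)=\sum_{k\ge 0} g_k y^k$, $H(y)=\sum_{k\ge 0} h_k y^k$. Since the recursion $\binom{g_k}{h_k}=M\binom{g_{k-1}}{h_{k-1}}$ is only asserted for $k\ge i+1$ (with $\binom{g_i}{h_i}$ the initial value), and presumably $g_k=h_k=0$ for $k<i$, I would multiply the two scalar recursions $g_k=m_{1,1}g_{k-1}+m_{1,2}h_{k-1}$ and $h_k=m_{2,1}g_{k-1}+m_{2,2}h_{k-1}$ by $y^k$ and sum over $k\ge i+1$. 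Using the shift identity (the power-series analogue of \eqref{app:eq:ops_shift}, i.e. $\sum_{k\ge i+1} c_{k-1}y^k = y\sum_{k\ge i}c_k y^k$), the right-hand sides become $y\,m_{1,1}G(y)+y\,m_{1,2}H(y)$ and $y\,m_{2,1}G(y)+y\,m_{2,2}H(y)$, while the left-hand sides are $G(y)-g_i y^i$ and $H(y)-h_i y^i$ respectively (the only surviving low-order term is the one at $k=i$, which is not produced by the recursion).

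This yields the linear $2\times 2$ system
\begin{align}
	(1-m_{1,1}y)\,G(y) - m_{1,2}y\,H(y) &= g_i y^i,\\
	-m_{2,1}y\,G(y) + (1-m_{2,2}y)\,H(y) &= h_i y^i.
\end{align}
The determinant of the coefficient matrix is $(1-m_{1,1}y)(1-m_{2,2}y)-m_{1,2}m_{2,1}y^2 = 1-(m_{1,1}+m_{2,2})y+(m_{1,1}m_{2,2}-m_{1,2}m_{2,1})y^2 = 1-\trace(M)y+\det(M)y^2$, which is exactly the common denominator appearing in \eqref{app:eq:genfunc_2d_rec}. Then I would solve by Cramer's rule: $G(y)$ has numerator $g_i y^i (1-m_{2,2}y) + m_{1,2}y\cdot h_i y^i = y^i\bigl(g_i + (m_{1,2}h_i - m_{2,2}g_i)y\bigr)$, and symmetrically $H(y)$ has numerator $y^i\bigl(h_i + (m_{2,1}g_i - m_{1,1}h_i)y\bigr)$. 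These match the claimed formulas verbatim.

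There is essentially no hard obstacle here; the only points requiring a little care are bookkeeping ones. I would make explicit the convention $g_k=h_k=0$ for $0\le k<i$ (so that the sums start correctly and the ``$-g_i y^i$'' term is accounted for), and I would note that all manipulations are purely formal in $\bbR[[y]]$, so no convergence argument is needed — though one may remark that if $M$ has spectral radius $<1/|y|$ the series converge and the identities hold analytically as well, in the spirit of the discussion in \autoref{ssec:genfunc}. If one wanted an alternative route, the closed form $\binom{g_k}{h_k}=M^{k-i}\binom{g_i}{h_i}$ together with $\sum_{k\ge 0}M^k y^k = (I-My)^{-1}$ gives $\binom{G(y)}{H(y)} = y^i (I-My)^{-1}\binom{g_i}{h_i}$, and inverting the $2\times 2$ matrix $I-My$ reproduces the same expressions; I would probably mention this as a one-line sanity check rather than the main argument.
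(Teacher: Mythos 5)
Your proof is correct, and it takes a genuinely different route from the paper's. The paper's proof diagonalises $M = Q\,\diag(\lambda_1,\lambda_2)\,Q^{-1}$, sums the two geometric series $\sum_k \lambda_\alpha^k y^k$, and then reconstructs $G,H$ by conjugating back --- but this requires $M$ to be diagonalisable, so the paper has to append a separate limiting/continuity argument (approximating a non-diagonalisable $M$ by diagonalisable ones and passing to the limit in a suitable norm on a small ball in $y$) to cover the general case. Your approach multiplies the scalar recursions by $y^k$, sums, and solves the resulting $2\times 2$ linear system over $\bbR[[y]]$ by Cramer's rule. This is cleaner and strictly more robust: it works for \emph{all} $M$ in one stroke, needs no spectral decomposition, no case distinction, and no perturbation argument, since the determinant $1-\trace(M)y+\det(M)y^2$ is a unit in $\bbR[[y]]$ regardless of $M$. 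Both proofs buy the same explicit denominator; the paper's spectral route makes the location of the poles at $y=1/\lambda_\alpha$ visible, while yours makes manifest that the formula is a polynomial identity in the entries $m_{\alpha,\beta}$ with no degeneracies. Your side remark that $\binom{G}{H} = y^i(I-My)^{-1}\binom{g_i}{h_i}$ is a nice compact restatement of the same computation. One small bookkeeping point you handle correctly but should keep explicit: the convention $g_k=h_k=0$ for $0\le k<i$ is what makes both the left-hand truncation $\sum_{k\ge i+1}g_ky^k = G(y)-g_iy^i$ and the shift $\sum_{k\ge i+1}g_{k-1}y^k = yG(y)$ come out cleanly.
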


\begin{proof}
	Let us begin by assuming that $M$ is diagonalisable. Then there exists an invertible matrix $Q$ such that
	\begin{align}
		M=Q\begin{pmatrix}
			\lambda_1 & 0 \\ 0 & \lambda_2
		\end{pmatrix} Q^{-1}.
	\end{align}
	Setting $\binom{g'_i}{h'_i}:=Q^{-1}\binom{g_i}{h_i}$, we can calculate --- because $g_k=0$ for $k<i$ --- that
	\begin{align}
		\binom{G(y)}{H(y)}
		&=\sum_{k\ge 0} \binom{g_{k}}{h_k} y^k 
		= \sum_{k\ge i} Q\begin{pmatrix}
			\lambda_1^{k-i} & 0 \\ 0 & \lambda_2^{k-i}
		\end{pmatrix} \binom{g'_i}{h'_i} y^k
		= y^i Q\sum_{k\ge 0} \binom{g'_i\lambda_1^k y^k}{h'_i\lambda_2^k y^k} = y^i Q \binom{\frac{g'_i}{1-\lambda_1 y}}{\frac{h'_i}{1-\lambda_2 y}}.
	\end{align}
	Since $\lambda_1 \lambda_2=\det(M)$ and $\lambda_1+\lambda_2=\trace(M)$, this implies
	\begin{align}
		\binom{G(y)}{H(y)}&
		= \frac{y^i}{\det(M) y^2 - \trace(M) y +1} Q \binom{g'_i(1-\lambda_2 y)}{h'_i(1-\lambda_1 y)}.
	\end{align}
	By explicitly calculating the eigenvalues and eigenvectors in terms of the $m_{i,j}$, one can check that (note the switched eigenvalues!)
	\begin{align}
		Q\begin{pmatrix}
			\lambda_2 & 0 \\ 0 & \lambda_1
		\end{pmatrix} Q^{-1} = \begin{pmatrix}
			\phantom{-}m_{2,2} & -m_{1,2} \\ -m_{2,1} & \phantom{-}m_{1,1}
		\end{pmatrix},
	\end{align}
	and therefore
	\begin{align}
		Q \binom{g'_i(1-\lambda_2 y)}{h'_i(1-\lambda_1 y)}=Q Q^{-1}\binom{g_i}{h_i} -
		y\begin{pmatrix}
			\phantom{-}m_{2,2} & -m_{1,2} \\ -m_{2,1} & \phantom{-}m_{1,1}
		\end{pmatrix} \binom{g_i}{h_i}
		= \binom{g_i+y(m_{1,2}h_i-m_{2,2}g_i)}{h_i+y(m_{2,1}g_i-m_{1,1}h_i)},
	\end{align}
	which finishes the proof for diagonalisable $M$.
	
	If $M$ is not diagonalisable, we can choose a sequence $M_n$ of diagonalisable matrices which converge to $M$ in an appropriate norm (e.g.~the Frobenius-norm), since, in the space $\bbC^{d\times d}$, non-diagonalisable matrices are a subset of the hypersurface where the discriminant of the characteristic polynomial is zero.
	
	For $G_n(y)$ and $H_n(y)$ corresponding to $M_n$, we see that they depend continuously on the coefficients $(m_{i,j})_n$ of $M_n$ --- taking an appropriate norm with respect to $y$ over a sufficiently small ball around the origin (such that the denominator is always positive) --- \emph{with no blow-up} for $M_n$ converging to non-diagonalisable $M$, and therefore $G_n(y)$ and $H_n(y)$ converge to the $G(y),\,H(y)$ formed with $M$. This finishes the proof.
\end{proof}

\begin{proposition}\label{app:prop:genfunc}
	Letting $q:=a^2b^2+a^2d^2-c^2$, we can calculate the generating functions for the coefficients of the PFD of $P_{m,n}$,
	\mathtoolsset{showonlyrefs=false}
	\begin{subequations}\label{app:eqs:pfd_genfunc}
	\begin{alignat}{3}
		R(y,z)&:=\sum_{\ell\ge 1} R^\ell(y) z^\ell&&:= \sum_{\ell\ge 1} \sum_{k\ge 1} r^\ell_k y^k z^\ell &&=yz\frac{\phantom{-}\Delta(y-z) +r^1_1}{\Delta (y-z)^2 - 2t^1_1 y -2q z+1},\hspace{-0.5cm} \label{app:eq:pfd_genfunc:r} \\
		T(y,z)&:=\sum_{\ell\ge 1} T^\ell(y) z^\ell&&:= \sum_{\ell\ge 1} \sum_{k\ge 1} t^\ell_k y^k z^\ell &&=yz\frac{-\Delta(y-z)+t^1_1}{\Delta (y-z)^2 - 2t^1_1 y -2q z+1},\hspace{-0.5cm} \label{app:eq:pfd_genfunc:t} \\
		U(y,z)&:=\sum_{\ell\ge 1} U^\ell(y) z^\ell&&:= \sum_{\ell\ge 1} \sum_{k\ge 1} u^\ell_k y^k z^\ell &&=yz\frac{2}{\Delta (y-z)^2 - 2t^1_1 y -2q z+1},\hspace{-0.5cm} \label{app:eq:pfd_genfunc:u}
	\end{alignat}
	\end{subequations}\mathtoolsset{showonlyrefs=true}%
	and $U(y,z)=-S(y,z)$. Then,
	\begin{align}
		r^\ell_k = \coeff*{y^k z^\ell} R(y,z), \qquad t^\ell_k = \coeff*{y^k z^\ell} T(y,z), \qquad u^\ell_k = \coeff*{y^k z^\ell} U(y,z).
	\end{align}
\end{proposition}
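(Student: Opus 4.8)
The plan is to exploit the two‑layer structure of the data furnished by \autoref{app:prop:pfd}. The initial identities \eqref{app:eqs:pfd_init}--\eqref{app:eqs:pfd_init_init} determine $(t^1_k,u^1_k)_{k\ge0}$ — and then $r^1_k$ — by a linear recursion \emph{in $k$}, whereas the recursions \eqref{app:eqs:pfd_recursion} pass from level $\ell-1$ to level $\ell$ by Cauchy products in $k$. Both layers are two‑dimensional linear recursions, so the strategy is to invoke \autoref{app:prop:genfunc_2d_rec} twice: once in the variable $y$ to get $R^1(y),T^1(y),U^1(y)$, and once more in $z$ (treating $y$ as a parameter — the proof of \autoref{app:prop:genfunc_2d_rec} is purely algebraic, so this is harmless) to assemble the full bivariate generating functions.

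\textbf{Base layer.} With $u^1_1=2$, the recursions \eqref{app:eq:pfd_init:t}--\eqref{app:eq:pfd_init:u} read $\binom{t^1_k}{u^1_k}=M\binom{t^1_{k-1}}{u^1_{k-1}}$, $M=\left(\begin{smallmatrix}t^1_1 & -2a^4b^2d^2\\ 2 & t^1_1\end{smallmatrix}\right)$, with initial value $\binom{t^1_0}{u^1_0}=\binom10$ from \eqref{app:eq:pfd_init_init:t}--\eqref{app:eq:pfd_init_init:u}. Here $\trace M=2t^1_1$ and $\det M=(t^1_1)^2+4a^4b^2d^2=\Delta$ (the identity $\Delta-(t^1_1)^2=4a^4b^2d^2$ is already in the proof of \autoref{app:prop:pfd}), so \autoref{app:prop:genfunc_2d_rec} gives $\sum_{k\ge0}t^1_ky^k=(1-t^1_1y)/P_1$ and $\sum_{k\ge0}u^1_ky^k=2y/P_1$ with $P_1:=\Delta y^2-2t^1_1y+1$. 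Stripping the constant term of the former and using $u^1_0=0$ gives $T^1(y)=y(t^1_1-\Delta y)/P_1$ and $U^1(y)=2y/P_1$; plugging these into \eqref{app:eq:pfd_init:r} gives $R^1(y)=y(r^1_1+\Delta y)/P_1$, where one invokes the elementary identity $\Delta=4a^2b^2(a^2b^2+c^2)-r^1_1t^1_1$ — obtained by expanding \eqref{app:eq:pfd_Delta} against \eqref{app:eqs:pfd_init_init}, equivalently $\Delta=(a^2b^2+a^2d^2+c^2)^2-4a^2c^2d^2$.

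\textbf{Inductive layer.} Using that $\sum_{k'=1}^{k}a_{k-k'+1}b_{k'}$ is the $(k{+}1)$st, and $\sum_{k'=1}^{k}a_{k-k'}b_{k'}$ the $k$th, coefficient of $A(y)B(y)$ — and that all products occurring start at order $y^2$ — the recursions \eqref{app:eq:pfd_recursion:r}, \eqref{app:eq:pfd_recursion:u} turn into, for $\ell\ge2$,
\begin{align*}
\binom{R^\ell(y)}{U^\ell(y)}=\frac1y\begin{pmatrix}R^1(y) & a^2b^2\bigl(\Delta y-(a^2b^2+c^2)\bigr)U^1(y)\\ U^1(y) & -T^1(y)\end{pmatrix}\binom{R^{\ell-1}(y)}{U^{\ell-1}(y)},
\end{align*}
a two‑dimensional recursion in $\ell$ with initial value $\binom{R^1}{U^1}$ at $\ell=1$; \autoref{app:prop:genfunc_2d_rec} then yields $R(y,z)$ and $U(y,z)$, and a fortunate cancellation — the correction term $m_{2,1}R^1-m_{1,1}U^1=\tfrac1y(U^1R^1-R^1U^1)$ vanishes — makes $U(y,z)$ come out especially cleanly. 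Finally $T(y,z)=zT^1(y)+\tfrac zy\bigl(T^1(y)R(y,z)+a^4b^2d^2\,U^1(y)U(y,z)\bigr)$ from \eqref{app:eq:pfd_recursion:t}, while $U=-S$ is immediate since $u^\ell_k=-s^\ell_k$ throughout \eqref{app:eqs:pfd_recursion}--\eqref{app:eqs:pfd_init_init}.

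The hardest part will be the closing algebra: after substituting the explicit $R^1,T^1,U^1$ into the output of \autoref{app:prop:genfunc_2d_rec} and clearing the $1/y$ factors, one must verify that $\det(M)z^2-\trace(M)z+1$ collapses to $\bigl(\Delta(y-z)^2-2t^1_1y-2qz+1\bigr)/P_1$ and the numerators to $yz(\Delta(y-z)+r^1_1)$, $yz(t^1_1-\Delta(y-z))$, $2yz$, respectively. This rests entirely on the parameter identities $r^1_1-t^1_1=2q$, $\;q+t^1_1=2a^2b^2$, $\;\Delta-(t^1_1)^2=4a^4b^2d^2$ and $\;\Delta=4a^2b^2(a^2b^2+c^2)-r^1_1t^1_1$, each checkable by direct expansion of \eqref{app:eq:pfd_Delta} and \eqref{app:eqs:pfd_init_init}. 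Granted these, the three closed forms \eqref{app:eqs:pfd_genfunc} drop out, and $r^\ell_k=[y^kz^\ell]R$, $t^\ell_k=[y^kz^\ell]T$, $u^\ell_k=[y^kz^\ell]U$ is then just the definition of the coefficient operator.
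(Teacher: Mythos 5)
Your proposal is correct and mirrors the paper's proof exactly: two applications of \autoref{app:prop:genfunc_2d_rec}, once in $y$ to obtain $R^1,T^1,U^1$ and once in $z$ (with $y$ as a parameter) to assemble $R,T,U$, via the same matrix $N$, the same $\det(N)=\Delta/P_1$ and $\trace(N)=2(\Delta y+q)/P_1$, and the same cancellation $m_{2,1}R^1-m_{1,1}U^1=0$ in the numerator of $U(y,z)$. The only cosmetic deviations are that you compute $R^1$ and $T(y,z)$ directly from \eqref{app:eq:pfd_init:r} and \eqref{app:eq:pfd_recursion:t} (needing the extra — but correct — identity $\Delta=4a^2b^2(a^2b^2+c^2)-r^1_1t^1_1$) whereas the paper shortcuts both through $r^\ell_k+t^\ell_k=2a^2b^2u^\ell_k$ of \eqref{app:eq:r_plus_t_eq_u}, and that you seed the base-layer matrix recursion at $k=0$ and strip the constant term instead of starting at $k=1$ as the paper does.
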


\begin{proof}
	We begin by considering $T^1$ and $U^1$ --- if we write \eqref{app:eq:pfd_init:t} and \eqref{app:eq:pfd_init:u} in matrix form, we see that
	\begin{align}
		\binom{t^1_{k}}{u^1_k}=M\binom{t^1_{k-1}}{u^1_{k-1}} = M^{k-1} \binom{t^1_1}{u^1_1}
		\qquad \text{with} \qquad
		M:= \begin{pmatrix}
			t^1_1 & -2a^4 b^2d^2 \\ 2 & t^1_1
		\end{pmatrix}.
	\end{align}
	Even though going back to initial values $\binom{r^1_0}{u^1_0}=\binom{1}{0}$ would be possible, we omit it on purpose, because the case for $m=0$ is not of interest to us (compare the definition of $I_{m,n}$).
	
	Fortunately, it turns out that we know the determinant of $M$ already from somewhere,
	\begin{align}
		\det(M)=\Delta, \qquad \trace(M)=2t^1_1,
	\end{align}
	and therefore, by \eqref{app:eq:genfunc_2d_rec}, we see that
	\begin{align}
		T^1(y)=y\frac{-\parens*{{\overbrace{4a^4b^2d^2+(t^1_1)^2}^{=\Delta}}} y+t^1_1}{\Delta y^2 -2t^1_1 y+1}, \qquad U^1(y)=\frac{2y}{\Delta y^2 -2t^1_1 y+1} 
	\end{align}
	Since \eqref{app:eq:r_plus_t_eq_u} implies that $R^1(y)+T^1(y)=2a^2b^2 U^1(y)$, we can easily calculate
	\begin{align}
		R^1(y)=y\frac{\Delta y+r^1_1}{\Delta y^2 -2t^1_1 y+1}.
	\end{align}
	
	Now we come to the full recursion, recalling that $R^{\ell}(y)=\sum_{k\ge 1} r^\ell_k y^k$ (and correspondingly for $U$, $T$) --- which effectively means that we have set $r^\ell_0$ to zero for the function $R^\ell$, resp.~$R$.
	Starting from \eqref{app:eq:pfd_recursion:r} and applying \eqref{app:eq:ops_shift} --- which is now easier due to the choice regarding $r^\ell_0$ --- it follows that
	\begin{align}
		\frac{R^\ell(y)}{y}
		&= \sum_{k\ge 0} r^\ell_{k+1} y^{k}
		=\sum_{k\ge 0}  y^{k} \sum_{k'=1}^{k+1} r^1_{k-k'+2}r^{\ell-1}_{k'} + a^2b^2 (\Delta u^1_{k-k'+1}- (a^2b^2+c^2) u^1_{k-k'+2}) u^{\ell-1}_{k'}.
	\end{align}
	We treat the terms separately, using \eqref{app:eq:ops_prod}, to compute
	\begin{align}
		\sum_{k\ge 0} y^k  \sum_{k'=1}^{k+1} r^1_{k-k'+2}r^{\ell-1}_{k'}
		&= \sum_{k\ge 0} y^k  \sum_{k'=0}^{k} r^1_{k-k'+1}r^{\ell-1}_{k'+1}
		= \parens[\bigg]{\sum_{k\ge 0}r^1_{k+1}y^k}\parens[\bigg]{\sum_{k\ge 0}r^{\ell-1}_{k+1}y^k}
		= \frac{R^1(y)}{y} \frac{R^{\ell-1}(y)}{y}.
	\end{align}
	Applying same procedure to the other terms, we see that, due to the choice to omit $r^\ell_0$ (and because $u^\ell_0$ is zero anyway),
	\begin{align}
		R^\ell(y)=\frac{\Delta y+r^1_1}{\Delta y^2 -2t^1_1 y+1} R^{\ell-1}(y)+2a^2b^2\frac{\Delta y-a^2b^2-c^2}{\Delta y^2 -2t^1_1 y+1} U^{\ell-1}(y).
	\end{align}
	is a well-defined recursion for $R^\ell$ with initial value $\ell=1$. For the $U$-recursion, we proceed in the same way to arrive at
	\begin{align}
		U^\ell(y)&=\frac{2}{\Delta y^2 -2t^1_1 y+1} R^{\ell-1}(y)+\frac{\Delta y - t^1_1}{\Delta y^2 -2t^1_1 y+1}U^{\ell-1}(y).
	\end{align}
	
	This allows us to again use \autoref{app:prop:genfunc_2d_rec} with $i=1$ and $z$ instead of $y$, by writing
	\begin{align}
		\binom{R^\ell(y)}{U^\ell(y)}&=\parens[\Bigg]{{\underbrace{\frac{1}{\Delta y^2 -2t^1_1 y+1}\begin{pmatrix} \Delta y+r^1_1 & 2a^2b^2(\Delta y-a^2b^2-c^2) \\ 2 & \Delta y - t^1_1 \end{pmatrix}}_{=:N}}}^{\ell-1} \binom{R^1(y)}{U^1(y)}.
	\end{align}
	Again, some factors in the determinant cancel,
	\begin{align}
		\det(N)=\frac{\Delta}{\Delta y^2 -2t^1_1 y+1}, \qquad \trace(N)=2\frac{\Delta y+\smash{\overbrace{a^2b^2+a^2d^2-c^2}^{=q}}}{\Delta y^2 -2t^1_1 y+1},
	\end{align}
	and we see that
	\begin{align}
		U(y,z)
		&=z\frac{\Delta y^2 -2t^1_1 y+1}{\Delta z^2-2(\Delta y+q)z+\Delta y^2 -2t^1_1 y+1} \frac{(\smash{\overbrace{N_{2,1}(\Delta y+r^1_1) -2N_{1,1}}^{=0}})yz+2y}{\Delta y^2 -2t^1_1 y+1}\\
		&=\frac{2yz}{\Delta (y-z)^2 - 2t^1_1 y -2q z+1}, \label{app:eq:pfd_genfunc:U} \\
		R(y,z)
		&=z\frac{(2 N_{1,2} -N_{2,2}(\Delta y+r^1_1)) yz+(\Delta y+r^1_1)y}{\Delta z^2-2(\Delta y+q)z+\Delta y^2 -2t^1_1 y+1}
		=yz\frac{\Delta(y-z) +r^1_1}{\Delta (y-z)^2 - 2t^1_1 y -2q z+1}, \label{app:eq:pfd_genfunc:R}
	\end{align}
	because $2 N_{1,2} -N_{2,2}(\Delta y+r^1_1)=-\Delta$.
	
	To conclude, we can use the fact that \eqref{app:eq:r_plus_t_eq_u} implies $R(y,z)+T(y,z)=2a^2b^2 U(y,z)$, which implies
	\begin{align}
		T(y,z)=yz\frac{-\Delta(y-z)+t^1_1}{\Delta (y-z)^2 - 2t^1_1 y -2q z+1}.\tag*{\qedhere}
	\end{align}
\end{proof}

\subsection{Generating Function for \texorpdfstring{$I_{m,n}$}{Imn}}

We begin by noting that for values at half-integers, the Gamma function can be evaluated as follows ($m\in\bbN_0$)
\begin{align}\label{app:eq:gamma_half_int}
	\Gamma\parens[\Big]{\frac 12+m}=\frac{(2m)!}{4^m m!}\sqrt{\pi}, \qquad \Gamma\parens[\Big]{\frac 12-m}=\frac{(-4)^m m!}{(2m)!}\sqrt{\pi},
\end{align}
which can be shown by induction using the functional equation $\Gamma(x+1)=x\Gamma(x)$ and the well-known fact that $\Gamma\parens*{\frac 12}=\sqrt{\pi}$.

This allows us to explicitly calculate the constants of the integral over one of the terms in the PFD, transforming first with $y=\frac ac (x-b)$, using the fact that the resulting function is even, and finally transforming with $t=\frac{1}{y^2+1}$,
\begin{align}
	\int_{-\infty}^{\infty} \frac{1}{(a^2 (x-b)^2+c^2)^m} \d x
	&= \frac{2}{c^{2m}}\int_{0}^{\infty} \frac{1}{(y^2+1)^m} \frac{c}{a} \d y= \frac{2}{a\,c^{2m-1}}\int_{1}^{0} t^m \frac{-1}{2\sqrt{t}^3\sqrt{1-t}} \d t \\
	&=\frac{1}{a\,c^{2m-1}}\int_{0}^{1} t^{m-\frac 12 -1} (1-t)^{\frac 12-1} \d t=\frac{1}{a\,c^{2m-1}} B\parens[\Big]{m-\frac 12,\frac 12} \\
	&= \frac{1}{a\,c^{2m-1}} \frac{\Gamma(m-\frac 12)\Gamma(\frac 12)}{\Gamma(m)} = \frac{\pi}{a\,c^{2m-1}} \frac{\binom{2(m-1)}{m-1}}{4^{m-1}}, \label{app:eq:single_factor_expl_int}
\end{align}
using the definition of the Beta function as well as \eqref{app:eq:gamma_half_int}.

\begin{proposition}\label{app:prop:genfunc_Imn}
	The generating function for $I_{m,n}$ is
	\begin{align}
		\sum_{m\ge 1}\sum_{n\ge 1} I_{m,n} y^m z^n 
		&=\frac{\pi yz}{\sqrt{c^2-\smash{y}}\sqrt{d^2-z}} \frac{\sqrt{c^2-\smash{y}}+a\sqrt{d^2- z}}{(\sqrt{c^2-\smash{y}}+a\sqrt{d^2- z})^2+a^2b^2}.
	\end{align}
	In particular, for $m,n\in\bbN$, we have
	\begin{align}\label{app:eq:Imn_as_coeff}
		I_{m,n} =\coeff*{y^{m-1} z^{n-1}} \frac{\pi}{\sqrt{c^2-\smash{y}}\sqrt{d^2-z}} \frac{\sqrt{c^2-\smash{y}}+a\sqrt{d^2- z}}{(\sqrt{c^2-\smash{y}}+a\sqrt{d^2- z})^2+a^2b^2}.
	\end{align}
\end{proposition}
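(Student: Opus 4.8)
The plan is to integrate the partial fraction decomposition \eqref{app:eq:pfd} of \autoref{app:prop:pfd} over $\bbR$, evaluate the resulting one-variable integrals via \eqref{app:eq:single_factor_expl_int}, and then identify $\sum_{m,n\ge1}I_{m,n}y^mz^n$ as a product of the generating functions of \autoref{app:prop:genfunc} with the elementary square-root series \eqref{app:eq:ops_sqrt}; a purely algebraic rationalisation collapses this to the asserted closed form, after which \eqref{app:eq:Imn_as_coeff} is immediate from \eqref{app:eq:def_coeff_op} and \eqref{app:eq:coeff_shift} (the right-hand side carries an explicit factor $yz$). Carrying out the first step: in the first sum of \eqref{app:eq:pfd} one writes $x=(x-b)+b$, so the odd $(x-b)$-part drops out and the numerator contributes the factor $r^n_k+a^2b^2 s^n_k$, which by \eqref{app:eq:r_plus_t_eq_u} together with $s^\ell_k=-u^\ell_k$ equals $\tfrac12(r^n_k-t^n_k)$; in the second sum the monomial $a^2b\,x$ is odd and contributes nothing, leaving $t^\ell_m$. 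Feeding the surviving integrals through \eqref{app:eq:single_factor_expl_int} (and its $a=1$, $b=0$ specialisation) produces central binomial coefficients, so that
\begin{align*}
	I_{m,n}={}&\frac{\pi}{2a}\sum_{k=1}^{m}\frac{a^{2n}(r^n_k-t^n_k)}{\Delta^{n+k-1}}\,\frac{1}{c^{2(m-k)+1}}\,\frac{\binom{2(m-k)}{m-k}}{4^{m-k}}\\
	&{}+\pi\sum_{\ell=1}^{n}\frac{a^{2(\ell-1)}t^\ell_m}{\Delta^{m+\ell-1}}\,\frac{1}{d^{2(n-\ell)+1}}\,\frac{\binom{2(n-\ell)}{n-\ell}}{4^{n-\ell}}.
\end{align*}

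Then I would multiply by $y^mz^n$ and sum. Each of the two inner sums is a Cauchy product: setting $j=m-k$ (resp.\ $j=n-\ell$), the convolution factor $\sum_{j\ge0}\binom{2j}{j}4^{-j}c^{-2j-1}y^{j}$ sums to $(c^2-y)^{-1/2}$ by \eqref{app:eq:ops_sqrt} (resp.\ to $(d^2-z)^{-1/2}$), while, after splitting off $\Delta^{-(n+k-1)}=\Delta\cdot\Delta^{-k}\Delta^{-n}$, the remaining ``diagonal'' double sum is $\Delta\,(R-T)\!\bigl(\tfrac{y}{\Delta},\tfrac{a^2z}{\Delta}\bigr)$ (resp.\ $\tfrac{\Delta}{a^2}\,T\!\bigl(\tfrac{y}{\Delta},\tfrac{a^2z}{\Delta}\bigr)$) by \autoref{app:prop:genfunc}. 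Using $r^1_1-t^1_1=2q$ (from the initial values \eqref{app:eqs:pfd_init_init}), the numerator of $R-T$ collapses to $2yz(\Delta(y-z)+q)$ (combine \eqref{app:eq:pfd_genfunc:r} and \eqref{app:eq:pfd_genfunc:t}), so that after the rescaling both pieces share the common denominator $D_0:=(y-a^2z)^2-2t^1_1y-2qa^2z+\Delta$, and one obtains
\[
	\sum_{m,n\ge1}I_{m,n}y^mz^n=\frac{\pi yz}{D_0}\left(\frac{a\,(y-a^2z+q)}{\sqrt{c^2-y}}+\frac{t^1_1-y+a^2z}{\sqrt{d^2-z}}\right).
\]

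The final step is the rationalisation. Writing $P:=\sqrt{c^2-y}$ and $Q:=\sqrt{d^2-z}$ (so $y=c^2-P^2$ and $a^2z=a^2d^2-a^2Q^2$), one verifies the two identities $y-a^2z+q=(aQ-P)(aQ+P)+a^2b^2$ and $t^1_1-y+a^2z=a^2b^2-(aQ-P)(aQ+P)$; multiplying the bracket above by $PQ$ and substituting, the numerator simplifies to $(P+aQ)\bigl[(aQ-P)^2+a^2b^2\bigr]$. It then suffices to check the polynomial identity $D_0=\bigl[(aQ-P)^2+a^2b^2\bigr]\bigl[(aQ+P)^2+a^2b^2\bigr]$, which I would do by matching the coefficients of $1$, $y$, $z$ and $(y-a^2z)^2$: the $y$- and $z$-coefficients reproduce the definitions of $t^1_1$ and $q$, and the constant term is exactly $\Delta$ written out as $\delta_+\delta_-$ in \eqref{app:eq:pfd_Delta}. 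Cancelling the common factor $(P+aQ)$ then leaves $\frac{\pi yz}{PQ}\cdot\frac{P+aQ}{(P+aQ)^2+a^2b^2}$, which is the claimed generating function; \eqref{app:eq:Imn_as_coeff} follows by reading off the coefficient of $y^{m-1}z^{n-1}$ via \eqref{app:eq:coeff_shift}.

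The conceptual content is entirely contained in \autoref{app:prop:pfd} and \autoref{app:prop:genfunc}, so the main obstacle is bookkeeping: keeping the substitutions $y\mapsto y/\Delta$, $z\mapsto a^2z/\Delta$ consistent with the $\Delta$-powers and the $a^{2n}$, $a^{2(\ell-1)}$ prefactors so that the two summands genuinely land on the single common denominator $D_0$, and then the degree-two polynomial identity for $D_0$, whose constant term forces one to recognise $\Delta$ precisely as the product $\delta_+\delta_-$ from \eqref{app:eq:pfd_Delta}. No new estimates or ideas beyond these are required.
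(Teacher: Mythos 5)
Your proposal is correct and follows essentially the same route as the paper: integrate the partial fraction decomposition term by term via \eqref{app:eq:single_factor_expl_int}, recognise the resulting double sums as Cauchy products with $(c^2-y)^{-1/2}$ and $(d^2-z)^{-1/2}$ and with the rescaled generating functions of \autoref{app:prop:genfunc}, and rationalise in $P=\sqrt{c^2-y}$, $Q=\sqrt{d^2-z}$; the bookkeeping differs only in that you rescale by $y/\Delta$, $a^2z/\Delta$ directly and work with $\tfrac12(r^n_k-t^n_k)$ rather than the paper's $Q_{m,n}$ intermediate, which is entirely equivalent. One point that your argument glosses over, and which the paper is careful about: for $k=m$ and $\ell=n$ the ``odd parts'' $\smash{\tfrac{x}{a^2x^2+c^2}}$ and $\smash{\tfrac{x}{x^2+d^2}}$ are not individually integrable, so one cannot simply say they ``drop out'' term by term --- the paper instead groups precisely these two borderline terms, computes $\int_{-X_-}^{X_+}$ explicitly, and checks the independent limits $X_\pm\to\infty$ exist; your shortcut is salvaged by the absolute convergence of $I_{m,n}$ together with the observation that the symmetric (and shifted-symmetric) limits of both odd pieces vanish, but this justification should be stated rather than left implicit. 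Finally, a harmless slip: the common factor you cancel at the very end is $(aQ-P)^2+a^2b^2$, not $(P+aQ)$; the resulting formula is of course right.
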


\begin{proof}
	We begin by inserting \eqref{app:eq:pfd},
	\begin{align}
		I_{m,n}&=\int_{-\infty}^\infty P_{m,n} \d x
		=\int_{-\infty}^\infty \sum_{k=1}^{m} \frac{a^{2n}}{\Delta^{n+k-1}} \frac{r^n_k + a^2b\,x\,s^n_k}{(a^2(x-b)^2+c^2)^{m-k+1}} + \sum_{\ell=1}^{n} \frac{a^{2(\ell-1)}}{\Delta^{m+\ell-1}} \frac{t^\ell_m +a^2b\,x\,u^\ell_m}{(x^2+d^2)^{n-\ell+1}} \d x.
	\end{align}
	The $u$-terms vanish because the integral is an odd function, and for the $s$-terms --- after shifting $x$ accordingly for the first sum --- the same happens. However, we need to pay attention to the terms with $k=m$ and $\ell=n$, because they are not integrable in $x$ by themselves, but we will see that together, they are.
	\begin{multline}\label{app:eq:int_PFD_split}
		I_{m,n}
		=\int_{-\infty}^\infty \sum_{k=1}^{m-1} \frac{a^{2n}}{\Delta^{n+k-1}} \frac{r^n_k + a^2b^2s^n_k}{(a^2x^2+c^2)^{m-k+1}} + \sum_{\ell=1}^{n-1} \frac{a^{2(\ell-1)}}{\Delta^{m+\ell-1}} \frac{t^\ell_m}{(x^2+d^2)^{n-\ell+1}} + \ldots\\
		\ldots+ \frac{a^{2(n-1)}}{\Delta^{n+m-1}} \parens[\bigg]{\frac{r^n_m}{a^2x^2+c^2}+\frac{t^n_m}{x^2+d^2}+a^2 s^n_m \parens[\bigg]{\frac{a^2 b(x+b)}{a^2x^2+c^2}-\frac{b\,x}{x^2+d^2}} } \d x.
	\end{multline}
	We begin by treating the last term, first calculating the definite integral for arbitrary $0<X_-,X_+<\infty$, splitting and transforming appropriately,
	\begin{align}
	\MoveEqLeft
		\int_{-X_-}^{X_+} \frac{a^2 b(x+b)}{a^2x^2+c^2}-\frac{b\,x}{x^2+d^2} \d x\\
		&=\frac{b}{2}\int_{-X_-}^{X_+} \frac{2a^2 x}{a^2x^2+c^2} \d x + \int_{-X_-}^{X_+}\frac{a^2 b^2}{a^2x^2+c^2} \d x-\frac{b}{2}\int_{-X_-}^{X_+} \frac{2x}{x^2+d^2} \d x \\
		&=\frac{b}{2}\parens[\bigg]{\int_{0}^{a^2X_+^2}-\int_{0}^{a^2X_-^2}} \frac{1}{u+c^2} \d u + \frac{a\, b^2}{c}  \int_{-\frac{a}{c}X_-}^{\frac{a}{c}X_+}  \frac{1}{u^2+1} \d u-\frac{b}{2}\parens[\bigg]{\int_{0}^{X_+^2}-\int_{0}^{X_-^2}} \frac{1}{u+d^2} \d u\\
		&=\frac{b}{2}\log\parens[\Big]{\frac{a^2X_+^2+c^2}{a^2X_-^2+c^2}} + \frac{a\,b^2}{c}\parens[\Big]{\arctan\parens[\Big]{\frac{a}{c}X_+}-\arctan\parens[\Big]{-\frac{a}{c}X_-}\!\?} -\frac{b}{2}\log\parens[\Big]{\frac{X_+^2+d^2}{X_-^2+d^2}} \\
		&=\frac{b}{2}\parens[\Big]{\log\parens[\Big]{\frac{a^2X_+^2+c^2}{X_+^2+d^2}}-\log\parens[\Big]{\frac{a^2X_- ^2+c^2}{X_-^2+d^2}}\!\?} + \frac{a\,b^2}{c}\parens[\Big]{\arctan\parens[\Big]{\frac{a}{c}X_+ }+\arctan\parens[\Big]{\frac{a}{c}X_-}\!\?}.
	\end{align}
	We can see that the integral does not only exist as a principal value, but converges to a finite number in $X_-$ and $X_+$ separately, and therefore,
	\begin{align}
		\lim_{X_-\to\infty} \lim_{X_+\to\infty}
		\int_{-X_-}^{X_+} \!\frac{a^2 b(x+b)}{a^2x^2+c^2}-\frac{b\,x}{x^2+d^2} \d x
		= \frac{b}{2}\parens*{\log(a^2)-\log(a^2)} +\frac{a\,b^2}{c}\parens[\Big]{\frac{\pi}{2}+\frac{\pi}{2}} = \frac{\pi a\,b^2}{c}.
	\end{align}
	Continuing from \eqref{app:eq:int_PFD_split}, we apply \eqref{app:eq:single_factor_expl_int} and see that the terms we have just treated separately actually conform to the pattern exhibited by the other terms --- i.e.~we can extend the summation to $k=m$ and $\ell=n$ again.
	\begin{align}
		I_{m,n}
		&\!\begin{multlined}[t][\linewidth-\mathindent-\widthof{$I_{m,n}$}-\multlinegap]
			=\sum_{k=1}^{m}  \frac{1}{\Delta^{n+k-1}} \frac{a^{2n-1}}{c^{2(m-k)+1}} \frac{\pi}{4^{m-k}} \binom{2(m-k)}{m-k} \parens*{r^n_k + a^2b^2s^n_k} + \ldots\\
			\ldots+ \sum_{\ell=1}^{n} \frac{1}{\Delta^{m+\ell-1}} \frac{a^{2(\ell-1)}}{d^{2(n-\ell)+1}} \frac{\pi}{4^{n-\ell}} \binom{2(n-\ell)}{n-\ell} t^\ell_m
		\end{multlined}\\
		&\!\begin{multlined}[t][\linewidth-\mathindent-\widthof{$I_{m,n}$}-\multlinegap]
			=\frac{\pi}{\Delta^{m+n-1}c^{2m-1}d^{2n-1}}\biggl(\sum_{k=1}^{m}  \frac{\Delta^{m-k}  c^{2(k-1)} (ad)^{2n-1}\binom{2(m-k)}{m-k}}{4^{m-k}} \parens*{r^n_k + a^2b^2s^n_k} + \ldots\\
			\ldots+\sum_{\ell=1}^{n}  \frac{\Delta^{n-\ell} c^{2m-1}  (ad)^{2(\ell-1)}}{4^{n-\ell}} \binom{2(n-\ell)}{n-\ell} t^\ell_m \biggr)
		\end{multlined}\\
		&=: \frac{\pi}{\Delta^{m+n-1}c^{2m-1}d^{2n-1}}Q_{m,n}.
	\end{align}
	We deal with the two sum separately (reversing the order of $k$), calculating the generating function that will have the term we're looking for as the $m\nth$ coefficient.
	\begin{align}
	\MoveEqLeft
		\sum_{m\ge 0} \sum_{k=1}^{m} \Delta^{m-k} c^{2(k-1)} (ad)^{2n-1} \frac{1}{4^{m-k}}\binom{2(m-k)}{m-k} \parens*{r^n_k + a^2b^2s^n_k} y^m\\
		&=\sum_{m\ge 0} \sum_{k\ge 0} \ind_{\{k\le m-1\}} \Delta^{k} c^{2(m-k-1)} (ad)^{2n-1} \frac{1}{4^{k}}\binom{2k}{k} \parens*{r^n_{m-k} - a^2b^2u^n_{m-k}} y^m  \\
		&=(ad)^{2n-1} \sum_{k\ge 0} \parens[\Big]{\frac{\Delta}{4c^2}}^k \binom{2k}{k} c^{-2} \sum_{m\ge k+1} \parens*{r^n_{m-k} - a^2b^2u^n_{m-k}} (c^2y)^m \\
		&=(ad)^{2n-1} \sum_{k\ge 0} \parens[\Big]{\frac{\Delta}{4}}^k \binom{2k}{k} y^{k+1} \sum_{m\ge 0} \parens*{r^n_{m+1} - a^2b^2u^n_{m+1}} (c^2y)^m\\
		&=(ad)^{2n-1} \sum_{k\ge 0} \parens[\Big]{\frac{\Delta}{4}}^k \binom{2k}{k} y^{k+1} \parens[\Big]{\frac{R^n(c^2y)-a^2b^2U^n(c^2y)}{c^2y}}\\
		&=\coeff{z^n}\frac{(ad)^{2n-1}y}{\sqrt{1-\Delta y}}\parens[\Big]{\frac{R(c^2y,z)-a^2b^2U(c^2y,z)}{c^2y}}
	\end{align}
	where we used \eqref{app:eq:ops_shift} --- recalling that $R^n$ and $U^n$ have no constant term --- and the generating functions of \autoref{app:prop:genfunc}, as well as \eqref{app:eq:ops_sqrt} for the remaining sum in $k$.
	
	Using the same procedure, we see that
	\begin{align}
		\sum_{\ell=1}^{n} \Delta^{n-\ell} c^{2m-1} (ad)^{2(\ell-1)} \frac{1}{4^{n-\ell}} \binom{2(n-\ell)}{n-\ell} t^\ell_m =\coeff{y^m z^n}\frac{c^{2m-1}z}{\sqrt{1-\Delta z}} \frac{T(y,a^2d^2z)}{a^2d^2z},
	\end{align}
	since $t^0_m=0$ for all $m\in\bbN$.
	
	Then, for $m,n\ge 1$, using \eqref{app:eq:coeff_factor},
	\begin{align}
		Q_{m,n}
		&=\coeff*{y^m z^n} \frac{y}{ad\sqrt{1-\Delta y}} \frac{R(c^2y,a^2d^2z)-a^2b^2U(c^2y,a^2d^2z)}{c^2y} + \frac{z}{c\sqrt{1-\Delta z}} \frac{T(c^2y,a^2d^2z)}{a^2d^2z}\\
		&\!\begin{multlined}[t][\linewidth-\mathindent-\widthof{$Q_{m,n}$}-\multlinegap]
			=\coeff*{y^m z^n} \frac{adyz}{\sqrt{1-\Delta y}} \frac{\Delta(c^2y-a^2d^2z) +r^1_1-2a^2b^2}{\Delta (c^2y-a^2d^2z)^2 - 2t^1_1 c^2y -2q \,a^2d^2z+1} + \ldots\\
			\ldots+\frac{cyz}{\sqrt{1-\Delta z}} \frac{-\Delta(c^2y-a^2d^2z)+t^1_1}{\Delta (c^2y-a^2d^2z)^2 - 2t^1_1 c^2y -2q\, a^2d^2z+1}
		\end{multlined}\\
		&=\coeff*{y^{m-1} z^{n-1}} \frac{ad\sqrt{1-\Delta z}(\Delta(c^2y-a^2d^2z) +q)-c\sqrt{1-\Delta y}(\Delta(c^2y-a^2d^2z) -t^1_1)}{\sqrt{1-\Delta y}\sqrt{1-\Delta z}(\Delta (c^2y-a^2d^2z)^2 - 2c^2 t^1_1 y -2a^2d^2 q z+1)},
	\end{align}
	where we have used \eqref{app:eq:coeff_shift} for the last equality.
	
	The next crucial realisation is to (try to) factor everything in terms of $v(y):=\sqrt{1-\Delta y}$ and $w(z):=\sqrt{1-\Delta z}$, which works out almost miraculously well. First for the last factor of the denominator (multiplied by $\Delta$),
	\begin{align}
	\MoveEqLeft
		\Delta\parens*{\Delta (c^2y-a^2d^2z)^2 - 2c^2 t^1_1 y -2a^2d^2q z+1}\\
		&=\parens*{c^2(1-v^2)-a^2d^2(1-w^2)}^2-2c^2 t^1_1(1-v^2)-2a^2d^2 q(1-w^2)+\Delta\\
		&\!\begin{multlined}[t][\linewidth-\mathindent-2em-\multlinegap]
			=c^4v^4-2a^2c^2d^2v^2w^2+ a^4d^4w^4+ 2c^2v^2(\underbrace{t^1_1+a^2d^2-c^2}_{a^2b^2}) +\ldots \\[-0.245cm]
			\ldots+2a^2d^2w^2({\overbrace{q-a^2d^2+c^2}^{\cramped{a^2b^2}}})+\smash{\overbrace{c^4-2a^2c^2d^2 +a^4d^4 -2c^2t^1_1-2a^2d^2q+\Delta}^{\cramped{a^4b^4}}}
		\end{multlined}\\
		&=(c^2v^2+a^2d^2w^2+a^2b^2)^2-4a^2c^2d^2v^2w^2\\
		&=(c^2v^2+a^2d^2w^2+a^2b^2+2acdvw)(c^2v^2+a^2d^2w^2+a^2b^2-2acdvw)\\
		&=\parens*{(cv+adw)^2+a^2b^2}\parens*{(cv-adw)^2+a^2b^2}
	\end{align}
	and then, in similar fashion for the numerator
	\begin{align}
		adw(\Delta(c^2y-a^2d^2z) +q)-cv(\Delta(c^2y-a^2d^2z) -t^1_1)
		= (cv+adw)\parens*{(cv-adw)^2+a^2b^2},
	\end{align}
	which finally achieves the cancellation that will turn out to be the key to get rid of the factor $\delta_-$ in the denominator of the PFD. Taken together, this yields
	\begin{align}
		Q_{m,n}=\coeff*{y^{m-1} z^{n-1}} \frac{\Delta}{v(y)w(z)}\frac{cv(y)+adw(z)}{(cv(y)+adw(z))^2+a^2b^2},
	\end{align}
	
	Finally, recalling $I_{m,n}=  \pi Q_{m,n} / (\Delta^{m+n-1}c^{2m-1}d^{2n-1})$, we apply \eqref{app:eq:coeff_factor} to arrive at
	\begin{align}
		I_{m,n} &= \coeff*{y^{m-1} z^{n-1}}  \frac{\pi}{c^{2m-1}d^{2n-1}} \frac{1}{\sqrt{1-\smash{y}}\sqrt{1-z}}\frac{c\sqrt{1-\smash{y}}+ad\sqrt{1- z}}{(c\sqrt{1-\smash{y}}+ad\sqrt{1- z})^2+a^2b^2},
	\end{align}
	and once more using \eqref{app:eq:coeff_factor}, to yield the representation \eqref{app:eq:Imn_as_coeff}. This proves the claimed generating function for $I_{m,n}$.
\end{proof}

\subsection{Determining the Coefficients}

Compared to \autoref{app:th:Imn_est}, \eqref{app:eq:Imn_as_coeff} already has the right structure, but what remains to be proved at this stage is that the coefficients $c^{m,n}_{i,j}$ are zero when $i\le 2(m-1) \land j\le2(n-1)$. Showing this will concern us for the remainder of this section.

The generalised chain rule is expressed through \emph{Fa\`{a} di Bruno's formula}
\begin{align}\label{app:eq:faa_di_bruno}
	\Dn{n}{x} f(g(x))=\sum_{k=1}^n f^{(k)}(g(x)) B_{n,k}\parens*{g'(x),g''(x),\ldots,g^{(n-k+1)}(x)},
\end{align}
where
\begin{align}
	B_{n,k}(y_1,\ldots,y_{n-k+1})=\sum_{\substack{\sum_i j_i=k\\ \sum_i i \cdot j_i=n}} \frac{n!}{j_1!j_2!\cdot \ldots \cdot j_{n-k+1}!} \parens[\Big]{\frac{y_1}{1!}}^{j_1}\parens[\Big]{\frac{y_2}{2!}}^{j_2}\cdot \ldots \cdot\parens[\Big]{\frac{y_{n-k+1}}{(n-k+1)!}}^{j_{n-k+1}}
\end{align}
are the so-called \emph{Bell polynomials}, which satisfy the following identity for their generating function,
\begin{align}\label{app:eq:genfunc_bell}
	\sum_{n=k}^\infty B_{n,k}(y_1,y_2,\ldots,y_{n-k+1}) \frac{t^n}{n!}=\frac{1}{k!}\parens[\Big]{\sum_{m=1}^\infty y_m \frac{t^m}{m!}}^k.
\end{align}

\begin{lemma}
	For $1\le k\le n$ and $c>0$, we have
	\begin{align}\label{app:eq:bell_sqrt}
		B_{n,k}\parens[\Big]{\D{y}\sqrt{c^2-y},\ldots,\Dn{n-k+1}{y}\sqrt{c^2-y}}\Bigr|_{y=0} = (-2c)^{k-2n} \frac{(2n-k-1)!}{(k-1)!(n-k)!}.
	\end{align}
\end{lemma}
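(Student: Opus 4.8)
The approach is to evaluate the partial Bell polynomial through its generating function \eqref{app:eq:genfunc_bell}, reducing the claim to extracting a single Taylor coefficient of a power of $\sqrt{c^2-t}-c$, which the identity \eqref{app:eq:ops_sqrt_k} handles in closed form.

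First I would set $y_m:=\Dn{m}[\sqrt{c^2-y}]{y}\big|_{y=0}$. Since $y\mapsto\sqrt{c^2-y}$ is analytic in a neighbourhood of $0$, its Taylor expansion gives $\sum_{m\ge 1}y_m\frac{t^m}{m!}=\sqrt{c^2-t}-c$ (the full series with the constant term $y_0=c$ removed). Substituting this into \eqref{app:eq:genfunc_bell} yields
\[
\sum_{n=k}^\infty B_{n,k}\parens*{y_1,\ldots,y_{n-k+1}}\frac{t^n}{n!}=\frac{1}{k!}\parens*{\sqrt{c^2-t}-c}^k,
\]
so that the quantity in \eqref{app:eq:bell_sqrt} equals $\frac{n!}{k!}\coeff{t^n}\parens*{\sqrt{c^2-t}-c}^k$.

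Second I would compute this coefficient. Writing $\sqrt{c^2-t}-c=-c\bigl(1-\sqrt{1-t/c^2}\bigr)$ and invoking \eqref{app:eq:ops_sqrt_k} with $4x=t/c^2$, one tracks the bookkeeping $2^k4^n=2^{2n+k}$ and $c^{2k}c^{2n}=c^{2n+2k}$ to obtain
\[
\parens*{\sqrt{c^2-t}-c}^k=(-1)^k\sum_{n\ge 0}\frac{k}{n+k}\binom{2n+k-1}{n}\frac{t^{n+k}}{2^{2n+k}\,c^{2n+k}},
\]
whence, reading off the $t^n$-coefficient (which vanishes unless $n\ge k$, via the shift $n\mapsto n-k$),
\[
\coeff{t^n}\parens*{\sqrt{c^2-t}-c}^k=(-1)^k\,\frac{k}{n}\binom{2n-k-1}{n-k}\,\frac{1}{2^{2n-k}c^{2n-k}}.
\]

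Third, I would simplify $\frac{n!}{k!}\cdot\frac{k}{n}\binom{2n-k-1}{n-k}$: since $\frac{n!}{k!}\cdot\frac{k}{n}=\frac{(n-1)!}{(k-1)!}$ and $\binom{2n-k-1}{n-k}=\frac{(2n-k-1)!}{(n-k)!(n-1)!}$, the product equals $\frac{(2n-k-1)!}{(k-1)!(n-k)!}$, while $(-1)^k 2^{-(2n-k)}c^{-(2n-k)}=(-2c)^{k-2n}$; putting these together gives exactly the right-hand side of \eqref{app:eq:bell_sqrt}. There is no serious obstacle here: the only thing requiring care is the substitution $4x=t/c^2$ and the accompanying powers of $2$ and $c$, together with the final binomial-to-factorial simplification; all series involved converge for $|t|<c^2$, so the formal manipulations are analytically justified.
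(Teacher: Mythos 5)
Your argument is correct and follows essentially the same route as the paper's: both proofs plug the Taylor coefficients of $\sqrt{c^2-y}$ into the Bell-polynomial generating function \eqref{app:eq:genfunc_bell} and then invoke \eqref{app:eq:ops_sqrt_k} after the substitution $4x=t/c^2$, differing only in whether the index shift by $k$ is performed via \eqref{app:eq:ops_shift} or by re-indexing the resulting series directly. The algebra (powers of $2$, $c$, and the binomial-to-factorial simplification) checks out.
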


\begin{proof}
	Using \eqref{app:eq:ops_geom_alpha} we see that
	\begin{align}
		\coeff{y^m}\sqrt{c^2-y}=\coeff{y^m} c\sqrt{1-\frac{y}{c^2}}=\binom{\frac 12}{m}(-1)^m c^{1-2m} 
	\end{align}
	Inserting this into \eqref{app:eq:genfunc_bell}, we obtain by virtue of \eqref{app:eq:ops_shift} and \eqref{app:eq:ops_sqrt_k} that
	\begin{align}
		B_{n,k}\parens[\Big]{\D{y}\sqrt{c^2-y},\ldots}\Bigr|_{y=0} 
		&= \coeff{t^n} \frac{n!}{k!} c^k \parens*{\sqrt{1-t/c^2}-1}^k = \coeff{t^n} \frac{n!}{k!} c^{k}  \parens[\Big]{\frac{1-\sqrt{1-t/c^2}}{t/(2c^2)}}^k \parens[\Big]{\frac{-t}{2c^2}}^k\\
		&= \coeff*{t^{n-k}} \frac{n!}{k!} (-2c)^{-k}  \parens[\Big]{\frac{1-\sqrt{1-t/c^2}}{t/(2c^2)}}^k \\
		&= \frac{n!}{k!} (-2c)^{-k} \frac{k}{n}\binom{2n-k-1}{n-k} (4c^2)^{k-n}, 
	\end{align}
	which yields the desired result after rearranging.
\end{proof}

\begin{proposition}\label{app:prop:coeff_expl_long}
	By deriving \eqref{app:eq:Imn_as_coeff} $m-1$ times in $y$ and $n-1$ times in $z$, the coefficients in \eqref{app:eq:Imn_est_best} can be calculated as follows
	\begin{align}
		c^{m,n}_{i,j}
		&\!\begin{multlined}[t][\linewidth-\mathindent-\widthof{$c^{m,n}_{i,j}$}-\multlinegap]
			= \frac{\delta_{\{i+j\equiv1\bmod{2}\}}}{4^{m+n-2}}  \sum_{k=1}^{m-1}\sum_{\ell=1}^{n-1}\sum_{p=0}^{\ell}\sum_{q=0}^{k}\sum_{r=0}^{k-q}\sum_{s,t\ge 0} \frac{2^k k}{m-1} \binom{2m-k-3}{\phantom{2}m-k-1} \frac{2^\ell \ell}{n-1} \binom{2n-\ell-3}{\phantom{2}n-\ell-1}  \cdot\ldots\\
			\shoveright{\ldots\cdot \binom{\ell+1}{p} \binom{\ell-p}{q} \binom{\ell+r}{r} \binom{m+n-2-\ell-r}{s} \binom{m+n-2-p-q-r-s}{t} \cdot\ldots} \\
			\ldots\cdot \binom{m+n-1}{i-p-s} \binom{m+n-1-i+p+s}{j-q-r-t} (-1)^{\frac{3(i+j)+1}{2}+p+s+r+t}.
		\end{multlined}
	\end{align}
	Furthermore, $c^{m,n}_{i,j}$ is zero for $i+j>2(m+n)-3$.
\end{proposition}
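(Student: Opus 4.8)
The plan is to read the coefficients $c^{m,n}_{i,j}$ off directly from the closed form \eqref{app:eq:Imn_as_coeff} of the generating function by a two‑stage Taylor expansion, and to obtain the vanishing statement from a short homogeneity argument. To set up, write $\Phi(v,w):=\frac{1}{vw}\cdot\frac{v+aw}{(v+aw)^2+a^2b^2}$, so that \eqref{app:eq:Imn_as_coeff} reads $I_{m,n}=\pi\coeff{y^{m-1}z^{n-1}}\Phi\big(\sqrt{c^2-y},\sqrt{d^2-z}\big)$, which by \eqref{app:eq:def_coeff_op}, applied in each variable, equals $\frac{\pi}{(m-1)!\,(n-1)!}$ times the mixed derivative $\partial_y^{m-1}\partial_z^{n-1}$ of $\Phi(\sqrt{c^2-y},\sqrt{d^2-z})$ evaluated at the origin. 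Everything then reduces to computing that derivative.

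Since the first argument of $\Phi$ depends on $y$ only and the second on $z$ only, I would apply Fa\`{a} di Bruno's formula \eqref{app:eq:faa_di_bruno} once in $y$ (with $z$ frozen) and once in $z$. This turns the derivative into $\sum_{k=1}^{m-1}\sum_{\ell=1}^{n-1}(\partial_v^k\partial_w^\ell\Phi)(c,d)\cdot B_{m-1,k}\cdot B_{n-1,\ell}$, where $B_{m-1,k}$ and $B_{n-1,\ell}$ are the Bell polynomials of the derivatives of $\sqrt{c^2-y}$ resp.\ $\sqrt{d^2-z}$ evaluated at $0$. These Bell polynomials are given in closed form by \eqref{app:eq:bell_sqrt}, and, combined with the factor $\frac{1}{(m-1)!\,(n-1)!}$, they produce exactly the prefactors $\frac{2^kk}{m-1}\binom{2m-k-3}{m-k-1}$ and $\frac{2^\ell\ell}{n-1}\binom{2n-\ell-3}{n-\ell-1}$, the power $\frac{1}{4^{m+n-2}}$, and the $c$‑ and $d$‑powers together with part of the sign appearing in the statement.

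The remaining ingredient is $\partial_v^k\partial_w^\ell\Phi$ at $(v,w)=(c,d)$. Split $\Phi=v^{-1}w^{-1}R$ with $R(v,w)=\frac{v+aw}{(v+aw)^2+a^2b^2}$ and use the Leibniz rule to distribute the $k$ (resp.\ $\ell$) derivatives among the factors: $\partial_v^p(v^{-1})|_{v=c}=(-1)^p p!\,c^{-p-1}$ produces the negative powers of $c$ (and, through the Leibniz binomial, $\binom{\ell+1}{p}$‑type factors), and likewise in $w$. For $R$, write $(v+aw)^2+a^2b^2=\delta_++X$ with $\delta_+:=(c+ad)^2+a^2b^2$ as in \eqref{app:eq:pfd_Delta} and $X:=\big(v+aw-(c+ad)\big)\big(v+aw+(c+ad)\big)$, expand $\frac{1}{\delta_++X}=\frac{1}{\delta_+}\sum_{\nu\ge0}(-X/\delta_+)^\nu$ (a finite sum after differentiation) and expand $X$ in powers of $(v-c)$ and $a(w-d)$ by the binomial theorem. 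Reading the result over the common denominator $\delta_+^{\,m+n-1}$ and collecting powers of $c$, $ad$ and $a^2b^2$ yields a finite multi‑sum; after relabelling the summation indices — two of them, $k,\ell$, coming from Fa\`{a} di Bruno, and five, $p,q,r,s,t$, from the binomial expansions just described — one obtains the stated formula, the matching of all the binomial coefficients and of the sign $(-1)^{(3(i+j)+1)/2+p+s+r+t}$ being a bookkeeping check that uses only \eqref{app:eq:binom_alpha} and the elementary series \eqref{app:eq:ops_geom}--\eqref{app:eq:ops_sqrt_k}. This bookkeeping — keeping every index range consistent and tracking which contributions carry a power of $\delta_+$ and which a power of $c$ or $d$ — is the only laborious part and the one most prone to sign slips and off‑by‑one errors; conceptually nothing new is required.

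For the last assertion, both facts follow from elementary symmetries of $I_{m,n}=\int_{-\infty}^{\infty}\big(a^2(x-b)^2+c^2\big)^{-m}(x^2+d^2)^{-n}\d x$. The substitution $x\mapsto -x$ shows that $I_{m,n}$ is even in $b$, so only even powers of $b$, hence of $ab$, occur. The substitution $x\mapsto tx$ together with $(b,c,d)\mapsto(tb,tc,td)$ (keeping $a$ fixed) gives $I_{m,n}(tb,tc,td)=t^{1-2(m+n)}\,I_{m,n}(b,c,d)$, so $I_{m,n}$ is homogeneous of degree $-(2(m+n)-1)$ in $(b,c,d)$. Comparing with the form $\frac{\pi}{\delta_+^{\,m+n-1}c^{2m-1}d^{2n-1}}\sum c^{m,n}_{i,j}\,c^i(ad)^j(ab)^{2k}$ from \autoref{app:th:Imn_est}, whose denominator is homogeneous of degree $4(m+n-1)$ in $(b,c,d)$, forces the numerator polynomial $\sum c^{m,n}_{i,j}\,c^i(ad)^j(ab)^{2k}$ to be homogeneous of degree $2(m+n)-3$; hence $c^{m,n}_{i,j}=0$ unless $i+j+2k=2(m+n)-3$, and in particular $c^{m,n}_{i,j}=0$ whenever $i+j>2(m+n)-3$. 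Since $2(m+n)-3$ is odd, every surviving term has $i+j$ odd, which accounts for the factor $\delta_{\{i+j\equiv1\bmod2\}}$.
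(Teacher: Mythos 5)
Your argument for the \emph{last} assertion (vanishing for $i+j>2(m+n)-3$) is correct and in fact more conceptual than the paper's. The paper deduces the bound from the binomial coefficient $\binom{m+n-1-j+p+s}{i-q-r-t}$ in the final multi-sum; your homogeneity argument under $(x,b,c,d)\mapsto(tx,tb,tc,td)$ gives $I_{m,n}\mapsto t^{1-2(m+n)}I_{m,n}$, which immediately forces $i+j+2k=2(m+n)-3$ and hence both the degree bound and the parity $i+j\equiv1\bmod 2$ with no combinatorics at all. That is a genuine, clean improvement.

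For the \emph{main} formula, however, there is a real gap: you describe a strategy but do not carry it out, and the strategy you describe would not obviously reproduce the stated multi-sum. Two concrete issues. First, the paper does not expand the real kernel $\Phi(v,w)=\frac{1}{vw}\frac{v+aw}{(v+aw)^2+a^2b^2}$; it first complexifies, writing
\begin{align}
I_{m,n}=\Re\coeff*{y^{m-1}z^{n-1}}\frac{\pi}{v(y)w(z)}\frac{1}{v(y)+aw(z)+\ii ab},
\end{align}
and this is what makes everything tractable: in the complex form the factor $\frac{1}{w(v+aw+\ii ab)}$ admits a one-line partial fraction decomposition in $w$, so the $\ell$\nth $w$-derivative is computed \emph{exactly} as $\frac{\ell!(-1)^\ell}{v+\ii ab}\bigl(\frac{1}{w^{\ell+1}}-\frac{a^{\ell+1}}{(v+aw+\ii ab)^{\ell+1}}\bigr)$. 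Your proposed Leibniz rule on $v^{-1}w^{-1}R$ plus a geometric series in $X=(v+aw)^2-(c+ad)^2$ generates different intermediate sums with different index structure; there is no reason to expect the binomial coefficients $\binom{\ell+1}{p}$, $\binom{\ell-p}{q}$, $\binom{\ell+r}{r}$ (the last coming from a specific rewriting $\binom{j}{r}\frac{r!(\ell+j-r)!}{\ell!}=j!\binom{\ell+r}{r}$ after an index reversal, not from Leibniz) to appear as written. Since representations of a polynomial as a nested multi-sum are far from unique, ``bookkeeping'' here is not a verification but the entire content of the proof. Second, a smaller but telling slip: you say $\partial_v^p(v^{-1})|_{v=c}=(-1)^pp!\,c^{-p-1}$ ``produces $\binom{\ell+1}{p}$-type factors through the Leibniz binomial''; it does not — Leibniz yields $\binom{k}{p}$, whereas the paper's $\binom{\ell+1}{p}$ arises from the binomial theorem applied to $(v+ad+\ii ab)^{\ell+1}$ after the PFD. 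These are different objects, and the mismatch is a symptom of the route being genuinely different. To make your plan into a proof you would either need to (i) adopt the complexification and follow the paper's PFD, or (ii) carry the Leibniz/geometric-series expansion to the end and then prove your multi-sum equals the stated one — neither of which is a routine check.

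Received wisdom aside, your approach to the final claim is worth keeping: the scaling argument replaces the paper's inequality chase with a two-line dimensional analysis and is reusable in \autoref{app:prop:coeff_zero} as well.
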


\begin{proof}
	Letting $v(y):=\sqrt{c^2-y}$ and $w(z):=\sqrt{d^2-z}$ (which have the same behaviour but different constants to their namesakes above), we begin by rewriting \eqref{app:eq:Imn_as_coeff},
	\begin{align}
		I_{m,n} &=\coeff*{y^{m-1} z^{n-1}} \frac{\pi}{vw}\parens{\frac{v+aw-\ii ab}{(v+aw+\ii ab)(v+aw-\ii ab)} + \frac{\ii ab}{(v+aw)^2+a^2b^2}}\\
		&=\Re \coeff*{y^{m-1} z^{n-1}}  \frac{\pi}{v(y)w(z)}\frac{1}{v(y)+aw(z)+\ii ab}, \label{app:eq:Imn_simplif_frac}
	\end{align}
	since, clearly, $I_{m,n}$ is always real (and the second summand always imaginary).
	
	For now, we assume that both $n,m\ge2$, and apply \eqref{app:eq:faa_di_bruno} to derive $n-1$ times in $z$, arriving at
	\begin{align}
		I_{m,n}&=\Re\coeff*{y^{m-1}} \frac{\pi}{v(y)} \frac{1}{(n-1)!}\sum_{\ell=1}^{n-1} B_{n-1,\ell}\parens[\Big]{\D{z}w(z),\ldots}\Bigr|_{z=0} \Dn{\ell}{w} \frac{1}{w}\frac{1}{v(y)+aw+\ii ab}\Bigr|_{w=d}
	\end{align}
	Using the PFD in $w$, we see that
	\begin{align}
		\Dn{\ell}{w} \frac{1}{w}\frac{1}{v(y)+aw+\ii ab}\Bigr|_{w=d} 
		&= \Dn{\ell}{w} \frac{1}{v(y)+\ii ab}\parens[\Big]{\frac{1}{w}-\frac{a}{v(y)+aw+\ii ab}}\Bigr|_{w=d} \\
		&=\frac{\ell!(-1)^\ell}{v(y)+\ii ab}\parens[\Big]{\frac{1}{w^{\ell+1}}-\frac{a^{\ell+1}}{(v(y)+aw+\ii ab)^{\ell+1}}}\Bigr|_{w=d} \\
		&=\frac{\ell!(-1)^\ell}{v(y)+\ii ab}\frac{1}{d^{\ell+1}}\parens[\Big]{1-\frac{(ad)^{\ell+1}}{(v(y)+ad+\ii ab)^{\ell+1}}}.
	\end{align}
	Together with \eqref{app:eq:bell_sqrt}, we can continue manipulating $I_{m,n}$,
	\begin{align}
	I_{m,n}&\!\begin{multlined}[t][\linewidth-\mathindent-\widthof{$I_{m,n}$}-\multlinegap]
		= \Re\coeff*{y^{m-1}} \frac{\pi}{v(y)}\frac{1}{v(y)+\ii ab} \frac{1}{(n-1)!}\cdot\ldots \\
		\ldots\cdot\sum_{\ell=1}^{n-1} \frac{(-2d)^{\ell-2n+2}(2n-\ell-3)!}{(\ell-1)!(n-\ell-1)!} \frac{\ell!(-1)^\ell}{d^{\ell+1}}\parens[\Big]{1-\frac{(ad)^{\ell+1}}{(v(y)+ad+\ii ab)^{\ell+1}}}
	\end{multlined}\\
	&= \Re\coeff*{y^{m-1}} \frac{\pi}{v(y)}\frac{1}{v(y)+\ii ab} \sum_{\ell=1}^{n-1} \frac{2^{\ell+1}}{(2d)^{2n-1}} \frac{\ell}{n-1} \binom{2n-\ell-3}{\phantom{2}n-\ell-1} \parens[\Big]{1-\frac{(ad)^{\ell+1}}{(v(y)+ad+\ii ab)^{\ell+1}}}.
	\end{align}
	Using
	\begin{multline}
		1-\frac{(ad)^{\ell+1}}{(v(y)+ad+\ii ab)^{\ell+1}} \\
		=\frac{1}{(v(y)+ad+\ii ab)^{\ell+1}}\parens[\bigg]{\sum_{p=0}^{\ell+1}\binom{\ell+1}{p}(ad)^p (v(y)+\ii ab)^{\ell+1-p} -(ad)^{\ell+1}}
	\end{multline}
	this collapses further to
	\begin{align}
		I_{m,n}&=\Re\coeff*{y^{m-1}} \frac{\pi}{v(y)} \sum_{\ell=1}^{n-1} \sum_{p=0}^{\ell}  \frac{2^{\ell+1}}{(2d)^{2n-1}} \frac{\ell}{n-1} \binom{2n-\ell-3}{\phantom{2}n-\ell-1} \binom{\ell+1}{p} \frac{(ad)^p (v(y)+\ii ab)^{\ell-p}}{(v(y)+ad+\ii ab)^{\ell+1}}.
	\end{align}
	Applying the product rule, we observe first that
	\begin{align}
		\Dn{j}{v}\frac{1}{v(v+ad+\ii ab)^{\ell+1}}
		&=\sum_{r=0}^{j}\binom{j}{r}(-1)^j \frac{r!(\ell+j-r)!}{\ell!} \frac{1}{v^{r+1}(v+ad+\ii ab)^{\ell+1+j-r}}\\
		&=j!(-1)^j\sum_{r=0}^{j}\binom{\ell+r}{r}\frac{1}{v^{j-r+1}(v+ad+\ii ab)^{\ell+1+r}},
	\end{align}
	where we reversed the order of summation for the last equation. Consequently, with $j=k-q$, once more due to the product rule, $I_{m,n}$ is equal to
	\begin{align}
	&\!\begin{multlined}[t][\linewidth-\mathindent-\multlinegap]
		\phantomrel \Re \frac{\pi}{(m-1)!} \sum_{\ell=1}^{n-1} \sum_{p=0}^{\ell}  \frac{2^{\ell+1}}{(2d)^{2n-1}} \frac{\ell}{n-1} \binom{2n-\ell-3}{\phantom{2}n-\ell-1} \! \binom{\ell+1}{p} \! \sum_{k=1}^{m-1} \! B_{m-1,k}\parens[\Big]{\D{y}v(y),\ldots}\Bigr|_{y=0} \! \cdot\ldots\hspace{-2cm}\\
		\ldots\cdot\sum_{q=0}^{\mathclap{\min(k,\ell-p)}} \hspace{0.3cm}\binom{k}{q}\frac{(\ell-p)!}{(\ell-p-q)!} (c+\ii ab)^{\ell-p-q}  \sum_{r=0}^{k-q} \binom{\ell+r}{r}\frac{(ad)^p (k-q)!(-1)^{k-q}}{c^{k-q-r+1}(c+ad+\ii ab)^{\ell+1+r}}
	\end{multlined}\\
	&\!\begin{multlined}[t][\linewidth-\mathindent-\multlinegap]
		=\Re \pi \sum_{k=1}^{m-1}\sum_{\ell=1}^{n-1} \frac{2^{k+1}}{(2c)^{2m-1}} \frac{k}{m-1} \binom{2m-k-3}{\phantom{2}m-k-1} \frac{2^{\ell+1}}{(2d)^{2n-1}} \frac{\ell}{n-1} \binom{2n-\ell-3}{\phantom{2}n-\ell-1} \cdot\ldots\\
		\ldots\cdot\sum_{p=0}^{\ell}\sum_{q=0}^{k} \sum_{r=0}^{k-q} \binom{\ell+1}{p} \binom{\ell-p}{q} \binom{\ell+r}{r} \frac{(-1)^q  c^{q+r} (ad)^p (c+\ii ab)^{\ell-p-q}}{(c+ad+\ii ab)^{\ell+r+1}}.
	\end{multlined}
	\end{align}
	To find the respective powers of $c$ and $ad$, we pull out the maximal power of the denominator --- multiplied by its conjugate to make it real; recalling $\delta_+=(c+ad)^2+a^2b^2=(c+ad+\ii ab)(c+ad-\ii ab)$ --- to find
	\begin{multline}\label{eq:Imn_coeff_unified_denom}
		I_{m,n}
		=\frac{4\pi}{(4\delta_+)^{m+n-1}c^{2m-1}d^{2n-1}} \Re\!\!\sum_{k,\ell,p,q,r} \!\frac{2^k k}{m-1} \binom{2m-k-3}{\phantom{2}m-k-1} \frac{2^\ell \ell}{n-1} \binom{2n-\ell-3}{\phantom{2}n-\ell-1} \! \binom{\ldots}{p,q,r}  \cdot\ldots\\
		\ldots\cdot(-1)^q c^{q+r} (ad)^p (c+\ii ab)^{\ell-p-q}(c+ad+\ii ab)^{m+n-2-\ell-r} (c+ad-\ii ab)^{m+n-1},
	\end{multline}
	where we will denote
	\begin{align}
		\binom{\ldots}{k,\ell,m,n,p,q,r}:=\frac{2^k k}{m-1} \binom{2m-k-3}{\phantom{2}m-k-1} \frac{2^\ell \ell}{n-1} \binom{2n-\ell-3}{\phantom{2}n-\ell-1} \binom{\ell+1}{p} \binom{\ell-p}{q} \binom{\ell+r}{r}
	\end{align}
	for brevity, and will often collect consecutive letters with a hyphen, e.g.~$k-n$ for $k,\ell,m,n$. As the penultimate step, we expand the last line of \eqref{eq:Imn_coeff_unified_denom} to
	\begin{multline}
		\sum_{s\ge 0} \sum_{t\ge 0} \sum_{g\ge 0} \sum_{h\ge 0} \binom{m+n-2-\ell-r}{s} \binom{m+n-2-p-q-r-s}{t}  \binom{m+n-1}{g} \cdot\ldots \\
		\ldots\cdot \binom{m+n-1-g}{h} (-1)^{g+h+m+n+q+1} c^{h+q+r+t} (ad)^{g+p+s} (\ii ab)^{2(m+n)-3-g-h-p-q-r-s-t}.
	\end{multline}
	Finally, we collect like powers of $c$ and $ad$ as powers of $i=h+q+r+t$ and $j=g+p+s$, respectively,
	\begin{align}
	I_{m,n}
	&\!\begin{multlined}[t][\linewidth-\mathindent-\widthof{$I_{m,n}$}-\multlinegap]\label{eq:Imn_repr_coeff}
		=\phantom{:}\frac{\pi}{\delta_+^{m+n-1}c^{2m-1}d^{2n-1}} \Re\!\! \sum_{i,j=0}^{2(m+n)-3} \!\! \sum_{k,\ell,p,q,r,s,t} \binom{\ldots}{k,\ell,m,n,p,q,r,s,t} \binom{m+n-1}{j-p-s} \cdot\ldots\\
		\ldots\cdot \binom{m+n-1-j+p+s}{i-q-r-t} \frac{(-1)^{i+j+p+r+s+t+m+n+1}}{4^{m+n-2}} c^i (ad)^j (\ii ab)^{2(m+n)-3-i-j}
	\end{multlined}\\
	&\!\begin{multlined}[t][\linewidth-\mathindent-\widthof{$I_{m,n}$}-\multlinegap]
		=\phantom{:}\frac{\pi}{\delta_+^{m+n-1}c^{2m-1}d^{2n-1}} \Re\!\! \sum_{i,j=0}^{2(m+n)-3} \!\! \sum_{k,\ell,p,q,r,s,t} \binom{\ldots}{k,\ell,m,n,p,q,r,s,t} \binom{m+n-1}{j-p-s} \cdot\ldots\\
		\ldots\cdot \binom{m+n-1-j+p+s}{i-q-r-t} \frac{\ii^{-i-j+1}(-1)^{p+r+s+t}}{4^{m+n-2}} c^i (ad)^j (ab)^{2(m+n)-3-i-j}
	\end{multlined}\\
	&=:\frac{\pi}{\delta_+^{m+n-1}c^{2m-1}d^{2n-1}} \sum_{i,j=0}^{2(m+n)-3} c^{m,n}_{i,j} c^i (ad)^j (ab)^{2(m+n)-3-i-j}. 
	\end{align}
	Observe that due to the fact that we only consider the real parts of the sum, $i+j$ has to be an odd number, so that the power of $\ii$ is even.
	
	The claim that $i+j$ has to be less than $2(m+n)-3$ follows easily from the binomial coefficient in $i$, because it implies that (estimating $t$ by its maximal value, which can be read off from the corresponding binomial coefficient)
	\begin{multline}
		i-q-r-t\le m+n-1-j+p+s \\
		\Longrightarrow \quad
		i+j\le m+n-1+p+q+r+s+t\le 2(m+n)-3.\tag*{\qedhere}
	\end{multline}
\end{proof}

Now, after having disassembled $I_{m,n}$ into its parts, we are able to insert the indicators we later want to look for --- meaning we multiply by $v^i w^j y^m z^n$ --- and try to reassemble the whole thing once more.

\begin{proposition}\label{app:prop:genfunc_coeff}
	Denoting $g(v,w):=(cv+adw)^2+a^2b^2$, it can be seen that the generating function of the numerator in $I_{m,n}$ is
	\begin{align}
	\MoveEqLeft
		N(v,w,y,z):=\sum_{i,j\ge 0,\,m,n\ge 1} c^{m,n}_{i,j} (cv)^i (adw)^j  (ab)^{2(m+n)-3-i-j} y^m z^n\\
		&=\Re \frac{yz}{\sqrt{1-g(v,w)y}\sqrt{1-g(v,w)z}} \frac{g(v,w)}{cv\sqrt{1-g(v,w)y} +adw\sqrt{1-g(v,w)z}+\ii ab} \label{app:eq:Imn_ops_numerator}\\
		&=\frac{g(v,w)yz}{\sqrt{1-g(v,w)y}\sqrt{1-g(v,w)z}} \frac{cv\sqrt{1-g(v,w)y}+adw\sqrt{1-g(v,w)z}}{(cv\sqrt{1-g(v,w)y}+adw\sqrt{1-g(v,w)z})^2+a^2b^2}, 
	\end{align}
	and for the coefficients alone, with $h(v,w):=(v+w)^2+1$,
	\begin{align}
		C(v,w,y,z)
		&:=\sum_{i,j\ge 0,\,m,n\ge 1} c^{m,n}_{i,j} v^i w^j y^m z^n\\
		&\phantom{:}=\Re \frac{yz}{\sqrt{1-h(v,w)y}\sqrt{1-h(v,w)z}} \frac{h(v,w)}{v\sqrt{1-h(v,w)y}+w\sqrt{1-h(v,w)z}+\ii}\\
		&\phantom{:}=\frac{h(v,w)yz}{\sqrt{1-h(v,w)y}\sqrt{1-h(v,w)z}} \frac{v\sqrt{1-h(v,w)y}+w\sqrt{1-h(v,w)z}}{(v\sqrt{1-h(v,w)y}+w\sqrt{1-h(v,w)z})^2+1}.
	\end{align}
\end{proposition}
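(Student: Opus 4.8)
The plan is to obtain both generating functions by \emph{re-reading}, rather than re-deriving, the coefficient identity already established while proving \autoref{app:prop:genfunc_Imn}, after noting that every quantity appearing there depends on $a,b,c,d$ only through the three products $c$, $ad$, $ab$. To this end introduce the polynomial
\[
  P_{m,n}(X,Y,Z):=\sum_{i,j\ge 0} c^{m,n}_{i,j}\,X^{i}Y^{j}Z^{2(m+n)-3-i-j},
\]
which is an honest homogeneous polynomial of degree $2(m+n)-3$ (the $c^{m,n}_{i,j}$ are rational, independent of $a,b,c,d$, and vanish for $i+j>2(m+n)-3$ by \autoref{app:prop:coeff_expl_long}). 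By construction $N(v,w,y,z)=\sum_{m,n\ge1}P_{m,n}(cv,adw,ab)\,y^{m}z^{n}$ and $C(v,w,y,z)=\sum_{m,n\ge1}P_{m,n}(v,w,1)\,y^{m}z^{n}$, so it is enough to put $P_{m,n}$ into a shape that tolerates the substitutions $c\mapsto cv$, $ad\mapsto adw$.

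The first step is to write $P_{m,n}$ in ``homogenised'' form. Combining \eqref{app:eq:Imn_as_coeff} with the representation $I_{m,n}=\pi\,\delta_+^{-(m+n-1)}c^{-(2m-1)}d^{-(2n-1)}P_{m,n}(c,ad,ab)$ from \autoref{app:prop:coeff_expl_long} (see \eqref{eq:Imn_repr_coeff}), and pulling the constants $c^{2m-1}d^{2n-1}$ out of the coefficient extraction, yields
\[
  P_{m,n}(c,ad,ab)=\delta_+^{\,m+n-1}\,\coeff*{y^{m-1}z^{n-1}}\;
  \frac{1}{\sqrt{1-y}\sqrt{1-z}}\;
  \frac{c\sqrt{1-y}+ad\sqrt{1-z}}{(c\sqrt{1-y}+ad\sqrt{1-z})^{2}+a^{2}b^{2}},
\]
with $\delta_+=(c+ad)^{2}+a^{2}b^{2}$; by \eqref{app:eq:coeff_factor} the factor $\delta_+^{\,m-1}\delta_+^{\,n-1}$ can be absorbed into the rescalings $y\mapsto\delta_+ y$, $z\mapsto\delta_+ z$, leaving a single $\delta_+$ out front. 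The right-hand side depends on $a,b,c,d$ only through $c$, $ad$, $ab$, and $(c,ad,ab)$ sweeps out all of $(\bbR^{+})^{3}$, so this is an identity between the polynomial $P_{m,n}$ and the displayed expression regarded as a function of $(c,ad,ab)=:(X,Y,Z)$; hence the substitution $X\mapsto cv$, $Y\mapsto adw$, $Z\mapsto ab$ is permitted and turns $\delta_+$ into $g(v,w)=(cv+adw)^{2}+a^{2}b^{2}$.

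Performing that substitution, absorbing $g(v,w)^{m-1}g(v,w)^{n-1}$ into $y\mapsto g(v,w)y$, $z\mapsto g(v,w)z$ as before, and summing $\sum_{m,n\ge1}(\cdot)\,y^{m}z^{n}$ --- which, since $\coeff*{y^{m-1}z^{n-1}}$ merely reads off a coefficient, collapses to multiplication by $yz$ --- reproduces exactly the last line of \eqref{app:eq:Imn_ops_numerator} for $N$. The $\Re$-form then follows from $\frac{X}{X^{2}+a^{2}b^{2}}=\Re\frac{1}{X+\ii ab}$ applied to the real number $X=cv\sqrt{1-g(v,w)y}+adw\sqrt{1-g(v,w)z}$, all remaining factors being real so that $\Re$ passes through them. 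Finally the formulas for $C$ come out by specialising $a=b=c=d=1$: then $(cv)^{i}(adw)^{j}(ab)^{2(m+n)-3-i-j}=v^{i}w^{j}$ and $g(v,w)=(v+w)^{2}+1=h(v,w)$, so that $C=N\big|_{a=b=c=d=1}$ and the claimed expressions drop directly out of those just established for $N$.

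The only point that needs genuine care is the legitimacy of the substitution $c\mapsto cv$, $ad\mapsto adw$, i.e.\ the observation that the identity for $P_{m,n}$ --- a priori valid only for triples $(c,ad,ab)$ arising from positive $a,b,c,d$ --- is in fact an identity of polynomial (equivalently, of analytic, near $y=z=0$) functions of three free variables. Equivalently, and perhaps more cleanly, one may note that the entire chain \autoref{app:prop:pfd}, \autoref{app:prop:genfunc}, \autoref{app:prop:genfunc_Imn} never used any algebraic relation among $c$, $ad$, $ab$, and can therefore be carried out verbatim with three independent parameters from the start, the formulas for $N$ and $C$ being obtained by specialising at the end. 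Everything else is routine bookkeeping with \eqref{app:eq:coeff_factor} together with the collapse of $\sum_{m,n}y^{m}z^{n}\coeff*{y^{m-1}z^{n-1}}(\cdot)$ to $yz\,(\cdot)$.
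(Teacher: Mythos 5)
Your proof is correct, and it takes a genuinely different --- and shorter --- route than the paper. The paper's proof of \autoref{app:prop:genfunc_coeff} starts from the explicit multi-sum representation \eqref{eq:Imn_repr_coeff} of \autoref{app:prop:coeff_expl_long} and laboriously resums over all the auxiliary indices $j,i,t,s,n,m,k,r,q,p,\ell$, repeatedly invoking the power-series identities of \autoref{ssec:genfunc}; this occupies roughly two pages and requires a separate (``left to the reader'') treatment of the boundary cases $m=1$ or $n=1$. You instead observe that $N$ is by definition $\sum_{m,n}P_{m,n}(cv,adw,ab)\,y^m z^n$ for a homogeneous polynomial $P_{m,n}$ whose coefficients $c^{m,n}_{i,j}$ are universal rationals (by \autoref{app:prop:coeff_expl_long}), that the rescaled closed form \eqref{app:eq:Imn_as_coeff} of \autoref{app:prop:genfunc_Imn} pins down $P_{m,n}(X,Y,Z)$ for every $(X,Y,Z)\in(\bbR^+)^3$ (since $(c,ad,ab)$ sweeps this whole set), and hence by the identity principle one may formally substitute $X\mapsto cv$, $Y\mapsto adw$, $Z\mapsto ab$ to read off $N$; then $C$ follows by specialising $a=b=c=d=1$. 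This is a ``free lunch'' once \autoref{app:prop:genfunc_Imn} and \autoref{app:prop:coeff_expl_long} are in hand, and it treats all $m,n\ge 1$ uniformly to exactly the extent that those propositions do (your argument inherits whatever gap exists there for $m=1$ or $n=1$, just as the paper's does). One cosmetic remark: you absorb $\delta_+^{m-1}\delta_+^{n-1}$ into the rescaling $y\mapsto\delta_+ y$, $z\mapsto\delta_+ z$ \emph{before} the marker substitution and then speak of ``absorbing $g^{m-1}g^{n-1}$'' again afterwards; only one of these two equivalent absorptions is actually needed, so the phrasing is slightly redundant but the mathematics is sound.
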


\begin{remark}
	Using \eqref{app:eq:Imn_as_coeff} and \eqref{app:eq:coeff_factor}, we can deduce that
	\begin{align}
		I_{m,n}=\frac{\pi}{\delta_+^{m+n-1}c^{2m-1}d^{2n-1}}\coeff*{y^{m}z^{n}}
		\Re \frac{yz}{\sqrt{1-\delta_+y}\sqrt{1-\delta_+z}} \frac{\delta_+}{c\sqrt{1-\delta_+y}+ad\sqrt{1-\delta_+z}+\ii ab}\qquad  \label{app:eq:Imn_ops_rescaled}
	\end{align}
	Comparing this with \eqref{app:eq:Imn_ops_numerator}, we immediately see the correspondence (by setting $v,w=1$, because $g\bigr|_{v=1,w=1}=\delta_+$) --- the knowledge we gain with $N(v,w,y,z)$ is the ability to determine a single coefficient as opposed to the whole numerator.
\end{remark}

\begin{proof}[Proof of \autoref{app:prop:genfunc_coeff}]
	As outlined in \autoref{ssec:genfunc}, we take \eqref{eq:Imn_repr_coeff} and extend the summation to $\infty$ for all parameters except $\ell$ (the binomial coefficients ensure that only finitely many terms are non-zero, except for $r$, where we need change the order of summation and use $k\ge q+r$ to enforce $r\le k-q$), and interchange order so that we can sum, first in $j$ and $i$,
	\begin{align}
		N(v,w,y,z)&\!\begin{multlined}[t][\linewidth-\mathindent-\widthof{$N(v,w,y,z)$}-\multlinegap]
			=\Re\sum_{j,k,\ell,m,n,p,q,r,s,t}  \binom{\ldots}{k,\ell,m,n,p,q,r,s,t} \binom{m+n-1}{j-p-s} \frac{(-1)^{m+n+1+j+p+r+s+t}}{4^{m+n-2}} \cdot\ldots \\
			\ldots\cdot   (adw)^j (\ii ab)^{2(m+n)-3-j} y^m z^n \sum_{i=q+r+t}^\infty \binom{m+n-1-j+p+s}{i-q-r-t} \parens[\Big]{\frac{-cv}{\ii ab}}^i
		\end{multlined}\\
		&\!\begin{multlined}[t][\linewidth-\mathindent-\widthof{$N(v,w,y,z)$}-\multlinegap]
			=\Re\sum_{j,k,\ell,m,n,p,q,r,s,t}  \binom{\ldots}{k-n,p-t} \binom{m+n-1}{j-p-s} \frac{(-1)^{m+n+1+j+p+r+s+t}}{4^{m+n-2}} \cdot\ldots \\
			\ldots\cdot (adw)^j  (\ii ab)^{2(m+n)-3-j} y^m z^n \parens[\Big]{\frac{-cv}{\ii ab}}^{q+r+t} \parens[\Big]{1-\frac{cv}{\ii ab}}^{m+n-1-j+p+s}
		\end{multlined}\\
		&\!\begin{multlined}[t][\linewidth-\mathindent-\widthof{$N(v,w,y,z)$}-\multlinegap]
			=\Re\sum_{k,\ell,m,n,p,q,r,s,t}  \binom{\ldots}{k-n,p-t} \frac{(-1)^{m+n+1+q}}{4^{m+n-2}} (cv)^{q+r+t} (\ii ab)^{2(m+n)-3-q-r-t} \cdot\ldots \\
			\ldots\cdot   y^m z^n \parens[\Big]{1-\frac{cv}{\ii ab}}^{m+n-1} \parens[\Big]{\frac{adw}{\ii ab}}^{p+s} \sum_{j=0}^\infty \binom{m+n-1}{j} \parens[\Big]{\frac{-adw}{\ii ab-cv}}^{j}
		\end{multlined}\\
		&\!\begin{multlined}[t][\linewidth-\mathindent-\widthof{$N(v,w,y,z)$}-\multlinegap]
			=\Re\sum_{k,\ell,m,n,p,q,r,s,t}  \binom{\ldots}{k-n,p-t} \frac{(-1)^{m+n+1+q}}{4^{m+n-2}} \cdot\ldots \\
			\ldots\cdot  (cv)^{q+r+t} (adw)^{p+s} (\ii ab)^{2(m+n)-3-p-q-r-s-t} y^m z^n \parens[\Big]{1-\frac{cv+adw}{\ii ab}}^{m+n-1},
		\end{multlined}
	\end{align}
	where the last line uses $(1-\frac{cv}{\ii ab})(1-\frac{adw}{\ii ab-cv})=1-\frac{cv+adw}{\ii ab}$. We continue by summing in $t$ and $s$, which are just applications of the binomial theorem,
	\begin{align}
		&\!\begin{multlined}[t][\linewidth-\mathindent-\multlinegap]
			=\Re  \sum_{k-n,p-s}  \binom{\ldots}{k-n,p-s} \frac{(-1)^{m+n+1+q}}{4^{m+n-2}} (cv)^{q+r} (adw)^{p+s}  (\ii ab)^{2(m+n)-3-p-q-r-s} \cdot\ldots \\
			\ldots\cdot  y^m z^n\parens[\Big]{1-\frac{cv+adw}{\ii ab}}^{m+n-1}   \sum_{t=0}^{m+n-2-\smash{p-q}-r-s} \binom{m+n-2-p-q-r-s}{t} \parens[\Big]{\frac{cv}{\ii ab}}^t
		\end{multlined}\\
		&\!\begin{multlined}[t][\linewidth-\mathindent-\multlinegap]
			=\Re \sum_{k-n,p-r} \binom{\ldots}{k-n,p-r} \frac{(-1)^{m+n+1+q}}{4^{m+n-2}}    (\ii ab)^{2(m+n)-3-p-q-r}  y^m z^n \parens[\Big]{1-\frac{cv+adw}{\ii ab}}^{m+n-1} \!\!\!\! \cdot\ldots \\
			\ldots\cdot  (cv)^{q+r}  (adw)^{p}  \parens[\Big]{1+\frac{cv}{\ii ab}}^{m+n-2-p-q-r} \sum_{s=0}^{m+n-2-\ell-r} \binom{m+n-2-\ell-r}{s} \parens[\Big]{\frac{adw}{cv+\ii ab}}^s
		\end{multlined}\\
		&\!\begin{multlined}[t][\linewidth-\mathindent-\multlinegap]
			=\Re \sum_{k-n,p-r} \binom{\ldots}{k-n,p-r} \frac{(-1)^{m+n+1+q}}{4^{m+n-2}} (cv)^{q+r}  (adw)^{p}   (\ii ab)^{2(m+n)-3-p-q-r}  y^m z^n  \cdot\ldots \\
			\ldots\cdot  \parens[\Big]{1-\frac{cv+adw}{\ii ab}}^{m+n-1}  \parens[\Big]{1+\frac{cv+adw}{\ii ab}}^{m+n-2-p-q-r}  \parens[\Big]{1+\frac{adw}{cv+\ii ab}}^{p+q-\ell}.
		\end{multlined}
	\end{align}
	Next we sum in $n$, setting $g(v,w):=(cv+adw)^2+a^2b^2=-(\ii ab)^2(1-\frac{cv+adw}{\ii ab})(1+\frac{cv+adw}{\ii ab})$,
	\begin{align}
		&\!\begin{multlined}[t][\linewidth-\mathindent-\multlinegap]
			=\Re \sum_{k-m,p-r} \binom{\ldots}{k-m,p-r} \frac{(-1)^{m+1+q}}{4^{m-2}} (cv)^{q+r} (adw)^{p} (\ii ab)^{2m-3-p-q-r}  y^m \parens[\Big]{1-\frac{cv+adw}{\ii ab}}^{m-1}\!\!\!\!\! \cdot\ldots \\
			\ldots\cdot   \parens[\Big]{1+\frac{cv+adw}{\ii ab}}^{m-2-p-q-r}  \parens[\Big]{1+\frac{adw}{cv+\ii ab}}^{p+q-\ell} \sum_{n=\ell+1}^{\infty}\frac{2^\ell \ell}{n-1} \binom{2n-\ell-3}{\phantom{2}n-\ell-1} \parens[\Big]{\frac{gz}{4}}^n
		\end{multlined}\\
		&\!\begin{multlined}[t][\linewidth-\mathindent-\multlinegap]
			=\Re \sum_{k-m,p-r} \binom{\ldots}{k-m,p-r} \frac{(-1)^{m+1+q}}{4^{m-2}} (cv)^{q+r} (adw)^{p} (\ii ab)^{2m-3-p-q-r}  y^m \parens[\Big]{1-\frac{cv+adw}{\ii ab}}^{m-1}\!\!\!\!\! \cdot\ldots \\
			\ldots\cdot \parens[\Big]{1+\frac{cv+adw}{\ii ab}}^{m-2-p-q-r}  \parens[\Big]{1+\frac{adw}{cv+\ii ab}}^{p+q-\ell}\parens[\Big]{\frac{gz}{4}}^{\ell+1} 2^\ell \sum_{n=0}^{\infty}\frac{\ell}{n+\ell}  \binom{2n+\ell-1}{n} \parens[\Big]{\frac{gz}{4}}^n
		\end{multlined}\\
		&\!\begin{multlined}[t][\linewidth-\mathindent-\multlinegap]
			=\Re \sum_{k-m,p-r} \binom{\ldots}{k-m,p-r} \frac{(-1)^{m+q}}{4^{m-1}} (cv)^{q+r} (adw)^{p} (\ii ab)^{2m-1-p-q-r}  y^m z  \cdot\ldots \\
			\ldots\cdot \parens[\Big]{1-\frac{cv+adw}{\ii ab}}^{m} \parens[\Big]{1+\frac{cv+adw}{\ii ab}}^{m-1-p-q-r}  \parens[\Big]{1+\frac{adw}{cv+\ii ab}}^{p+q-\ell}  \parens*{1-\sqrt{1-gz}}^\ell,
		\end{multlined}
	\end{align}
	where we used \eqref{app:eq:ops_sqrt_k} for the last equation and distributed one surplus power back to its constituent factors.
	
	In the case of $\ell=0$, \eqref{app:eq:ops_sqrt_k} is not applicable, so we would have to subtract the entire term with $\ell$ set to $0$. As is immediately obvious from the last display, this only affects the case that $n=1$ (because the only remaining power of $z$ is $z^1$), and we will not deal with the case $m=1 \lor n=1$ here, as they can be done in exactly the same way as for the general case (e.g.~starting from \eqref{app:eq:Imn_simplif_frac}, where either $y$ or $z$ can then be set to zero and the derivatives only need to be considered for the other variable).
	
	Similarly, we tackle the sum in $m$ (again ignoring the error for $k=0$), allowing us to also deal with the sum in $k$,
	\begin{align}
		&\!\begin{multlined}[t][\linewidth-\mathindent-\multlinegap]
			=\Re  \sum_{k,\ell,p-r}  \binom{\ldots}{p-r} 4(-1)^{q} (cv)^{q+r} (adw)^{p} (\ii ab)^{-1-p-q-r} z \parens[\Big]{1+\frac{cv+adw}{\ii ab}}^{-1-p-q-r} \cdot\ldots \\
			\ldots\cdot   \parens[\Big]{1+\frac{adw}{cv+\ii ab}}^{p+q-\ell}  \parens*{1-\sqrt{1-gz}}^\ell 2^k \parens[\Big]{\frac{gy}{4}}^{k+1}\sum_{m=0}^\infty \frac{ k}{m+k} \binom{2m+k-1}{m} \parens[\Big]{\frac{gy}{4}}^{m}
		\end{multlined}\\
		&\!\begin{multlined}[t][\linewidth-\mathindent-\multlinegap]
			=\Re  \sum_{k,\ell,p-r}  \binom{\ldots}{p-r} (-1)^{q+1} (cv)^{q+r} (adw)^{p}  (\ii ab)^{1-p-q-r} yz \parens[\Big]{1-\frac{cv+adw}{\ii ab}} \cdot\ldots \\
			\ldots\cdot  \parens[\Big]{1+\frac{cv+adw}{\ii ab}}^{-p-q-r} \parens[\Big]{1+\frac{adw}{cv+\ii ab}}^{p+q-\ell}  \parens*{1-\sqrt{1-gz}}^\ell \parens*{1-\sqrt{1-gy}}^k
		\end{multlined}\\
		&\!\begin{multlined}[t][\linewidth-\mathindent-\multlinegap]
			=\Re  \sum_{\ell,p-r}  \binom{\ldots}{p-r} (-1)^{q+1} (cv)^{q+r} (adw)^{p}  (\ii ab)^{1-p-q-r} yz \parens[\Big]{1-\frac{cv+adw}{\ii ab}}  \cdot\ldots \\
			\ldots\cdot \parens[\Big]{1+\frac{cv+adw}{\ii ab}}^{-p-q-r} \parens[\Big]{1+\frac{adw}{cv+\ii ab}}^{p+q-\ell}  \parens*{1-\sqrt{1-gz}}^\ell \sum_{k=q+r}^\infty \parens*{1-\sqrt{1-gy}}^k
		\end{multlined}\\
		&\!\begin{multlined}[t][\linewidth-\mathindent-\multlinegap]
			=\Re\frac{yz}{\sqrt{1-gy}}\sum_{\ell,p-r}  \binom{\ldots}{p,q,r}  (-1)^{q+1} (cv)^{q+r} (adw)^{p}  (\ii ab)^{1-p-q-r} \parens[\Big]{1-\frac{cv+adw}{\ii ab}}  \cdot\ldots \\
			\ldots\cdot  \parens[\Big]{1+\frac{cv+adw}{\ii ab}}^{-p-q-r} \parens[\Big]{1+\frac{adw}{cv+\ii ab}}^{p+q-\ell}  \parens*{1-\sqrt{1-gz}}^\ell \parens*{1-\sqrt{1-gy}}^{q+r}.
		\end{multlined}
	\end{align}
	Next, we sum in $r$, by \eqref{app:eq:ops_geom_alpha},
	\begin{align}
		&\!\begin{multlined}[t][\linewidth-\mathindent-\multlinegap]
			=\Re\frac{yz}{\sqrt{1-gy}} \sum_{\ell,p,q} \binom{\ldots}{p,q}
			(-1)^{q+1} (cv)^{q} (adw)^{p}  (\ii ab)^{1-p-q} \parens[\Big]{1-\frac{cv+adw}{\ii ab}} \parens[\Big]{1+\frac{cv+adw}{\ii ab}}^{-p-q} \cdot\ldots \\
			\ldots\cdot \parens[\Big]{1+\frac{adw}{cv+\ii ab}}^{p+q-\ell}  \parens*{1-\sqrt{1-gz}}^\ell \parens*{1-\sqrt{1-gy}}^{q} \;\smash[b]{\underbrace{\sum_{r=0}^\infty \binom{\ell+r}{r} \parens[\bigg]{\frac{cv\parens*{1-\sqrt{1-gy}}}{cv+adw+\ii ab}}^r,}_{\parens*{\frac{cv+adw+\ii ab}{cv\sqrt{1-gy}+adw+\ii ab}}^{\ell+1}}} \vphantom{\underbrace{\sum_{r=0}}_{a}}
		\end{multlined}
	\end{align}
	then in $q$,
	\begin{align}
		&\!\begin{multlined}[t][\linewidth-\mathindent-\multlinegap]
			=\Re\frac{-yz}{\sqrt{1-gy}}\sum_{\ell,p}  \binom{\ell+1}{p} (adw)^{p}  (\ii ab)^{1-p} \parens[\Big]{1-\frac{cv+adw}{\ii ab}} \parens[\Big]{1+\frac{cv+adw}{\ii ab}}^{-p} \parens[\Big]{1+\frac{adw}{cv+\ii ab}}^{p-\ell} \!\!\!\!\! \cdot\ldots \\
			\ldots\cdot \parens*{1-\sqrt{1-gz}}^\ell \parens[\Big]{\frac{cv+adw+\ii ab}{cv\sqrt{1-gy}+adw+\ii ab}}^{\ell+1} \sum_{q=0}^{\infty} \binom{\ell-p}{q} \parens[\bigg]{\frac{-cv\parens*{1-\sqrt{1-gy}}}{cv+\ii ab}}^q,
		\end{multlined}
	\end{align}
	as well as in $p$,
	\begin{align}
		&\!\begin{multlined}[t][\linewidth-\mathindent-\multlinegap]
			=\Re\frac{-yz}{\sqrt{1-gy}}\sum_{\ell=0}^\infty \ii ab \parens[\Big]{1-\frac{cv+adw}{\ii ab}}  \parens[\Big]{\frac{cv+adw+\ii ab}{cv+\ii ab}}^{-\ell} \parens*{1-\sqrt{1-gz}}^\ell \cdot\ldots \\
			\ldots\cdot  \parens[\Big]{\frac{cv+adw+\ii ab}{cv\sqrt{1-gy}+adw+\ii ab}}^{\ell+1} \parens[\bigg]{\frac{cv\sqrt{1-gy}+\ii ab}{cv+\ii ab}}^\ell \sum_{p=0}^{\ell} \binom{\ell+1}{p} \parens[\bigg]{\frac{adw}{cv\sqrt{1-gy}+\ii ab}}^p
		\end{multlined}\\
		&\!\begin{multlined}[t][\linewidth-\mathindent-\multlinegap]
			=\Re\frac{yz}{\sqrt{1-gy}}  \sum_{\ell=0}^\infty \frac{\parens{cv+adw-\ii ab}\parens{cv+adw+\ii ab}}{cv\sqrt{1-gy}+adw+\ii ab} \parens*{1-\sqrt{1-gz}}^\ell \cdot\ldots \\
			\ldots\cdot   \parens[\bigg]{\frac{cv\sqrt{1-gy}+\ii ab}{cv\sqrt{1-gy}+adw+\ii ab}}^\ell \parens{\frac{\parens{cv\sqrt{1-gy}+adw+\ii ab}^{\ell+1}-(adw)^{\ell+1}}{(cv\sqrt{1-gy}+\ii ab)^{\ell+1}}}.
		\end{multlined}
	\end{align}
	Finally, we wrap things up by summing in $\ell$,
	\begin{align}
		&\!\begin{multlined}[t][\linewidth-\mathindent-\multlinegap]
			=\Re\frac{yz}{\sqrt{1-gy}} \frac{(cv+adw)^2+a^2b^2}{cv\sqrt{1-gy}+\ii ab} \Biggl(\sum_{\ell=0}^\infty  \parens*{1-\sqrt{1-gz}}^{\ell} - \ldots \\
			\ldots -\sum_{\ell=0}^\infty\frac{adw}{cv\sqrt{1-gy}+adw+\ii ab}\parens[\bigg]{\frac{adw\parens*{1-\sqrt{1-gz}}}{cv\sqrt{1-gy}+adw+\ii ab}}^\ell\Biggr)
		\end{multlined}\\
		&=\Re\frac{yz}{\sqrt{1-gy}} \frac{g}{cv\sqrt{1-gy}+\ii ab} \parens[\bigg]{\frac{1}{\sqrt{1-gz}}-\frac{adw}{cv\sqrt{1-gy}+adw\sqrt{1-gz}+\ii ab}}\\
		&=\Re\frac{yz}{\sqrt{1-gy}\sqrt{1-gz}} \frac{g}{cv\sqrt{1-gy}+adw\sqrt{1-gz}+\ii ab}.
	\end{align}
	As it turns out, this formula also holds for the case $m=1\lor n=1$ that we have omitted thus far. The proof is left as an exercise to the reader.
	
	The form of the function $C(v,w,y,z)$ is easily seen by using \eqref{app:eq:coeff_factor} --- or more heuristically, setting $a=b=c=d=1$ in $N(v,w,y,z)$. This finishes the proof.
\end{proof}

\begin{proposition}\label{app:prop:coeff_zero}
	If any of the following conditions is \emph{not} met, the coefficient $c^{m,n}_{i,j}$ is zero,
	\begin{align}
	i+j&\equiv 1\bmod{2}, & i+j&\le 2(m+n)-3, & (i&\ge 2m-1 \lor j\ge 2n-1).
	\end{align}
	If the first two conditions \emph{are} met, the coefficients can be calculated as follows
	\begin{align}
		c^{m,n}_{i,j}
		\begin{multlined}[t][\linewidth-\mathindent-\widthof{$c^{m,n}_{i,j}$}-\multlinegap]
			=\sum_{r=0}^{i}\sum_{s=0}^{j} \delta_{\{r+s\equiv1\bmod{2}\}} (-1)^{m+n+\frac{r+s-1}{2}} \binom{r+s}{s}  \cdot\ldots \\
			\ldots \cdot  \binom{\frac{r-1}{2}}{m-1}\binom{\frac{s-1}{2}}{n-1} \binom{m+n-1}{m+n-1-\frac{i+j-r-s}{2}} \binom{i+j-r-s}{i-r}.
		\end{multlined}
	\end{align}
\end{proposition}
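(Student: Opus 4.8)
The plan is to read the coefficients $c^{m,n}_{i,j}$ straight off the closed form of $C(v,w,y,z)$ obtained in \autoref{app:prop:genfunc_coeff}, by treating it as a single formal power series in all four variables $v,w,y,z$ at once. Write $H:=h(v,w)=(v+w)^2+1$. Since $\sqrt{1-Hy}=1+O(y)$ and $\sqrt{1-Hz}=1+O(z)$, the quantity $v\sqrt{1-Hy}+w\sqrt{1-Hz}$ has vanishing constant term as a formal power series, so
\begin{align}
	\frac{1}{v\sqrt{1-Hy}+w\sqrt{1-Hz}+\ii}=\sum_{k\ge0}\ii^{-k-1}(-1)^k\bigl(v\sqrt{1-Hy}+w\sqrt{1-Hz}\bigr)^k
\end{align}
is a legitimate formal identity. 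Substituting it into the $\Re$-representation of $C$ from \autoref{app:prop:genfunc_coeff}, expanding each $k$-th power by the binomial theorem, using $(1-Hy)^{r/2}/\sqrt{1-Hy}=(1-Hy)^{(r-1)/2}$, and simplifying $\ii^{-k-1}(-1)^k=\ii^{\,k-1}$ with $k=r+s$, I would arrive at
\begin{align}
	C(v,w,y,z)=\Re\sum_{r,s\ge0}\ii^{\,r+s-1}\binom{r+s}{s}\,v^rw^s\,H\,yz\,(1-Hy)^{\frac{r-1}{2}}(1-Hz)^{\frac{s-1}{2}}.
\end{align}
The prefactor $H\,yz\,(1-Hy)^{(r-1)/2}(1-Hz)^{(s-1)/2}$ has real coefficients, so $\Re$ retains exactly the terms with $r+s$ odd, for which $\ii^{\,r+s-1}=(-1)^{(r+s-1)/2}$.

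Next I would extract the $y$- and $z$-coefficients. By \eqref{app:eq:ops_geom_alpha}, $\coeff{y^{m-1}}(1-Hy)^{(r-1)/2}=\binom{(r-1)/2}{m-1}(-1)^{m-1}H^{m-1}$ and likewise in $z$, so together with the leading $H$ one collects $H^{m+n-1}$ and a sign $(-1)^{m+n-2}=(-1)^{m+n}$, giving
\begin{align}
	\coeff{y^mz^n}C=H^{m+n-1}\sum_{r,s\ge0}\delta_{\{r+s\equiv1\bmod{2}\}}(-1)^{m+n+\frac{r+s-1}{2}}\binom{r+s}{s}\binom{\frac{r-1}{2}}{m-1}\binom{\frac{s-1}{2}}{n-1}\,v^rw^s.
\end{align}
Finally, expanding $H^{m+n-1}=\sum_{\ell=0}^{m+n-1}\binom{m+n-1}{\ell}\sum_{p}\binom{2\ell}{p}v^pw^{2\ell-p}$ and taking $\coeff{v^iw^j}$, a term survives only if $p=i-r$ and $2\ell-p=j-s$, i.e.\ $\ell=\frac{i+j-r-s}{2}$, contributing $\binom{m+n-1}{m+n-1-\ell}\binom{i+j-r-s}{i-r}$; since $\binom{i+j-r-s}{i-r}$ already vanishes unless $0\le r\le i$ and $0\le s\le j$, the double sum reduces to exactly the range in the statement, and one recovers the claimed formula for $c^{m,n}_{i,j}$ (using $\binom{m+n-1}{\ell}=\binom{m+n-1}{m+n-1-\ell}$).

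For the vanishing assertions: $\ell=\frac{i+j-r-s}{2}$ has to be a nonnegative integer while $r+s$ is odd, which forces $i+j$ odd, so $c^{m,n}_{i,j}=0$ when $i+j\equiv0\bmod{2}$; the bound $i+j\le2(m+n)-3$ is already part of \autoref{app:prop:coeff_expl_long}. It remains to show $c^{m,n}_{i,j}=0$ whenever $i<2m-1$ and $j<2n-1$. In any surviving summand $r+s$ is odd, so exactly one of $r,s$ is odd; by the symmetry $(r,i,m)\leftrightarrow(s,j,n)$ of the formula I may assume $r$ is odd. Then $\frac{r-1}{2}\in\bbN_0$, so $\binom{(r-1)/2}{m-1}=0$ unless $r\ge2m-1$, whereas $\frac{s-1}{2}$ is a genuine half-integer and $\binom{(s-1)/2}{n-1}$ is never zero; since $r\le i$ in any surviving term, $i\ge r\ge2m-1$, and symmetrically $j\ge2n-1$ in the other case. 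Hence if both $i<2m-1$ and $j<2n-1$, every summand vanishes.

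The one step calling for genuine care is the very first: justifying the simultaneous four-variable geometric expansion, which hinges on $v\sqrt{1-Hy}+w\sqrt{1-Hz}$ having no constant term; everything afterwards is bookkeeping of signs and binomial identities. The qualitative reason the conditions drop out is pleasantly transparent — the parity of $r+s$ is what makes exactly one of the two factors $\binom{(r-1)/2}{m-1}$, $\binom{(s-1)/2}{n-1}$ have an \emph{integer} upper argument (hence capable of vanishing when that argument is too small), and the coupling $r\le i$, $s\le j$ then transfers that threshold to $i$ or $j$.
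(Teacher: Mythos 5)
Your proposal is correct and follows the same strategy as the paper: read $c^{m,n}_{i,j}$ off the generating function $C(v,w,y,z)$ from \autoref{app:prop:genfunc_coeff} by expanding it as a quadruple formal power series, take the real part, and exploit the integer/half-integer dichotomy in $\binom{(r-1)/2}{m-1}$, $\binom{(s-1)/2}{n-1}$. The only (cosmetic) differences: you expand $\bigl(v\sqrt{1-Hy}+w\sqrt{1-Hz}+\ii\bigr)^{-1}$ by a single geometric series in $k$ and then the binomial theorem to split $k=r+s$, whereas the paper first does a geometric series in $v\sqrt{1-hy}/(w\sqrt{1-hz}+\ii)$ and then a binomial series for $(w\sqrt{1-hz}+\ii)^{-r-1}$ — the two reindexings yield the identical double sum; and you argue the vanishing via a WLOG on which of $r,s$ is odd rather than the paper's explicit split into the two parity sub-sums $(r=2p,s=2q+1)$ and $(r=2p+1,s=2q)$.
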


\begin{proof}
	Instead of explicit differentiation, we will reverse the process of calculating a generating function, and ``disassemble'' $C(v,w,y,z)$ in a particular way. However, we will still use the information we have already, i.e.~that $i+j$ must be odd and less than $2(m+n)-3$ (e.g.~from the representation in \autoref{app:prop:coeff_expl_long}). First we calculate
	\begin{align}
		c^{m,n}_{i,j}
		&=\Re\coeff*{v^i w^j y^{m}z^{n}}\frac{yz}{\sqrt{1-hy}\sqrt{1-hz}} \frac{h}{v\sqrt{1-hy}+w\sqrt{1-hz}+\ii} \\
		&=\Re\coeff*{v^i w^j y^{m}z^{n}} \frac{hyz}{\sqrt{1-hy}\sqrt{1-hz}} \sum_{r\ge 0}\frac{(-v\sqrt{1-hy})^r}{(w\sqrt{1-hz}+\ii)^{r+1}}\\
		&=\Re\coeff*{v^i w^j y^{m}z^{n}} \frac{hyz}{\sqrt{1-hz}} \sum_{r\ge 0} (-v)^r (\sqrt{1-hy})^{r-1} (-\ii)^{r+1} \sum_{s\ge 0} \binom{r+s}{s} \parens*{\ii w\sqrt{1-hz}}^s\\
		&=\Re\coeff*{v^i w^j y^{m}z^{n}} -hyz \sum_{r,s\ge 0} \ii^{r+s+1} \binom{r+s}{s} v^r w^s \sum_{k\ge 0} \binom{\frac{r-1}{2}}{k} (-hy)^k \sum_{\ell\ge 0} \binom{\frac{s-1}{2}}{\ell} (-hz)^\ell,
	\end{align}
	having used \eqref{app:eq:ops_geom}, \eqref{app:eq:ops_geom_k} and \eqref{app:eq:ops_geom_alpha}. Extracting the appropriate powers of $y,z$, we continue
	\begin{align}
		c^{m,n}_{i,j}
		&=\Re\coeff*{v^i w^j} \sum_{r,s\ge 0} (-1)^{m+n+1} \ii^{r+s+1} \binom{r+s}{s} \binom{\frac{r-1}{2}}{m-1} \binom{\frac{s-1}{2}}{n-1} v^r w^s h^{m+n-1}\\
		&\!\begin{multlined}[t][\linewidth-\mathindent-\widthof{$c^{m,n}_{i,j}$}-\multlinegap]
			=\Re\coeff*{v^i w^j} \sum_{r,s,t,u\ge 0}  (-1)^{m+n+1} \ii^{r+s+1} \binom{r+s}{s} \binom{\frac{r-1}{2}}{m-1} \binom{\frac{s-1}{2}}{n-1} \cdot\ldots \\
			\ldots\cdot\binom{m+n-1}{t} \binom{2(m+n-1-t)}{u} v^{r+u} w^{s+2(m+n-1-t)-u} 1^t
		\end{multlined}\\
		&\!\begin{multlined}[t][\linewidth-\mathindent-\widthof{$c^{m,n}_{i,j}$}-\multlinegap]
			=\Re\coeff*{v^i w^j} \sum_{i',j',r,s\ge 0} (-1)^{m+n+1} \ii^{r+s+1} \binom{r+s}{s} \binom{\frac{r-1}{2}}{m-1} \binom{\frac{s-1}{2}}{n-1} \cdot\ldots \\
			\ldots\cdot \binom{m+n-1}{m+n-1-\frac{i'+j'-r-s}{2}} \binom{i'+j'-r-s}{i'-r} v^{i'} w^{j'}
		\end{multlined}\\
		&\!\begin{multlined}[t][\linewidth-\mathindent-\widthof{$c^{m,n}_{i,j}$}-\multlinegap]
			= \Re\sum_{r=0}^{i}\sum_{s=0}^{j} (-1)^{m+n+1} \ii^{r+s+1} \binom{r+s}{s} \binom{\frac{r-1}{2}}{m-1} \binom{\frac{s-1}{2}}{n-1} \cdot\ldots \\
			\ldots\cdot\binom{m+n-1}{m+n-1-\frac{i+j-r-s}{2}} \binom{i+j-r-s}{i-r}.
		\end{multlined}
	\end{align}
	Firstly, the restriction of the range in $r,s$ comes from the last binomial coefficient, because clearly the lower entry needs to be positive, and the upper one needs to be larger than the lower one. Also, like $i+j$, we see that $r+s$ has to be odd for the term to contribute to the (real part of the) sum. From this we can read off the formula for the coefficients claimed in the proposition.
	
	We use $r+s\equiv 1\bmod{2}$ to split the sum into ($r=2p$ even, $s=2q+1$ odd) and ($r=2p+1$ odd, $s=2q$ even) for $p,q\in\bbN$,
	\begin{align}
		c^{m,n}_{i,j}
		&\!\begin{multlined}[t][\linewidth-\mathindent-\widthof{$c^{m,n}_{i,j}$}-\multlinegap]
			=\sum_{p=0}^{\floor{\frac{i}{2}}}  \sum_{q=0}^{\floor{\frac{j-1}{2}}}  (-1)^{m+n+p+q} \binom{2p+2q+1}{2q+1}  \binom{p-\frac{1}{2}}{m-1}  \binom{q}{n-1} \cdot\ldots \\
			\shoveright{\ldots\cdot\binom{m+n-1}{m+n+p+q-\frac{i+j+1}{2}}  \binom{i+j-2p-2q-1}{i-2p}  +\ldots}\\
			\shoveleft{\phantomrel \ldots+ \sum_{p=0}^{\floor{\frac{i-1}{2}}}  \sum_{q=0}^{\floor{\frac{j}{2}}}  (-1)^{m+n+p+q} \binom{2p+2q+1}{2q}  \binom{p}{m-1} \cdot \ldots}\\
			\ldots\cdot\binom{q-\frac{1}{2}}{n-1}  \binom{m+n-1}{m+n+p+q-\frac{i+j+1}{2}}  \binom{i+j-2p-2q-1}{i-2p-1}.
		\end{multlined}
	\end{align}
	Now, if $i\le 2(m-1)$, we see that $p<m-1$ in the second sum, and similarly, if $j\le 2(n-1)$ then $q<n-1$ in the first sum, which means all terms in the sums are zero due to the binomial coefficients $\binom{p}{m-1}$ and $\binom{q}{n-1}$. This finishes the proof.
\end{proof}

\begin{remark}\label{rem:conject_coeff_pos}
	After having investigated the quantity $I_{m,n}$ intensely while trying to prove \autoref{app:th:Imn_est}, we are convinced that
	\begin{align}
		c^{m,n}_{i,j}\ge 0, \qquad \forall m,n\in\bbN,\, \forall i,j\in\bbN_0.
	\end{align}
	However, we have not seriously attempted to prove this.
\end{remark}

\pagestyle{plain}

\printbibliography[heading=bibintoc]

\end{document}